\definecolor{darkblue}{rgb}{0,0,0.4} 
\newcommand{\phipsiphi}{\!\tensor*[_{\phi}]{\cup}{_{\phi\circ\psi}}\!}
\newcommand{\RcupL}{\tensor*[_{\bdy_R}]{\cup}{_{\bdy_L}}}
\newcommand{\co}{\nobreak\mskip2mu\mathpunct{}\nonscript
  \mkern-\thinmuskip{:}\penalty300\mskip6muplus1mu\relax}
\renewcommand{\th}{^{\text{th}}}
\newcommand{\lsub}[2]{{}_{#1}#2}
\newcommand{\lsup}[2]{{}^{#1}\mskip-.6\thinmuskip#2}
\newcommand{\wt}{\widetilde}
\newcommand{\ZZ}{\mathbb{Z}}
\newcommand{\RR}{\mathbb{R}}
\newcommand{\QQ}{\mathbb{Q}}
\newcommand{\FF}{\mathbb{F}}
\newcommand{\op}{\mathrm{op}}
\newcommand{\Ainf}{A_\infty}
\newcommand{\bdy}{\partial}
\DeclareMathOperator{\Ann}{Ann}
\DeclareMathOperator{\Supp}{Supp}
\DeclareMathOperator{\Spec}{Spec}
\newcommand{\dg}{\textit{dg }}
\newcommand{\Id}{\mathit{Id}}
\DeclareMathOperator{\Mor}{Mor}
\DeclareMathOperator{\Tor}{Tor}
\DeclareMathOperator{\Ext}{Ext}
\DeclareMathOperator{\rank}{rank}
\newcommand{\into}{\hookrightarrow}
\newcommand{\HF}{\mathit{HF}}
\newcommand{\HFa}{\widehat{\mathit{HF}}}
\newcommand{\CF}{{\mathit{CF}}}
\newcommand{\CFa}{\widehat{\mathit{CF}}}
\newcommand{\tHF}{\underline{\HF}}
\newcommand{\tHFa}{\widehat{\tHF}}
\newcommand{\tCF}{\underline{\CF}}
\newcommand{\tCFa}{\widehat{\tCF}}
\newcommand{\Alg}{\mathcal{A}}
\newcommand{\Idem}{\mathcal{I}}
\newcommand{\CFD}{\mathit{CFD}}
\newcommand{\CFA}{\mathit{CFA}}
\newcommand{\CFDA}{\mathit{CFDA}}
\newcommand{\CFDAa}{\widehat{\CFDA}}
\newcommand{\CFDa}{\widehat{\CFD}}
\newcommand{\CFAa}{\widehat{\CFA}}
\newcommand{\tCFDa}{\underline{\CFDa}}
\newcommand{\tCFDAa}{\underline{\CFDAa}}
\newcommand{\DD}{$\mathit{DD}$}
\newcommand{\DA}{$\mathit{DA}$}
\newcommand{\AAm}{$\mathit{AA}$}
\newcommand{\DT}{\boxtimes}
\newcommand{\BSD}{\mathit{BSD}}
\newcommand{\BSA}{\mathit{BSA}}
\newcommand{\BSDA}{\mathit{BSDA}}
\newcommand{\tBSD}{\underline{\BSD}}
\newcommand{\tBSDA}{\underline{\BSDA}}
\newcommand{\tBSA}{\underline{\BSA}}
\newcommand{\SFH}{\mathit{SFH}}
\newcommand{\SFC}{\mathit{SFC}}
\newcommand{\tSFC}{\underline{\SFC}}
\newcommand{\tSFH}{\underline{\SFH}}
\newcommand{\HD}{\mathcal{H}}
\newcommand{\alphas}{\boldsymbol{\alpha}}
\newcommand{\betas}{\boldsymbol{\beta}}
\newcommand{\x}{\mathbf{x}}
\newcommand{\y}{\mathbf{y}}
\newcommand{\spinc}{\mathfrak{s}}
\newcommand{\SpinC}{\mathrm{spin}^c}
\newcommand{\PMC}{\mathcal{Z}}
\DeclareMathOperator{\Sym}{Sym}
\newcommand{\TC}{\mathsf{TC}}
\newcommand{\HalfHD}{\frac{\mathcal{H}}{2}}
\newcommand{\cM}{\mathcal{M}}
\DeclareMathOperator{\ind}{ind}
\newcommand{\nbd}{\mathrm{nbd}}
\theoremstyle{plain}
\numberwithin{equation}{section}
\newtheorem{theorem}[equation]{Theorem}
\newtheorem{proposition}[equation]{Proposition}
\newtheorem{lemma}[equation]{Lemma}
\newtheorem{corollary}[equation]{Corollary}
\newtheorem{convention}[equation]{Convention}
\newtheorem{definition}[equation]{Definition}
\newtheorem{construction}[equation]{Construction}
\newtheorem{notation}[equation]{Notation}
\theoremstyle{definition}
\theoremstyle{remark}
\newtheorem{example}[equation]{Example}
\newtheorem{remark}[equation]{Remark}
\definecolor{darkgreen}{rgb}{0,.25,0}
\definecolor{darkblue}{rgb}{0,0,.5}
\definecolor{darkred}{rgb}{.25,0,0}
\definecolor{darkpurple}{rgb}{.4,.18,.57}
\definecolor{pink}{rgb}{1,.08,.575}
\providecommand\@dotsep{5}
\def\listtodoname{List of Todos}
\def\listoftodos{\@starttoc{tdo}\listtodoname}
\newcommand{\cupplus}{{\setbox0\hbox{\large$\cup$}\rlap{\hbox to \wd0{\hss\raisebox{3pt}{\tiny$+$}\hss}}\box0}}
\newcommand{\cupminus}{{\setbox0\hbox{\large$\cup$}\rlap{\hbox to \wd0{\hss\raisebox{3pt}{\tiny$-$}\hss}}\box0}}
\newcommand{\out}{\mathit{out}}
\begin{document}
\title[Bordered Floer, compression bodies, and diffeomorphisms]{Bordered Floer homology, handlebody detection, and compressing diffeomorphisms}

\author{Akram Alishahi}
\thanks{\texttt{AA was partly supported by NSF Grants DMS-2000506 and DMS-2238103 and partly
    supported by NSF Grant DMS-1928930 while in residence at SLMath}}
\address{Department of Mathematics, University of Georgia, Athens, GA 30602}
\email{\href{mailto:akram.alishahi@uga.edu}{akram.alishahi@uga.edu}}

\author{Robert Lipshitz}
\thanks{\texttt{RL was partly supported by NSF Grant DMS-2204214 and partly
    supported by NSF Grant DMS-1928930 while in residence at SLMath}}
\address{Department of Mathematics, University of Oregon, Eugene, OR 97403}
\email{\href{mailto:lipshitz@uoregon.edu}{lipshitz@uoregon.edu}}

\keywords{}

\date{\today}

\begin{abstract}
  We show that, up to connected sums with integer homology $L$-spaces,
  bordered Floer homology detects handlebodies,
  as well as whether a mapping class extends over a given
  handlebody or compression body. Using this, we combine ideas of Casson-Long with the theory of train tracks to give an
  algorithm using bordered Floer homology to detect whether a mapping
  class extends over any compression body.
\end{abstract}

\maketitle

\tableofcontents
\section{Introduction}
To apply a numerical or algebraic invariant to topological problems,
it is helpful to know what geometric information the invariant
contains. For Heegaard Floer homology, some of the most useful
information comes from its famous detection properties:
Ozsv\'ath and Szab\'o's theorems that it detects the genus of knots and
the Thurston norm of 3-manifolds~\cite{OS04:ThurstonNorm} and Ni's
theorems that it detects fiberedness of knots and
3-manifolds~\cite{Ni07:FiberedKnot,Ni09:FiberedMfld} (see
also~\cite{Ghiggini08:FiberedGenusOne}). A useful cousin of these
properties is that various twisted forms of Heegaard Floer homology
detect the existence of homologically essential
2-spheres~\cite{HeddenNi13:detects,AL19:incompressible}.

This last result extends easily to show that twisted Heegaard Floer
homology can also be used to detect the presence of homologically
linearly independent 2-spheres (Lemma~\ref{lem:sphere-support}, below).
In this paper, we use that extension to give two new phenomena that
bordered Heegaard Floer homology detects: it detects handlebodies
(Theorem~\ref{thm:detect-hb}), and also whether a diffeomorphism extends
over a given handlebody or compression body
(Theorem~\ref{thm:detect-over-specific}). A related question is whether
a diffeomorphism extends over any handlebody. Casson and Long give an
algorithm for answering this in the
1980s~\cite{CassonLong85:compression}. (One reason for interest was a
theorem of Casson and Gordon that the monodromy of a fibered ribbon knot
extends over a handlebody~\cite{CG83:fibered-ribbon}.) We modify their
algorithm to show that bordered-sutured Floer homology can be used to
detect whether a diffeomorphism extends over some compression body, and
give explicit bounds on the complexity of the bimodules involved
(Theorem~\ref{thm:phi-extend-bound}). In particular, the proof involves
replacing some bounds in terms of the lengths of geodesics with bounds
in terms of train tracks and giving a connection between train track
splitting sequences and bordered-sutured bimodules associated to mapping
classes, both of which may be of independent use.

We state these results in a little more detail, starting with detection of handlebodies:
\begin{theorem}\label{thm:detect-hb} Let $Y$ be an irreducible homology
  handlebody. Fix $\phi\co F\to \bdy Y$ making $Y$ into a bordered
  $3$-manifold and let $\lsup{\Alg(F)}\tCFDAa(\Id)_{\Alg(F)}$ be the
  twisted identity bimodule of $F$. Then the support of
  \[
    \Mor_{\Alg(F)}\bigl(\lsup{\Alg(F)}\CFDa(Y),\lsup{\Alg(F)}\tCFDAa(\Id)_{\Alg(F)}\DT\lsup{\Alg(F)}\CFDa(Y)\bigr)
  \]
  over $\FF_2[H_2(Y,F)]$ is $0$-dimensional if and only if $Y$ is a
  handlebody.
\end{theorem}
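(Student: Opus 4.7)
The plan is to reduce the statement to a twisted Heegaard Floer detection result for linearly independent essential $2$-spheres applied to the double $DY = (-Y) \cup_\phi Y$, and then to translate the resulting sphere count back into a handlebody condition on $Y$.

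First I would invoke the pairing theorem. For type D modules over $\Alg(F)$, the complex $\Mor_{\Alg(F)}(\CFDa(Y),\cdot)$ computes $\CFAa(-Y) \DT \cdot$\,, and inserting $\tCFDAa(\Id)$ identifies the complex in the statement with a twisted version of $\CFa(DY)$, regarded as a chain complex of $\FF_2[H_2(Y,F)]$-modules. The twisted identity bimodule is designed precisely to record homology classes of $2$-chains in the resulting closed manifold; Mayer--Vietoris together with $H_2(Y)=0$ gives $H_2(DY)\cong H_2(Y,F)$, so the coefficient ring matches the standard one for fully twisted $\CFa(DY)$. Writing $g=\rank H_2(Y,F)$, I would then apply Lemma~\ref{lem:sphere-support} to conclude that the support over $\FF_2[H_2(Y,F)]$ is $0$-dimensional if and only if $DY$ contains $g$ homologically linearly independent essential $2$-spheres.

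Finally I would show that $DY$ contains such a sphere system if and only if $Y$ is a handlebody. The forward direction is immediate: a genus-$g$ handlebody doubles to $\#^g(S^1\times S^2)$, which visibly contains the required spheres. For the converse, suppose $S_1,\dots,S_g\subset DY$ are essential and span $H_2(DY)$. Put them transverse to $F$ and apply the standard innermost-disk argument to $S_i\cap F$: each innermost disk either bounds a disk in $F$ (allowing one to isotope away the intersection) or is an essential compressing disk for $F$ in $Y$ or in $-Y$. Irreducibility of $Y$ and $-Y$ lets us discard any spurious sphere components split off by the surgery process without losing essential classes in $H_2(DY)$, and iterating yields enough essential compressing disks in $Y$ to span the Lagrangian $\ker(H_1(F)\to H_1(Y))$, hence a complete cut system for $F$, identifying $Y$ as a handlebody.

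The main obstacle is this last topological step: ensuring that enough homological information survives the innermost-disk and sphere-surgery reductions to produce a complete cut system in $Y$ itself, rather than merely some collection of compressing disks that might lie on the $-Y$ side. This will require careful bookkeeping of homology classes through the normal-sum process, presumably combined with a symmetry argument exploiting the $Y\leftrightarrow -Y$ involution of $DY$ and the irreducibility hypothesis. A secondary technical point is matching conventions so that the twisted identity bimodule indeed produces the $\FF_2[H_2(Y,F)]$-coefficient system asserted in the statement.
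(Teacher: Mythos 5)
Your high-level plan matches the paper's: rewrite the Mor complex via the pairing and duality theorems as a twisted $\CFa$ of the double $D(Y)$, apply Lemma~\ref{lem:sphere-support} to relate the support dimension to the number of independent embedded spheres in $D(Y)$, and then argue that $H_2(D(Y))$ being spanned by spheres characterizes handlebodies. But there are two genuine gaps, both of which you flag without resolving, and it is worth noting how the paper does resolve them.

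First, the coefficient ring does \emph{not} come out as $\FF_2[H_2(D(Y))]$. The pairing with the twisted identity bimodule produces $\tCFa\bigl(D(Y);\FF_2[H_1(F)]\bigr)$, i.e.\ coefficients in the group ring of $H_1(F)$, which is strictly larger: choosing a splitting of the exact sequence $0\to H_2(Y,F)\to H_1(F)\to H_1(Y)\to 0$ gives $\FF_2[H_1(F)]\cong\FF_2[H_2(D(Y))]\otimes\FF_2[H_1(Y)]$, so the complex is $\tCFa(D(Y))\otimes\FF_2[H_1(Y)]$. One then has to check that this extra tensor factor does not change the dimension of the support when measured over $\FF_2[H_2(Y,F)]\cong\FF_2[H_2(D(Y))]$, which is exactly the bookkeeping the paper does. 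Asserting that ``the coefficient ring matches the standard one'' skips this.

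Second, and more seriously, the topological converse that $H_2(D(Y);\QQ)$ being spanned by $2$-spheres forces $Y$ to be a handlebody (the paper's Lemma~\ref{lem:double-handlebody}) does not follow from the innermost-disk argument as you sketch it. The difficulty you identify is real: innermost-disk reductions give \emph{some} compressing disks, possibly all on the $-Y$ side, and there is no straightforward way to see that enough homologically independent ones land in $Y$ to form a complete cut system. The paper circumvents this entirely. After reducing to disjoint embedded spheres via Lemma~\ref{lem:disj-sphere}, it first splits off any boundary-connect-sum factors of $Y$ (using $D(Y_1\#_b Y_2)=D(Y_1)\#D(Y_2)$ and induction), then in the boundary-sum-indecomposable case takes a \emph{single} sphere $S$ meeting $\Sigma$ minimally, extracts one disk $D\subset S\cap(\pm Y)$ which is homologically essential because $Y$ is not a boundary sum, and uses injectivity of $H^1(Y)\to H^1(\bdy Y)$ (the homology-handlebody hypothesis) to find a dual circle meeting $\bdy D$ once. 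This exhibits $Y$, or by symmetry $-Y$, as a boundary sum of a solid torus with another manifold, which by indecomposability must be a ball. That is a rather different and much more economical argument than trying to build a complete cut system by iterated surgery, and it is what makes the lemma go through.
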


\begin{corollary}\label{cor:detect-hb}
  Let $Y$ be a bordered $3$-manifold so that $\CFDa(Y)$ is homotopy equivalent to $\CFDa(H,\phi)$ for some bordered handlebody $H$, as (relatively) graded type $D$ structures. Then $Y$ is a connected sum of a handlebody and an integer homology sphere $L$-space.
\end{corollary}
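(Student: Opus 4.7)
The plan is to prime-decompose $Y$, isolate each summand using the bordered pairing theorem, and then invoke Theorem~\ref{thm:detect-hb} on the bordered piece. First, choose a bordered handlebody $Y'$ with $\bdy Y' \cong -F$ parametrized so that $H \cup_\phi Y' \cong S^3$; this is possible for any $g\ge 0$ since $S^3$ admits Heegaard splittings of every genus. By the pairing theorem and the hypothesis $\CFDa(Y) \simeq \CFDa(H,\phi)$,
\[
  \HFa(Y \cup_\phi Y') \;\simeq\; \HFa(H \cup_\phi Y') \;=\; \HFa(S^3) \;=\; \FF_2,
\]
so $Y \cup_\phi Y'$ is an integer homology sphere $L$-space. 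A Mayer--Vietoris computation then yields $H_1(Y) \cong \ZZ^g$ (free) and $H_2(Y) = 0$; in particular $Y$ is a homology handlebody of genus $g$.

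Next, take the prime decomposition $Y = Y_0 \# Y_1 \# \cdots \# Y_n$ with $Y_0$ irreducible and bordered (with $\bdy Y_0 = \bdy Y$) and $Y_1,\ldots,Y_n$ closed irreducible. Since connected sum distributes over the closed pieces,
\[
  Y \cup_\phi Y' \;\cong\; (Y_0 \cup_\phi Y') \,\#\, Y_1 \,\#\, \cdots \,\#\, Y_n.
\]
Because $\HFa$ is multiplicative under connected sum and the left-hand side equals $\FF_2$, each factor must be an integer homology sphere $L$-space; in particular $\CFa(Y_i) \simeq \FF_2$ for $i \ge 1$. Applying the bordered connected-sum formula
\[
  \CFDa(Y) \;\simeq\; \CFDa(Y_0) \otimes_{\FF_2} \bigotimes_{i=1}^n \CFa(Y_i) \;\simeq\; \CFDa(Y_0),
\]
we conclude $\CFDa(Y_0) \simeq \CFDa(H,\phi)$ as relatively graded type $D$ structures.

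The summand $Y_0$ is irreducible by construction, and $H_1(Y_0) = H_1(Y) = \ZZ^g$ since each closed summand has trivial $H_1$, so $Y_0$ is an irreducible homology handlebody. The morphism space
\[
  \Mor_{\Alg(F)}\bigl(\CFDa(Y_0),\;\tCFDAa(\Id)\DT\CFDa(Y_0)\bigr)
\]
appearing in Theorem~\ref{thm:detect-hb} depends only on $\CFDa(Y_0)$ and the twisted identity bimodule, so the equivalence $\CFDa(Y_0)\simeq\CFDa(H,\phi)$ identifies it with the analogous module for $H$. Applying Theorem~\ref{thm:detect-hb} to the handlebody $H$ shows the latter support is $0$-dimensional over $\FF_2[H_2(H,F)] \cong \FF_2[H_2(Y_0,F)]$; hence so is $Y_0$'s, and Theorem~\ref{thm:detect-hb} applied to $Y_0$ identifies it as a handlebody. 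Combining the closed summands $Y_1 \# \cdots \# Y_n$ into a single integer homology sphere $L$-space completes the conclusion.

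The main subtlety is ensuring the $\FF_2[H_2(\cdot,F)]$-module structure on the Mor space transfers cleanly under the type $D$ equivalence $\CFDa(Y_0)\simeq\CFDa(H,\phi)$: the induced identification $H_2(Y_0,F)\cong H_2(H,F)$ must be compatible with the action of $\tCFDAa(\Id)$, so that $0$-dimensionality of the support transfers between $H$ and $Y_0$. A secondary concern is verifying that the bordered connected-sum formula preserves the relative grading, so that after cancelling the trivial $\CFa(Y_i)\simeq\FF_2$ factors we retain a graded equivalence $\CFDa(Y_0) \simeq \CFDa(H,\phi)$ rather than merely an ungraded one.
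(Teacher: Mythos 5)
Your argument reaches the same conclusion as the paper's but by a genuinely different route for the first step. Where the paper uses the grading hypothesis directly (grading-set orbits $\leftrightarrow$ $\SpinC(Y)$ $\leftrightarrow$ $H^2(Y)$) to conclude $H^2(Y)=0$ and hence that $Y$ is a homology handlebody, you instead cap off with a complementary handlebody $Y'$ giving $H\cup_\phi Y'\cong S^3$, apply the pairing theorem to get $\rank\HFa(Y\cup_\phi Y')=1$, and extract $H_1(Y)\cong\ZZ^g$, $H_2(Y)=0$ by Mayer--Vietoris. This works, and in fact the pairing-theorem step needs only an ungraded homotopy equivalence $\CFDa(Y)\simeq\CFDa(H)$, so your version shows the grading hypothesis in the statement is dispensable (the paper's route genuinely uses it). As a byproduct you also get the closed piece being an integer homology sphere $L$-space for free from the K\"unneth factorization of $\FF_2$, whereas the paper closes with a separate computation of $\Mor(\CFDa(Y),\CFDa(Y))\simeq\HFa(\#^g S^2\times S^1)$. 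Your full prime decomposition versus the paper's cruder $Y=Y'\#Y''$ (irreducible bordered piece plus closed piece) is a cosmetic difference.

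The ``main subtlety'' you flag at the end is real but you should close it, and your proposed resolution (``the induced identification $H_2(Y_0,F)\cong H_2(H,F)$'') is not quite the right framing, since $\ker(H_1(F)\to H_1(Y_0))$ and $\ker(H_1(F)\to H_1(H))$ are different Lagrangian subspaces of $H_1(F)$ a priori. The cleaner resolution is contained in the paper's own proof of Theorem~\ref{thm:detect-hb}: the $\FF_2[H_1(F)]$-module structure on the morphism complex is intrinsic to $\CFDa(\cdot)$ (it comes from the fixed bimodule $\tCFDAa(\Id)$), so the $\FF_2[H_1(F)]$-support is the same for $Y_0$ and $H$; and for any homology handlebody, the dimension of the support over $\FF_2[H_2(\cdot,F)]$ equals the dimension over $\FF_2[H_1(F)]$ minus $g=\rank H_1(\cdot)$, via $\tCFa(D(\cdot);\FF_2[H_1(F)])\cong\tCFa(D(\cdot))\otimes\FF_2[H_1(\cdot)]$. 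Since $g$ is the same for $Y_0$ and $H$, zero-dimensionality of the $\FF_2[H_2(\cdot,F)]$-support transfers regardless of which Lagrangian $H_2(\cdot,F)$ is. Your ``secondary concern'' about retaining a graded equivalence after cancelling the $\CFa(Y_i)\simeq\FF_2$ factors is a non-issue: Theorem~\ref{thm:detect-hb} makes no grading hypothesis, so the ungraded equivalence $\CFDa(Y_0)\simeq\CFDa(H)$ suffices.
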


Given a surface $\Sigma$, a $3$-manifold $Y$ with $\Sigma\subset \bdy Y$, and a homeomorphism
$\psi\co \Sigma\to \Sigma$, $\psi$ \emph{extends over} $Y$ if there is a
homeomorphism $\Psi\co Y\to Y$ so that $\Psi|_{F}=\psi$. 
With a little more work, the techniques
used to prove Theorem~\ref{thm:detect-hb} also show that bordered Floer
homology detects whether a homeomorphism extends over a given handlebody
or compression body:

\begin{theorem}\label{thm:detect-over-specific}%
  Let $C$ be a compression body with outer boundary $\Sigma$ and $k$ components
  of its inner boundary (none of which are spheres), and let $\psi\co
  \Sigma\to \Sigma$ be a homeomorphism. Let $g'$ be the sum of the genera of the components of the inner boundary of $C$. Make $C$ into a special
  bordered-sutured manifold with outer bordered boundary $F$, inner
  bordered boundary $F'$, and $m$ sutures on the inner boundary, and
  choose a strongly based representative for $\psi$ (i.e., a
  representative respecting the sutured structure on $\bdy F$). Then
  $\psi$ extends over $C$ if and only if $\psi$ preserves the
  kernel of the map $H_1(\Sigma)\to H_1(C)$ and the support of
  \begin{equation}\label{eq:detect-over-specific-formula}
    \Mor_{\Alg(F)}\bigl(\lsup{\Alg(F'),\Alg(F)}\BSD(C),\lsup{\Alg(F)}\BSDA(\psi)_{\Alg(F)}\DT\lsup{\Alg(F),\Alg(F')}\tBSD(C)\bigr)
  \end{equation}
  over $\FF_2[H_2(C,F\cup F')]$ is $(2g'+k-m)$-dimensional.
\end{theorem}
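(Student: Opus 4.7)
The plan is to adapt the strategy of Theorem~\ref{thm:detect-hb} to the relative setting, using the bordered-sutured pairing theorem to reinterpret the morphism complex as a (twisted) sutured Floer complex, then invoking Lemma~\ref{lem:sphere-support}.

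First, by the bordered-sutured pairing theorem, the complex in~\eqref{eq:detect-over-specific-formula} is quasi-isomorphic to the twisted sutured Floer complex $\tSFC(N_\psi)$, where $N_\psi$ is the sutured $3$-manifold obtained by gluing two copies of $C$ along their outer boundary $\Sigma$ via $\psi$; its boundary is two copies of the inner boundary of $C$ with the prescribed sutures. The ring $\FF_2[H_2(C,F\cup F')]$ acts as deck transformations on the corresponding abelian cover of $N_\psi$.

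For the forward direction, suppose $\psi$ extends to $\Psi\co C\to C$. Then $\Psi_*$ preserves the kernel of $H_1(\Sigma)\to H_1(C)$, giving the first condition. Applying $\Psi$ on one factor trivializes the twist and identifies $N_\psi$ with the double $DC$ of $C$ along $\Sigma$. An Euler-characteristic count describes $DC$ as the product sutured manifold on the inner boundary, boundary-connect-summed with $h = g - g' + k - 1$ copies of $S^1\times S^2$, one for each $1$-handle in a standard compression body structure on $C$; the co-cores of these $1$-handles pair with their mirror images to give $h$ homologically independent essential $2$-spheres in $DC$. Combining Lemma~\ref{lem:sphere-support} with the computation of twisted sutured Floer homology of the product pieces then gives support dimension exactly $2g'+k-m$.

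For the backward direction, assume the kernel condition and support dimension $2g'+k-m$. Lemma~\ref{lem:sphere-support} produces that many homologically linearly independent essential $2$-spheres in $N_\psi$; by the sphere theorem and standard $3$-manifold cut-and-paste arguments they can be taken embedded and pairwise disjoint. The kernel-preservation hypothesis matches a complete compressing disk system for one copy of $C$ with one for the other homologically, and the maximality of the support upgrades this to an isotopy-class matching of compressing disks. Reconstruct the extension $\Psi$ by sending matched disks to matched disks and filling in on the product complement, using a version of the Alexander trick compatible with the sutured structure on the inner boundary.

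The main obstacle is the backward direction: translating the algebraic support-dimension condition into an honest geometric extension. One must show that the homologically independent spheres produced by Lemma~\ref{lem:sphere-support} can be realized by disjoint embedded spheres whose complementary regions are sufficiently rigid to recover the compression body structure on each side of $\psi$, in parallel with how Theorem~\ref{thm:detect-hb} and Corollary~\ref{cor:detect-hb} use irreducibility to rule out stray homology $L$-space summands from inflating the dimension count.
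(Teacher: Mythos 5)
Your forward direction tracks the paper's argument: glue to form $Y = C\,{}_\phi\!\cup_{\phi\circ\psi}(-C)$, note that an extension trivializes this to the double $D(C)$, decompose as in Proposition~\ref{prop:mc-S2S1}, and compute the support via Lemma~\ref{lem:sphere-support}. (One arithmetic slip: $D(C)$ has $g-g'$ copies of $S^2\times S^1$, not $g-g'+k-1$; the remaining $k-1$ independent sphere classes are the separating connect-sum spheres between the product pieces $Y_i\cong[0,1]\times\Sigma_{g_i}$.) You also correctly flag the $m$-shift coming from the sutures, though the paper makes this precise with a long-exact-sequence computation relating $H_2(Y,F'\amalg(-F'))$ to $H_2(C,F\cup F')$.

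The backward direction, which you correctly identify as the main obstacle, is where the proposal falls short. Your sketch of ``matching a complete compressing disk system homologically and upgrading to isotopy classes via maximality of the support'' does not reflect how the reconstruction actually proceeds, and the key technical inputs are missing. The paper does not match disks via $\psi$ at all. Instead, it combines three things you have not supplied: (a) a generalization of Haken's lemma to compression body splittings (Lemma~\ref{lem:cb-Haken}), producing disjoint embedded spheres $S_1,\dots,S_{g-g'+k-1}$ each meeting $\Sigma$ in a \emph{single} circle; (b) the criterion of Lemma~\ref{lem:max-set} showing the disks $D_i = S_i\cap C$ and $D_i' = S_i\cap(-C)$ form maximal meridian systems, so that $C\cong\Sigma[\bdy D_1,\dots]$ on each side; and (c) a nontrivial homological bookkeeping step using the kernel-preservation hypothesis (via Mayer--Vietoris, the symplectic orthogonal of the kernel, and the long exact sequence of $(Y,C)$) to show the prime decomposition has no extra $S^2\times S^1$ factors, i.e.\ that the integer $\ell$ in $Y = Y'\#(S^2\times S^1)^{g-g'-\ell}\#Y_1\#\cdots\#Y_{k+\ell}$ is zero, and that each $Y_i$ has exactly two boundary components of equal genus. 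Only after this does the extension get built, by sending $D_i\mapsto D_i'$ and extending across the product complements $C\setminus C'$ and $C\setminus C''$ (no Alexander trick as such). Without (a), (b), and (c), the claim that the support dimension ``upgrades'' a homological matching to an isotopy-class matching of compressing disks is not justified.
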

In particular, if there is one suture on each inner boundary component, then the question is whether the support is $2g'$-dimensional. 

If $C$ is a handlebody, Formula~\eqref{eq:detect-over-specific-formula} reduces to whether 
\[
  \dim\Supp\Mor_{\Alg(F)}\bigl(\lsup{\Alg(F)}\CFDa(C),\lsup{\Alg(F)}\CFDAa(\psi)_{\Alg(F)}\DT\lsup{\Alg(F)}\tCFDa(C)\bigr)=0.
\]
This can also be interpreted as 
\[
  \dim\Supp\Mor_{\Alg(F)}\bigl(\lsup{\Alg(F)}\CFDa(C),\lsup{\Alg(F)}\tCFDAa(\Id)_{\Alg(F)}\DT\lsup{\Alg(F)}\CFDAa(\psi)_{\Alg(F)}\DT\lsup{\Alg(F)}\CFDa(C)\bigr),
\]
as in Theorem~\ref{thm:detect-hb}.

Since the bordered Floer algebras categorify the exterior
algebra on $H_1(F)$ or, more precisely, bordered Floer homology
categorifies the Donaldson TQFT~\cite{HLW17:chi,Petkova18:decat}, the
bordered condition in Theorem~\ref{thm:detect-over-specific} is in some
sense a categorification of the obvious necessary condition that $\psi$
preserve the kernel of $H_1(\Sigma)\to H_1(C)$. So, like with the Thurston
norm, Floer homology detects a phenomenon which classical topology
merely obstructs. There are other interesting obstructions to partially
extending diffeomorphisms over 3-manifolds with boundary, e.g., in terms
of laminations~\cite{BJM:partially-extend}; in light of the third part
of the paper, it might be interesting to compare the two kinds of
techniques.

Theorems~\ref{thm:detect-hb} and~\ref{thm:detect-over-specific} are
effective: the bordered invariants are computable~\cite{LOT1} (see
also~\cite{Zhan:bfh}), and the dimension of the support of these
modules can also be computed (Section~\ref{sec:compute-support}).

A related problem to asking whether a given homeomorphism $\psi$ of
$\Sigma$ extends over a given compression body filling of $\Sigma$ is to ask if
$\psi$ extends over any compression body filling of $\Sigma$. In 1985,
Casson-Long showed that this problem is algorithmic, using a bound in
terms of how $\psi$ interacts with geodesics on $\Sigma$, which they call
the intercept length~\cite{CassonLong85:compression}. In particular, they show that if $\psi$ extends
over a handlebody, it also extends over a compression body containing
a disk whose boundary is relatively short (with respect to a metric on
$\Sigma$). Combining some of their ideas with results about train tracks,
we show that if $\psi$ extends over a handlebody then $\psi$ extends
over a compression body whose bordered Floer bimodule is relatively
small. More precisely:

\begin{theorem}\label{thm:phi-extend-bound}
  Let $\psi\co \Sigma\to \Sigma$ be a mapping class of a closed surface $\Sigma$ with
  genus $g$. Let $(m_{i,j})$ be the
  incidence matrix for $\psi$ with respect to some train track $\tau$
  carrying $\psi$, 
  $\kappa$ be the number of connected components of $\Sigma\setminus\tau$,
  $s$ be the number of switches of $\tau$, and $M(\psi)$ be as in
  Formula~\eqref{eq:def-M-psi}. Then there is a bordered Heegaard
  diagram $\HD$ for a compression body $C_{\HD}$ with boundary $\Sigma$ so
  that $\psi$ extends over $C_{\HD}$ and $\HD$ has at most
  \[
  (20(g+s)-18)^s\left((2M(\psi))^{2g}+(2M(\psi)+8)^{2(g+s-1)}\right)
  \]  
  many generators.
\end{theorem}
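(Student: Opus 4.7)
The plan is to follow the Casson-Long strategy for deciding compressibility \cite{CassonLong85:compression}, but to replace their bounds in terms of lengths of geodesics on $\Sigma$ with bounds in terms of the train track $\tau$, and then to translate the resulting compression body into an explicit bordered Heegaard diagram. The argument has three stages: find a compression curve efficiently carried by $\tau$; iterate to assemble a compression body $C_{\HD}$ over which $\psi$ extends; and construct a bordered Heegaard diagram for $C_{\HD}$, carefully counting its generators.

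First, I would establish a train track version of Casson-Long's short-curve lemma: if $\psi$ extends over any compression body filling $\Sigma$, then there exists an essential simple closed curve $c$ on $\Sigma$, bounding a compressing disk in a $\psi$-invariant compression body, which is carried by $\tau$ with all branch weights bounded by $M(\psi)$. The original Casson-Long argument uses hyperbolic geometry to bound an intercept length along an invariant lamination; the train track analogue should instead study the action of $\psi_*$ on the weight cone $\RR_{\geq 0}^{\mathrm{branches}(\tau)}$ together with splitting sequences of $\tau$ carrying this action, producing such a $c$ with weights controlled by powers of the incidence matrix $(m_{i,j})$. This is presumably where the quantity $M(\psi)$ from Formula~\eqref{eq:def-M-psi} enters.

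Next, I would iteratively compress $\Sigma$ along the curves produced above, restricting $\psi$ to each simpler surface and re-applying the previous step while keeping control on the weights at each stage; after at most $g$ such compressions one obtains a compression body $C_{\HD}$ over which $\psi$ extends, with compressing-disk boundaries all carried by $\tau$ with weights bounded by $M(\psi)$. I would then build a bordered Heegaard diagram $\HD$ for $C_{\HD}$ by taking $\beta$-circles parallel to the compressing-disk boundaries and constructing $\alpha$-arcs/circles from a spine of $\Sigma$ adapted to $\tau$, using a fixed local model at each switch. Counting $\alpha$-$\beta$ intersection tuples that match up consistently at switches should yield the stated bound: the factor $(2M(\psi))^{2g}$ accounts for intersection tuples on the $g$ compressing-disk circles (each branch crossing contributing a factor of at most $2M(\psi)$); the factor $(2M(\psi)+8)^{2(g+s-1)}$ accounts for the $2(g+s-1)$ additional arcs needed to complete the diagram (with a small bounded correction per arc coming from the local switch model); and the prefactor $(20(g+s)-18)^s$ counts the combinatorial matching choices available at each of the $s$ switches.

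The main obstacle is the first step: replacing Casson-Long's hyperbolic estimates by train-track estimates. Their proof relies on the Margulis lemma and geodesic geometry on $\Sigma$, whereas the train track replacement must show that a suitable sequence of splittings of $\tau$, guided by the action of $\psi$, produces an (almost-)invariant compression curve whose weights are polynomially controlled by the incidence matrix. A secondary difficulty lies in the last step, in pinning down the precise local model at each switch of $\tau$ and at the complementary regions of $\Sigma\setminus\tau$, so that the switch-matching count is exactly $20(g+s)-18$ and the completion-arc count absorbs the constant $+8$. Both difficulties amount to careful bookkeeping of train track weights along $\tau$, but converting this bookkeeping into the precise closed-form bound in the theorem will require substantial combinatorial work with splitting sequences and their interaction with bordered-sutured diagrams, as flagged in the introduction.
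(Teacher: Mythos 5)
Your three-stage outline---find a short compression curve via $\tau$, iterate \`a la Casson--Long to get a full compression body, then build and count a bordered Heegaard diagram---does match the paper's architecture, but both steps you flag as ``obstacles'' are precisely the technical heart of the proof, and the mechanism you sketch for the first one is not the one that works.

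For the short-curve step (the paper's Lemma~\ref{lem:scc-length-bound}), you propose ``studying the action of $\psi_*$ on the weight cone together with splitting sequences,'' producing a compression curve with branch weights bounded. That is not how the argument goes, and there are two issues. First, the ``length'' that is actually controlled is the geometric intersection number $\ell(\gamma)=\imath(\gamma,\mathcal{T})$ with the \emph{dual triangulation} $\mathcal{T}$ of $\tau$, not a vector of branch weights; a curve carried by $\tau$ with small branch weights can still fail to be a basis curve for a compression body the paper cares about. Second, and more fundamentally, the mechanism that produces the short compression curve uses \emph{both} the train track $\tau$ (carrying forward iterates $\psi^{n}(\gamma)$, via convergence to the unstable lamination and Lemma~\ref{lem:unstableconv}/\ref{lem:bound-1}) \emph{and} the dual bigon track $\tau^*$ (carrying backward iterates $\psi^{-n}(\gamma)$, via convergence to the stable lamination and Lemma~\ref{lem:stableconv}). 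Both iterates bound disks in $C$, and the short curve is assembled from an innermost-arc argument on $D\cap D'$ for those two disks, cutting off a short arc $\beta\subset\psi^n(\gamma)$ (length $\le c(\psi)$) and a short arc $\beta'\subset\psi^{-n}(\gamma)$ that lies in a single complementary region of $\tau$ (length $\le c'(\psi)$). Without bringing in $\tau^*$ and the disk-intersection argument you have no way to get a \emph{compression} curve, only a short carried curve, and the constants $c(\psi),c'(\psi)$ (and hence the shape of $M(\psi)$) are defined through diagonal extensions of $\tau$ and its dual, which your sketch does not touch. Note also that the first curve one obtains has $\ell(\gamma)\le c(\psi)+c'(\psi)$; the bound $M(\psi)$ appears only after the Casson--Long tree iteration (which, per \cite[Lemma 2.3]{CassonLong85:compression}, has height at most $2g$, not $g$ as you state), so your claim that the initial curve already has ``weights bounded by $M(\psi)$'' conflates two stages.

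On the generator count, your allocation of factors is only partly right. The $(2M(\psi))^{2g}$ factor comes from the $m\le 2g$ $\beta$-\emph{circles} $\betas^c=\gamma$, each meeting the $\alpha$-arcs in at most $\#(\gamma\cap\tau)\le 2\ell(\gamma)\le 2M(\psi)$ points (Lemma~\ref{lem:graph}); the $(2M(\psi)+8)^{2(g+s-1)}$ factor comes from the $\beta$-\emph{arcs} $\betas^a$, which are built from a spine graph $\widetilde{G}$ dual to $\gamma\cup\tau$ and need their own careful construction (removing monogons/bigons, Hall's marriage theorem for distributing sutures). But the prefactor $(20(g+s)-18)^s$ is not a count of ``combinatorial matching choices at each switch'' of the train track on $\Sigma$; it arises because the $(\alpha,\beta)$-bordered--sutured diagram built from $\tau$ and $\gamma$ is not yet special, and one must glue on $s$ tube-cutting pieces (one per switch) to make it so. Each tube-cutting piece has its own distinguished $\alpha$-circle and $\beta$-circle, and multiplying out the two kinds of extensions of a generator across that piece is what produces the $20(g+s-m)-18$ factor. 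Without identifying the tube-cutting pieces and the specialness issue, this factor cannot be recovered. So while you have the right scaffold, both of the steps you flag as ``substantial combinatorial work'' are genuinely missing, and for the first one the replacement mechanism you propose is not the one the argument needs.
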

(This is re-stated and proved as Theorem~\ref{thm:phi-extend-bound-v2}.)

\begin{corollary}\label{cor:alg-extend} Bordered-sutured Floer homology
  gives an algorithm to test whether $\psi$ extends over some
  compression body, or over some handlebody.
\end{corollary}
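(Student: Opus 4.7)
The plan is to combine Theorems~\ref{thm:detect-over-specific} and~\ref{thm:phi-extend-bound}: the latter says that if $\psi$ extends over some compression body at all (in particular, over a handlebody), then it already extends over one whose bordered Heegaard diagram has a bounded, explicitly computable number of generators; the former gives, for any specific compression body, an effective criterion using bordered-sutured Floer homology for deciding whether $\psi$ extends over it. An exhaustive search over all Heegaard diagrams below the bound then decides the extension question.

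In more detail, I would first effectively produce a train track $\tau$ carrying $\psi$---for example using the Bestvina--Handel algorithm, or the construction from the proof of Theorem~\ref{thm:phi-extend-bound-v2}---and from it compute the incidence matrix $(m_{i,j})$, the numbers $\kappa$ and $s$, and the constant $M(\psi)$. Substituting these into Theorem~\ref{thm:phi-extend-bound} yields an explicit integer $N$. I would then enumerate all bordered-sutured Heegaard diagrams $\HD$ with outer boundary a fixed pointed matched circle for $\Sigma$ and at most $N$ generators. This is a finite list: once the boundary parametrization is pinned down, the $\alpha$- and $\beta$-curves are determined up to isotopy by their combinatorial intersection pattern with a chosen one-skeleton of $\Sigma$, and the bound on the generator count bounds the geometric complexity of these patterns. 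For the handlebody version of the corollary, I would simply restrict the enumeration to diagrams with empty inner boundary.

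For each diagram $\HD$ in the list, algorithmically compute the bordered-sutured bimodules $\lsup{\Alg(F'),\Alg(F)}\BSD(C_{\HD})$, $\lsup{\Alg(F),\Alg(F')}\tBSD(C_{\HD})$, and $\lsup{\Alg(F)}\BSDA(\psi)_{\Alg(F)}$ via the algorithms of~\cite{LOT1, Zhan:bfh}, assemble the morphism complex of Formula~\eqref{eq:detect-over-specific-formula}, and compute the dimension of its support over $\FF_2[H_2(C_{\HD}, F\cup F')]$ using the procedure of Section~\ref{sec:compute-support}. Checking in parallel the elementary condition that $\psi$ preserves the kernel of $H_1(\Sigma)\to H_1(C_{\HD})$, Theorem~\ref{thm:detect-over-specific} declares whether $\psi$ extends over $C_{\HD}$. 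Output ``yes'' as soon as any $\HD$ succeeds, and ``no'' if the search completes without success; correctness in the ``no'' branch is exactly the content of Theorem~\ref{thm:phi-extend-bound}.

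The main obstacle is organizational rather than mathematical: verifying that the enumeration of bordered Heegaard diagrams of complexity $\leq N$ is genuinely effective, effectively producing a carrying train track $\tau$ for a given mapping class $\psi$, and modding out by the obvious isotopy and handleslide equivalences so that the search is finite in practice. None of these introduces any new conceptual difficulty once Theorems~\ref{thm:phi-extend-bound} and~\ref{thm:detect-over-specific} are available; the entire content of the corollary lies in combining those two effective statements.
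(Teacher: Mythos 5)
Your algorithm is essentially the same as the paper's: compute $\BSDA(\psi)$, enumerate bordered Heegaard diagrams for compression bodies with at most the number of generators in Formula~\eqref{eq:DDbound}, and test each via Theorem~\ref{thm:detect-over-specific}. The paper computes $\BSDA(\psi)$ via the arcslide factorization of Corollary~\ref{cor:factor-into-arcslides} (feeding into the arcslide-based algorithms of~\cite{LOT4,AL19:incompressible}), whereas you invoke Bestvina--Handel or the train-track construction more loosely; both are fine, and the overall structure and correctness argument match.

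The one place where your proposal says something the paper's proof does not, and where I think there is a real gap, is your sentence ``For the handlebody version of the corollary, I would simply restrict the enumeration to diagrams with empty inner boundary.'' Theorem~\ref{thm:phi-extend-bound} (via Lemma~\ref{lem:bound}) only promises that if $\psi$ extends over \emph{some} compression body then it extends over a \emph{compression body} of bounded complexity; that compression body need not be a handlebody, even when the compression body you started with was a handlebody (the Casson--Long tree in Lemma~\ref{lem:bound} stops as soon as $B(\gamma',\psi(\gamma'))$ is a singleton, which may happen strictly inside the handlebody). So a search restricted to handlebodies of bounded complexity could return ``no'' even when $\psi$ does extend over a handlebody. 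Handling the handlebody case correctly requires something extra---for instance, after finding each bounded-complexity compression body $C$ that $\psi$ extends over, recursing on the induced mapping class of $\bdy_{\mathit{in}}C$ to decide whether it extends over a handlebody filling of the inner boundary (this terminates because the total genus drops). The paper's own proof does not spell this out either, so the issue is partly inherited from the source, but you should not commit to the empty-inner-boundary restriction as stated.
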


The proof of Theorem~\ref{thm:phi-extend-bound} has several
ingredients. One is a construction of bordered Heegaard diagrams from
trivalent train tracks. Another is to define a notion of length of a
curve in terms of its intersections with stable and unstable train
tracks for $\psi$, and to use Agol's periodic splitting sequences to
give bounds on this length for some curve bounding a disk in the
compression body.

\begin{convention}
  Because we are working mostly with compression bodies and
  handlebodies, throughout this paper by \emph{3-manifold} we mean a
  compact, connected, orientable 3-manifold with boundary; when
  3-manifolds are required to be closed, we will say that explicitly.
\end{convention}

This paper is organized as follows. Section~\ref{sec:traintracks} is a quick review of the theory of train tracks for pseudo-Anosov maps, collecting the material needed to prove Theorem~\ref{thm:phi-extend-bound}. Section~\ref{sec:compression-bodies} has some general results about compression bodies and decompositions of 3-manifolds along spheres, needed for Theorems~\ref{thm:detect-hb} and~\ref{thm:detect-over-specific}. Terminology about compression bodies is also explained in Section~\ref{sec:compression-bodies}. Section~\ref{sec:HF-background} recalls twisted Heegaard Floer homology and sutured Floer homology and untwisted bordered-sutured Floer homology, and then introduces twisted bordered-sutured Floer homology. It also introduces the notion of special bordered-sutured manifolds. None of the material in that section will be surprising to experts, but some has not yet appeared in the literature. Section~\ref{sec:detect-handlebodies} starts by recalling the definition of the support and then proves Theorem~\ref{thm:detect-hb}. It also indicates what we mean by the support of the bordered invariants (needed for later sections) and discusses how one can compute these supports. The proof of Theorem~\ref{thm:detect-over-specific} is in Section~\ref{sec:HF-detect}. Section~\ref{sec:tt-arc-diagram-slides} connects the material from Sections~\ref{sec:traintracks} and~\ref{sec:HF-background}, showing how train tracks give arc diagrams and splitting sequences give factorizations of mapping classes into arcslides. Section~\ref{sec:compress-diffeo} builds on these ideas to prove Theorem~\ref{thm:phi-extend-bound} and Corollary~\ref{cor:alg-extend}.

\subsubsection*{Acknowledgments}
We thank Nick Addington, Mark Bell, Spencer Dowdall, Dave Gabai, Tye Lidman, Dan Margalit, and Lee Mosher for
helpful conversations. We thank Chi Cheuk Tsang for pointing out an error in a previous version and discussions about correcting it. Finally, we thank the Simons Laufer Mathematical Sciences
Institute for its hospitality in Fall 2022, while much of this work
was conducted.

\section{Background on pseudo-Anosov maps and train-tracks}\label{sec:traintracks}

Let $\Sigma$ be an oriented surface. For the most part, this section follows \cite{PennerHarrer92:book}, so we assume $\Sigma$ is not the once punctured torus. A diffeomorphism $\psi$ of $\Sigma$  is called \emph{pseudo-Anosov} if there exist transverse measured foliations $(\mathcal{F}^s,\mu_s)$ and  $(\mathcal{F}^u,\mu_u)$ on $\Sigma$ and a real number $\lambda>1$ such that $\mathcal{F}^s$ and $\mathcal{F}^u$ are preserved by $\psi$, while the measures $\mu_s$ and $\mu_u$ are multiplied by $1/\lambda$ and $\lambda$, respectively. The constant $\lambda(\psi)=\lambda$ is called the \emph{dilatation number} of $\psi$.

Thurston introduced \emph{measured train tracks} as combinatorial tools to encode the measured foliations $(\mathcal{F}^s,\mu_s)$ and  $(\mathcal{F}^u,\mu_u)$. A \emph{train track} $\tau\subset \Sigma$ is an embedded graph on $\Sigma$ satisfying the following conditions: 
\begin{enumerate}
\item Each edge is $C^1$ embedded, and at each vertex there is a well-defined tangent line to all of the edges adjacent to it as in Figure \ref{fig:switch-basics}.
\item For every connected component $S$ of $\Sigma\setminus \tau$, the Euler characteristic of the double of $S$ along its boundary with cusp singularities on $\bdy S$ removed is negative.
\end{enumerate}
Edges and vertices of a train track are called \emph{branches} and \emph{switches}, respectively. A train track is called \emph{generic} if every switch is trivalent. 
In this paper, we work with generic train tracks, unless stated otherwise. 

\begin{figure}
  \centering
  \includegraphics{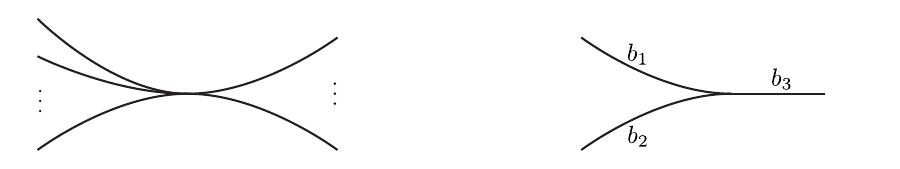}
  \caption[Switches]{\textbf{Switches}. Left: a general switch. Right:
    a trivalent switch, with incident half-branches $b_1$, $b_2$, and
    $b_3$. The half-branches $b_1$ and $b_2$ are small and $b_3$ is large. The switch condition for a measure $\mu$ is that
    $\mu(b_3)=\mu(b_1)+\mu(b_2)$.}
  \label{fig:switch-basics}
\end{figure}

Given a branch $b$ and some point $p$ in the interior of $b$, the components of $\mathrm{int}(b)\setminus p$ are called \emph{half-branches} of $b$. Moreover, two half-branches of $b$ are equivalent if their intersection is a half-branch as well. Whenever we talk about half-branches, we mean an equivalence class of half-branches. Every switch is in the closure of three half-branches, two \emph{small} half-branches bounding the cusp region and one \emph{large} half-branch on the other side. Every branch contains two half-branches, and is called \emph{large} if both of its half-branches are large. 

We require train tracks to be \emph{filling}, i.e., every
connected component of $\Sigma\setminus \tau$ is either a polygon or a or once-punctured polygon.
For every connected component $A$ of $\Sigma\setminus \tau$, an \emph{edge} of $A$ is a maximal smooth arc $e\subset \tau\cap\bdy A$. 

Given train tracks $\tau$ and $\tau'$ on $\Sigma$, we say $\tau$ \emph{carries} $\tau'$, and write $\tau'<\tau$, if there exists a smooth map $f\co \Sigma\to\Sigma$ homotopic to the identity such that $f(\tau')\subset \tau$ and the restriction of $f$ to the tangent spaces of $\tau'$ is non-singular. (This condition does not imply that $f$ sends switches to switches.) Similarly, we say a simple closed curve $\gamma$ is \emph{carried} by a train track $\tau$, and write $\gamma<\tau$, if $\gamma$ is smoothly homotopic to a curve in $\tau$ via a map whose restriction to the tangent space of $\gamma$ is non-singular.

Roughly, the \emph{incidence matrix} $M=\left(m_{ij}\right)$ for the carrying $\tau'<\tau$ using the map $f$ is defined so that $m_{ij}$ is the number of times $f(b_j')$ traverses $b_i$ in either direction. Here, $b_i$ and $b_j'$ denote the $i$-th and $j$-th branches of $\tau$ and $\tau'$, respectively. More precisely, to define the incidence matrix we fix a regular value $p_i$ of $f$ in the interior of each $b_i$ and let $m_{ij}$ be the number of preimages of $p_i$ in $b_j'$. (Since switches may not map to switches, the incidence matrix may depend on the choice of $p_i$.)

A \emph{measure} on a train track $\tau$ is a function $\mu$ that assigns a weight $\mu(b)\ge 0$ to each branch $b$ of $\tau$ and satisfies the \emph{switch condition} as illustrated in Figure \ref{fig:switch-basics} at every switch of $\tau$. The pair $(\tau,\mu)$ is called a \emph{measured train track}. If $\tau'<\tau$, every measure $\mu'$ on $\tau'$ will induce a measure on $\tau$ where
\[\mu(b_i)=\sum_{j}m_{ij}\mu(b_j').\]

Two measured train tracks are called \emph{equivalent} if one is obtained from the other by a finite sequence of  isotopies and the following moves.
\begin{enumerate}
\item \emph{Split}. $(\tau',\mu')$ is obtained from $(\tau,\mu)$ by splitting a large branch $b$ if it is obtained from $(\tau,\mu)$ as in Figure~\ref{fig:tt-split}. In this case, we write  $(\tau,\mu)\rightharpoonup_b(\tau',\mu')$.
\item \emph{Shift}. $(\tau',\mu')$ is obtained from $(\tau,\mu)$ by a
  shift if it is obtained by sliding one switch past another as in
  Figure~\ref{fig:tt-shift}. Note that the weights are not important in a shift move, so we do not specify them in the figure.
\item \emph{Fold}. This is the inverse of the split move.
\end{enumerate}

\begin{figure}
  \centering
  \includegraphics{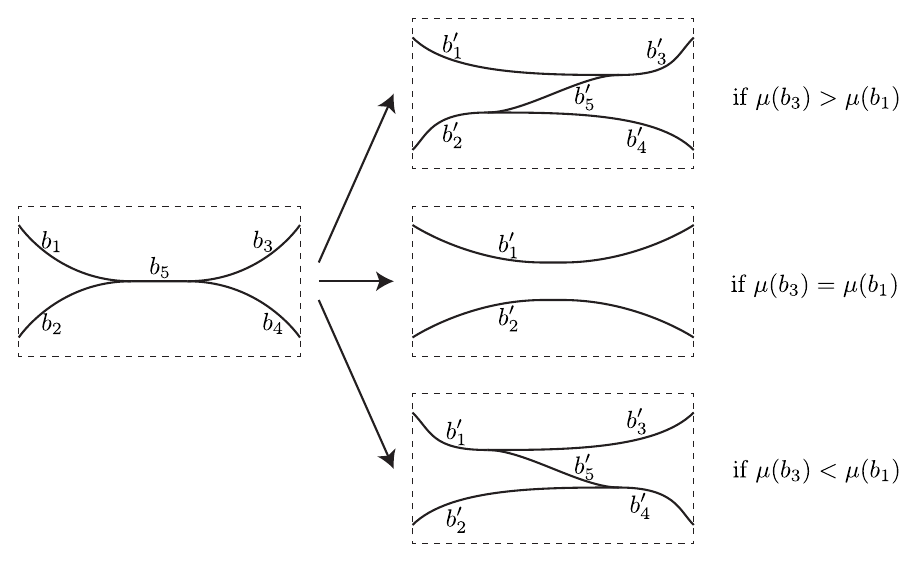}
  \caption[Splitting a train track]{\textbf{A split}. There are three cases, depending on the relative weights of the branches.}
  \label{fig:tt-split}
\end{figure}

\begin{figure}
  \centering
  \includegraphics{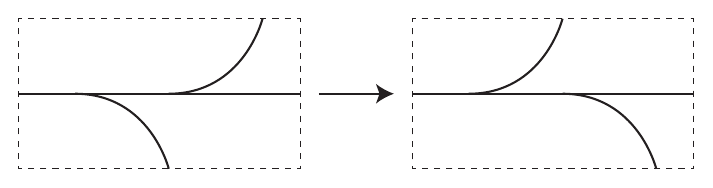}
  \caption[Shifting a train track]{\textbf{A shift}. The weight of the inner edge is the sum of the weights of the two edges to its right; the other weights are unchanged by the move.}
  \label{fig:tt-shift}
\end{figure}

If $\tau'$ is obtained from $\tau$ by a split or a shift then $\tau'$ is carried by $\tau$. In the case of a split, the carrying map can be chosen to send switches to switches.

Following Penner-Harer~\cite[Section 1.7]{PennerHarrer92:book}, a \emph{measured lamination} on $\Sigma$ is a measured foliation of a closed subset of $\Sigma$. (There is a related concept of a measured geodesic lamination, which we will not explicitly use.) The \emph{space of measured laminations} on $\Sigma$ is denoted by $\mathcal{ML}(\Sigma)$. Positive real numbers $\RR_+$ act on $\mathcal{ML}(\Sigma)$ by multiplication and the quotient of $\mathcal{ML}(\Sigma)\setminus\{0\}$ by this action is called the \emph{space of projective measured laminations} on $\Sigma$ and is denoted by $\mathcal{PML}(\Sigma)$. Here, $0$ denotes the empty lamination. Every measured train track $(\tau,\mu)$ specifies a well-defined measured lamination on $\Sigma$, and if two positively measured train tracks are equivalent then their corresponding measured laminations are isotopic \cite[Theorem 2.7.4]{PennerHarrer92:book}. Conversely, if the corresponding measured laminations are isotopic, the measured train tracks are equivalent \cite[Theorem 2.8.5]{PennerHarrer92:book}. That is, there is a bijection between the equivalence classes of measured train tracks and measured laminations. We say a measured lamination $(L,\mu_0)$ on $\Sigma$ is \emph{suited} to the train track $\tau$ if there exists a measure $\mu>0$ on $\tau$ such that $(L,\mu_0)$ is isotopic to the measured lamination specified by $(\tau,\mu)$. If $(\tau',\mu')$ is obtained from $(\tau,\mu)$ by a shift or a split move, then $\tau'$ is carried by $\tau$. So, measured laminations suited to $\tau'$ are suited to $\tau$ as well. 

A train track is called \emph{recurrent} if it supports a positive measure (i.e., a measure satisfying $\mu(b)>0$ for every branch $b$). 

A train track $\tau$ is called \emph{maximal} if 
it is not a proper subtrack of another train track. A \emph{diagonal} for $\tau$ is a smooth arc in one of the complementary regions of $\tau$ whose endpoints terminate tangentially at cusps and such that the union of $\tau$ with this arc is a train track. A train track $\tau'$ is called a \emph{diagonal extension} of $\tau$ if it is obtained from $\tau$ by adding pairwise disjoint diagonals. Note that if a train track $\tau$ is not maximal, one can construct a maximal diagonal extension $\tau'$ of $\tau$, by adding diagonals. This maximal diagonal extension $\tau'$ is not unique (and diagonal extensions are not generic train tracks). 

\begin{lemma}\label{lem:unstableconv} Let $\tau$ be a train track suited to the unstable foliation of $\psi$ and let $\gamma\subset\Sigma$ be a simple closed curve. Then there is an integer $N$, such that for all $n>N$, $\psi^n(\gamma)$ is carried by some maximal diagonal extension of $\tau$ (possibly depending on $n$).
 \end{lemma}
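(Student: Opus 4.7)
The plan is to combine the classical North--South dynamics of $\psi$ on $\mathcal{PML}(\Sigma)$ with an openness property for the set of laminations carried by (maximal) diagonal extensions of $\tau$. Concretely, I will show that $\psi^n[\gamma]$ converges to $[\mathcal{F}^u]$ in $\mathcal{PML}(\Sigma)$ and that $[\mathcal{F}^u]$ has a neighborhood consisting of projective classes of measured laminations carried by maximal diagonal extensions of $\tau$.

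For the convergence step, I would invoke the standard theorem on pseudo-Anosov dynamics from Penner--Harer~\cite{PennerHarrer92:book}: the induced homeomorphism of $\mathcal{PML}(\Sigma)$ has $[\mathcal{F}^u]$ and $[\mathcal{F}^s]$ as its only fixed points, and attracts every point of $\mathcal{PML}(\Sigma)\setminus\{[\mathcal{F}^s]\}$ to $[\mathcal{F}^u]$ under forward iteration. Since $\gamma$ is a simple closed curve while the stable foliation of a pseudo-Anosov has no closed leaves, $[\gamma]\ne[\mathcal{F}^s]$, so $\psi^n[\gamma]\to[\mathcal{F}^u]$.

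For the openness step, I would use the fact (also in Penner--Harer~\cite{PennerHarrer92:book}) that, for any generic train track $\tau$ on $\Sigma$, the finite union over all maximal diagonal extensions $\tau'$ of $\tau$ of the projective classes of laminations carried by $\tau'$ contains an open neighborhood of every projective class suited to $\tau$; in particular it contains an open neighborhood $U$ of $[\mathcal{F}^u]$. (There are only finitely many such extensions up to isotopy, since each is obtained by inserting pairwise disjoint diagonals into the finitely many complementary regions of $\tau$, of which there are boundedly many configurations.) Combining the two steps gives an integer $N$ such that for every $n>N$ the projective class $\psi^n[\gamma]$ lies in $U$, and hence $\psi^n(\gamma)$---viewed as an integrally weighted simple closed curve, i.e.\ as an integral measured lamination---is suited to some maximal diagonal extension of $\tau$, where the extension is allowed to depend on $n$.

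The main conceptual point to verify is the translation between the dynamical statement (convergence in $\mathcal{PML}(\Sigma)$ together with carrying of measured laminations) and the combinatorial statement used in the paper (smooth homotopy of the curve $\psi^n(\gamma)$ into a maximal diagonal extension via a tangentially nonsingular map). For a simple closed curve this translation is immediate from unwinding the two definitions of carrying, so no serious additional argument is required; I expect this routine bookkeeping to be the only substantive step beyond quoting Penner--Harer.
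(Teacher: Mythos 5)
Your proposal takes essentially the same route as the paper: convergence of $\psi^n[\gamma]$ to $[\mathcal{F}^u]$ in $\mathcal{PML}(\Sigma)$ plus the observation that a neighborhood of $[\mathcal{F}^u]$ is covered by the carrying polyhedra of the (finitely many) maximal diagonal extensions of $\tau$. The paper simply supplies the missing detail for your openness step, passing through the combed maximal track $\tau_c$ of Penner--Harer (Proposition~1.4.9) and the identity $U(\tau_c)=\bigcup_{\tau'}U(\tau')$ from their Proposition~2.2.2, which is what makes ``contains an open neighborhood of $[\mathcal{F}^u]$'' a citable fact rather than an assertion.
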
 

\begin{proof}
  By \cite[Corollary 12.3]{FLP79:TravauxThurston}, as $n$ goes to infinity, $\psi^n(\gamma)$ converges to  $[\mathcal{F}^u,\mu_u]$, the image of the unstable foliation in the space of projective measured laminations $\mathcal{PML}(\Sigma)$. By \cite[Proposition 1.4.9]{PennerHarrer92:book}, there exists a maximal birecurrent (in the sense of~\cite[Section 1.3]{PennerHarrer92:book}) train track $\tau_c$ with $\tau<\tau_c$ and such that $\tau_c$ is obtained from $\tau$ by a sequence of trivial collapses along admissible arcs, as in \cite[Figure 1.4.14]{PennerHarrer92:book}.  By \cite[Lemma 2.1.2]{PennerHarrer92:book}, the measured laminations corresponding to the measures on $\tau_c$ form a polyhedron $U(\tau_c)$. By \cite[Lemma 3.1.2]{PennerHarrer92:book}, the interior of $U(\tau_c)$ is an open neighborhood of $[\mathcal{F}^u,\mu_u]$ in $\mathcal{PML}(\Sigma)$. So, there exists an integer $N>0$ such that $\psi^n(\gamma)$ is in  $U(\tau_c)$ for all $n>N$.
 
  On the other hand, for any maximal diagonal extension $\tau'$ of $\tau$, the measured laminations corresponding to the measures on $\tau'$ will form a polyhedron $U(\tau')$ in $\mathcal{PML}(\Sigma)$, as well (see~\cite[Theorem 1.3.6, Lemma 2.1.2]{PennerHarrer92:book}). Moreover, \cite[Proposition 2.2.2]{PennerHarrer92:book} implies that $U(\tau_c)=\bigcup_{\tau'}U(\tau')$ where the union is over all maximal diagonal extensions $\tau'$ of $\tau$. Therefore, for all $n>N$, $\psi^n(\gamma)$ is carried by some maximal diagonal extension $\tau'$ of $\tau$ (not necessarily independent of $n$).
\end{proof}

For a measured train track $(\tau,\mu)$ a \emph{maximal split} is splitting $\tau$ simultaneously along all the large branches with maximum measure. If $(\tau',\mu')$ is obtained from $(\tau,\mu)$ by a maximal split, we write $(\tau,\mu)\rightharpoonup (\tau',\mu')$.

\begin{theorem} \cite[Theorem 3.5]{Agol2010IdealTO} \label{thm:Agol-cycle}Let $\psi$ be a pseudo-Anosov diffeomorphism of $\Sigma$ and $(\tau,\mu)$ be a measured train track suited to the unstable measured foliation $(\mathcal{F}^u,\mu_u)$ of $\psi$. Then there exist positive integers $n$ and $m$ such that 
\[(\tau,\mu)\rightharpoonup (\tau_1,\mu_1)\rightharpoonup(\tau_2,\mu_2)\rightharpoonup\cdots\rightharpoonup(\tau_n,\mu_n)\rightharpoonup\cdots\rightharpoonup (\tau_{n+m},\mu_{n+m})\]
and $\tau_{n+m}=\psi(\tau_{n})$ and $\mu_{n+m}=\lambda(\psi)^{-1}\psi(\mu_n)$.
\end{theorem}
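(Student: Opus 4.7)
The plan is to show that the maximal splitting sequence, viewed modulo the action of $\psi$ combined with the rescaling $\mu \mapsto \lambda(\psi)^{-1}\mu$, visits only finitely many equivalence classes and is therefore eventually periodic. First, observe that each maximal split preserves the underlying measured lamination up to isotopy: since $(\tau_i,\mu_i) \rightharpoonup (\tau_{i+1},\mu_{i+1})$ is a (simultaneous) split, the two measured train tracks are equivalent, so by the bijection between equivalence classes of measured train tracks and measured laminations (\cite[Theorems~2.7.4 and~2.8.5]{PennerHarrer92:book}) every $(\tau_i,\mu_i)$ is still suited to the unstable foliation $(\mathcal{F}^u,\mu_u)$. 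Because $\psi$ sends $(\mathcal{F}^u,\mu_u)$ to $(\mathcal{F}^u,\lambda(\psi)^{-1}\mu_u)$, the pseudo-Anosov $\psi$ acts on the set of equivalence classes of measured train tracks suited to $(\mathcal{F}^u,\mu_u)$ up to such rescaling.

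Next, I would prove that the maximal split commutes with this $\psi$-action. Since the split operation is intrinsic — it simply cuts along all large branches achieving the maximum weight — applying $\psi$ to $(\tau_i,\mu_i)$ and then splitting yields the same result (up to isotopy) as splitting first and then applying $\psi$. In particular, if one ever has $(\tau_{n+m},\mu_{n+m}) = \psi \cdot (\tau_n,\mu_n)$ with the appropriate rescaling, periodicity propagates forward.

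The main step is a finiteness argument. I would bound the combinatorial complexity (number of switches and branches) of the train tracks $\tau_i$ in the sequence: a maximal split can decrease the number of branches (when two of the three sub-cases in Figure~\ref{fig:tt-split} occur, one large branch is collapsed) but never increases it, so the complexity stabilizes after finitely many steps. Once stabilized, $\tau_i$ lies in a finite set of combinatorial types. On each fixed combinatorial type, a measure suited to $(\mathcal{F}^u,\mu_u)$ is determined up to a positive scalar (it must be proportional to the restriction of $\mu_u$). Thus, after rescaling $\mu_i$ so that its total mass is, say, $\mu_u(\tau)$, the normalized sequence takes values in a finite set. By the pigeonhole principle there exist indices with $(\tau_n,\bar\mu_n)$ and $(\tau_{n+m},\bar\mu_{n+m})$ identified in this finite set, and tracking the actual (unnormalized) measures and the action of $\psi$ on the unstable foliation then yields the desired equalities $\tau_{n+m} = \psi(\tau_n)$ and $\mu_{n+m} = \lambda(\psi)^{-1}\psi(\mu_n)$.

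The hard part is the complexity bound and the rigidity of measures on a fixed combinatorial type — in particular, checking that no sequence of maximal splits creates arbitrarily long branches with measure-$0$ sub-branches that change the combinatorial type in a genuinely unbounded way. Agol's original argument \cite[Theorem~3.5]{Agol2010IdealTO} handles this by reinterpreting the splitting sequence as a sequence of diagonal exchanges in an ideal triangulation dual to $\tau$, and showing the resulting layered structure on the mapping torus of $\psi$ has only finitely many tetrahedra per period; I would follow that route rather than trying to give an elementary finiteness argument directly on train tracks.
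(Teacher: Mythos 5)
The paper does not prove this theorem; it is quoted from Agol, so there is no in-paper argument to compare against. Your job is therefore to be judged as a freestanding proof sketch, and as such it has a genuine gap in the central finiteness step — a gap you partially acknowledge, but which is the entire content of the theorem.

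The problem is the claim ``a maximal split can decrease the number of branches ... but never increases it, so the complexity stabilizes after finitely many steps. Once stabilized, $\tau_i$ lies in a finite set of combinatorial types.'' If ``combinatorial type'' means abstract combinatorial type (a finite graph with the switch structure), the conclusion is true but useless: the $\tau_i$ are embedded in $\Sigma$, and a single abstract type has infinitely many isotopy classes of embeddings. As the splitting sequence runs, the embedded tracks spiral in toward the unstable lamination, so the isotopy class genuinely changes at every step and does not stabilize. If instead ``combinatorial type'' means isotopy class of embedded measured train track modulo the action of $\psi$ and rescaling, then the assertion that this set is finite is exactly the theorem, and you have not supplied an argument for it. (Your subsequent remark that the measure on a fixed track is rigid is correct — it is determined by integrating $\mu_u$ over tie transversals, with no scalar ambiguity once the track is fixed — but this only eliminates the measure as a source of infinitude, not the embedding.) Pigeonhole is the right shape for the proof, but the pigeonholes are precisely what needs to be constructed.

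You do flag this yourself in the final paragraph and defer to Agol's actual mechanism (encoding the splits as diagonal exchanges of the dual ideal triangulation and bounding the number of tetrahedra per period in the layered triangulation of the mapping torus). That deferral is honest, but it means the proposal is an outline of the statement's logical skeleton plus a pointer to the literature, not a proof. A self-contained argument would need a substitute for that step — e.g.\ controlling, via the singular flat metric of the quadratic differential, the positions of the switches of $\tau_i$ modulo the $\lambda$-scaling of the horizontal direction, and showing they remain in a compact fundamental domain so that only finitely many isotopy classes of $(\tau_i,\mu_i)$ occur up to $\psi$. Absent something of that kind, the proof is not complete.
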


A \emph{periodic splitting sequence} for a pseudo-Anosov diffeomorphism $\psi$ is a sequence of train tracks suited to the unstable foliation
formed by maximal splittings 
\[(\tau_n,\mu_n)\rightharpoonup (\tau_{n+1},\mu_{n+1})\rightharpoonup\cdots\rightharpoonup (\tau_{n+m},\mu_{n+m})\]
such that $\tau_{n+m}=\psi(\tau_{n})$ and $\mu_{n+m}=\lambda(\psi)^{-1}\psi(\mu_n)$. A periodic splitting sequence for $\psi$ is unique up to applying powers of $\psi$ and cyclic permutations (changing where in the loop one starts), so we will often abuse terminology and refer to the periodic splitting sequence for $\psi$.

Suppose $(\tau,\mu)$ is a train track from the periodic splitting sequence of some pseudo-Anosov diffeomorphism $\psi$. Then $\psi(\tau)<\tau$ with a carrying map induced by the sequence of maximal splits. Moreover, we may assume that this maps sends switches to switches. 

The following lemma is due to Agol and Tsang~\cite{AgolTsang24:flow-graphs}; it and its proof were communicated to us by Tsang.
\begin{lemma}\cite{AgolTsang24:flow-graphs}\label{lemma:AT24}
  There exists a (possibly empty) collection of branches $b_I(\tau)$ of $\tau$ such that:
\begin{enumerate}[label=(\arabic*)]
  \item\label{item:AT-1} For any sufficiently large integer $n$, the image of $\psi^n(b)$ under the carrying map goes over every branch of $\tau$ if and only if $b\notin b_I(\tau)$. 
  \item\label{item:AT-2} If $b\in b_{I}(\tau)$, then $\psi(b)$ is equal to $b'$ for some branch $b'\in b_I(\tau)$. 
  \item\label{item:AT-3} The union of $b_I(\tau)$ is a collection of disjoint train paths of $\tau$.
\end{enumerate}
\end{lemma}
\begin{proof}
  Agol and Tsang associate a directed graph to the periodic splitting sequence, its \emph{flow graph} $G$, with vertices given by branches in the splitting sequence and, for each split, an edge from the branches coming from the split to the branch that split into them. (When a train track $\tau$ splits to $\tau'$, if $b$ is a branch of $\tau$ not involved in the split, then $b$ and its image in $\tau'$ specify the same vertex of $G$. Also, the branches of the last train track in the periodic splitting sequence are identified with the branches of the first, via $\psi$.) So, given branches $b$ and $b'$ of $\tau$, there is a path in the flow graph from $b$ to $b'$ if and only if the image of $\psi^n(b)$ under the carrying map goes over $b'$ (for some $n$).

  Recall that vertices $v,w$ of a directed graph lie in the same \emph{strongly connected component} if there is a directed path from $v$ to $w$ and one from $w$ to $v$; this defines an equivalence relation $\sim$ on the flow graph. They show that $G/\sim$ is a directed tree with a vertex $v_0$ that has edges to all the other vertices~\cite[Theorem 3.5]{AgolTsang24:flow-graphs}. Let $b_I(\tau)$ be the branches of $\tau$ not corresponding to $v_0$. Property~\ref{item:AT-1} is immediate.

  For Property~\ref{item:AT-2}, ~\cite[Theorem 3.5]{AgolTsang24:flow-graphs} implies that the equivalence classes of  branches in $b_I(\tau)$ correspond to \emph{infinitesimal cycles of walls}. Each of these  infinitesimal cycles is an oriented cycle in the  flow graph with only incoming edges. Moreover, corresponding to each  infinitesimal cycle there are branches $b$ and $b'$ of $b_I(\tau)$ such that the image of $\psi(b)$ under the carrying map is $b'$.
  
  For Property~\ref{item:AT-3}, they show that the unions of branches in $b_I(\tau)$ have particular local forms~\cite[Section 3.3]{AgolTsang24:flow-graphs}. (The local forms are cases I--V in~\cite[Figure 7]{AgolTsang24:flow-graphs}.) It is immediate that the union of branches in $b_I(\tau)$ is a disjoint union of intervals.
\end{proof}

The branches in $b_I(\tau)$ are called \emph{infinitesimal}.

\begin{corollary}\label{cor:almost-PF}
  If we list the branches $b_i$ of $\tau$ such that the elements of $b_I(\tau)$ are listed first, $b_I(\tau)=\cup_{i=1}^kb_i$, then for a sufficiently  large $n$, the incidence matrix of $\psi^n$ has the block form $\left[\begin{smallmatrix}P&N\\0&M\end{smallmatrix}\right]$ where $P$ is a $k\times k$ permutation matrix and $M$ and $N$ have strictly positive entries.
\end{corollary}

\begin{lemma}\label{lem:carry-diag-ext}
  Let $\tau$ be a train track from the periodic splitting sequence for $\psi$.
  For any maximal diagonal extension $\widetilde{\tau}$ of $\tau$, there exists a carrying of $\psi(\widetilde{\tau})$ by some (possibly distinct) maximal diagonal extension $\widetilde{\tau}'$ of $\tau$ such that the restriction of this carrying map to the subtrack $\psi(\tau)\subset \psi(\widetilde{\tau})$ coincides with the carrying map $\psi(\tau)<\tau$.
\end{lemma}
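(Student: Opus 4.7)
My plan is to prove the lemma by induction along the sequence of maximal splits
\[
  \tau = \tau_n \rightharpoonup \tau_{n+1} \rightharpoonup \cdots \rightharpoonup \tau_{n+m} = \psi(\tau)
\]
that realizes the carrying $\psi(\tau)<\tau$. Thus it suffices to prove the analogous statement for a single maximal split $\tau_i \rightharpoonup \tau_{i+1}$: given a maximal diagonal extension $\widetilde{\tau}_{i+1}$ of $\tau_{i+1}$, one can produce a maximal diagonal extension $\widetilde{\tau}_i$ of $\tau_i$ carrying it, via an extension of the standard carrying map $\tau_{i+1}<\tau_i$. Chaining the steps, starting from $\psi(\widetilde{\tau})$ as a maximal diagonal extension of $\psi(\tau)=\tau_{n+m}$ and walking backwards through the splits, yields the desired $\widetilde{\tau}'$ as a maximal diagonal extension of $\tau=\tau_n$; the composite carrying map extends the prescribed carrying $\psi(\tau)<\tau$ by construction.

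The first observation that I need is that in the periodic splitting sequence of a pseudo-Anosov diffeomorphism, every split is a left or right split, never central. Indeed, a central split merges two complementary regions into one, whereas the periodic condition $\tau_{n+m}=\psi(\tau_n)$ (together with the fact that $\psi$ is a diffeomorphism) forces every intermediate track to have the same number of complementary regions as $\tau_n$. Consequently, the standard carrying map $f\co\Sigma\to\Sigma$ associated to a single non-central maximal split can be chosen to be the identity outside a small neighborhood $N$ of the split branch $b\subset\tau_i$, and to induce a bijection between the complementary regions of $\tau_{i+1}$ and those of $\tau_i$ that carries cusps to cusps.

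Using this, each diagonal $d$ of $\widetilde{\tau}_{i+1}\setminus \tau_{i+1}$ lies in some complementary region $R$ of $\tau_{i+1}$ and has its endpoints terminating tangentially at cusps of $R$. Its image $f(d)$ is then a smooth arc in the corresponding complementary region $f(R)$ of $\tau_i$, terminating tangentially at cusps of $f(R)$. After a $C^0$-small perturbation of $f$ inside $N$ that does not disturb the underlying carrying $\tau_{i+1}<\tau_i$, the arcs $f(d)$ can be arranged to be pairwise disjoint (since the diagonals $d$ are disjoint and $f$ is close to the identity). These arcs therefore assemble into a diagonal extension $\widetilde{\tau}_i$ of $\tau_i$, and $f$ extends to a carrying map witnessing $\widetilde{\tau}_{i+1}<\widetilde{\tau}_i$ with the required compatibility on $\tau_{i+1}$. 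Since $\widetilde{\tau}_{i+1}$ is maximal, its complementary regions are triangles and once-punctured monogons, and because $f$ preserves the combinatorics of cusps and complementary regions, $\widetilde{\tau}_i$ is maximal as well.

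I expect the main obstacle to be the local analysis near the split branch: carefully verifying that the carrying map for a non-central maximal split can be arranged to send cusps to cusps and to induce a bijection on complementary regions. Once that local picture is in hand, the rest of the argument is essentially bookkeeping along the splitting sequence and a small perturbation to keep the images of diagonals disjoint.
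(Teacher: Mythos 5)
Your inductive reduction to a single maximal split is a genuinely different strategy from the paper's, which works with the full carrying $\psi(\tau)<\tau$ at once and uses the diffeomorphism $\psi$ itself (rather than the carrying map) to set up the correspondence between cusps of a complementary region $B$ and cusps of $B'=\psi(B)$. The reduction is a reasonable idea, but the local step has a genuine gap.

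The problematic claim is that for a single non-central split the carrying map $f$ ``induces a bijection between the complementary regions of $\tau_{i+1}$ and those of $\tau_i$ that carries cusps to cusps,'' so that $f(d)$ is again a diagonal in the corresponding region. It is not. Write $p$ and $q$ for the switches at the two ends of the large branch $b$ being split, and $R_p$, $R_q$ for the complementary regions of $\tau_i$ whose cusps sit at $p$ and $q$ respectively. After the split, $R_p$ has its cusp at a new switch of $\tau_{i+1}$, but one checks (the measure and incidence-matrix constraints force this) that the carrying map sends that new switch to $q$, not to $p$: under a non-central split the two cusps at the ends of $b$ pass each other and swap ends. So $f$ sends the cusp of $R_p$ to the cusp of $R_q$, and vice versa. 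Consequently, if $d$ is a diagonal of $\widetilde{\tau}_{i+1}$ in $R_p$ with an endpoint at that cusp, the arc $f(d)$ terminates at $q$, which is generally not a cusp of $R_p$; worse, because $f$ collapses the fibered neighborhood of $\tau_i$ onto $\tau_i$, near that endpoint $f(d)$ runs along the branch $b$ and is not contained in any complementary region of $\tau_i$ at all. The arcs $f(d)$ therefore do not assemble into a diagonal extension, and no $C^0$-small perturbation of $f$ can repair this. The missing ingredient is exactly what the paper supplies with the train paths $\gamma_i$: one must choose the new diagonal $d'$ of $\widetilde{\tau}_i$ to end at the actual cusp of $R_p$ (at $p$, not at $f$'s image of $d$'s endpoint), and then exhibit a carrying that sends $d$ to a train path of the form $\gamma^{-1}\circ d'\circ\gamma'$ with $\gamma,\gamma'$ running along $\tau_i$. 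The paper avoids propagating such connecting paths through the entire splitting sequence by using $\psi$ to match cusps directly (via the singular points of the foliation in the complementary regions) and defining the $\gamma_i$ once from the full carrying $\psi(\tau)<\tau$; your inductive approach would need to construct and compose such paths at every step, which you have not done.
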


\begin{proof}
The train track $\tau$ is filling, so every connected component of $\Sigma\setminus \tau$ is a polygon or once-punctured polygon. Let $B$ be one of these complementary regions and suppose $B$ is an $n$-gon, so the boundary of $B$ contains $n$ cusps. Denote the corresponding switches by $s_1,s_2,\dots, s_n$, indexed counterclockwise. Denote the $n$ train paths that give the edges of $B$ by $\rho_1,\rho_2,\dots, \rho_n$, where $\rho_i$ connects $s_i$ to $s_{i+1}$ for all $i$ (and $s_{n+1}\coloneqq s_1$).

\begin{figure}
  \includegraphics{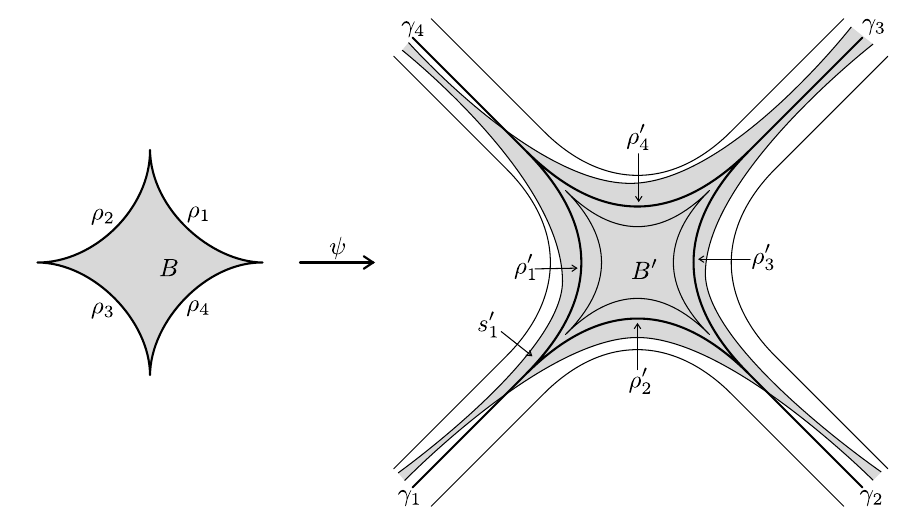}
  \caption[The image of a 4-gon under $\psi$]{\textbf{The image of a 4-gon $B$ under $\psi$}. Thick lines indicate the train track $\tau$. Thin lines indicate the boundary of a fibered neighborhood of $\tau$. The shaded region is $B$ (left) and the image of $B$ (right). The image of $B$ and the train paths $\gamma_i$ extend beyond the picture. (In the drawing, near the singular point $\psi$ sends $B$ by a $135^\circ$ twist.)}
  \label{fig:map-polygon}
\end{figure}

 The diffeomorphism $\psi$ maps each singular point of the unstable foliation (respectively puncture of $\Sigma$) to a singular point (respectively puncture).
 Moreover, since $\psi(\tau)$ is obtained from $\tau$ by a sequence of splits, we may assume the carrying map $\psi(\tau)<\tau$ sends switches to switches.
 Each connected component $B$ of $\Sigma\setminus \tau$ contains either one singular point or one puncture. If $B$ contains a singular point (respectively puncture), then $\psi(B)$ will contain a singular point (respectively puncture). Denote the component of $\Sigma\setminus \tau$ containing the image of that point under $\psi$ by $B'$. Note that $B'$ is also an $n$-gon and might coincide with $B$. The boundary of $B'$ is a union of $n$ train paths $\rho_1', \rho_2',\dots, \rho_n'$, indexed counterclockwise.  Denote the switches corresponding to the cusps on the boundary of $B'$ by $s_1', s_2',\dots, s_n'$ such that $\rho_i'$ connects $s_i'$ to $s_{i+1}'$ (and $s_{n+1}'\coloneqq s_1'$). Then there exist train paths $\gamma_1,\gamma_2,\dots,\gamma_n$ such that $\gamma_i$ ends in the large branch of $s_i'$ and the image of $\psi(\rho_i)$ under the carrying map is equal to $\gamma_{i+k}\circ\rho_{i+k}'\circ\gamma_{i+k+1}^{-1}$, where indices are taken modulo $n$. Here, $0\le k\le n-1$ is a constant, depending on the singular point in $B$. See Figure~\ref{fig:map-polygon}. 

With this description in hand, we are ready to describe $\widetilde{\tau}'$. For each diagonal $d$ in $B$ of $\widetilde{\tau}$ that connects $s_i$ to $s_j$ we add the diagonal $d'$ in $B'$ that connected $s_i'$ to $s_j'$. These new diagonals give the maximal diagonal extension $\widetilde{\tau}'$. It is straightforward that the carrying map $\psi(\tau)<\tau$ extends to a carrying map $\psi(\widetilde{\tau})<\widetilde{\tau}'$. For instance, under this extension a diagonal $d$ in $B$ connecting $s_i$ to $s_j$ is mapped to $\gamma_i^{-1}d'\gamma_j$. 
\end{proof}

Let $\widetilde{\tau}$ be a maximal diagonal extension of $\tau$. By Lemma~\ref{lem:carry-diag-ext}, for any $n>0$, $\psi^{n}(\widetilde{\tau})$ is carried by some maximal diagonal extension $\widetilde{\tau}_n'$ of $\tau$, and the carrying map for $\psi^n(\widetilde{\tau})<\widetilde{\tau}'_n$ extends the carrying $\psi^n(\tau)<\tau$. A diagonal $d$ in $\widetilde{\tau}$ is called \emph{infinitesimal} if, for every $n>0$, the image of $\psi^n(d)$ under this carrying map does not go over any non-infinitesimal branches of $\tau$. In particular, $\psi^n(d)$ under the  carrying map is of the form $\gamma d' \gamma'$ where $d$ is an infinitesimal diagonal in $\widetilde{\tau}_n$ and $\gamma$ and $\gamma'$ are train paths in $\tau$ consisting of only infinitesimal branches. That is, $\gamma$ and $\gamma'$ are subsets of train paths in $b_{I}(\tau)$. Note that, for sufficiently large $n$, the image $\psi^n(d)$ of any non-infinitesimal diagonal $d$ for $\tau$ will go over  every branch of $\tau$ by the carrying map.

Let $l_I(\tau)$ and $d_{I}(\tau)$ denote the number of infinitesimal branches of $\tau$ and diagonals of $\widetilde{\tau}$, respectively.

\begin{lemma}\label{lem:long-paths}
Let $\widetilde{\tau}$ be a maximal diagonal extension of $\tau$. Every train path that goes over  $l_{I}(\tau)+d_I(\tau)+1$ branches of $\widetilde{\tau}$ will contain at  least one non-infinitesimal branch or diagonal.
\end{lemma}
\begin{proof} 
First, we show that every loop carried by $\widetilde{\tau}$ contains at  least one non-infinitesimal branch or diagonal. Let $\gamma$ be a loop carried by $\widetilde{\tau}$, and suppose that $\gamma$ is a union of infinitesimal branches and diagonals. Then, for every  $n>0$, $\psi^{n}(\gamma)$ is carried by some maximal diagonal extension  of $\tau$ and goes over the same number of diagonals as $\gamma$. Thus, by part~\ref{item:AT-3} of Lemma~\ref{lemma:AT24}, $\psi^n(\gamma)$ will go over at most $d_{I}(\tau)(1+l_I(\tau))$  branches of $\widetilde{\tau}$. For any maximal diagonal extension $\widetilde{\tau}$ there are finitely many loops carried by $\widetilde{\tau}$ with such bounded length, and so there are $0<n<n'$ such that $\psi^n(\gamma)$ and $\psi^{n'}(\gamma)$ are isotopic.
But this can not happen, since $\psi^{n'-n}$ is pseudo-Anosov.

\begin{figure}
  \centering
  \includegraphics{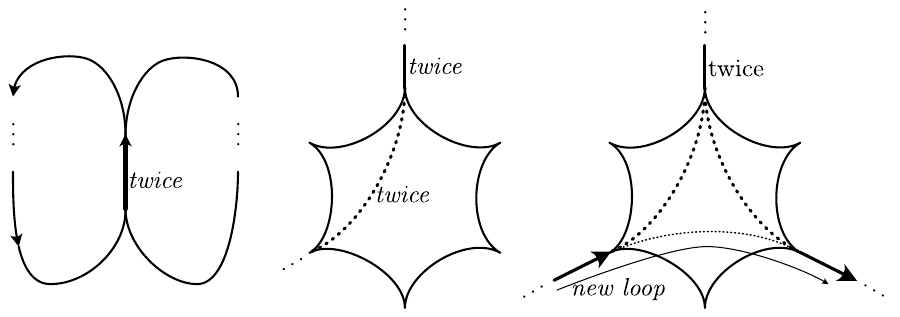}
  \caption{\textbf{Finding a loop.} Left: if $\gamma$ goes over a non-infinitesimal branch twice in the same direction then it contains a loop. Center: if $\gamma$ goes over the same diagonal twice then it goes over the same non-diagonal edge twice. Right: if $\gamma$ goes over two diagonals incident to the same switch and goes over the adjacent branch twice in opposite directions, then $\gamma$ can be short-circuited by a different infinitesimal diagonal to give a loop. In the center and right pictures, dotted edges indicate diagonals and solid edges the original train track $\tau$; the original train path under consideration is thick. In the left picture, edges could either be diagonals or in $\tau$.}
  \label{fig:make-a-loop}
\end{figure}

Let $\gamma$ be a train path that goes over $l_{I}(\tau)+d_I(\tau)+1$ branches of $\widetilde{\tau}$. If $\gamma$ does not contain any non-infinitesimal branch or diagonal then it must go over at least one infinitesimal branch or diagonal twice. By part~\ref{item:AT-3} of Lemma~\ref{lemma:AT24}, if $\gamma$ goes over a diagonal twice then $\gamma$ either contains a loop or goes over an infinitesimal branch of $\tau$ twice, as well, in opposite directions. (See Figure~\ref{fig:make-a-loop}.) If $\gamma$ contains a loop, the previous argument implies that it must contain a non-infinitesimal branch or diagonal. Thus, assume $\gamma$ does not contain a loop, and so $\gamma$ goes over an infinitesimal branch of $\tau$ twice, in opposite directions. Again, by part~\ref{item:AT-3} of Lemma~\ref{lemma:AT24}, $\gamma$ contains two diagonals with one endpoint the same switch of $\tau$. If both of these diagonals are infinitesimal, then the branch or diagonal of $\tau$ that connects the two other end points of these diagonals is infinitesimal. (Again, see Figure~\ref{fig:make-a-loop}.) So, the sub-path of $\gamma$ between these switches along with this branch or diagonal is a loop carried by a maximal diagonal extension of $\tau$. Therefore, it must contain a non-infinitesimal branch or diagonal, and thus $\gamma$ must contain a non-infinitesimal branch or diagonal, as well.
\end{proof}

Associated to any filling train track $\tau$ on $\Sigma$, there is a dual triangulation $\mathcal{T}$ of $\Sigma$, defined so that the dual edge has one vertex in every component of $\Sigma\setminus \tau$ and one edge dual to every branch of $\tau$
 such that it intersects the branch transversely and connects the vertices in the components of  $\Sigma\setminus \tau$ adjacent to the branch. Moreover, if a component of $\Sigma\setminus \tau$ is punctured then the vertex in that component coincides with the puncture. If $\tau'$ is obtained from $\tau$ by a split, its dual triangulation $\mathcal{T}'$ is obtained from $\mathcal{T}$ by a \emph{Whitehead move} as depicted in Figure~\ref{fig:Whitehead-move}.

\begin{figure}
  \centering
  \includegraphics{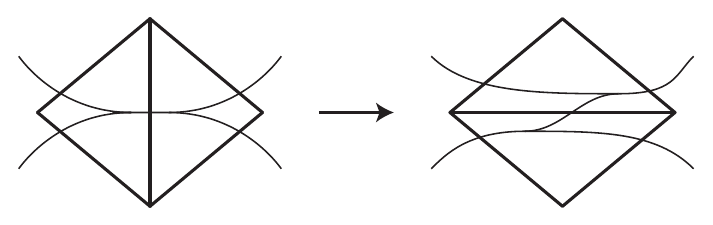}
  \caption[A Whitehead move]{\textbf{A Whitehead move}. The train track is drawn in a
    thin line, and the dual triangulation in a thick line. When the
    train track changes by a split, the dual triangulation changes by
    a Whitehead move.}
  \label{fig:Whitehead-move}
\end{figure}

A \emph{bigon track} is a branched $1$-dimensional submanifold of $\Sigma$ that fails to be a train track because some of its complementary regions are bigons. A bigon track is called \emph{generic} if all of its switches are trivalent. 

\begin{figure}
  \centering
  \includegraphics{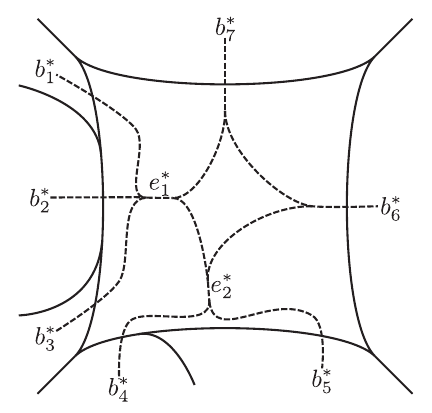}
  \caption[Dual bigon track]{\textbf{Dual bigon track.} A piece of a train track is shown (solid) together the corresponding piece of its dual bigon track (dashed). }
  \label{fig:dual-bigon-track}
\end{figure}

Given a train track $\tau$, the \emph{dual bigon track} $\tau^*$ is defined as follows. For each branch $b_i$ of $\tau$ consider a small arc $b_i^*$ transversely intersecting $b_i$ at one point and disjoint from every $b_j$ with $j\neq i$. Moreover, arrange that the arcs $b_1^*,b_2^*,\dots, b_{l}^*$ are pairwise disjoint. Let $A$ be a connected component of $\Sigma\setminus \tau$. For each edge $e$ of $A$, join the dual arcs to the branches contained in $e$ by a switch in $A$ with the dual arcs on one side and a new branch $e^*$ on the other side. So, for example, if $e=\overline{b_{i_1}}\cup \overline{b_{i_2}}\cup\cdots\cup\overline{b_{i_k}}$, the arcs $b_{i_1}^*, b_{i_2}^*,\dots, b_{i_k}^*$ will merge together and form a switch of valence $k+1$. For any cusp singularity in $\bdy A$ with adjacent edges $e_1$ and $e_2$, connect the new branches $e_1^*$ and $e_2^*$ with a new smooth branch, as in Figure~\ref{fig:dual-bigon-track}. Note that these branches are chosen such that for each switch of $\tau$ there is a bigon region in $\Sigma\setminus\tau^*$ that contains the switch. 
Therefore, if $A$ is a once-punctured $n$-gon the corresponding  $n$-gon component of $\Sigma\setminus \tau^*$ inside $A$ is once-punctured as well.  Note that $\tau^*$ is not necessarily generic. Moreover, $\tau^*$ and $\tau$ intersect \emph{efficiently} i.e.  there is no embedded bigon on $\Sigma$ whose boundary is the union of a smooth arc on $\tau$ and a smooth arc on $\tau^*$. See \cite[Section 3.4]{PennerHarrer92:book} for more detailed discussion of dual bigon tracks.

\begin{lemma}\label{lem:stableconv} Let $\tau$ be a train track suited to the unstable foliation of $\psi$ and let $\gamma\subset\Sigma$ be a simple closed curve. Then there is an integer $N$ such that for all $n>N$, $\psi^{-n}(\gamma)$ is carried by some maximal diagonal extension of the dual bigon track $\tau^*$ (possibly depending on $n$).
\end{lemma}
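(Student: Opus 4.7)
The plan is to run the proof of Lemma~\ref{lem:unstableconv} verbatim, but with $\tau^*$ in the role of $\tau$ and the stable foliation in the role of the unstable foliation. The key input I need is that a train track suited to the unstable foliation of $\psi$ has a dual bigon track suited to the \emph{stable} foliation of $\psi$, and that the polyhedral machinery of Penner--Harer for (the unstable foliation and) train tracks extends to (the stable foliation and) bigon tracks.

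First, by \cite[Corollary 12.3]{FLP79:TravauxThurston}, as $n\to\infty$ the sequence $\psi^{-n}(\gamma)$ converges in $\mathcal{PML}(\Sigma)$ to the projective class $[\mathcal{F}^s,\mu_s]$ of the stable foliation of $\psi$, since $\psi^{-1}$ expands $\mu_s$ and contracts $\mu_u$. Next, because $\tau$ is suited to $(\mathcal{F}^u,\mu_u)$ and $\tau^*$ intersects $\tau$ efficiently, the dual bigon track $\tau^*$ carries the stable lamination $(\mathcal{F}^s,\mu_s)$; this is part of the basic duality theory of dual bigon tracks developed in \cite[Section 3.4]{PennerHarrer92:book}. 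In particular, there is a positive transverse measure on $\tau^*$ realizing $[\mathcal{F}^s,\mu_s]$.

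Now I would invoke the bigon-track analogues of the Penner--Harer facts used in the proof of Lemma~\ref{lem:unstableconv}. By \cite[Proposition 1.4.9]{PennerHarrer92:book} (which applies to bigon tracks as well as train tracks), there is a maximal birecurrent bigon track $\tau_c^*$ with $\tau^*<\tau_c^*$, obtained from $\tau^*$ by trivial collapses along admissible arcs. By \cite[Lemma 2.1.2]{PennerHarrer92:book}, the set $U(\tau_c^*)$ of projective classes of measured laminations carried by $\tau_c^*$ is a polyhedron in $\mathcal{PML}(\Sigma)$, and by \cite[Lemma 3.1.2]{PennerHarrer92:book} its interior is an open neighborhood of $[\mathcal{F}^s,\mu_s]$. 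Hence there is an $N$ such that for all $n>N$, $\psi^{-n}(\gamma)\in U(\tau_c^*)$. Finally, by the analog of \cite[Proposition 2.2.2]{PennerHarrer92:book} for bigon tracks, $U(\tau_c^*)=\bigcup_{\widetilde{\tau}^*}U(\widetilde{\tau}^*)$ as $\widetilde{\tau}^*$ ranges over maximal diagonal extensions of $\tau^*$. So $\psi^{-n}(\gamma)$ lies in $U(\widetilde{\tau}^*)$ for some such $\widetilde{\tau}^*$, i.e.\ is carried by some maximal diagonal extension of $\tau^*$.

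The main obstacle is the last paragraph: I have to verify (or locate in Penner--Harer) the statements that the polyhedral structure on laminations carried by a train track, together with Propositions~1.4.9, 2.1.2, 3.1.2 and 2.2.2, transfer to the bigon track setting. These are implicit in \cite[Sections 1.3, 1.7, 3.4]{PennerHarrer92:book}, where bigon tracks are treated in parallel with train tracks, but citing them in the right form is the part requiring care. Everything else is a direct substitution into the proof of Lemma~\ref{lem:unstableconv}.
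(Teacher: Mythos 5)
Your overall strategy is right, and you correctly flag the gap: Penner--Harer's Proposition~1.4.9 and Lemmas~2.1.2 and~3.1.2 are stated for train tracks, and it is not automatic that they carry over to bigon tracks. The paper's proof is designed precisely to avoid having to generalize those statements. Rather than applying Proposition~1.4.9 to $\tau^*$ directly, the paper applies it to the train track $\tau$, producing a maximal birecurrent train track $\tau_c$ by trivial collapses; it then splits to obtain a \emph{generic} maximal birecurrent train track $\tau'$ which is a combing of a maximal diagonal extension of $\tau$. At that point it passes to the dual side: \cite[Proposition 3.4.5]{PennerHarrer92:book} says that the dual bigon track $(\tau')^*$ of a generic maximal birecurrent train track is again maximal and birecurrent, and collapsing the small bigons created near the endpoints of the added diagonals yields a bigon track $\tau^*_c$ which one checks is obtained from $\tau^*$ by trivial collapses along admissible arcs (this is the content of Figure~\ref{fig:trivial-collapse}). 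So the needed object $\tau^*_c$ is constructed via duality, not by re-running 1.4.9 for bigon tracks.

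For the polytope/neighborhood step, you should cite \cite[Proposition 3.4.1]{PennerHarrer92:book} --- the bigon-track version stated in Section~3.4 of Penner--Harer --- rather than Lemmas~2.1.2 and~3.1.2, which concern train tracks. The paper also supplies the input you waved at: the positive measure on $\tau^*$ realizing $[\mathcal{F}^s,\mu_s]$ comes from the Epilogue of Penner--Harer, and since $\tau^*<\tau^*_c$ this induces a positive measure on $\tau^*_c$, putting $[\mathcal{F}^s,\mu_s]$ in the interior of the polytope. The final appeal to \cite[Proposition 2.2.2]{PennerHarrer92:book} is made as in the unstable case. In short: the missing idea is to \emph{dualize the train-track statements} via Proposition~3.4.5 instead of reproving them for bigon tracks, and to use the Section~3.4 bigon-track versions (3.4.1, 3.4.5) where they exist.
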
 

\begin{proof}
  By \cite[Corollary 12.3]{FLP79:TravauxThurston}, as $n$ goes to infinity, $\psi^{-n}(\gamma)$ converges to  $[\mathcal{F}^s,\mu_s]$, the image of the stable foliation in the space of projective measured laminations $\mathcal{PML}(\Sigma)$. By 
  \cite[Proposition 1.4.9]{PennerHarrer92:book} one can obtain a maximal birecurrent train track ${\tau}_c$ from $\tau$ by a sequence of trivial collapses along admissible arcs. Then, a corresponding sequence of split moves on ${\tau}_c$ will result in a generic maximal and birecurrent train track $\tau'$ such that $\tau'$ is the result of combing a maximal diagonal extension of $\tau$ \cite[Figure 1.4.2]{PennerHarrer92:book}. 
  
\begin{figure}
  \centering
  \includegraphics[scale=.7]{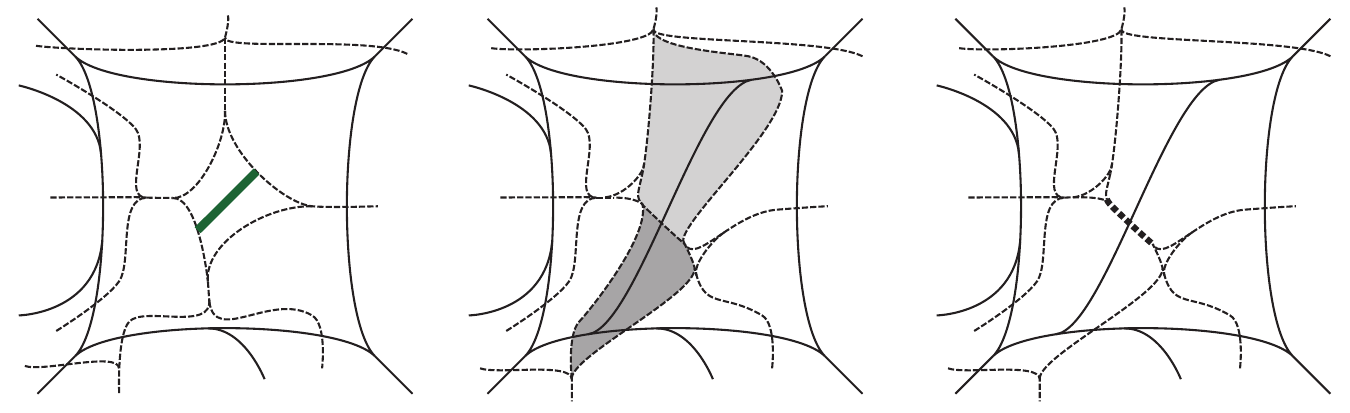}
  \caption{\textbf{Collapsing admissible arcs}. Left: the train track $\tau$ (solid), its dual bigon track $\tau^*$ (dashed), and an admissible arc (\textcolor{darkgreen}{thick}). Center: the train track $\tau'$  obtained from adding a diagonal to $\tau$ and combing it to make the result generic, its dual bigon track $(\tau')^*$. Right: the (dashed) bigon track $\tau^*_c$ obtained by collapsing the two shaded bigons, which coincides with the bigon track obtained from $\tau^*$ by a trivial collapse along the admissible \textcolor{darkgreen}{thick} arc in left figure.}
  \label{fig:trivial-collapse}
\end{figure}

The dual bigon track $(\tau')^*$ is birecurrent and maximal \cite[Proposition 3.4.5]{PennerHarrer92:book}. Collapsing the bigons corresponding to the endpoint switches of the added diagonals results in a bigon track $\tau^*_c$, which can also be obtained from $\tau^*$ by trivial collapses along admissible arcs (see Figure~\ref{fig:trivial-collapse}). Note that the measured laminations corresponding to the measures on 
$(\tau')^*$ and $\tau^*_c$ are the same.  There is a positive measure $\mu^*$ on $\tau^*$ such that $(\tau^*,\mu^*)$ is suited to $[\mathcal{F}^s,\mu_s]$ \cite[Epilogue]{PennerHarrer92:book}. Since $\tau^*<\tau^*_c$, the positive measure $\mu^*$ induces a positive measure $\mu^*_c$ on $\tau^*_c$ so that $(\tau^*_c,\mu^*_c)$ is suited to $[\mathcal{F}^s,\mu_s]$. Therefore, the measured laminations corresponding to the measures on $\tau^*_c$ form a polytope whose interior is an open neighborhood of $[\mathcal{F}^s,\mu_s]$ \cite[Proposition 3.4.1]{PennerHarrer92:book}. Then, as in the proof of Lemma~\ref{lem:unstableconv}, by \cite[Proposition 2.2.2]{PennerHarrer92:book} this neighborhood is the union of measured laminations corresponding to the maximal diagonal extensions of $\tau^*$.  Therefore, there exists an $N>0$ such that for all $n>N$, $\psi^{-n}(\gamma)$ is carried by some maximal diagonal extension of $\tau^*$ (not necessarily independent of $n$). 
\end{proof}

\section{Spheres and compression bodies}\label{sec:compression-bodies}
In this section, we prove four elementary lemmas about decompositions of
3-manifolds, handlebodies, and compression bodies, needed for the
detection results later. The first is a specific version of the prime
factorization, in terms of the Hurewicz homomorphism. The second lemma
uses the first to give a simple criterion for a 3-manifold to be a
handlebody. The
third is a criterion for a collection of disks to generate a compression
body. The last is a generalization of a result of Haken's about how
spheres intersect Heegaard surfaces to the case of compression body
splittings.

\begin{lemma}\label{lem:disj-sphere}
Let $Y$ be a compact, connected, oriented
$3$-manifold. Suppose that the image of the Hurewicz
homomorphism $\pi_2(Y)\to H_2(Y)$ generates an $n$-dimensional subspace
of $H_2(Y;\QQ)$. Then there is an integer $0\leq k\leq n$, a closed 3-manifold
$Y'$, and 3-manifolds $Y_1,\dots,Y_{n-k+1}$ with non-empty boundary so that
\[
  Y=\begin{cases}
      Y'\#\overbrace{(S^2\times S^1)\#\cdots\# (S^2\times S^1)}^k\#Y_1\#\cdots\#Y_{n-k+1}
      & \text{if }\bdy Y\neq\emptyset\\
      Y'\#\overbrace{(S^2\times S^1)\#\cdots\# (S^2\times S^1)}^n
      & \text{if }\bdy Y = \emptyset.
    \end{cases}
\]
Further, $Y'$ and every $Y_i$ does not contain any homologically
essential $2$-spheres.
\end{lemma}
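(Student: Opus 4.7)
My plan is to induct on $n$, the dimension of the Hurewicz image. In the base case $n = 0$, $Y$ itself has no homologically essential $2$-spheres, so I take $Y' = Y$ when $\bdy Y = \emptyset$; otherwise I set $Y' = S^3$, $k = 0$, $Y_1 = Y$, using $S^3 \# Y = Y$. For the inductive step with $n \ge 1$, the Sphere Theorem produces an embedded $2$-sphere $S \subset Y$ with $[S] \neq 0$ in $H_2(Y; \QQ)$. I then cut $Y$ open along $S$ and cap each resulting $S^2$ boundary with a $3$-ball, analyzing two geometric cases.

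If $S$ is non-separating, the cut-and-capped manifold $Y^*$ is connected with $\bdy Y^* = \bdy Y$, and $Y = Y^* \# (S^2 \times S^1)$. A Mayer--Vietoris computation (using $H_2(M \setminus B^3) \cong H_2(M)$ for closed $M$ and $H_2(M) \oplus \ZZ$ for bordered $M$) gives $H_2(Y;\QQ) \cong H_2(Y^*;\QQ) \oplus \QQ\langle [S] \rangle$, where $[S]$ is realized by the fiber sphere of the $S^2 \times S^1$ summand, so the Hurewicz image of $Y^*$ has dimension $n-1$. The inductive hypothesis applied to $Y^*$, after absorbing the extra $S^2 \times S^1$ into $k$, yields the desired decomposition.

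If $S$ is separating, cutting produces $Y = A \# B$ with $A, B$ connected, and a key observation is that both $A$ and $B$ must have non-empty boundary. Otherwise, if the region capped off to form (say) $B$ did not meet $\bdy Y$, it would be a $3$-chain in $Y$ whose boundary is exactly $[S]$, contradicting $[S] \neq 0$. Mayer--Vietoris then yields $H_2(Y;\QQ) \cong H_2(A;\QQ) \oplus H_2(B;\QQ) \oplus \QQ\langle [S] \rangle$, so the Hurewicz image splits as $n = n_A + n_B + 1$ with $n_A, n_B < n$. I apply the inductive hypothesis to both $A$ and $B$, merge the closed pieces into $Y' := A' \# B'$, set $k := k_A + k_B$, and concatenate the boundary pieces, giving $(n_A - k_A + 1) + (n_B - k_B + 1) = n - k + 1$ boundary pieces as required.

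The main obstacle is verifying that each of the newly formed pieces (the closed piece $Y'$ or the $Y_i$'s) still has no homologically essential $2$-spheres. For $Y' = A' \# B'$ this follows from a Laudenbach-type isotopy argument: any embedded sphere in a connect sum can be pushed into a summand---where the induction forbids homologically essential representatives---or isotoped onto the connect-sum sphere, which is null-homologous because $A'$ and $B'$ are both closed. An analogous argument handles each $Y_i$. The one genuinely delicate point is the dichotomy in the separating case and the accompanying accounting, which forces the precise $n - k + 1$ count of boundary pieces to come out correctly through the induction.
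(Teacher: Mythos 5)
Your induction on $n$ is a genuinely different route from the paper's proof, which begins with a prime decomposition of $Y$ and analyzes the Hurewicz image directly via a diagram chase (using that the image of $\pi_2(Y_i'\setminus B^3)$ vanishes in $H_2$, that the image for $S^2\times S^1$ is generated by the fiber, and that the image for $Y_i\setminus B^3$ is generated by the boundary sphere). The bookkeeping in your separating and non-separating cases works out correctly, and the inductive structure is sound. However, there are two points that need to be tightened, both stemming from invoking slightly wrong tools.

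First, the classical Sphere Theorem (Papakyriakopoulos) gives an embedded sphere that is nontrivial in $\pi_2(Y)$, not one that is nontrivial in $H_2(Y;\QQ)$; a $\pi_2$-essential sphere can be null-homologous (e.g., a separating sphere in an irreducible piece of a connect sum). To produce an embedded homologically essential sphere when $n\geq 1$, you really need the stronger statement that the Hurewicz image is generated by the homology classes of embedded spheres, which follows from the prime decomposition theorem (or Meeks--Simon--Yau): the decomposing spheres and the $S^2\times\{\mathrm{pt}\}$'s in any $S^2\times S^1$ factors generate. Once you cite that, though, you have the prime decomposition in hand, which is precisely the paper's starting point, so the induction becomes somewhat circuitous. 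Second, the ``Laudenbach-type isotopy argument'' is misstated: an embedded sphere in $A'\# B'$ cannot in general be isotoped into one summand or onto the connect-sum sphere (consider a sphere tubing together an essential sphere in each factor). The correct tool is the innermost-disk surgery argument, which shows the \emph{homology class} of any embedded sphere is a $\ZZ$-linear combination of classes of spheres in $A'\setminus B^3$, $B'\setminus B^3$, and the connect-sum sphere; this same argument is also what justifies your implicit claim that the Hurewicz image splits compatibly with the Mayer--Vietoris decomposition, so it is worth stating it once and using it in both places. With those two repairs, the proof is correct, though it ends up using the same inputs as the paper's proof in a less direct way.
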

\begin{proof}
  Choose a prime decomposition
  \begin{equation}\label{eq:decompose-Y}
    Y=Y'_1\#\cdots\#Y'_j\#\overbrace{(S^2\times S^1)\#\cdots\# (S^2\times S^1)}^k\#Y_1\#\cdots\#Y_{\ell}
  \end{equation}
  of $Y$, where each $Y_i$ and $Y'_i$ is irreducible (every
  sphere bounds a ball or equivalently, by the Sphere Theorem, each $Y_i$ and $Y'_i$ has
  trivial $\pi_2$), each $Y_i$ has nonempty boundary, and each $Y'_i$ is closed.
  If $S$ is a sphere in $Y$ then, intersecting $S$ with the connect
  sum spheres in Formula~\eqref{eq:decompose-Y} and using an innermost
  disk argument, $[S]\in H_2(Y)$ is a linear combination of spheres in
  the $Y'_i\setminus B^3$, the copies of $(S^2\times S^1)\setminus B^3$,
  and the $Y_i\setminus B^3$. Since $Y'_i$ is closed, from the diagram
  \[
    \begin{tikzcd}
      \pi_2(Y_i'\setminus B^3) \arrow[r]\arrow[d] & \pi_2(Y_i')=0\arrow[d]\\
      H_2(Y_i'\setminus B^3)\arrow[r,"\cong"] & H_2(Y_i')
    \end{tikzcd}
  \]
  the image of the Hurewicz map to $H_2(Y_i'\setminus B^3)$
  vanishes. Similarly, the image of the Hurewicz map to $(S^2\setminus
  S^1)\setminus B^3$ is generated by $[S^2\times\{pt\}]$, and for
  $Y_i\setminus B^3$, the analogous diagram
  \[
    \begin{tikzcd}
    & \pi_2(S^2) \arrow[r]\arrow[d,"\cong"] & \pi_2(Y_i\setminus B^3)\arrow[r]\arrow[d]
    & \pi_2(Y_i)=0\arrow[d]\\
    0\arrow[r] &\ZZ\arrow[r] & H_2(Y_i\setminus B^3)\arrow[r] & H_2(Y_i)
    \end{tikzcd}
  \]
  the image of the Hurewicz map to $H_2(Y'_i\setminus B^3)$ is
  generated by the boundary sphere. So, $[S]$ is a linear combination
  of the spheres $[S^2\times\{pt\}]$ for the $S^2\times S^1$ factors
  and the boundary spheres for the $Y_i$ factors. Further, the sum of
  all the boundary spheres vanishes in $H_2(Y)$. So,  $k+\ell=n+1$,
  and letting $Y'=Y'_1\#\cdots\#Y'_j$ gives the desired factorization.
  
  The proof for the closed case is the same, except that $\ell=0$ and $k=n$.
\end{proof}

A \emph{homology handlebody of genus $g$} is a compact, connected,
orientable $3$-manifold $Y$ with boundary a connected surface of genus
$g$ so that the map $H_1(\bdy Y)\to H_1(Y)$ is surjective. (We will
generally assume that $g>0$. Also, $H_1(\bdy Y)\to H_1(Y)$ being
surjective implies that $H_1(Y,\bdy Y)=0$ so, since the torsion
subgroup of $H_1(Y)=H^2(Y,\bdy Y)$ is $\Ext^1(H_1(Y,\bdy Y),\ZZ)$,
this implies that $H_1(Y)$ is free.)

We can use Lemma~\ref{lem:disj-sphere} to give a criterion for
recognizing handlebodies among homology handlebodies, which is key to
the proof of Theorem~\ref{thm:detect-hb}:
\begin{lemma}\label{lem:double-handlebody}
  Let $Y$ be an irreducible homology handlebody of genus $g$. Then $Y$
  is a handlebody of genus $g$ if and only if the homology
  $H_2(D(Y);\QQ)$ of the double $D(Y)=Y\cup_\bdy (-Y)$ is generated by
  $2$-spheres.
\end{lemma}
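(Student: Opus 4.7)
The forward direction is immediate: $Y = \natural_g(S^1 \times D^2)$ doubles to $\#_g(S^2 \times S^1)$, whose $H_2$ is generated by the sphere fibers $\{\mathrm{pt}\}\times S^2$, so I focus on the converse.

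First, Mayer--Vietoris for $D(Y) = Y \cup_\Sigma (-Y)$, using $H_2(Y;\QQ) = 0$ (from $Y$ being a homology handlebody), identifies $H_2(D(Y);\QQ) \cong L := \ker(H_1(\Sigma;\QQ) \to H_1(Y;\QQ))$, a rank-$g$ Lagrangian in $H_1(\Sigma;\QQ)$, and the connecting map sends the class of a transverse sphere $T$ to $[T \cap \Sigma]$. The hypothesis lets me pick pairwise disjoint embedded spheres $S_1, \dots, S_g$ in $D(Y)$ whose classes span $H_2(D(Y);\QQ)$; for the disjointness I use the sphere family coming from the connect sum decomposition provided by Lemma~\ref{lem:disj-sphere}. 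Making them transverse to $\Sigma$ and minimizing $\sum_i \lvert S_i\cap\Sigma\rvert$, the usual innermost disk argument together with irreducibility of $Y$ (and hence of $-Y$) forces every circle of $S_i\cap\Sigma$ to be essential on $\Sigma$: otherwise an innermost inessential circle would close up with a disk on $\Sigma$ into a sphere in $Y$ bounding a ball, through which I could isotope to reduce intersections.

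Next, I iterate surgery: given an innermost disk $D\subset Y$ of some $S_i$ with $\bdy D = c$, the $\iota$-symmetric sphere $D \cup \iota(D)$, where $\iota$ is the involution of $D(Y)$ swapping $Y$ and $-Y$, intersects $\Sigma$ only in $c$, while the residual sphere $(S_i\setminus D)\cup \iota(D)$ has one fewer intersection circle with $\Sigma$; small perturbations maintain pairwise disjointness of the whole family. After finitely many such steps I obtain a pairwise disjoint collection $T_1,\dots,T_N$ of spheres, each meeting $\Sigma$ in exactly one essential circle $c_j$, whose classes still span $H_2(D(Y);\QQ) \cong L$, so $\{[c_j]\}$ spans $L$. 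I then choose $g$ spheres so that $[c_{j_1}], \dots, [c_{j_g}]$ forms a basis of $L$; the $Y$-side halves $\hat D_i := T_{j_i} \cap Y$ are $g$ pairwise disjoint compressing disks in $Y$ whose boundaries realize a Lagrangian basis.

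Finally, cut $Y$ along $\hat D_1, \dots, \hat D_g$ to obtain $Y^*$. A Lagrangian basis of non-separating simple closed curves cuts $\Sigma$ into a sphere with $2g$ holes, which caps off to a single $S^2$, so $\bdy Y^* = S^2$ and $Y^*$ is connected (each $\hat D_i$ is non-separating in $Y$ because its boundary is non-separating on $\Sigma$). A Mayer--Vietoris / Euler characteristic count then shows $Y^*$ is a rational homology ball, and cutting an irreducible $3$-manifold along properly embedded disjoint disks preserves irreducibility. Any irreducible $3$-manifold with boundary $S^2$ is $B^3$: a sphere pushed slightly off the boundary bounds a ball by irreducibility, and since the collar side is not a ball the other side must be, giving $Y^* \cong B^3$ and hence $Y \cong B^3$ with $g$ $1$-handles attached, a handlebody of genus $g$. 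The main technical obstacle is the iterated surgery step: verifying the homology additivity $[S_i] = [D\cup\iota(D)] + [\text{residual sphere}]$ by a chain-level calculation and keeping the entire family pairwise disjoint through the induction by careful small perturbations.
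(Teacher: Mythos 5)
Your Mayer--Vietoris identification of $H_2(D(Y);\QQ)$ with the Lagrangian $L=\ker(H_1(\Sigma;\QQ)\to H_1(Y;\QQ))$, the cleanup making every circle of $S_i\cap\Sigma$ essential, and the endgame (a connected planar cut surface forces $\bdy Y^*\cong S^2$, whence $Y^*\cong B^3$ by irreducibility) are all sound. The gap is in the iterated surgery step, which you flag as the main technical obstacle but do not actually close. Only the boundary of the reflected disk $\iota(D)\subset -Y$ is under control; its interior is a fresh piece of surface in $-Y$ that in general meets the pieces $S_j\cap(-Y)$ for $j\neq i$ (and $(S_i\setminus D)\cap(-Y)$) in transverse circles. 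Such intersections cannot be removed by ``small perturbations,'' and you cannot directly invoke irreducibility of $-Y$ to clean them up, because the innermost-circle swaps need disks on the $S_j$ that typically cross $\Sigma$, so the spheres to be capped off are not contained in $-Y$. Producing a \emph{pairwise disjoint} family of essential spheres each meeting the splitting surface $\Sigma$ in a single circle is exactly a Haken-type statement (cf.\ Lemma~\ref{lem:cb-Haken}), and its proof uses a system of compressing disks for the two sides of $\Sigma$---that is, it already assumes they are compression bodies, which is what you are trying to prove.

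The paper's proof sidesteps this by never needing a whole disjoint cut system at once: after reducing to the case that $Y$ is not a nontrivial boundary connected sum (via $D(Y_1\#_b Y_2)=D(Y_1)\# D(Y_2)$ and induction on genus), it minimizes $|S\cap\Sigma|$ for a \emph{single} generating sphere $S$, takes an innermost disk $D$, observes $\bdy D$ must be non-separating on $\Sigma$ (a separating $\bdy D$ would make $D$ boundary-parallel or give a nontrivial $\#_b$-decomposition, both excluded), and uses a curve dual to $\bdy D$ to exhibit $Y\cong(S^1\times D^2)\#_b Y'$, forcing $Y'=B^3$. Working with one disk at a time is what avoids the mutual-disjointness problem that breaks your surgery. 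To salvage your argument, you would need to process one sphere at a time and induct after each cut, which essentially reproduces the paper's proof.
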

\begin{proof}
  If $Y$ is a handlebody then certainly $H_2(D(Y))$ is generated by
  $2$-spheres.
  For the converse, suppose $H_2(D(Y);\QQ)$ is
  generated by $2$-spheres. By Lemma~\ref{lem:disj-sphere}, $H_2(D(Y);\QQ)$ is
  generated by disjoint, embedded $2$-spheres. We first reduce to the case that $Y$ is not a boundary sum of two manifolds. If $Y=Y_1\#_bY_2$ then $D(Y)=D(Y_1)\# D(Y_2)$. It follows from the proof of Lemma~\ref{lem:disj-sphere} that both $H_2(D(Y_1);\QQ)$ and $H_2(D(Y_2);\QQ)$ are generated by $2$-spheres. Moreover, if $Y_1$ and $Y_2$ are handlebodies, then $Y$ is also a handlebody.

  So, assume that $Y$ is not a boundary sum. Let $\Sigma=\bdy Y$. Consider an embedded $2$-sphere $S$ in $DY$ that is a generator of $H_2(D(Y);\QQ)$. It follows from the irreducibility of $Y$ that $S$ intersects $\Sigma$. Further, we may arrange for $S$ to intersect $\Sigma$ minimally, i.e., that none of the disks in $S\cap Y$ or $S\cap -Y$ are homotopic relative boundary to disks in $\Sigma$. Choose a disk $D$ in $S\cap Y$ or $S\cap -Y$. Since $Y$ is not a boundary sum, $D$ is homologically essential. Since  $Y$ is a homology handlebody, the map $H^1(Y)\to H^1(\bdy Y)$ is injective, so there is a circle in $\bdy Y$ intersecting $\bdy D$ in a single point. This gives a decomposition of $Y$ as the boundary sum of a solid torus with a $3$-manifold $Y'$. Hence, $Y'$ is a $3$-ball and $Y$ is a solid torus.
\end{proof}

Before giving the next two results, we introduce some terminology
related to compression bodies. Let $\Sigma$ be a closed, orientable
surface of genus $g$. Given disjoint curves
$\gamma_1,\dots,\gamma_n\subset \Sigma$ there is an associated
3-manifold $\Sigma[\gamma_1,\dots,\gamma_n]$ obtained from
$[0,1]\times\Sigma$ by attaching 2-handles along the
$\gamma_i\times\{1\}$ and filling any $S^2$ boundary components of the
result with 3-balls. A \emph{compression body} $C$ is a manifold homeomorphic to some $\Sigma[\gamma_1,\dots,\gamma_n]$.
We refer to (the image of) the boundary component
$\{1\}\times\Sigma$ of the result as the \emph{outer boundary} $\bdy_{\out}C$ and the
remaining boundary components as the \emph{inner boundary} $\bdy_{\mathit{in}}C$. Since we do not
require the attaching circles for the 2-handles to be homologically
linearly independent, the inner boundary may not be connected. A
\emph{handlebody} is the special case that the inner boundary is
empty. 
A \emph{basis}
for $C$ is a set of pairwise disjoint simple closed curves
$\{\gamma_1,\cdots,\gamma_{n}\}$ on $\bdy_{\mathit{out}}C$ so that $C\cong (\bdy_{\mathit{out}}C)[\gamma_1,\cdots,\gamma_{n}]$.

Since we are interested in bordered Floer theory, we will be
interested in compression bodies whose boundaries are parameterized by
surfaces associated to pointed matched circles or arc diagrams. Define a
\emph{half-bordered compression body} to be a compression body $C$
together with a diffeomorphism $\phi$ from a reference surface $\Sigma$ to
$\bdy_{\mathit{out}}C$.  An essential simple closed curve
$\gamma\subset \Sigma$ is a \emph{meridian} for $C$ if $\phi(\gamma)$
bounds a disk in $C$.

A \emph{compression body splitting} of a 3-manifold $Y$
is a decomposition $Y=C\cup_{\Sigma} C'$ as a union of two compression bodies
glued along their outer boundaries $\Sigma$.

The following lemma gives a criterion for when a set of meridians for a
compression body is large enough to determine the compression body; this
will be used in detecting when diffeomorphisms extend over specific
compression bodies.
\begin{lemma} \label{lem:max-set} Let $(C,\phi\co \Sigma\to\bdy_{\mathit{out}}C)$ be a half-bordered compression body. Let $\amalg_{i=1}^n\gamma_i\subset \Sigma$ be a collection of pairwise disjoint meridians for $C$ and consider pairwise disjoint, properly embedded disks $\amalg_{i=1}^n D_i\subset C$ such that $\bdy D_i=\phi(\gamma_i)$ for all $i$. Then $\Sigma[\gamma_1,\gamma_2,\dots,\gamma_n]\cong C$ if and only if the homology classes $[D_1], [D_2], \dots, [D_n]$ generate $H_2(C,\bdy_{\mathit{out}}(C))$. 
\end{lemma}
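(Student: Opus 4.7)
The forward implication is a direct handle-decomposition computation. If $\Sigma[\gamma_1,\ldots,\gamma_n]\cong C$, realize the homeomorphism so that the $D_i$ are the cores of the $n$ attaching $2$-handles (up to isotopy rel boundary). The resulting relative handle decomposition of $(C,\bdy_{\mathit{out}} C)$ has only $2$- and $3$-cells, so $H_2(C,\bdy_{\mathit{out}}(C))$ is a quotient of the free abelian group on $\{[D_1],\ldots,[D_n]\}$ and these classes generate.

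For the converse, assume the $[D_i]$ generate. Take a closed regular neighborhood $X'\subset C$ of $\bdy_{\mathit{out}} C\cup D_1\cup\cdots\cup D_n$, so $X'\cong\Sigma\times[0,1]\cup(\text{$2$-handles along }\gamma_i\times\{1\})$, and let $Y=\overline{C\setminus X'}$. The boundary of $X'$ decomposes as $\bdy_{\mathit{out}} C\sqcup\bdy_{\mathit{in}} X'$, where $\bdy_{\mathit{in}} X'$ is $\Sigma$ surgered along the $\gamma_i$, and by construction $\Sigma[\gamma_1,\ldots,\gamma_n]$ equals $X'$ with $3$-balls attached along each sphere component of $\bdy_{\mathit{in}} X'$. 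Thus it suffices to show that $Y$ is a disjoint union of such capping $3$-balls together with trivial product cobordisms from the non-sphere components of $\bdy_{\mathit{in}} X'$ to the corresponding components of $\bdy_{\mathit{in}} C$. The long exact sequence of the triple $(C,X',\bdy_{\mathit{out}} C)$ feeds in the hypothesis: since $H_2(X',\bdy_{\mathit{out}} C)\cong\bigoplus_i\ZZ\langle[D_i]\rangle$ and $H_1(X',\bdy_{\mathit{out}} C)=0$, the generation hypothesis forces $H_2(C,X')=0$, which by excision equals $H_2(Y,\bdy_{\mathit{in}} X')$. Compression bodies with no sphere inner-boundary components are irreducible (they deformation-retract to an aspherical $2$-complex, giving $\pi_2=0$ and the Sphere Theorem applies), so each sphere component of $\bdy_{\mathit{in}} X'$ bounds a $3$-ball, which must lie in $Y$ since $X'$ is connected with non-sphere outer boundary. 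Let $Y_{\mathrm{ball}}$ be this union of balls and $Y_{\mathrm{rest}}=\overline{Y\setminus Y_{\mathrm{ball}}}$; its boundary is $F_1\sqcup F_2$ with $F_1$ the non-sphere components of $\bdy_{\mathit{in}} X'$ and $F_2=\bdy_{\mathit{in}} C$, and the computation above gives $H_2(Y_{\mathrm{rest}},F_1)=0$.

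The main obstacle is to convert the homology vanishing $H_2(Y_{\mathrm{rest}},F_1)=0$ into the topological statement that each component of $Y_{\mathrm{rest}}$ is a product $F\times[0,1]$ matching a component of $F_1$ to a component of $F_2$. The surface $F_2$ is incompressible in $Y_{\mathrm{rest}}$ because $F_2=\bdy_{\mathit{in}} C$ is incompressible in $C$ by the definition of a compression body. The surface $F_1$ is incompressible as well: a compressing disk with non-separating boundary would yield a class in $H_2(Y_{\mathrm{rest}},F_1)$ whose image in $H_1(F_1)$ is the nonzero class $[\bdy D]$, contradicting the vanishing, while separating compressing disks can be eliminated by iteration combined with the Loop Theorem (one eventually exposes a non-separating compression). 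Next, $Y_{\mathrm{rest}}$ is irreducible: any $2$-sphere in it bounds a ball in the irreducible $C$, and that ball cannot cross the incompressible $F_1\cup F_2$, so it lies in $Y_{\mathrm{rest}}$. Thus $Y_{\mathrm{rest}}$ is a compact, irreducible Haken $3$-manifold with incompressible boundary, and $H_*(Y_{\mathrm{rest}},F_1)=0$ together with incompressibility promotes $F_1\hookrightarrow Y_{\mathrm{rest}}$ to a homotopy equivalence. Waldhausen's rigidity theorem for Haken manifolds with incompressible boundary then implies each component of $Y_{\mathrm{rest}}$ is homeomorphic to a product $F\times[0,1]$ pairing components of $F_1$ with components of $F_2$. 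Collapsing these product collars gives $C\cong X'\cup Y_{\mathrm{ball}}=\Sigma[\gamma_1,\ldots,\gamma_n]$, completing the proof.
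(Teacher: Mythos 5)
Your argument takes a genuinely different route from the paper's: the paper reduces the converse to showing that $\{\gamma_i\}$ is a \emph{maximal} collection of disjoint meridians for $C$ and then invokes \cite[Lemma 2.1]{BiringerVlamis17}, whereas you try to prove directly that the complement $Y_{\mathrm{rest}}$ of $X'$ (and the capping balls) is a product. Your forward direction, the setup of $X'$, $Y_{\mathrm{ball}}$, $Y_{\mathrm{rest}}$, and the homology computation $H_*(Y_{\mathrm{rest}},F_1)=0$ are all fine, as is the observation that $F_2$ is incompressible.

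However, there is a genuine gap in the passage from the homology vanishing to the product structure. First, your claim that ``separating compressing disks can be eliminated by iteration combined with the Loop Theorem (one eventually exposes a non-separating compression)'' is not justified: compressing $F_1$ along a separating disk produces new closed surfaces in which any further compressing curve again represents $0$ in $H_1$ (for the same reason as before), so the iteration has no reason to terminate in a non-separating compression. More seriously, the assertion that ``$H_*(Y_{\mathrm{rest}},F_1)=0$ together with incompressibility promotes $F_1\hookrightarrow Y_{\mathrm{rest}}$ to a homotopy equivalence'' is false as an abstract statement. One needs $\pi_1(F_1)\to\pi_1(Y_{\mathrm{rest}})$ to be \emph{surjective}, and a $\ZZ$-homology equivalence plus $\pi_1$-injectivity does not give this: there exist compact, irreducible, $\ZZ$-homology cobordisms $W$ between surfaces with both boundary components incompressible which are not products (for instance, hyperbolic homology cobordisms from $T^2$ to $T^2$, obtained by $\pm1$-surgery on a nullhomologous hyperbolic knot in $T^2\times I$). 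For such $W$, all the hypotheses you are using — irreducibility, incompressible boundary, $H_*(W,F_1)=0$ — hold, yet $F_1\hookrightarrow W$ is not a homotopy equivalence and $W$ is not a product. So the topological conclusion cannot follow from the homological input alone; you must use the embedding of $Y_{\mathrm{rest}}$ in the compression body $C$ more essentially. The paper sidesteps all of this by outsourcing the 3-manifold topology to the Biringer–Vlamis maximality lemma, and only verifying the maximality of the meridian system, which is a much lighter claim.
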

\begin{proof}
First, it is easy to see that the homology classes of the cores of the attached $2$-handles in $\Sigma[\gamma_1,\gamma_2,\dots,\gamma_n]$ generate $H_2(\Sigma[\gamma_1,\gamma_2,\dots,\gamma_n], \Sigma)$, and so if $\Sigma[\gamma_1,\gamma_2,\dots,\gamma_n]\cong C$ then $[D_1], [D_2], \dots, [D_n]$ generate $H_2(C,\bdy_{\mathit{out}}(C))$.

On the other hand, suppose $[D_1], [D_2], \dots, [D_n]$ generate $H_2(C,\bdy_{\mathit{out}}(C))$. If we show that $\amalg_{i=1}^n\gamma_i$ is a maximal set of meridians for $C$, then the claim follows from \cite[Lemma 2.1]{BiringerVlamis17} (which states that attaching 2-handles along any maximal set of meridians for a compression body $C$ gives $C$). Let $\gamma\subset \Sigma$ be a simple closed curve disjoint from $\amalg_{i=1}^n\gamma_i$ such that $\phi(\gamma)=\bdy D$ for a properly embedded disk $D$ in $C$. The homology class $[D]\in H_2(C,\bdy_{\mathit{out}}(C))$ is equal to a linear combination of $[D_1], [D_2], \dots, [D_n]$ and so $\nbd(\bdy_{\mathit{out}}(C))\cup(\amalg_{i=1}^n \nbd(D_i))\cup \nbd(D)$ has an $S^2$ boundary component that intersects $\nbd(D)$ nontrivially. Consequently, $\gamma$ is homotopic to a homotopically trivial curve in the inner boundary of $\Sigma[\gamma_1,\gamma_2,\cdots,\gamma_n]$ and so $\amalg_{i=1}^n\gamma_i$  is maximal.
\end{proof}

The following result shows that prime decompositions can be chosen to
be compatible with compression body splittings. The case of
Heegaard splittings (handlebodies) is due to Haken~\cite{Haken68:surfaces} (but we
learned it from Ghiggini-Lisca~\cite[Lemma 3.4]{GL15:OB-prime}):
\begin{lemma}\label{lem:cb-Haken}
  Let $Y=C\cup_{\Sigma} C'$ be a compression body splitting of a
  $3$-manifold $Y$ with prime factorization of the form
  \[
    Y=Y_1\#\cdots\# Y_k\#(S^2\times S^1)^{l}\#Y'_1\#\cdots\#Y'_{h},
  \]
  where each $Y_i$ has nonempty boundary and every
  $Y'_j$ is closed.  Moreover, assume that
  $\rank(H_2(Y)/i_*(H_2(\bdy Y)))=l$, where $i$ denotes the inclusion
  map.  Then there exist pairwise disjoint, embedded $2$-spheres
  $S_1,\dots, S_{k+l-1}$ in $Y$ satisfying the following conditions:
  \begin{enumerate}[label=(\arabic*)]
  \item Each $S_i$ intersects the surface $\Sigma$ in a single circle $C_i$.
  \item\label{item:Haken-lin-indep} The homology classes
    $[S_1], [S_2],\dots,[S_{k+l-1}]$ are linearly independent in
    $H_2(Y)$.
  \item\label{item:Haken-span} The homology classes
    $[S_1],[S_2],\dots,[S_l]$ span $H_2(Y;\QQ)/i_*(H_2(\bdy Y;\QQ))$.
  \end{enumerate}
\end{lemma}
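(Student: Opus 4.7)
The plan is to adapt Haken's innermost-disk argument for Heegaard splittings \cite{Haken68:surfaces} (as used in \cite[Lemma 3.4]{GL15:OB-prime}) to the setting of compression body splittings. First I would apply Lemma~\ref{lem:disj-sphere} to produce a pairwise disjoint family of embedded $2$-spheres realizing the prime decomposition: $l$ non-separating spheres $S_1,\dots,S_l$ coming from the $S^2\times S^1$ factors, whose classes generate $H_2(Y;\QQ)/i_*(H_2(\bdy Y;\QQ))$, together with $k-1$ separating spheres $S_{l+1},\dots,S_{k+l-1}$ splitting the $Y_1,\dots,Y_k$ summands from one another. The separating spheres arising from the closed factors $Y'_j$ are nullhomologous in $H_2(Y)$ (since the piece they cut off has no $\bdy Y$ component), and can be discarded. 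Conditions~\ref{item:Haken-lin-indep}--\ref{item:Haken-span} then follow from this choice: the classes $[S_j]$ for $j>l$ equal, up to sign, the boundary classes $[\bdy Y_{i_j}]\in i_*(H_2(\bdy Y))$ coming from distinct prime summands, so together with the $S^2\times S^1$ classes they are linearly independent in $H_2(Y)$.

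Next, I would put the spheres in general position with $\Sigma$ and, among all such configurations (keeping the spheres pairwise disjoint and in their assigned homology classes), choose one minimizing the total number of intersection circles $T:=\sum_i |S_i\cap\Sigma|$. The claim is that in this minimal configuration each $S_i$ meets $\Sigma$ in exactly one circle, establishing condition~(1). Suppose for contradiction that $|S_i\cap\Sigma|\ge 2$ for some $i$. An innermost-disk argument on $S_i$ yields a disk $D\subset S_i$ whose boundary $c$ lies on $\Sigma$ and whose interior lies in, say, $C$. If $c$ is inessential in $\Sigma$, it bounds an innermost disk $D'\subset\Sigma$, and the sphere $D\cup D'$ bounds a ball in $C$ because compression bodies (with our convention of filling in any $S^2$ components on the inner boundary) are irreducible; isotoping $S_i$ across this ball and slightly past $\Sigma$ strictly reduces $T$, contradicting minimality. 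If $c$ is essential in $\Sigma$ then $c$ is a meridian of $C$, and surgering $S_i$ along a parallel pushoff of $D$ into $C$ replaces $S_i$ with two spheres: one, $S_i^+$, that bounds the product region between the two parallel disks and is nullhomologous, and another, $S_i^-$, with $[S_i^-]=[S_i]\in H_2(Y)$ and $|S_i^-\cap\Sigma|=|S_i\cap\Sigma|-1$. Discarding $S_i^+$ and replacing $S_i$ with $S_i^-$ again strictly reduces $T$, contradicting minimality.

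The main obstacle is verifying that the surgery in the essential case can be performed while preserving both disjointness of the family $\{S_j\}$ and the homology class of the modified sphere. Preserving disjointness requires choosing the parallel pushoff of $D$ into $C$ to avoid the other spheres, which is possible by a generic perturbation since the other $S_j$ meet $C$ in finitely many properly embedded disks disjoint from $D$. Homology is automatic: $[S_i^+]=0$ because $S_i^+$ bounds a ball in $C$, so $[S_i^-]=[S_i]-[S_i^+]=[S_i]$. The process terminates because $T$ takes nonnegative integer values and strictly decreases at each step, and the resulting family satisfies conditions~(1)--(3) by construction.
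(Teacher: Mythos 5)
Your strategy is to replace Haken's iterative normalization with a single minimization argument: choose a representative family minimizing $T=\sum_i|S_i\cap\Sigma|$ and show by contradiction that each sphere then meets $\Sigma$ once. The inessential case and the termination bookkeeping are fine, but the essential case contains a genuine gap, and it is precisely the gap that forces Haken's (and the paper's) more elaborate five-step procedure.

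The problem is the surgery you propose when $c=\partial D$ is essential in $\Sigma$. Write it out carefully: $D\subset S_i\cap C$ is an innermost disk with $\partial D=c\subset\Sigma$, and $D'$ is a parallel copy of $D$ properly embedded in $C$ with $\partial D'\subset\Sigma$ parallel to $c$. Then $\partial D'$ does \emph{not} lie on $S_i$, so there is no disk along which to compress $S_i$; the only well-defined operation here is an isotopy of $S_i$ across the $3$-ball bounded by $D\cup A\cup D'$ (where $A\subset\Sigma$ is the annulus between $c$ and $\partial D'$). But that isotopy simply replaces the intersection circle $c$ by the parallel circle $\partial D'$: after pushing $A$ slightly off $\Sigma$, the new sphere still meets $\Sigma$ in $|S_i\cap\Sigma|$ circles. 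There is no $S_i^-$ with $|S_i^-\cap\Sigma|=|S_i\cap\Sigma|-1$ in the same homology class, and the minimality contradiction does not materialize. An essential circle of $S_i\cap\Sigma$ is a meridian of $C$ and is genuinely ``stuck''; it cannot be removed by a local move near one innermost disk. (A secondary issue: you assert that the other $S_j$ meet $C$ in properly embedded disks, but before normalization the components of $S_j\cap C$ are arbitrary planar surfaces; this particular point is harmless, but it is a sign that the configuration has not yet been brought into the controlled form the argument needs.)

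The paper's proof confronts exactly this obstruction. Step~1 uses the compression-body structure (and an ambient isotopy carrying a small neighborhood $C''$ of $\bdy_{\mathit{in}}C'$ and a spanning arc system onto $C'$) to arrange that $S_i\cap C'$ consists of disks. Step~2 compresses so that $S_i\cap C$ is incompressible in $C$; this can \emph{increase} the number of spheres, which is why the lemma's conclusion allows a larger family. Step~3 isotopes away intersections with a complete meridian-disk system for $C$, using the complexity quantities $a_2,b_2,c_2$ to certify strict progress; the Waldhausen ``incompressible implies boundary-parallel in a product'' input is needed because $C$ may be a compression body rather than a handlebody, so an incompressible $S_i\cap C$ need not be a union of disks. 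Steps~4--5 then alternate sides. Each of these ingredients is doing work that a one-pass minimization of $T$ cannot do. If you want to pursue your approach, the place to look is why a $T$-minimal configuration could still have a sphere meeting $\Sigma$ in several essential circles, and which more refined complexity (and which global moves, beyond an innermost-disk isotopy) the Haken argument uses to rule that out.
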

(In the case that $Y$ is closed, Condition~\ref{item:Haken-lin-indep}
is redundant.)
 \begin{proof}
 The prime factorization of $Y$ implies that for $n=k+l-1$ there exist
 embedded $2$-spheres $S_1,S_2,\dots, S_{n}$ in $Y$ such that
 conditions~\ref{item:Haken-lin-indep} and~\ref{item:Haken-span} hold for them. Following an analogous argument
 of Haken's~\cite[pp. 84--86]{Haken68:surfaces}, we show that one can
 construct a collection of pairwise disjoint, essential embedded $2$-spheres $S'_1,S'_2,\dots, S'_{n'}$ so that $n'\ge n$, each $S'_i$ intersects $\Sigma$ in a single circle and the subspace of $H_2(Y)$ generated by the homology classes $[S'_1], \dots,[S'_{n'}]$ contains the subspace generated by $[S_1], [S_2],\dots, [S_n]$. As a result, we can find $\{j_1,\dots, j_n\}\subset\{1,2,\dots,n'\}$ such that the homology classes $[S'_{j_1}], [S_{j_2}'], \dots, [S'_{j_l}]$ span $H_2(Y;\QQ)/i_*(H_2(\bdy Y;\QQ))$ and $[S'_{j_1}], [S'_{j_2}],\dots, [S'_{j_n}]$ are linearly independent in $H_2(Y)$, and we are done.

\emph{Step 1.} We show that the spheres can be changed via isotopy so that every connected component of $S_i\cap C'$ is a disk representing a nontrivial homology class in $H_2(C',\bdy_{\mathit{out}}C')$ for all $1\le i\le n$. If $\bdy_{\mathit{in}}C'=\emptyset$, then $C'$ is a handlebody and this is Step $1$ of Haken's proof. Suppose $\bdy_{\mathit{in}}C'\neq\emptyset$. Consider pairwise disjoint, properly embedded arcs $\gamma_1,\dots,\gamma_m$ in $C'$ so that $\bdy\gamma_i\subset \bdy_{\mathit{in}}C'$ for all $i$ and the homology classes $[\gamma_1],\dots,[\gamma_m]$ form a basis for $H_1(C',\bdy_{\mathit{in}}C')$. Moreover, assume $S_i$ and $\gamma_j$ intersect transversely for all $i,j$. Let $C''\subset C'$ be the compression body defined as a small neighborhood of $\bdy_{\mathit{in}}C'\cup\gamma_1\cup\cdots\cup\gamma_m$, so that every component of $S_i\cap C''$ is a disk,
for all $i$. There is an ambient isotopy $h$ that maps $C''$ to $C'$.  Let $S_{1,i}=h(S_i)$ for $1\le i\le n$.

Denote the number of connected components in $\left(\coprod_{i=1}^nS_{1,i}\right)\cap\Sigma$ by $c_1$. 

\emph{Step 2.} We transform the spheres $S_{1,1},\dots, S_{1,n}$ from
Step 1 into a collection of pairwise disjoint embedded spheres $S_{2,1},\dots, S_{2,n'}$ so that they still satisfy conditions (2) and (3),   $\left(\coprod_{i=1}^nS_{2,i}\right)\cap C$ is incompressible in $C$ and 
\[\left(\coprod_{i=1}^{n'}S_{2,i}\right)\cap \Sigma\subset\left(\coprod_{i=1}^nS_{1,i}\right)\cap \Sigma. \]
If we define $c_2$ to be the number of connected components of $\left(\coprod_{i=1}^{n'}S_{2,i}\right)\cap\Sigma$, the last condition is $c_2\le c_1$.

If $\left(\coprod_{i=1}^nS_{1,i}\right)\cap C$ is incompressible in
$C$, then let $n=n'$ and $S_{2,i}=S_{1,i}$ for all $i$. So, suppose
$\left(\coprod_{i=1}^nS_{1,i}\right)\cap C$ is not incompressible. Let
$\gamma\subset (S_{1,i}\cap C)$ be a circle that bounds a compressing
disk $D$ in $C$. By an innermost disk argument, we can assume that the interior of $D$ is disjoint from the spheres $S_{1,i}$. 
Remove a small tubular neighborhood $\nbd(\gamma)$ of
$\gamma$ from $S_{1,i}$ and add two parallel copies of $D$ to $\bdy
\nbd(\gamma)$. Denote the resulting spheres by $S_{11,i}$ and
$S_{12,i}$.  If one of $S_{11,i}$ or $S_{12,i}$, say $S_{11,i}$,
bounds a ball in $Y$, then replace $S_{1,i}$ with $S_{12,i}$. Otherwise, if both $S_{11,i}$ and $S_{12,i}$ are essential in $Y$, then replace $S_{1,i}$ with the two spheres $S_{11,i}$ and $S_{12,i}$. Repeat this process until the intersection of our spheres with $C$ is incompressible in $C$. Denote the resulting spheres by $S_{2,1},\dots, S_{2,n'}$. It is obvious that
\[\left\langle [S_{1,1}],\dots, [S_{1,n}]\right\rangle \subset \left\langle [S_{2,1}],\dots,[S_{2,n'}]\right\rangle.\]

Before going to the next step, we introduce some more notation.
Let
\[a_2=c_2-\left|\left(\coprod_{i=1}^{n'}S_{2,i}\right)\cap C\right|\]
and let $b_2$ be $a_2$ minus the number of non-disk components in
$\left(\coprod_{i=1}^{n'}S_{2,i}\right)\cap C$. Then $a_2\ge 0$,
because otherwise one of the spheres $S_{2,i}$ is disjoint from
$\Sigma$ and lies in $C$, which contradicts the incompressibility of
$S_{2,i}$. So, if $a_2=0$, then all connected components of
$S_{2,i}\cap C$ are disks,
and we are done. If $a_2\neq 0$ then $a_2>b_2\ge 0$. 
 
\emph{Step 3.} Suppose $a_2>0$. We use an isotopy to transform $S_{2,1},\dots, S_{2,n'}$ into $S_{3,1},\dots, S_{3,n'}$ such that $c_3<c_2$ and $a_3=b_3=0$, where $a_3, b_3$ and $c_3$ are defined similar to $a_2, b_2$ and $c_2$. However, the connected components of $\left(\coprod_{i=1}^{n'}S_{3,i}\right)  \cap C'$ might no longer be disks. 

Consider a set of pairwise disjoint, properly embedded disks $D_1, D_2,\dots, D_m$ in $C$ such that $\bdy D_i\subset\bdy_{\mathit{out}}C$ and 
\[
  \begin{cases}
    C\setminus\left(\coprod_{i=1}^m\nbd(D_i)\right)\cong
    [0,1]\times\bdy_{\mathit{in}}C & \text{if }\bdy_{\mathit{in}}C\neq\emptyset\\
    B^3 & \text{if } \bdy_{\mathit{in}}C=\emptyset.
  \end{cases}
\]
Moreover, assume every $D_i$ intersects all the spheres $S_{2,j}$
transversely. So, $D_i\cap S_{2,j}$ is a collection of arcs and
circles. First, every disk $D_i$ can be transformed via isotopies so
that $D_i\cap\left(\coprod_{j=1}^{n'}S_{2,j}\right)$ has no circle
components, as follows. Consider an innermost circle component
$\gamma$ on $D_i$, i.e., a circle $\gamma$ which bounds a disk $a$ in $D_i$ disjoint from $\coprod_{j=1}^{n'}S_{2,j}$. Then the incompressibility of $C\cap\left(\coprod_{j=1}^{n'}S_{2,j}\right)$ in $C$ implies that $\gamma$ bounds a disk $b$ in $\coprod_{j=1}^{n'}S_{2,j}$. The union of $a$ and $b$ is an embedded $2$-sphere in $C$ and so bounds a $3$-ball. Pushing $D_i$ (and possibly other disks that have nonempty intersection with this $3$-ball) via an isotopy through this $3$-ball will remove the intersection circle $\gamma$. Repeat this process until every connected component of $D_i\cap S_{2,j}$ is an arc, for all $i$ and $j$. 

Second, say that an arc $\gamma\subset D_i\cap S_{2,j}$ \emph{splits off
a disk} from $S_{2,j}\cap C$ if one of the components of $(S_{2,j}\cap
C)\setminus \gamma$ is a disk $D$ with $\gamma$ on its boundary. We
transform our collection of embedded disks $D_1,\dots, D_m$
such that all the aforementioned properties hold, and none of the
arc components in $D_i\cap S_{2,j}$ splits off a disk from
$S_{2,j}\cap C$, for every $i$ and $j$. Consider an
intersection arc $\gamma$ in $S_{2,j}\cap C$ that splits off a disk $D$
and which is innermost in the sense that
the interior of $D$ is disjoint from $\coprod_{i=1}^m D_i$. Remove a small
neighborhood $\nbd(\gamma)$ of $\gamma$ from $D_i$ and add two
parallel copies of $D$ to $D_i\setminus\nbd(\gamma)$ to construct two
properly embedded disks $D_i'$ and $D_i''$ with boundary on
$\bdy_{\mathit{out}}(C)$ in $C$. After small isotopies we may assume
$D_i$, $D_i'$ and $D_i''$ are pairwise disjoint. Since
$C\setminus\left(\coprod_{i=1}^m\nbd(D_i)\right)$ is a product, the
disks $D_i'$ and $D_i''$ are boundary-parallel in
$C\setminus\left(\coprod_{i=1}^m\nbd(D_i)\right)$, so there is a
$3$-ball $B_i'$ (respectively $B_i''$) with part of its boundary $D_i'$
(respectively $D_i''$) and the rest on the boundary of $C\setminus\left(\coprod_{i=1}^m\nbd(D_i)\right)$. After an isotopy, we may assume
that the $3$-balls $B_i'$ and $B_i''$ are either disjoint or one is
contained in the other; but if $B_i'\cap B_i''=\emptyset$ then their
union together with the region between $D_i$ and $D'_i\cup D''_i$ is a
$3$-ball, showing that $D_i$ is boundary parallel, a contradiction. So,
without loss of generality, we may assume that $B_i''\supset B_i'$.
Then it is easy to check that replacing $D_i$ with $D_i''$ will result
in a set of disks that still splits $C$ into a $3$-manifold homeomorphic
to $[0,1]\times \bdy_{\mathit{in}}C$ or into $B^3$.
Repeat this process until none of the arc components in $D_i\cap
S_{2,j}$ splits off a disk from $S_{2,j}\cap C$, for every $i$ and
$j$.

Third, we remove the rest of the intersection arcs by isotoping $\coprod_{i=1}^{n'}S_{2,i}$ as follows. Consider an innermost intersection arc $\gamma\subset D_i\cap \left(\coprod_{i=1}^{n'}S_{2,i}\right)$ on $D_i$, in the sense that one of the two components in $D_i\setminus \gamma$, denoted by $D_i'$, is disjoint from $\coprod_{i=1}^{n'}S_{2,i}$. Suppose $\gamma\subset S_{2,j}$. Then transform $S_{2,j}$ by an isotopy that pushes $S_{2,j}$ along $D_i'$ through $\Sigma$ and removes the intersection arc $\gamma$. Depending on $\gamma$ one of the following happens:
\begin{itemize}
\item If $\gamma$ connects distinct components of $\Sigma\cap\left(\coprod_{i=1}^{n'}S_{2,j}\right)$, then $c_2$ and $a_2$ decrease by $1$, and $b_2$ does not increase.
\item If $\bdy \gamma$ lies on a single component of $\Sigma\cap\left(\coprod_{i=1}^{n'}S_{2,j}\right)$, then $c_2$ increases by $1$, $a_2$ does not change, and $b_2$ decreases by $1$. 
\end{itemize}
Repeat this process until we get a collection of embedded $2$-spheres $S_{2,1}', S_{2,2}',\dots, S_{2,n'}'$ disjoint from $\coprod_{i=1}^mD_i$. 
If $\bdy_{\mathrm{in}}C=\emptyset$, so $C$ is a handlebody, incompressibility of $C\cap\left(\coprod_{i=1}^{n'}S_{2,i}'\right)$ in $C$ implies that every connected component of $S'_{2,j}\cap C$ is a disk, and we define $S_{3,j}=S_{2,j}'$. Then $a_3=b_3=0$ and since $a_2>b_2$, we have $c_3<c_2$. 

Suppose $C$ is not a handlebody. If every connected component of $C\cap\left(\coprod_{i=1}^{n'}S'_{2,i}\right)$ is a disk then again we let $S_{3,j}=S_{2,j}'$ for all $1\le j\le n'$, and as before $a_3=b_3=0$ and $c_3<c_2$. Therefore, assume $C\cap\left(\coprod_{i=1}^{n'}S'_{2,i}\right)$ contains non-disk components. Waldhausen showed that any incompressible surface in a product is parallel to the
boundary. That is, if $F$ is a closed surface and $(S,\bdy
S)\subset(F\times[0,1],F\times\{0\})$ then $S$ is isotopic,
relative boundary, to a subset of $F\times\{0\}$~\cite[Corollary
3.2]{Waldhausen68}. Thus, every non-disk component of
$C\cap\left(\coprod_{j=1}^{n'}S_{2,j}'\right)$ is parallel to
$\bdy_{\mathrm{out}}C$. Let $S\subset S'_{2,j}$ be a non-disk,
innermost component of $C\cap\left(\coprod_{j=1}^{n'}S_{2,j}'\right)$,
i.e., the interior of the component of $C\setminus S$ bounded between
$S$ and $\bdy_{\mathrm{out}}C$ is disjoint from
$\coprod_{j=1}^{n'}S_{2,j}'$. Assume $|\bdy S|=r$ and consider arcs
$\gamma_1,\dots, \gamma_{r-1}$ on $S$ so that
$S\setminus\left(\coprod_{i=1}^{r-1}\nbd(\gamma_i)\right)$ is a
disk. Under the isotopy that transforms $S$ to a subset of
$\bdy_{\mathrm{out}}C$ each $\gamma_i$ would traverse a disk
$D'_i$. Push $S$ along the disks $D'_1, D'_2,\dots, D'_{r-1}$ and through
$\Sigma$. This operation reduces $c_2$ and $a_2$ by
$r-1$, while reducing $b_2$ by $r-2$. Repeat this process until every
connected component of $C\cap\left(\coprod_{i=1}^{n'}S'_{2,i}\right)$ is a disk. Then we let $S_{3,j}=S_{2,j}'$. As before, $a_3=b_3=0$ and $c_3<c_2$.

\emph{Step 4.} Repeat Steps 2 and 3
by exchanging the role of $C$ and $C'$ to obtain a collection of essential spheres $S_{4,1},S_{4,2},\dots, S_{4,n''}$ such that every connected component of $S_{4,j}\cap C'$ is a disk for all $j$ and $c_4=n''$ or $c_4<c_3$.  

\emph{Step 5.} Repeat Steps 2, 3, and 4 until we are done.  
\end{proof}

\section{Invariants}\label{sec:HF-background} 

In this section, we introduce twisted
bordered-sutured Floer homology, and also collect some properties of
twisted sutured Floer homology that we need later in the paper. Twisted
bordered-sutured Floer homology is a relatively straightforward
adaptation of twisted $\HFa$, so we review that theory first. While
discussing twisted bordered-sutured Floer homology, we also introduce
the notion of special bordered-sutured manifolds and some related
constructions that make sense in both the twisted and untwisted setting.
The section assumes some familiarity with bordered Floer homology, but
not with bordered-sutured Floer homology.

\subsection{Twisted Heegaard Floer homology}\label{sec:twisted-HF} Here,
we recall briefly the construction and key properties of twisted
Heegaard Floer homology. Throughout, we will work with
$\FF_2$-coefficients, as that suffices for the applications in this
paper.

Given a 3-manifold $Y$, the
\emph{totally twisted Heegaard Floer complex} of $Y$ is a chain
complex $\tCFa(Y)$ over the group ring $\FF_2[H_2(Y)]$~\cite[Section
8]{OS04:HolDiskProperties}. If we fix an isomorphism
$H_2(Y)\cong \ZZ^n$ then there is an induced identification
$\FF_2[H_2(Y)]\cong \FF_2[x_1^{\pm 1},\dots,x_n^{\pm 1}]$. Given any
other module $M$ over $\FF_2[H_2(Y)]$, the tensor product
$\tCFa(Y)\otimes_{\FF_2[H_2(Y)]} M=\tCFa(Y;M)$ is the \emph{twisted
  Heegaard Floer complex with coefficients in $M$}. The homologies of
$\tCFa(Y)$ and $\tCFa(Y;M)$ are denoted $\tHFa(Y)$ and
$\tHFa(Y;M)$. Like untwisted Heegaard Floer homology, the twisted
Heegaard Floer complex decomposes as a direct sum along
$\SpinC$-structures on $Y$.

To construct $\tCFa(Y)$, one fixes a pointed Heegaard diagram
$\HD=(\Sigma,\alphas,\betas,z)$, a sufficiently generic almost complex
structure, a base generator $\x_0\in T_\alpha\cap T_\beta$ for
$\CFa(\HD)$ for each $\SpinC$-structure, and for each other generator
$\x$ representing that $\SpinC$-structure a homotopy class of disks
$B_\x\in\pi_2(\x_0,\x)$. Recall that
$\pi_2(\x_0,\x_0)\cong\ZZ\oplus H_2(Y)$; given $B\in \pi_2(\x_0,\x_0)$ let
$[B]$ denote its image in $H_2(Y)$. (We are implicitly assuming that
$\Sigma$ has genus at least $2$; in the genus $1$ case, there is a map
to $\ZZ\oplus H_2(Y)$, but it is not surjective; this makes no
difference for the constructions below. In the cylindrical formulation
of Heegaard Floer homology, the isomorphism holds in any genus.) Then
$\tCFa(Y)$ is freely generated over $\FF_2[H_2(Y)]$ by
$T_\alpha\cap T_\beta$ and the differential is given by
\begin{equation}\label{eq:tHF-diff}
  \bdy(\x)=\sum_{\y\in T_\alpha\cap
    T_\beta}\sum_{\substack{B\in\pi_2(\x,\y)\\\mu(B)=1,\ n_z(B)=0}}\bigl(\#\cM^B\bigr)e^{[B_\x*B*B_\y^{-1}]}\y 
\end{equation}
where we are writing $e^h$, for $h\in H_2(Y)$, to denote the
corresponding group ring element and $\cM^B$ is the moduli space of
holomorphic disks in $(\Sym^g(\Sigma),T_\alpha,T_\beta)$ in the
homotopy class $B$. One then shows that the homotopy type of $\tCFa(Y)$
is independent of the choices made in its construction.

Twisted Floer homology has a number of useful properties, including:
\begin{enumerate}
\item K\"unneth Theorem:
  $\tCFa(Y_1\# Y_2)\simeq \tCFa(Y_1)\otimes_{\FF_2}\tCFa(Y_2)$. More
  generally, for modules $M_i$ over $\FF_2[H_2(Y_i)]$,
  $\tCFa(Y_1\#Y_2;M_1\otimes M_2)\simeq
  \tCFa(Y_1;M_1)\otimes_{\FF_2}\tCFa(Y_2;M_2)$. This is immediate from the
  definition, if one takes the connected sum near the basepoints.
\item Non-vanishing Theorem: Let $\omega\co H_2(Y)\to\RR$. The map
  $\omega$ makes the universal Novikov field
  $\Lambda=\{\sum_{i=1}^\infty n_ie^{a_i}\mid n_i\in\FF_2,\ a_i\in\RR,\ 
  \lim_{i\to\infty}a_i=\infty\}$ into an algebra $\Lambda_\omega$ over
  $\FF_2[H_2(Y)]$. Then $\tHFa(Y;\Lambda_\omega)=0$ if and only if
  there is a 2-sphere $S\subset Y$ so that $\omega([S])\neq 0$.
\end{enumerate}
The second point is proved in our previous
paper~\cite{AL19:incompressible}, building on results of Ni and
Hedden-Ni~\cite{Ni:spheres,HeddenNi10:small,HeddenNi13:detects}.

\subsection{A quick review of bordered-sutured Floer homology}\label{sec:bs-background}
To define the bordered Floer complexes
of a 3-manifold with multiple boundary components, one fixes a single
basepoint on each boundary component and a tree in the 3-manifold
connecting the basepoints. For the constructions below, it is more
convenient not to fix such a tree, and to allow multiple basepoints on a
single boundary component. This fits nicely as a special case of Zarev's
bordered-sutured Floer homology~\cite{Zarev09:BorSut}, so we review that
theory.
We assume the reader is already somewhat familiar with bordered Floer
homology.

A \emph{sutured surface} is an oriented surface $F$ with no closed
components together with a 0-manifold $\Lambda$ in its boundary
dividing the boundary into two collections of intervals, $S_+$ and $S_-$~\cite[Defintion
1.2]{Zarev09:BorSut}. (In particular, $\Lambda$ is required to
intersect every component of $\bdy F$.) We call $\Lambda$ the \emph{sutures}, and $S_+$ and $S_-$ the \emph{positive} and \emph{negative} arcs in the boundary, respectively. The relevant combinatorial
model for a sutured surface is an \emph{arc diagram}, which consists
of a collection of oriented arcs $Z$, an even number of points $\mathbf{a}$ in
the interior of $Z$, and a fixed point-free involution $M$ of
$\mathbf{a}$, which we think of as identifying the points in
$\mathbf{a}$ in pairs, satisfying a compatibility condition that we
state presently. An arc diagram $\PMC$ specifies a sutured surface
$F(\PMC)$ by thickening the arcs to rectangles $Z\times[0,1]$,
attaching 1-handles to $Z\times\{0\}$ via the matching, and declaring
the sutures to be $(\bdy Z)\times\{1/2\}$, $S_+$ to be
$Z\times\{1\}\cup (\bdy Z)\times[1/2,1]$, and $S_-$ to be the rest of
the boundary~\cite[Section 2.1]{Zarev09:BorSut}. The compatibility
condition for an arc diagram is that $S_-$ has no closed components.

In addition to abstract arc diagrams, we will also consider arc diagrams
embedded in surfaces. That is, given a surface $\Sigma$ (not necessarily closed), an \emph{arc diagram} for $\Sigma$ is an arc diagram $\PMC$ together with an embedding $F(\PMC)\into \Sigma$ so that each component of $\Sigma\setminus F(\PMC)$ is either a disk or an annulus around a component of $\bdy\Sigma$. An arc diagram $\PMC$ is called \emph{special} if every component of $\bdy F(\PMC)$ contains exactly one positive and one negative arc.

A \emph{bordered-sutured manifold} is a cobordism with corners between
sutured surfaces. That is, a bordered-sutured manifold is a $3$-manifold
$Y$, an embedding $\phi\co F(\PMC)\to \bdy Y$ for some arc diagram
$\PMC$, and a 1-dimension submanifold $\Gamma\subset \bigl(\bdy
Y\setminus \phi(F(\PMC))\bigr)$ with $\bdy\Gamma=\Lambda$, dividing
$\bdy Y\setminus \phi(F(\PMC))$ (the \emph{sutured boundary}) into
regions $R_+$ and $R_-$, with $\bdy R_\pm=S_\pm\cup\Gamma$. We will
often abbreviate the data $(Y,\phi,\Gamma,R_\pm)$ of a bordered-sutured
manifold simply as $Y$ or $(Y,\phi)$. An example of a bordered-sutured
manifold is depicted in Figure \ref{fig:bs-eg}. (If we are thinking of
$Y$ as a cobordism then some part of the bordered boundary is viewed as
on the left and some on the right.)

\begin{figure}
  \centering
  \includegraphics[width=5in]{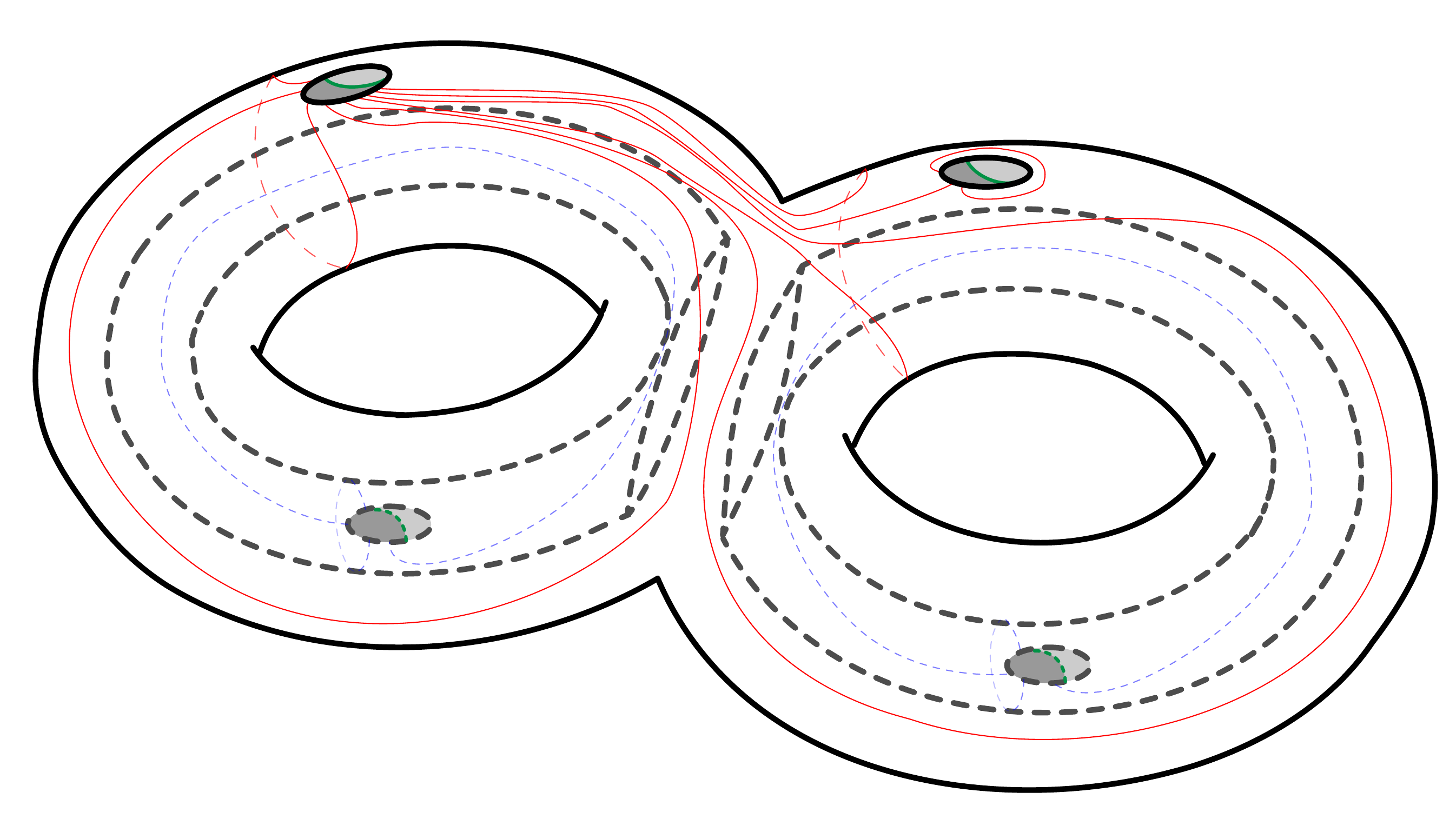}\qquad\includegraphics[scale=.8]{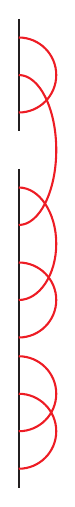}
  \caption[A bordered-sutured manifold]{\textbf{A bordered-sutured
  manifold.} The figure shows a compression body with outer boundary a
  genus $2$ surface and inner boundary the union of two tori. The
  bordered boundary is the complement of four disks. The sutured
  boundary is shaded, with $R_-$ darker than $R_+$. On the genus $2$
  boundary we have drawn the cores of the $1$-handles of $F(\PMC)$ as
  \textcolor{red}{thin} curves, where $\PMC$ is the arc diagram shown at
  the right. On the two genus $1$ boundaries, the cores of the
  $1$-hanles of $F(\PMC')$ are drawn with \textcolor{darkblue}{thin,
  dashed} curves.}
  \label{fig:bs-eg}
\end{figure}

To each arc diagram $\PMC$, bordered-sutured Floer homology associates
a \dg algebra $\Alg(\PMC)$, defined combinatorially in terms of chords
in the diagram. One can take disjoint unions of arc diagrams, and both
the construction of
sutured surfaces and the bordered-sutured algebras behave well with
respect to this operation:
\begin{align}
  F(\PMC_1\amalg \PMC_2)&\cong F(\PMC_1)\amalg F(\PMC_2) \\
  \Alg(\PMC_1\amalg\PMC_2)&\cong\Alg(\PMC_1)\otimes_{\FF_2}\Alg(\PMC_2),\label{eq:alg-is-tens-prod}
\end{align}
canonically. Orientation reversal also has a simple effect: reversing
the orientation of the arcs in $\PMC$ gives an arc diagram $-\PMC$,
$F(-\PMC)=-F(\PMC)$, and $\Alg(-\PMC)=\Alg(\PMC)^{\op}$.

Associated to a bordered-sutured manifold $Y$ with bordered boundary
$F(\PMC)$ is a left type $D$ structure (twisted complex) $\BSD(Y)$
over $\Alg(-\PMC)$ and a right $\Ainf$-module $\BSA(Y)$ over
$\Alg(\PMC)$. If $F(\PMC)$ has two components, by
Formula~\eqref{eq:alg-is-tens-prod}, we can view $\BSD(Y)$ and
$\BSA(Y)$ as bimodules; more generally, if $F(\PMC)$ has many
components, $\BSD(Y)$ and $\BSA(Y)$ can be viewed as
multi-modules. For $\BSD$, this just uses the identification between
type $D$ structures over
$\Alg(-\PMC_1)\otimes\cdots\otimes\Alg(-\PMC_n)$ and type $D^n$
multi-modules over $\Alg(-\PMC_1),\dots,\Alg(-\PMC_n)$. For $\BSA$,
this uses the equivalence of categories between $\Ainf$-modules over
$\Alg(\PMC_1)\otimes\cdots\otimes\Alg(\PMC_n)$ and
$\Ainf$-multi-modules over $\Alg(\PMC_1),\dots,\Alg(\PMC_n)$ (see,
e.g.,~\cite[Section 2.4.3]{LOT2}).

Given an arc diagram $\PMC$, we can view the identity cobordism
$\Id_\PMC$ of $F(\PMC)$ as a bordered-sutured manifold. The invariants
$\BSA(\Id_\PMC)$ and $\BSD(\Id_\PMC)$ are then bimodules over
$\Alg(\PMC)$ and $\Alg(-\PMC)$. If $Y$ has bordered boundary $F(\PMC)$
then
\begin{align*}
  \BSA(Y)&\simeq \BSA(\Id_\PMC)\DT_{\Alg(-\PMC)}\BSD(Y)\\
  \BSD(Y)&\simeq \BSA(Y)\DT_{\Alg(\PMC)}\BSD(\Id_\PMC).
\end{align*}
In fact, for corresponding choices of Heegaard diagrams, the second
of these homotopy equivalences is an isomorphism, and could be taken
as the definition of $\BSD(Y)$.

If the set of bordered boundary components of $Y$ is partitioned
into two parts, $F(Z_D)\amalg F(Z_A)$, there is a mixed-type invariant
\[
  \BSDA(Y)=\BSA(Y)\DT_{\Alg(Z_D)}\BSD(\Id_{Z_D}).
\]
We will call $F(Z_D)$ (respectively $F(Z_A)$) the \emph{type $D$}
(respectively \emph{type $A$}) \emph{bordered boundary} of $Y$.

Given bordered-sutured manifolds $(Y_1,\phi_1\co F_1\to \bdy Y_1)$ and
$(Y_2,\phi_2\co F_2\to \bdy Y_2)$ so that $F_1$ and $F_2$ have a common
sutured subsurface $F$, their \emph{gluing along $F$} is
\[
Y_1\cup_F Y_2=
  Y_1\thinspace\lsub{\phi_1|_F\!\!}{\cup}_{\phi_2|_F}Y_2=(Y_1\amalg Y_2)/\bigl(\phi_1(F)\ni p\sim \phi_2(\phi_1^{-1}(p))\in \phi_2(F)\bigr).
\]
The \emph{pairing theorem} for bordered-sutured Floer homology states that when gluing
along a collection of components $F$ of their boundary which are
parameterized by the same arc diagram (up to an orientation reversal), where those components are treated as type $A$ boundary for $Y_1$ and type
$D$ boundary for $Y_2$, the bordered invariants assemble as
\[
\BSDA(Y_1\cup_F Y_2)\simeq \BSDA(Y_1)\DT_{\Alg(F)}\BSDA(Y_2)
\]
\cite[Theorem 8.7]{Zarev09:BorSut}. If not taken as definitions, the
previous three formulas are special cases of this pairing theorem.

An important class of the bordered-sutured manifold is the following:
\begin{definition}
  A diffeomorphism $\psi$ from a sutured surface $(F,\Lambda)$ to
  another sutured surface $(F',\Lambda')$ is called \emph{strongly
  based} if $\psi(S_+)=S'_+$ and $\psi(S_-)=S'_-$. Given a strongly
  based diffeomorphism $\psi\co F(\PMC_L)\to F(\PMC_R)$, the
  \emph{mapping cylinder} of $\psi$ is $Y_\psi=([0,1]\times
  F(\PMC),\psi,\Id)$.
\end{definition}
We often abbreviate $\BSDA(Y_\psi)$ as $\BSDA(\psi)$. 

\begin{figure}
  \centering
  \includegraphics{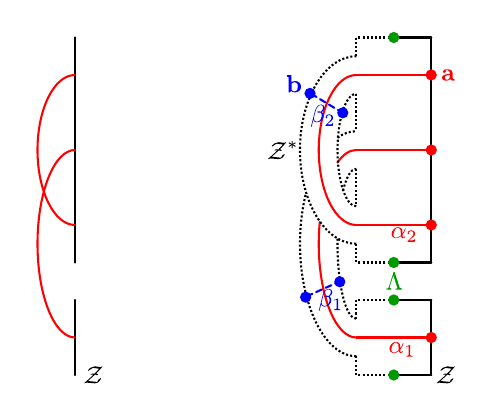}
\caption{\textbf{The half-identity diagram.} Left: an arc diagram $\PMC$. Right: the associated half-identity diagram. The $\alpha$-arcs are \textcolor{red}{solid}, the $\beta$-arcs are \textcolor{blue}{dashed}, $S_+$ is solid, and $S_-$ is dotted.}
  \label{fig:half-id-diagram}
\end{figure}

The constructions in Sections~\ref{sec:tt-arc-diagram-slides}
and~\ref{sec:compress-diffeo} will use a particular way of associating
Heegaard diagrams to mapping cylinders. These Heegaard diagrams come in
two halves. We start with the case of the identity map:
\begin{definition}\label{def:id-HD}
  Fix an arc diagram $\PMC=(Z,\mathbf{a},M)$. On $F(\PMC)$, identify  $\mathbf{a}$ with a subset of $S_+$ and let $\alpha_1,\dots,\alpha_n$ denote
  the (extended) cores of the handles glued to the points $\mathbf{a}$, so
  $\bdy(\alpha_1\cup\cdots\cup\alpha_n)=\mathbf{a}$. Let $\beta_i$ be an arc
  with boundary in $S_-$ and so that $\beta_i$ intersects $\alpha_i$ in a single
  point and is disjoint from $\alpha_j$ for $i\neq j$. Let
  $\mathbf{b}=\bdy(\beta_1\cup\dots\cup\beta_n)$. Define
  $M'\co \mathbf{b}\to\mathbf{b}$ to exchange the endpoints of $\beta_i$. Then
  $\PMC^*=(S_-,\mathbf{b},M')$ is the \emph{dual arc diagram} to $\PMC$.  The
  triple $(F(\PMC),\{\alpha_1,\dots,\alpha_n\},\{\beta_1,\dots,\beta_n\})$ is
  the \emph{standard half Heegaard diagram for the identity map of $\PMC$},
  denoted $\HalfHD(\Id_\PMC)$ or $\HalfHD(\Id)$. See Figure~\ref{fig:half-id-diagram}.

  The \emph{standard Heegaard diagram for the identity map of $\PMC$},
  $\HD(\Id)$, is the result of gluing $\HalfHD(\Id_\PMC)$ to
  $\HalfHD(\Id_{\PMC^*})^\beta$, where $\HalfHD(\Id_{\PMC^*})^\beta$ is the
  result of exchanging the $\alpha$- and $\beta$-arcs in
  $\HalfHD(\Id_{\PMC^*})$.
\end{definition}

More generally, by a \emph{half Heegaard diagram} we mean a sutured
surface $F$, arc diagrams $\PMC_L$ and $\PMC_R$, and diffeomorphisms
$\phi_L\co F(\PMC_L)\to F$ and $\phi_R\co F(\PMC_R)\to F$, so that on
the boundary, $\phi_L(S_\pm(\PMC_L))=S_\pm$ and
$\phi_R(S_\mp(\PMC_R))=S_\pm$. The images of the cores of the
$1$-handles of $F(\PMC_L)$ under $\phi_L$ give arcs
$\alpha_1,\dots,\alpha_n$ in $F$, and the images of the cores of the
$1$-handles of $F(\PMC_R)$ under $\phi_R$ give arcs
$\beta_1,\dots,\beta_n$ in $F$. Further, we can recover the
diffeomorphisms $\phi_L$ and $\phi_R$, up to isotopy, from these
arcs. So, we will sometimes refer to $(S,\PMC_L,\phi_L,\PMC_R,\phi_R)$
as the half Heegaard diagram, and sometimes refer to
$(S,\{\alpha_1,\dots,\alpha_n\},\{\beta_1,\dots,\beta_n\})$ as the
half Heegaard diagram.  In particular, $\HalfHD(\Id_{\PMC})$
corresponds to the case that $\PMC_L=\PMC$, $\PMC_R=\PMC^*$,
$\phi_L=\Id$, and $\phi_R=\Id$.

Given a half Heegaard diagram $\HD=(F,\phi_L,\phi_R)$, we get a map
$\phi_R^{-1}\circ\phi_L\co F(\PMC_L)\to F(\PMC_R)$ sending
$S_\pm(\PMC_L)\to S_\mp(\PMC_R)$. The diagram $\HalfHD(\Id_{\PMC_R})$
induces a diffeomorphism $\Id'\co F(\PMC_R)\to F(\PMC_R^*)$ which maps $S_{\mp}(\PMC_R)$ to $S_{\pm}(\PMC_R^*)$. So,
$\psi(\HD)=\Id'\circ\phi_R^{-1}\circ\phi_L\co F(\PMC_L)\to
F(\PMC_R^*)$ is a strongly based diffeomorphism. We call this diffeomorphism the \emph{mapping class associated to the half Heegaard
  diagram}.  Equivalently, if we glue the half Heegaard diagram to the
standard half identity diagram $\HalfHD(\Id_{\PMC_R})$, we obtain a bordered-sutured Heegaard diagram for the mapping cylinder of $\psi(\HD)$. Consequently, given a strongly based diffeomorphism $\psi\co F(\PMC')\to F(\PMC)$ a bordered-sutured Heegaard diagram for $Y_{\psi}$ is obtained from gluing the half-identity diagram $\HalfHD(\Id_{{\PMC}^*})^{\beta}$ to the half Heegaard diagram $\left(F(\PMC),\{\psi(\alpha_1),\cdots,\psi(\alpha_n)\},\{\beta_1,\cdots,\beta_n\}\right)$, where $\HalfHD(\Id_{\PMC})=(F(\PMC),\{\alpha_1,\dots,\alpha_n\},\{\beta_1,\dots,\beta_n\})$.

\begin{figure}
  \centering
  \includegraphics[scale=.95]{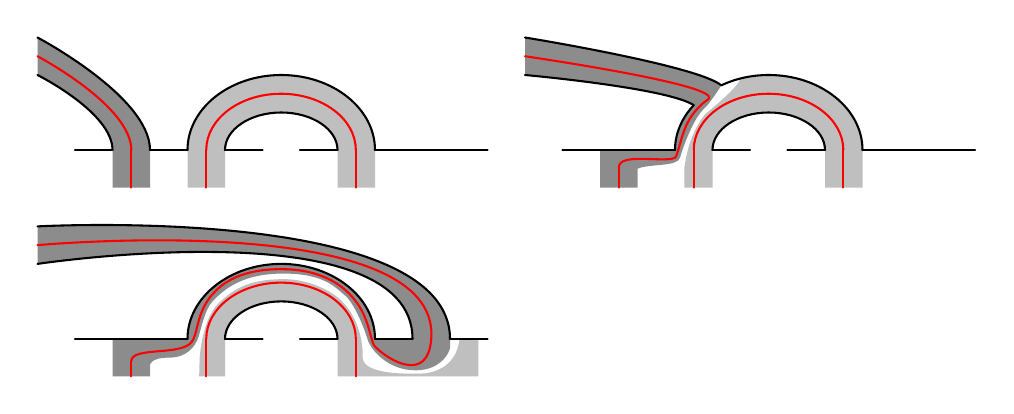}
  \caption{\textbf{An arcslide diffeomorphism.} Top left: the diagram before the arcslide. The two relevant handles are shaded, and their cores ($\alpha$-arcs) are \textcolor{red}{indicated}. Top right: an intermediate stage. The images of the handles under the diffeomorphism are shaded and the images of their cores \textcolor{red}{indicated}. Bottom right: the diagram after the arcslide. Again, the images of the handles are shaded and the images of their cores \textcolor{red}{indicated}.}
  \label{fig:arcslide}
\end{figure}

A key tool for bordered Floer theory (e.g.,~\cite{LOT4}) is a particular class of diffeomorphisms, the arcslides:
\begin{definition}\label{def:arcslide}
  Given an arc diagram $\PMC=(\{Z_i\},\mathbf{a},M)$ and a pair of
  adjacent points $a,a'\in Z_i\cap\mathbf{a}$, let $a''$ be a point
  adjacent to $M(a)$, so that $a''$ is above (respectively below)
  $M(a)$ if $a'$ is below (respectively above) $a$. There is a new arc
  diagram $\PMC'$ obtained by replacing $a'$ by $a''$ (and defining
  $M(a'')=M(a')$). We say that $\PMC'$ is \emph{obtained from
    $\PMC$ by an arcslide}.

  The surface $F(\PMC')$ is obtained from $F(\PMC)$ by sliding one
  foot of the handle corresponding to $\{a',M(a')\}$ over the handle
  corresponding to $\{a,M(a)\}$. In particular, this 1-parameter
  family of sutured surfaces induces a strongly based diffeomorphism
  $F(\PMC')\to F(\PMC)$, the \emph{arcslide diffeomorphism}
  corresponding to sliding $a'$ over $a$. See
  Figure~\ref{fig:arcslide} (as well as~\cite[Section
  6.1]{AndersenBenePenner09:MCGroupoid} and~\cite[Figure 3]{LOT4}).
\end{definition}

Given a half Heegaard diagram $\HD=(F,\phi_L\co F(\PMC_L)\to
F,\phi_R\co F(\PMC_R)\to F)$ and an arcslide diffeomorphism
$\psi\co F(\PMC'_L)\to F(\PMC_L)$ there is a new half Heegaard diagram
$(F,\phi_L\circ\psi,\phi_R)$, which we call the result of performing
the arcslide to $\HD$. In terms of $\alpha$- and $\beta$-arcs, this
corresponds to performing an embedded arcslide of the $\alpha$-arc
corresponding to $a'$ over the $\alpha$-arc corresponding to $a$.

The bordered-sutured manifolds of interest in this paper come from
compression bodies, with simple kinds of sutures. We will give a name
to that class:
\begin{definition}
  A \emph{special bordered-sutured manifold} is a bordered-sutured
  manifold so that each component of $R_\pm$ is a bigon with one edge
  a component of $\Gamma$ and one edge a component of $S_\pm$.
\end{definition}
The example in Figure~\ref{fig:bs-eg} is a special bordered-sutured manifold.

Given a bordered 3-manifold with connected boundary, if we view the
bordered Heegaard diagram as a bordered-sutured Heegaard diagram then
the corresponding bordered-sutured manifold is special, with each $R_\pm$ a single bigon. An arced bordered Heegaard diagram with two
boundary components (such as the diagrams representing
diffeomorphisms~\cite[Section 5.3]{LOT2}) represents a pair
$(Y,\gamma)$ of a cobordism between closed surfaces and an arc $\gamma$
connecting the two boundary components, but does not directly
represent a special bordered-sutured manifold. Specifically, we obtain
a bordered-sutured diagram by deleting a neighborhood of the arc and
viewing the newly-created boundary as sutured arcs; see
Figure~\ref{fig:arced-HD}. The corresponding bordered-sutured manifold
is $Y\setminus\nbd(\gamma)$ where $R_+$ and $R_-$ consist of a
rectangle each on the cylinder $\bdy\nbd(\gamma)$, and $\Gamma$
consists of two arcs along $\bdy\nbd(\gamma)$.  In particular, this is
not a special bordered-sutured manifold. To obtain a special
bordered-sutured manifold, we attach a 2-handle to a meridian of
$\gamma$ and modify $R_+$ and $R_-$ to be bigons. At the level of
Heegaard diagrams, this can be accomplished by gluing on the
tube-cutting diagram shown in Figures~\ref{fig:arced-HD} and~\ref{fig:TC-again}. 
(The tube-cutting diagram appeared previously
in~\cite{Hanselman16:graph,LT:hoch-loc,AL19:incompressible}.)

\begin{figure}
  \centering
  \includegraphics{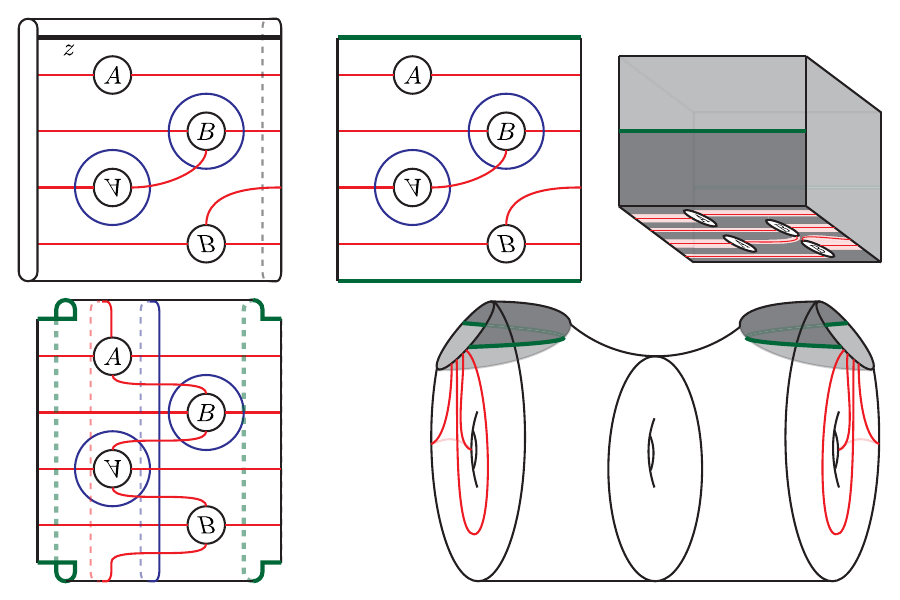}
  \caption[Turning a bordered Heegaard diagram into a
  sutured Heegaard diagram]{\textbf{Turning an arced bordered Heegaard diagram into a
      sutured Heegaard diagram.} Top-left: an arced bordered Heegaard
    diagram for the mapping cylinder of a Dehn twist on a
    torus. Top-center: the corresponding bordered-sutured
    diagram. Top-right: the corresponding bordered-sutured manifold,
    which is not special.  Bottom-left: the tube-cutting
    diagram. Bottom-right: the bordered-sutured manifold represented
    by the tube-cutting diagram.  Gluing this to the manifold on the
    top-right gives a special bordered-sutured manifold.}
  \label{fig:arced-HD}
\end{figure}

There are analogous constructions for bordered $3$-manifolds with more
than two boundary components. For example, Figure~\ref{fig:bs-handlebody-diag}
shows a bordered Heegaard diagram for $(Y,\gamma)$ where $Y$ is the
compression body with outer boundary of genus $2$ and inner boundary
$T^2\amalg T^2$ and $\gamma$ is a Y-shaped graph in $Y$ connecting the
three boundary components. The figure also shows the corresponding
bordered-sutured diagram (which is not special). Gluing on
tube-cutting pieces to two of the
boundary gives a bordered-sutured Heegaard diagram for a special
bordered-sutured structure on $Y$. One can also introduce extra
sutured disks, by gluing on the bordered-sutured diagram shown on the
right of Figure~\ref{fig:bs-handlebody-diag}. In particular, from
these pieces it is easy to construct a bordered-sutured diagram for
any special bordered-sutured structure on a compression body.

\begin{figure}
  \centering
  \includegraphics{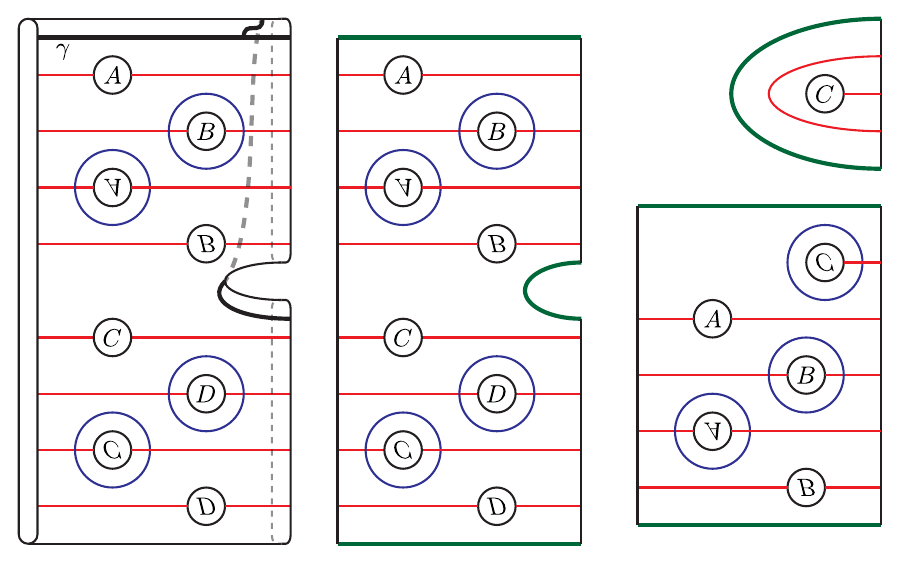}
  \caption[Bordered-sutured diagrams for compression bodies]{\textbf{Bordered-sutured diagrams for compression bodies.}
    Left: a bordered Heegaard diagram for $(Y,\gamma)$ where $\gamma$
    is a Y-shaped graph connecting the boundary components of
    $Y$. Center: a bordered-sutured Heegaard diagram $\HD$ for a
    special bordered-sutured structure on $Y$. Right: a
    bordered-sutured diagram $\HD'$ so that gluing $\HD'$ along its
    left boundary to a
    boundary component of $\HD$ introduces an extra disk suture.}
  \label{fig:bs-handlebody-diag}
\end{figure}

The bordered-sutured modules have several duality properties which
allow one to state the pairing theorem in terms of morphism spaces
instead of tensor products (cf.~\cite{LOTHomPair}). In the general
case, the duality properties have the effect of twisting the sutures
(see~\cite{Zarev:JoinGlue}), so we will state them only for special
bordered-sutured manifolds:
\begin{theorem}\label{thm:bs-dual}
  Let $Y$ be a special bordered-sutured manifold and $F(\PMC)$ a
  component of the bordered boundary of $Y$. Then there is a homotopy
  equivalence
  \[
    \Mor_{\Alg(-\PMC)}\bigl(\Alg(-\PMC)\DT\BSD(Y),\Alg(-\PMC)\bigr)
    \simeq \BSDA(-Y)
  \]
  where, on the right side, $F(\PMC)$ is treated as type $A$ boundary
  and the other components as type $D$ boundary.

  In particular, given another bordered-sutured manifold $Y'$ and an
  identification of $F(\PMC)$ with one of the boundary components of
  $Y'$,
  \[
    \Mor_{\Alg(-\PMC)}\bigl(\BSD(Y),\BSD(Y')\bigr)
    \simeq \BSD((-Y)\cup_{F(\PMC)}Y').
  \]
\end{theorem}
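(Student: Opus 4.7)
The plan is to prove the first homotopy equivalence directly using a dualization argument together with orientation reversal, and then deduce the second equivalence from the first by a standard algebraic manipulation combined with the bordered-sutured pairing theorem.

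For the second equivalence, the starting point is the general algebraic identity that for type $D$ structures $M, N$ over a \dg algebra $A$,
\[
  \Mor_A(M,N) \simeq \Mor_A(A\DT M, A)\DT N,
\]
which holds because the bar resolution of $A\DT M$ computes the derived Hom. Applied with $M=\BSD(Y)$, $N=\BSD(Y')$, and $A=\Alg(-\PMC)$, the first equivalence of the theorem rewrites the right-hand side as $\BSDA(-Y)\DT\BSD(Y')$, and Zarev's pairing theorem~\cite[Theorem 8.7]{Zarev09:BorSut} identifies this with $\BSD((-Y)\cup_{F(\PMC)}Y')$.

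For the first equivalence, the key observation is that the left-hand side is the standard construction that turns a type $D$ structure $M$ over $\Alg(-\PMC)$ into a right $A_\infty$-module over the same algebra; in bordered(-sutured) Floer theory this operation is geometrically realized by orientation reversal. I would establish this by choosing a bordered-sutured Heegaard diagram $\HD$ for $Y$, observing that a Heegaard diagram for $-Y$ is obtained by reversing the orientation of $\HD$ and appropriately swapping the roles of $\alpha$- and $\beta$-arcs on the $F(\PMC)$ boundary, and then checking that under the natural identification of generators the $A_\infty$-operations produced by the algebraic dualization match those arising from $-\HD$ via the definition of $\BSDA$. This is the bordered-sutured analogue of the Mor-pairing results of~\cite{LOTHomPair}, and the modifications needed to pass from the bordered setting to the bordered-sutured setting are formal, since the $A_\infty$-structures and the bar resolution arguments are defined in the same way.

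The ``special'' hypothesis plays a decisive role. For a general bordered-sutured manifold, orientation reversal interchanges $R_+$ and $R_-$, and this interchange at the bordered boundary introduces a join twist on the sutured structure (see~\cite{Zarev:JoinGlue}) that would obstruct a clean identification with $\BSDA(-Y)$; instead, an extra ``half-identity'' type bimodule would have to be inserted. When each component of $R_\pm$ at the boundary component $F(\PMC)$ is a single bigon, the relevant join is trivial, so no correction factor appears and the dualization matches $\BSDA(-Y)$ directly. I expect the main obstacle to be the careful bookkeeping of orientation conventions and the distinction between $\Alg(\PMC)$ and $\Alg(-\PMC)=\Alg(\PMC)^{\op}$, together with verifying that the other bordered boundary components, which remain type $D$ on both sides of the equivalence, are unaffected by the dualization performed on the $F(\PMC)$ factor.
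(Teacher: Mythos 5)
Your proposal is correct and follows the same route as the paper: the first equivalence is the bordered--sutured analogue of the Mor-pairing of~\cite{LOTHomPair}, with the \emph{special} hypothesis serving precisely to kill the join/twisting-slice correction that appears for general bordered--sutured manifolds~\cite{Zarev:JoinGlue}; the paper itself simply cites~\cite[Theorem 2.6]{AL19:incompressible} (whose proof lives in Zarev's work) together with the observation that the twisting slice acts trivially. Your derivation of the second equivalence from the first, via $\Mor_A(M,N)\simeq\Mor_A(A\DT M,A)\DT N$ followed by the pairing theorem, is exactly what the paper means by ``tensoring both sides with $\BSD(Y')$.'' The one place you are a bit optimistic is the claim that the bordered-to-bordered--sutured modifications are ``formal'': the paper explicitly notes that most of the work is carried out in~\cite{Zarev:JoinGlue}, so a complete self-contained argument along your lines would need to reproduce the analysis of the twisting slice, not merely observe that it is trivial in the special case.
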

\begin{proof}
  The proof is the same as the proof of~\cite[Theorem
  2.6]{AL19:incompressible} (most of the work of which is
  in~\cite{Zarev:JoinGlue}), observing that the twisting slice acts
  trivially on a special bordered-sutured manifold. The second
  statement follows from the first by tensoring both sides with
  $\BSD(Y')$.
\end{proof}
Versions of Theorem~\ref{thm:bs-dual} where the other
boundary components of $Y$ or $Y'$ are treated as type $A$ rather than
$D$ boundary follow from the same proof or, in many cases, by tensoring with the type \AAm\ identity bimodule.

\subsection{Twisted bordered-sutured Floer homology}\label{sec:twisted-bs}
Bordered-sutured Floer homology with twisted coefficients is not developed in Zarev's
papers, but is a straightforward combination of his theory with
Ozsv\'ath-Szab\'o's construction of $\HFa$ with twisted coefficients. 
The group of periodic domains $\pi_2(\x,\x)$ in a bordered-sutured
Heegaard diagram representing $(Y,\Gamma,\phi\co F(\PMC)\to \bdy Y)$
is isomorphic to
$H_2(Y,\phi(F(\PMC)))$~\cite[p. 24]{Zarev09:BorSut}. Note that $\phi(F(\PMC))$ is
the bordered boundary of $Y$, so in particular has no closed
components; abusing notation, we will use $F$ to denote the image
$\phi(F)\subset\bdy Y$. Thus, one can
define the twisted bordered-sutured modules $\tBSD(Y)$ and $\tBSA(Y)$,
similarly to the construction of $\tCFa$ above. (The bordered case is
discussed in~\cite[Sections 6.4 and 7.4]{LOT1}.) Specifically, we can
view $\Alg(\PMC)\otimes\FF_2[H_2(Y,F)]$ as a \dg algebra over
$\FF_2[H_2(Y,F)]$. 
Then the invariant
$\tBSD(Y)$ is a type $D$ structure over $\Alg(-\PMC)\otimes\FF_2[H_2(Y,F)]$, while
$\tBSA(Y)$ is a (strictly unital) $\Ainf$-module over
$\Alg(\PMC)\otimes\FF_2[H_2(Y,F)]$. The fact that $\Alg(\PMC) \otimes\FF_2[H_2(Y,F)]$ is viewed as a \dg algebra over
the ground ring $\FF_2[H_2(Y, F)]$ means, in particular, that the
operations on $\tBSA(Y)$ satisfy
$m_{1+n}(x,a_1,\dots,e^ha_i,\dots,a_n)=e^hm_{1+n}(x,a_1,\dots,a_i,\dots,a_n)$
for any $e^h\in \FF_2[H_2(Y,F)]$, so strict unitality implies that
$m_{1+n}(x,a_1,\dots,e^h,\dots,a_n)=0$ if $n>1$. The formulas for the
twisted operations are obtained easily from the untwisted case. For
example, for $\tBSA(Y)$, one replaces Zarev's formula~\cite[Definition
7.12]{Zarev09:BorSut} by
\[
  m_k(\x,a_1,\dots,a_{k-1})=\sum_{\substack{\y\in \mathcal{G}(x,y)\\
      \vec{a}(\x,\y,\vec{\rho})=a_1\otimes\dots\otimes a_{k-1}\\\ind(B,\vec{\rho})=1}}
  \Bigl(\#\cM^B_{\mathit{emb}}(\x,\y;\vec{\rho})\Bigr)e^{[B_\x*B*B_\y^{-1}]}\y,
\]
where $B_\x$ and $B_\y$ are domains connecting the generators to the
basepoint, as in the definition of $\tCFa$. (Also as in the definition
of $\tCFa$, one fixes one choice of base generator for each
$\SpinC$-structure.)

One can equivalently view $\tBSA(Y)$ as an $\Ainf$-bimodule over
$\FF_2[H_2(Y,F)]$ and $\Alg(\PMC)$, and $\tBSD(Y)$ as a type \DA\
bimodule over $\Alg(-\PMC)$ and $\FF_2[H_2(Y,F)]$. In both cases,
the bimodule structure is quite simple: for $\tBSA(Y)$, $m_{m,1,n}$
vanishes if $m$ and $n$ are both positive, or if $m>1$ and $n=0$. For
$\tBSD(Y)$, $\delta^1_{1+n}$ is only non-zero for $n=0,1$ and the
operation $\delta^1_{2}$ is given by $\delta^1_2(\x,e^h)=\iota\otimes
(e^h\x)$ (where $\iota$ is the basic idempotent with $\iota \x=\x$).

There are also partially twisted versions of the bordered-sutured
modules. Focusing on $\tBSA$, say, given a module $M$ over
$\FF_2[H_2(Y,F)]$, the bordered-sutured complex twisted by $M$ is
\[
  \tBSA(Y;M)=M\otimes_{\FF_2[H_2(Y,F)]}\tBSA(Y).
\]
A particularly interesting case is when $F_0$ is a union of components
of the bordered boundary of $Y$ and $M=\FF_2[H_1(F_0)]$ is an
algebra over $\FF_2[H_2(Y,F)]$ via the connecting homomorphism
$H_2(Y,F)\to H_1(F)$ and projection
$H_1(F)\to H_1(F_0)$.

The twisted versions $\tBSDA(Y)$ and $\tBSDA(Y;M)$ of $\BSDA(Y)$ are
defined similarly.

We state the twisted-coefficient pairing theorem for the invariant
$\BSDA$, since the other versions are special cases:
\begin{theorem}\label{thm:twisted-pairing}
  Let $Y$ be the result of gluing bordered-sutured $3$-manifolds $Y_1$
  and $Y_2$ along a union of bordered boundary components
  $F(\PMC')$. Let $F_i$ denote the full bordered boundary of $Y_i$ and $F$
  the bordered boundary of $Y$. View $F(Z')$ as type $A$ bordered
  boundary of $Y_1$ and
  view $F(-Z')$ as type $D$ bordered boundary of $Y_2$. Let $M_i$
  be a module over $\FF_2[H_2(Y_i,F_i)]$. Then
  \begin{equation}\label{eq:twisted-pairing}
    \tBSDA(Y_1;M_1)\lsup{\Alg(Z')}\DT_{\Alg(Z')}\tBSDA(Y_2;M_2)\simeq
    \tBSDA(Y;M_1\otimes M_2)
  \end{equation}
  where $M_1\otimes M_2$ is a module over $\FF_2[H_2(Y,F)]$ via
  the ring homomorphism
  $
  \FF_2[H_2(Y,F)]\to \FF_2[H_2(Y_1,F_1)]\otimes\FF_2[H_2(Y_2,F_2)]
  $
  induced by the homomorphism
  $
  H_2(Y,F)\to H_2(Y_1,F_1)\oplus H_2(Y_2,F_2)
  $
  induced by the maps of pairs $(Y,F)\to (Y,F_i\cup Y_j)$ and excision
  $H_2(Y,F_i\cup Y_j)\cong H_2(Y_i,F_i)$, where $\{i,j\}=\{1,2\}$.
\end{theorem}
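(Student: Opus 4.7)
The plan is to adapt Zarev's untwisted pairing theorem to the twisted setting by keeping track of how the group-ring exponents attached to holomorphic curves decompose under gluing. First I would choose bordered-sutured Heegaard diagrams $\HD_1$ and $\HD_2$ for $Y_1$ and $Y_2$ whose restrictions to the boundary arc diagram $Z'$ match (with the appropriate orientations), so that gluing them along $Z'$ yields a bordered-sutured Heegaard diagram $\HD$ for $Y$. Following the definition of $\tBSDA$ from Section~\ref{sec:twisted-bs}, I would fix a base generator $\x_0^{(i)}$ in each $\SpinC$-structure on $\HD_i$ together with homotopy classes $B_{\x^{(i)}}\in\pi_2(\x_0^{(i)},\x^{(i)})$, and take $\x_0^{(1)}\otimes \x_0^{(2)}$ (with the induced disks) as the base data for $\HD$.

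Next, I would invoke Zarev's untwisted pairing theorem~\cite[Theorem 8.7]{Zarev09:BorSut} at the chain level: compatible pairs $(\x^{(1)},\x^{(2)})$ of generators in $\HD_1,\HD_2$ match generators $\x$ of $\HD$; and pairs of domains $(B_1,B_2)$ with matching chord sequences on $F(Z')$ glue to a domain $B$ in $\HD$, with an identification of moduli spaces $\cM^{B_1}\times\cM^{B_2}\simeq \cM^{B}$. The only additional content in the twisted statement is that the exponent $e^{[B_{\x_0^{(1)}}\ast B_1\ast B_{\y^{(1)}}^{-1}]}\otimes e^{[B_{\x_0^{(2)}}\ast B_2\ast B_{\y^{(2)}}^{-1}]}$ assigned to the pair on the left of~\eqref{eq:twisted-pairing} is sent to the exponent $e^{[B_{\x_0}\ast B\ast B_\y^{-1}]}$ appearing in $\tBSDA(Y;M_1\otimes M_2)$, under the ring map specified in the theorem.

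The key observation is that the identification of two-chains is exactly the geometric realization of a Mayer--Vietoris map. Specifically, for the decomposition $Y=Y_1\cup_{F(Z')} Y_2$ with $F=F_1\cup_{\bdy F(Z')}F_2$ (identifying $F(Z')$ with its image in $\bdy Y_i$), excision gives $H_2(Y,F_i\cup Y_j)\cong H_2(Y_j,F_j)$ for $\{i,j\}=\{1,2\}$, and the composition with the maps of pairs $(Y,F)\to (Y,F_i\cup Y_j)$ yields the homomorphism $H_2(Y,F)\to H_2(Y_1,F_1)\oplus H_2(Y_2,F_2)$ appearing in the statement. At the level of domains, cutting a 2-chain representing $B_{\x_0}\ast B\ast B_\y^{-1}$ along $F(Z')$ gives two 2-chains representing $B_{\x_0^{(i)}}\ast B_i\ast B_{\y^{(i)}}^{-1}$ in the respective $Y_i$, and this cutting operation visibly realizes the Mayer--Vietoris map.

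The main obstacle will be the careful algebraic bookkeeping: verifying that all the twisted structure maps (strict unitality in the $\FF_2[H_2(Y_i,F_i)]$ direction for $\tBSA$, the modified $\delta^1_{1+n}$ for $\tBSD$, and the interaction of $\DT$ with the central action of the group rings) combine to give precisely the module action and structure maps of $\tBSDA(Y;M_1\otimes M_2)$. Because the group-ring variables act diagonally and centrally on both factors, this is essentially formal once the domain-level decomposition has been identified with the Mayer--Vietoris map. Finally, tensoring both sides of the untwisted universal version with $M_1\otimes M_2$ over $\FF_2[H_2(Y_1,F_1)]\otimes\FF_2[H_2(Y_2,F_2)]$ produces the stated equivalence, using that $\DT_{\Alg(Z')}$ commutes with the group-ring tensor products because $\Alg(Z')$ acts trivially on the group-ring factors.
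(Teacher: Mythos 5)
Your overall strategy is the same one the paper uses: adapt the untwisted bordered-sutured pairing theorem by tracking how the group-ring exponents decompose under cutting and gluing of domains, identifying the resulting map $\pi_2(\x_0,\x_0)\to\pi_2(\x_0^{(1)},\x_0^{(1)})\oplus\pi_2(\x_0^{(2)},\x_0^{(2)})$ with the homomorphism $H_2(Y,F)\to H_2(Y_1,F_1)\oplus H_2(Y_2,F_2)$ in the statement. The paper cites the pairing theorems of Lipshitz--Ozsv\'ath--Thurston rather than Zarev's, but that is cosmetic; the geometric content (a periodic domain in $\HD$ restricts to periodic domains in $\HD_1$ and $\HD_2$) is the one you identify.

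There is, however, one place where you have the logic backwards, and it does matter. You propose to fix base generators $\x_0^{(i)}$ and disks $B_{\x^{(i)}}$ for each $\HD_i$ independently, and then use $\x_0^{(1)}\otimes\x_0^{(2)}$ ``with the induced disks'' as base data for $\HD$. This is not well-defined as stated: first, the idempotents of $\x_0^{(1)}$ and $\x_0^{(2)}$ need not be complementary along $F(\PMC')$, in which case $\x_0^{(1)}\cup\x_0^{(2)}$ is simply not a generator of $\HD$; second, even when it is, independently chosen $B_{\x^{(1)}}$ and $B_{\x^{(2)}}$ need not have matching $\alpha$-boundary multiplicities and chord sequences along $F(\PMC')$, so there is no domain on $\HD$ for them to glue to. The correct order is the opposite one: choose $B_\x$ for generators of $\HD$ first (one per $\SpinC$-structure on $Y$), restrict these to $\HD_1$ and $\HD_2$ to get a \emph{partial} set of compatible choices of paths on each side, and then extend arbitrarily to the generators of $\HD_i$ that do not appear as a component of any generator of $\HD$. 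Once the base data is arranged this way, the rest of your argument --- matching exponents under cutting, and the formal bookkeeping with the $\DT$ product and the central group-ring actions --- goes through as you describe. So the gap is not conceptual, but the direction of the induction of base data needs to be reversed, and that reversal is exactly the point the paper flags in its proof sketch.
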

The proof is the same as, for instance,~\cite[Theorem 9.44]{LOT1} (see
also~\cite[Theorem 12]{LOT2}). A key point is that a periodic domain
for $Y$ restricts to periodic domains for $Y_1$ and $Y_2$; this
corresponds to the map $H_2(Y,F)\to H_2(Y_1,F_1)\oplus H_2(Y_2,F_2)$
above. Choosing paths $B_\x$ for $Y$ as in
Section~\ref{sec:twisted-HF} gives a partial choice of paths for
the $Y_i$, compatible along the boundary; these can then be extended
to complete choices for the $Y_i$ and used to define the twisted
coefficient complexes.

For example, if $M_i=\FF_2[H_2(Y_i,F_i)]$, then
Formula~\eqref{eq:twisted-pairing} computes
$\tBSDA(Y;\FF_2[H_2(Y,F_1\cup F_2)])$, not $\tBSDA(Y;\FF_2[H_2(Y,F)])$.
(The difference is that $F_1\cup F_2$ includes the part of $\bdy Y_i$
where $Y_1$ and $Y_2$ are glued together, which is not part of $\bdy
Y$.) However, there is an evident split injection
$H_2(Y,F)\hookrightarrow H_2(Y,F_1\cup F_2)$. If one chooses a splitting $p\co H_2(Y,F_1\cup F_2)\to H_2(Y,F)$ then we can form the tensor product
\begin{equation}\label{eq:drop-extra-twisting}
  \tBSDA(Y;\FF_2[H_2(Y,F_1\cup F_2)])\otimes_{\FF_2[H_2(Y,F_1\cup F_2)]}\FF_2[H_2(Y,F)]\simeq \tBSDA(Y).
\end{equation}
So, the pairing theorem determines the standard, totally twisted
bimodule of $Y$ via the two-step process of taking the box product and
then extending scalars under $p_*$.

As a special case of Theorem~\ref{thm:twisted-pairing}, if some component(s) $F_0$ of $\bdy Y$ are parameterized
by $F(\PMC_0)$ then
\begin{equation}\label{eq:twist-pair-Id}
  \tBSDA(Y;\FF_2[H_1(F_0)])\simeq \BSDA(Y)\DT\tBSDA(\Id_{\PMC_0}).
\end{equation}
So, one can reconstruct the boundary twisted version of $\BSDA$ from
the untwisted version. (In particular, the boundary twisting is, in some sense,
redundant with the bordered module structure.)

Finally, we state the K\"unneth theorem for bordered-sutured Floer
homology. (Note that one cannot apply the statement below directly to
an arced bordered manifold with two boundary components: one must
first replace it by a bordered-sutured manifold, which then has
connected boundary.)

\begin{theorem}\label{thm:bs-Kunneth}
  Consider an (internal) connected sum of two bordered-sutured
  3-manifolds, $(Y_1,F_1)$ and $(Y_2,F_2)$.
  \begin{itemize}
  \item If $\bdy Y_1\neq\emptyset$ but $\bdy Y_2=\emptyset$ then
    \begin{align*}
      \BSD(Y_1\#Y_2)&\simeq \BSD(Y_1)\otimes_{\FF_2}\CFa(Y_2)\\
      \tBSD(Y_1\#Y_2)&\simeq \tBSD(Y_1)\otimes_{\FF_2}\tCFa(Y_2).
    \end{align*}
    Here, the second homotopy equivalence uses the evident isomorphism
    $H_2(Y_1\#Y_2,F_1)\cong H_2(Y_1,F_1)\oplus H_2(Y_2)$.
  \item If $\bdy Y_1\neq\emptyset$ and $\bdy Y_2\neq\emptyset$ then
    \begin{align*}
      \BSD(Y_1\#Y_2)&\simeq
      \BSD(Y_1)\otimes_{\FF_2}\BSD(Y_2)\otimes_{\FF_2}\CFa(S^2\times S^1)\\
      \tBSD(Y_1\#Y_2,F_1\amalg F_2)&\simeq
      \tBSD(Y_1)\otimes_{\FF_2}\tBSD(Y_2)\otimes_{\FF_2}\tCFa(S^2\times S^1).
    \end{align*}
    Here, the second homotopy equivalence uses the evident isomorphism
    $H_2(Y_1\#Y_2,F_1\amalg F_2)\cong H_2(Y_1,F_1)\oplus H_2(Y_2,F_2)\oplus\ZZ\cong
    H_2(Y_1,F_1)\oplus H_2(Y_2,F_2)\oplus H_2(S^2\times S^1)$. (Recall
    that the $F_i$ are just the bordered parts of the boundary, so
    have no closed components.)
  \end{itemize}
\end{theorem}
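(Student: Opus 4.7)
The plan is to adapt the standard K\"unneth-type argument for $\HFa$ (going back to~\cite[Proposition 6.1]{OS04:HolDiskProperties}) to the bordered-sutured setting. In each case the strategy is the same: build an explicit bordered-sutured Heegaard diagram for $Y_1\#Y_2$ by combining Heegaard diagrams for $Y_1$ and $Y_2$ near the connect-sum region, identify the generators with those of the tensor product, and stretch the neck along the connect-sum circle to show that the differential splits in the required way.

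For Case~1, I would fix a bordered-sutured Heegaard diagram $\HD_1$ for $Y_1$ together with an interior point $p_1\in\Sigma_1$ disjoint from all the $\alpha$- and $\beta$-curves, and a pointed Heegaard diagram $(\HD_2,z_2)$ for $Y_2$. Forming the interior connect sum $\HD=\HD_1\#_{p_1,z_2}\HD_2$ by removing small disks at $p_1$ and $z_2$ and identifying the resulting boundary circles yields a bordered-sutured Heegaard diagram for $Y_1\#Y_2$ whose generators are canonically the pairs $(\x_1,\x_2)$ of generators of $\HD_1$ and $\HD_2$. A neck-stretching argument along the connect-sum circle, identical in form to~\cite[Proposition 6.1]{OS04:HolDiskProperties}, shows that any index-1 holomorphic curve in $\HD$ degenerates to an index-1 curve in one factor and a constant in the other; the constraint $n_{z_2}=0$ in $\HD_2$ guarantees that only differentials counted by $\CFa(Y_2)$ appear on the $Y_2$ side. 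This gives the untwisted statement, and for the twisted version one observes via Mayer-Vietoris that $H_2(Y_1\#Y_2,F_1)\cong H_2(Y_1,F_1)\oplus H_2(Y_2)$, since $Y_2$ is closed and the connect-sum sphere bounds in $Y_2\setminus B^3$.

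Case~2 proceeds along parallel lines but requires extra care because the connect-sum sphere now represents a nontrivial class in $H_2(Y_1\#Y_2,F_1\amalg F_2)$. Starting from bordered-sutured diagrams $\HD_1,\HD_2$ with interior points $p_i$, I would construct a bordered-sutured Heegaard diagram for $Y_1\#Y_2$ by interior connect sum combined with a compression body analysis verifying that the resulting diagram matches the boundary connect sums $H_1^\alpha\natural H_2^\alpha$ and $H_1^\beta\natural H_2^\beta$; this analysis forces an extra handle on the connect-sum tube carrying a pair of parallel $\alpha$- and $\beta$-curves meeting in two points, exactly as in the standard genus-one diagram for $S^2\times S^1$. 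Neck-stretching then identifies $\BSD(Y_1\#Y_2)$ with the tensor product $\BSD(Y_1)\otimes\BSD(Y_2)\otimes\CFa(S^2\times S^1)$. The twisted version uses the identification $H_2(Y_1\#Y_2,F_1\amalg F_2)\cong H_2(Y_1,F_1)\oplus H_2(Y_2,F_2)\oplus H_2(S^2\times S^1)$ in which the last summand is generated by the connect-sum sphere itself.

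The main obstacle I anticipate is the geometric bookkeeping in Case~2: verifying rigorously that the interior connect-sum of the Heegaard surfaces, together with the required curves on the connect-sum tube, produces a bordered-sutured Heegaard diagram representing $Y_1\#Y_2$ (and not some stabilization of it), and matching the tube's contribution precisely with the two generators and trivial differential of $\CFa(S^2\times S^1)$. Once this diagram is in hand and its generators are correctly enumerated, the neck-stretching argument and the splitting of domains under Mayer-Vietoris are essentially the same as in the closed case.
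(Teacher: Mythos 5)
The key device the paper uses, and which your argument is missing, is to take the internal connect sum \emph{near a suture of $\HD_1$}, i.e., with $p_1$ in a region of $\Sigma_1\setminus(\alphas\cup\betas)$ adjacent to $\bdy\Sigma_1$. That region is the bordered-sutured analogue of the basepoint region: every domain counted in $\BSD$ or $\tBSD$ must have multiplicity zero there. After the connect sum, the merged region (formed from the $p_1$-region of $\Sigma_1$ and the $z_2$-region of $\Sigma_2$) is still adjacent to $\bdy\Sigma_1$, so no domain in $\HD_1\#\HD_2$ ever touches the neck. Domains therefore split as the disjoint union of a domain in $\HD_1$ and a domain in $\HD_2$ with $n_{z_2}=0$, the holomorphic curves split accordingly, and the differential is a product differential with no neck-stretching argument required.

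Your proposal instead puts $p_1$ at an arbitrary interior point disjoint from the curves, and then invokes neck stretching, asserting that ``the constraint $n_{z_2}=0$ in $\HD_2$ guarantees that only differentials counted by $\CFa(Y_2)$ appear on the $Y_2$ side.'' This is the gap. Once the disk around $z_2$ has been excised and glued to $\HD_1$, nothing in the definition of $\BSD(\HD_1\#\HD_2)$ constrains domains from having positive multiplicity in the merged region; such domains generically exist, and after stretching the neck the degenerate pieces on the $\Sigma_2$ side are relative curves with boundary on the neck circle, not closed disks with $n_{z_2}=0$. You would effectively be seeing a plus-flavored differential on the $Y_2$ side, and the claimed tensor-product splitting does not follow. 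Placing $p_1$ near a suture dissolves the issue entirely, and with that change the rest of your outline --- the $H_2$ identifications for the twisted statements via Mayer--Vietoris, and the pair of parallel $\alpha$- and $\beta$-circles in the connect-sum annulus supplying the $\CFa(S^2\times S^1)$ factor in case two --- matches the paper's argument. (One minor slip: those new circles lie in the genus-$(g_1+g_2)$ neck annulus itself; there is no additional handle, which would just be a stabilization.)
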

\begin{proof}
  All of these statements follow easily by taking the (internal)
  connected sum of Heegaard diagrams for $Y_1$ and $Y_2$ near a suture
  (basepoint). In the second case, to obtain a Heegaard diagram for
  $Y$, we then have to add a pair of $\alpha$- and $\beta$-circles
  inside the connected sum neck, which gives the extra
  $\CFa(S^2\times S^1)$ factor.
\end{proof}

\subsection{Twisted sutured Floer homology}
Here, we collect some properties we need of twisted sutured Floer homology.
Twisted sutured Floer homology can be viewed as a special case of
twisted bordered-sutured Floer homology, in which the bordered
boundary is empty. More directly, given a balanced sutured manifold
$(Y,\Gamma)$, there is a twisted sutured Floer complex
$\tSFC(Y,\Gamma)$ over $\FF_2[H_2(Y)]$, extending the untwisted
case~\cite{Juhasz06:Sutured}, defined as follows. Given a sutured
Heegaard diagram $\HD=(\Sigma,\alphas,\betas)$ for $(Y,\Gamma)$ and a
point $\x_0\in T_\alpha\cap T_\beta$, there is a map
$\pi_2(\x_0,\x_0)\to H_2(Y)$, which we denote
$B\mapsto [B]$~\cite[Definition 3.9]{Juhasz06:Sutured}. (If the
Heegaard diagram is sufficiently stabilized, or we work in the
cylindrical setting, this map is an isomorphism.)  The complex
$\tSFC(Y,\Gamma)$ is freely generated over $\FF_2[H_2(Y)]$ by
$T_\alpha\cap T_\beta$ with differential as in
Formula~\eqref{eq:tHF-diff} (without the requirement on $n_z$, since
sutured Heegaard diagrams have punctures instead of basepoints). Given
an $\FF_2[H_2(Y)]$-module $M$, $\tSFC(Y,\Gamma;M)$, $\tSFH(Y,\Gamma)$,
and $\tSFH(Y,\Gamma;M)$ are defined as in the closed case.

The key property of sutured Floer homology is its behavior under
surface decompositions. The twisted version of Juh\'asz's surface
decomposition theorem~\cite[Theorem 1.3]{Juhasz08:SuturedDecomp} is:
\begin{proposition}\label{prop:twist-surf-decomp}
  Let $(Y,\Gamma)$ be a balanced sutured manifold, $S\subset Y$
  a good decomposing surface~\cite[Defintion
  4.6]{Juhasz08:SuturedDecomp}, and $M$ a module over
  $\FF_2[H_2(Y)]$. Let $(Y',\Gamma')$ be the result of decomposing
  $(Y,\Gamma)$ along $S$. Then there is a module $M'$ over
  $\FF_2[H_2(Y')]$ so that
  \[
    \tSFH(Y',\Gamma';M')\cong \bigoplus_{\spinc \text{ outer w.r.t.\ } S}
    \tSFH(Y,\Gamma;\spinc;M).
  \]
  Moreover, as abelian groups, $M'\cong M$.
\end{proposition}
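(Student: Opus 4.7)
The plan is to adapt Juh\'asz's proof of the surface decomposition theorem~\cite{Juhasz08:SuturedDecomp}, carefully tracking the group-ring coefficients throughout. First, invoke Juh\'asz's construction of a balanced sutured Heegaard diagram $\HD=(\Sigma,\alphas,\betas)$ for $(Y,\Gamma)$ adapted to $S$: such a diagram has a distinguished subset $\Gens_{\mathrm{outer}}\subset T_\alpha\cap T_\beta$ of ``outer'' generators and an associated sutured Heegaard diagram $\HD'=(\Sigma',\alphas',\betas')$ for $(Y',\Gamma')$ obtained by surgering $\Sigma$ along certain arcs coming from $\bdy S$. Juh\'asz establishes a bijection $\Gens_{\mathrm{outer}}\leftrightarrow T_{\alpha'}\cap T_{\beta'}$ and shows, using a sufficiently stretched almost complex structure, that the differentials on $\CF(\HD')$ and on the summand $\bigoplus_{\spinc\text{ outer}}\CF(\HD,\spinc)$ match disk-for-disk, with each domain $B'$ in $\HD'$ naturally identified with a domain $B^\sharp$ in $\HD$ supported in the common portion of the two Heegaard surfaces.

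Next, define $M'$. Since $(Y,\Gamma)$ is recovered from $(Y',\Gamma')$ by gluing the two boundary copies $S_+,S_-$ of $S$, or equivalently via the inclusion $Y'\hookrightarrow Y$, there is a natural homomorphism $q_*\co H_2(Y')\to H_2(Y)$ and hence a ring map $\FF_2[H_2(Y')]\to\FF_2[H_2(Y)]$. Let $M'$ be $M$ with $\FF_2[H_2(Y')]$ acting via pullback along this ring map; then $M'\cong M$ as abelian groups. The core step is to promote Juh\'asz's chain-level identification to the twisted setting: choose a base generator $\x_0^{\spinc}\in\Gens_{\mathrm{outer}}$ for each outer $\SpinC$-structure and transport it, via Juh\'asz's bijection, to a base generator for $\HD'$. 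For any domain $B'\in\pi_2(\x',\y')$ in $\HD'$ counted in Formula~\eqref{eq:tHF-diff}, the corresponding domain $B^\sharp$ in $\HD$ satisfies $q_*[B^\sharp]=[B']$ in $H_2(Y)$, since $B'$ and $B^\sharp$ have identical support in the shared region and any difference comes from the identification of $S_+$ with $S_-$. Hence the twisted differential on $\tSFC(\HD';M')$ matches, coefficient-by-coefficient under $q_*$, the differential on $\bigoplus_{\spinc\text{ outer}}\tSFC(\HD,\spinc;M)$, and passing to homology yields the claimed isomorphism.

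The main obstacle is the domain-tracking computation: one must verify that the bijection of generators is compatible with the identification $\pi_2(\x,\x)\cong H_2(Y)$ (which in the sutured setting has no $\ZZ$ summand) and that the choice of base paths $B_\x$ on the $(Y',\Gamma')$-side inherited from those on the $(Y,\Gamma)$-side yields the correct twisted coefficients. Both reduce to the statement that periodic domains in $\HD'$ push forward under the topological identification to periodic domains in $\HD$ inducing $q_*$ on $H_2$, which follows directly from Juh\'asz's construction of the adapted diagrams. Once this naturality is in hand, the rest of the argument is a routine extension of Juh\'asz's proof, with each $\FF_2$-count of disks promoted to a sum of group-ring monomials that pull back correctly from $\FF_2[H_2(Y)]$ to $\FF_2[H_2(Y')]$.
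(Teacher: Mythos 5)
Your proof is correct but takes a genuinely different route from the paper. You adapt Juh\'asz's original adapted-Heegaard-diagram proof, tracking group-ring coefficients through the outer-generator bijection and neck-stretching/domain-matching argument. The paper instead adapts Zarev's bordered-sutured reproof of the surface decomposition theorem: it writes $Y=(S\times[-2,2])\cup_{S\times\{\pm 2\}}W$ and $Y'\cong W$, uses the isomorphism $\BSD(S\times([-2,-1]\cup[1,2]),\spinc'_k)\cong\BSD(S\times[-2,2],\spinc_k)$, and then applies the already-established twisted pairing theorem (Theorem~\ref{thm:twisted-pairing}). The advantage of the paper's route is that all the holomorphic-curve work and coefficient bookkeeping is packaged into the pairing theorem, so no analysis needs to be redone with twisted coefficients; the module $M'$ drops out of the exact sequence $0=H_2(S)\to H_2(Y)\to H_2(W,S\times\{-2,2\})\to H_1(S)$. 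Your route instead requires re-running the domain-matching argument in Juh\'asz's proof with twisted coefficients in hand; that is more work but is self-contained at the level of sutured Heegaard diagrams. Worth noting: the two definitions of $M'$ actually agree --- the paper's $p^*M$ restricted along $H_2(Y')\to H_2(W,S\times\{-2,2\})$ is independent of the choice of splitting $p$ and equals your $q^*M$, where $q_*\co H_2(Y')\to H_2(Y)$ is inclusion-induced. One small slip in your write-up: the displayed relation should read $[B^\sharp]=q_*[B']$ (the class of the domain in $\HD$ is the pushforward of the class of the corresponding domain in $\HD'$), not $q_*[B^\sharp]=[B']$, since $q_*$ maps $H_2(Y')\to H_2(Y)$ and $[B^\sharp]$ already lives in $H_2(Y)$.
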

\begin{proof}
  We adapt Zarev's proof of the sutured decomposition
  theorem~\cite[Theorem 10.5]{Zarev09:BorSut}, and will assume some
  familiarity with it. (Zarev's figure~\cite[Figure
  10]{Zarev09:BorSut} is perhaps especially enlightening.) Decompose
  \begin{equation}\label{eq:tw-surf-decomp-pf-1}
    Y=(S\times [-2,2])\cup_{S\times\{\pm 2\}}W.
  \end{equation}
  Viewing $S\times[-2,2]$ as the identity cobordism of the sutured
  surface $S$ makes it into a bordered-sutured manifold; make $W$ into
  a bordered-sutured manifold so that
  Equation~\eqref{eq:tw-surf-decomp-pf-1} holds as sutured
  manifolds. There is a bordered-sutured structure on
  $S\times([-2,-1]\cup[1,2])$ so that
  \begin{equation}\label{eq:tw-surf-decomp-pf-2}
    Y'=(S\times \left([-2,-1] \cup[1,2]\right))\cup_{S\times\{\pm 2\}}W
  \end{equation}
  as sutured manifolds; see Zarev's paper for an explicit
  description. Moreover, for these bordered-sutured structures,
  \begin{equation}\label{eq:tw-surf-decomp-pf-3}
    \BSD(S\times \left([-2,-1] \cup[1,2]\right),\spinc_k')\cong \BSD(S\times[-2,2],\spinc_k)
  \end{equation}
  where $\spinc_k$ is the $\SpinC$-structure on $S\times[-2,2]$ so
  that in the corresponding idempotents for $\BSD(S\times[-2,2])$, all
  the $\alpha$-arcs corresponding to $S\times\{-2\}$ are occupied,
  and none corresponding to $S\times\{2\}$ are; and $\spinc_k'$ is the $\SpinC$-structure on $S\times [-2,-1] \cup[1,2]$ which agrees with $\spinc_k$ on $S\times\{\pm 2\}$. (See Zarev's paper
  for a little more discussion.)
  
  Combining the long exact sequence for the pair $(Y,S\times[-2,2])$
  and the excision isomorphism
  $H_*(Y,S\times[-2,2])\cong H_*(W,S\times\{-2,2\})$ gives
  \[
    0=H_2(S)\to H_2(Y)\to H_2(W,S\times\{-2,2\})\to H_1(S)\to\cdots.
  \]
  Since $H_1(S)$ is free, we can choose a splitting
  $p\co H_2(W,S\times\{-2,2\})\to H_2(Y)$. Let $M_W=p^*M$ be the
  module over $\FF_2[H_2(W,S\times\{-2,2\})]$ obtained by restricting
  scalars. Equation~\eqref{eq:tw-surf-decomp-pf-1} and the twisted
  pairing theorem, Theorem~\ref{thm:twisted-pairing}, give
  \[
    \tSFC(Y,\Gamma;\spinc_W\cup\spinc_S;M)\simeq \tBSA(W;\spinc_W;M_W)\DT\BSD(S\times[-2,2],\spinc_S).
  \]
  The outer $\SpinC$-structures on $Y$ are the ones that restrict to
  $\spinc_k$ on $S\times[-2,2]$, so in particular
  \begin{equation}
    \label{eq:tw-surf-decomp-4}
    \bigoplus_{\spinc \text{ outer w.r.t.\ }
      S}\tSFC(Y,\Gamma;\spinc;M)\simeq \bigoplus_{\spinc_W}\tBSA(W,\spinc_W;M_W)\DT\BSD(S\times[-2,2],\spinc_k).
  \end{equation}
  Since $\BSD(S\times[-2,2],\spinc_k)$ is supported on the idempotent
  where all the arcs corresponding to $S\times\{-2\}$ are occupied,
  and none corresponding to $S\times\{2\}$ are,
  the box tensor product on the right vanishes unless
  $\spinc_W$ has the same property. So, we could restrict the direct sum to these
  $\SpinC$-structures, which we could again call outer.

  The homeomorphism $Y'\cong W$ induces a map
  $i\co H_2(Y')\to H_2(W,S\times\{-2,2\})$. Let
  $M'=i^*M_W$. By Equation~\eqref{eq:tw-surf-decomp-pf-2} and the
  twisted pairing theorem,
  \begin{equation}
    \label{eq:tw-surf-decomp-5}
    \bigoplus_{\spinc'}\tSFC(Y',\Gamma';\spinc';M')\simeq \bigoplus_{\spinc_W}\tBSA(W,\spinc_W;M_W)\DT\BSD(S\times([-2,-1]\cup[1,2]),\spinc_k').
  \end{equation}

  Combining
  Equations~\eqref{eq:tw-surf-decomp-pf-3},~\eqref{eq:tw-surf-decomp-4},
  and~\eqref{eq:tw-surf-decomp-5} gives the result.
\end{proof}

Recall that an irreducible balanced sutured manifold is \emph{taut} if $R(\gamma)$ is Thurston-norm minimizing in $H_2(Y,\Gamma)$.
\begin{corollary}\label{cor:taut-twist-SFH-nontriv}
  If $(Y,\Gamma)$ is a taut balanced sutured manifold and $M$ is a
  nontrivial module over $\ZZ[H_2(Y)]$ then $\tSFH(Y,\Gamma;M)$ is
  nontrivial. In fact, $\tSFH(Y,\Gamma;M)$ has a summand isomorphic to
  $M$ as an abelian group.
\end{corollary}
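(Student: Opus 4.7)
The plan is to mimic Juh\'asz's proof that $\SFH$ is nontrivial for taut sutured manifolds~\cite[Theorem 1.4]{Juhasz08:SuturedDecomp}, substituting the twisted surface decomposition formula (Proposition~\ref{prop:twist-surf-decomp}) for its untwisted analog. The overall idea is to reduce, via a sutured manifold hierarchy, to the case of a product sutured manifold, on which the twisted coefficient system has nothing to twist with and the computation is immediate.

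\textbf{Step 1: Hierarchy.} By Gabai's existence theorem for sutured manifold hierarchies, together with Juh\'asz's refinement~\cite[Section 4]{Juhasz08:SuturedDecomp} ensuring that every decomposing surface can be chosen to be good in the sense required by Proposition~\ref{prop:twist-surf-decomp}, there is a sequence of surface decompositions
\[
  (Y,\Gamma)=(Y_0,\Gamma_0)\stackrel{S_1}{\leadsto}(Y_1,\Gamma_1)\stackrel{S_2}{\leadsto}\cdots\stackrel{S_n}{\leadsto}(Y_n,\Gamma_n)
\]
in which each $(Y_i,\Gamma_i)$ is taut and $(Y_n,\Gamma_n)$ is a disjoint union of product sutured manifolds.

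\textbf{Step 2: Iterated twisted decomposition.} Set $M_0=M$ and apply Proposition~\ref{prop:twist-surf-decomp} inductively to obtain a module $M_i$ over $\FF_2[H_2(Y_i)]$ with $M_i\cong M_{i-1}$ as abelian groups, together with an isomorphism
\[
  \tSFH(Y_i,\Gamma_i;M_i)\cong\bigoplus_{\spinc\text{ outer w.r.t.\ }S_i}\tSFH(Y_{i-1},\Gamma_{i-1};\spinc;M_{i-1}).
\]
Since the right-hand side is a direct summand of $\tSFH(Y_{i-1},\Gamma_{i-1};M_{i-1})$ as an abelian group, iterating realizes $\tSFH(Y_n,\Gamma_n;M_n)$ as an abelian group summand of $\tSFH(Y,\Gamma;M)$.

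\textbf{Step 3: The product case.} For a product sutured manifold $(\Sigma\times I,\bdy\Sigma\times\{1/2\})$ one has $H_2(Y_n)=0$, so $\FF_2[H_2(Y_n)]\cong\FF_2$ and $M_n$ is merely an $\FF_2$-vector space isomorphic to $M$ as an abelian group. Since the untwisted $\SFH$ of a product sutured manifold is $\FF_2$, concentrated in a single $\SpinC$-structure, we get $\tSFH(Y_n,\Gamma_n;M_n)\cong M_n\cong M$. Combined with Step 2, this produces the required summand of $\tSFH(Y,\Gamma;M)$ isomorphic to $M$ as an abelian group, and in particular shows that $\tSFH(Y,\Gamma;M)$ is nontrivial.

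The only real subtlety is ensuring that the hierarchy can be chosen with every $S_i$ good in the sense needed to apply Proposition~\ref{prop:twist-surf-decomp}; this is precisely what Juh\'asz arranges in the untwisted case, and the same choices work verbatim in the twisted setting since the twisted decomposition formula has the same geometric hypotheses as its untwisted counterpart.
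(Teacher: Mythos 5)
Your proof is correct and is precisely the argument the paper is invoking: the paper's proof simply says to run Juh\'asz's hierarchy argument from the untwisted case, substituting Proposition~\ref{prop:twist-surf-decomp} for the untwisted surface decomposition theorem, which is exactly what your three steps spell out. The only cosmetic remark is that the corollary's statement writes $\ZZ[H_2(Y)]$ while the twisted theory in the paper is constructed over $\FF_2[H_2(Y)]$; this does not affect the argument.
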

\begin{proof}
  The proof is the same as Juh\'asz's proof in the untwisted
  case~\cite[Theorem 1.4]{Juhasz08:SuturedDecomp}, using
  Proposition~\ref{prop:twist-surf-decomp} in place of the untwisted
  surface decomposition theorem.
\end{proof}

The K\"unneth theorem holds for balanced sutured manifolds for either
disjoint unions or boundary connected sums. (These operations differ
by a disk decomposition.) For ordinary connected sums, we have:

\begin{lemma}\label{lem:sut-Kunneth}\cite[Proposition 9.15]{Juhasz06:Sutured}
  \begin{enumerate}
  \item Suppose that $Y_1$ is a closed $3$-manifold and
    $(Y_2,\Gamma_2)$ is a balanced sutured 3-manifold. Then
    $\SFH(Y_1\#Y_2,\Gamma_2)\cong
    \HFa(Y_1)\otimes\SFH(Y_2,\Gamma_2)$. Moreover, given modules $M_i$
    over $\ZZ[H_2(Y_i)]$,
    $\tSFH(Y_1\#Y_2,\Gamma_2;M_1\otimes M_2)\cong
    \tHFa(Y_1;M_1)\otimes\tSFH(Y_2,\Gamma_2;M_2)$ as modules over
    $\ZZ[H_2(Y_1\#Y_2)]\cong \ZZ[H_2(Y_1)]\otimes\ZZ[H_2(Y_2)]$.
  \item Suppose $(Y_i,\Gamma_i)$, $i=1,2$, is a balanced sutured
    manifold. Then $\SFH(Y_1\#Y_2,\Gamma_1\cup\Gamma_2)\cong
    \SFH(Y_1,\Gamma_1)\otimes\SFH(Y_2,\Gamma_2)\otimes\HFa(S^2\times S^1)$. Moreover, given modules $M_i$ over $\ZZ[H_2(Y_i)]$ and a module $M$ over
    $\ZZ[t,t^{-1}]$, if we identify $H_2(Y_1\#Y_2)\cong H_2(Y_1)\oplus
    H_2(Y_2)\oplus\ZZ$ where the third summand is generated by the
    connected sum sphere, then
    $\tSFH(Y_1\#Y_2,\Gamma_1\cup\Gamma_2;M_1\otimes M_2\otimes M)\cong
    \tSFH(Y_1,\Gamma_1;M_1)\otimes\tSFH(Y_2,\Gamma_2;M_2)\otimes
    \tHFa(S^2\times S^1;M)$.
  \end{enumerate}
\end{lemma}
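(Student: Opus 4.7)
The plan is to mimic Juh\'asz's proof of the untwisted K\"unneth formula~\cite[Proposition 9.15]{Juhasz06:Sutured} and the argument for the bordered-sutured K\"unneth Theorem~\ref{thm:bs-Kunneth} above: form a sutured Heegaard diagram for $Y_1\#Y_2$ by assembling diagrams for the two summands, then use a neck-stretching argument to split the differential and periodic domains. The only new content in the twisted setting is bookkeeping on $H_2$-classes.

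For part (1), take a pointed Heegaard diagram $\HD_1=(\Sigma_1,\alphas_1,\betas_1,z_1)$ for $Y_1$ and a sutured Heegaard diagram $\HD_2=(\Sigma_2,\alphas_2,\betas_2)$ for $(Y_2,\Gamma_2)$, pick a point $p\in\Sigma_2$ disjoint from the $\alpha$- and $\beta$-curves, and form $\HD=(\Sigma_1\#\Sigma_2,\alphas_1\cup\alphas_2,\betas_1\cup\betas_2)$ by connect-summing at $z_1$ and $p$. This $\HD$ is a sutured Heegaard diagram for $(Y_1\#Y_2,\Gamma_2)$, with generators $\Gens(\HD_1)\times\Gens(\HD_2)$. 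After sufficiently stretching the neck, any index-one disk is forced onto one side because disks in $\HD_1$ avoid $z_1$ and disks in $\HD_2$ avoid a puncture near $p$, so the differential factors as a tensor product. Mayer--Vietoris gives $H_2(Y_1\#Y_2)\cong H_2(Y_1)\oplus H_2(Y_2)$ (the connect-sum sphere bounds on the closed side and is null-homologous), and periodic domains on $\HD$ split compatibly with this isomorphism. This yields the twisted statement after tensoring with $M_1\otimes M_2$.

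For part (2), when both summands have nonempty boundary, a bare connect sum of Heegaard surfaces would realize the boundary connect sum $Y_1\natural Y_2$, so one needs an additional pair of curves to recover the interior connect sum. Connect $\Sigma_1$ and $\Sigma_2$ by a tube at interior points $p_i\in\Sigma_i\setminus(\alphas_i\cup\betas_i)$, and add a new $\alpha$-circle $\alpha_0$ and $\beta$-circle $\beta_0$, both isotopic to the core circle of the tube and perturbed to meet transversely in two points; this is exactly the construction used in the proof of Theorem~\ref{thm:bs-Kunneth}. The tube together with $\alpha_0,\beta_0$ realizes $Y_1\#Y_2$: the connect-sum $S^2$ lives inside the tube and generates the new $\ZZ$ summand in $H_2(Y_1\#Y_2)\cong H_2(Y_1)\oplus H_2(Y_2)\oplus\ZZ$, while $\alpha_0\cap\beta_0=\{\theta_+,\theta_-\}$ supplies the standard Heegaard picture of $\HFa(S^2\times S^1)$. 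Generators of the resulting diagram split as $\Gens(\HD_1)\times\Gens(\HD_2)\times\{\theta_+,\theta_-\}$, and neck-stretching splits both the differential and the periodic domains three ways, with the $H_2$-decomposition matching the Mayer--Vietoris one. Tensoring with $M_1\otimes M_2\otimes M$ yields the twisted statement. The main obstacle is verifying that the identification of periodic domains across the three pieces agrees with the Mayer--Vietoris splitting so that the generator of the new $\ZZ$ summand really corresponds to the connect-sum sphere; once that is checked, the rest of the argument is standard.
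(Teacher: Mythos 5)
Your proposal is correct and takes essentially the same route as the paper, whose proof is a one-line citation to Juh\'asz's untwisted argument together with the remark that the twisted cases follow from the same reasoning; you have simply spelled out that argument (connected sum of Heegaard diagrams near a suture/basepoint, extra $\alpha_0,\beta_0$ circles in the neck for the interior connect sum of two sutured pieces, neck-stretching to split the differential, and the Mayer--Vietoris bookkeeping of periodic domains and $H_2$-classes). The one place to be careful---which you gesture at but could state more explicitly---is that the connect-sum point on the sutured side should be taken in a region adjacent to $\bdy\Sigma_2$, so that the merged region meets a suture and the multiplicity there is forced to vanish; as written, ``pick a point $p\in\Sigma_2$ disjoint from the $\alpha$- and $\beta$-curves'' allows choices for which the later clause ``disks in $\HD_2$ avoid a puncture near $p$'' has no content.
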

\begin{proof}
  The untwisted cases were proved by Juh\'asz~\cite[Proposition
  9.15]{Juhasz06:Sutured}; the twisted cases follow from the same
  arguments.
\end{proof}

Juh\'asz showed (combining~\cite[Proposition 9.18]{Juhasz06:Sutured}
and~\cite[Theorem 1.4]{Juhasz08:SuturedDecomp}) that in the untwisted
case, for an irreducible, balanced sutured manifold $(Y,\Gamma)$,
$\SFH(Y,\Gamma)\neq 0$ if and only if $(Y,\Gamma)$ is taut.
We give an indirect proof that, given $Y$, $\Gamma$ can always be
chosen satisfying this property:
\begin{proposition}\label{prop:choose-SFH-nontriv}
  For any 3-manifold $Y$ with boundary there is a choice of sutures
  $\Gamma\subset\bdy Y$ so that $\SFH(Y,\Gamma)\neq 0$. Moreover,
  $\Gamma$ can be chosen so that for each component of $\bdy Y$,
  $\chi(R_+)=\chi(R_-)$.
\end{proposition}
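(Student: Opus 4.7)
The plan is to reduce to the case where $Y$ is irreducible with non-empty boundary via the prime decomposition and the sutured K\"unneth formula, then to exhibit taut balanced sutures on each irreducible piece and invoke Juh\'asz's non-vanishing criterion.

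For the reduction, Lemma~\ref{lem:disj-sphere} gives
\[
  Y \cong Y'_1 \# \cdots \# Y'_h \# \underbrace{(S^2\times S^1)\#\cdots\#(S^2\times S^1)}_{k} \# Y_1 \# \cdots \# Y_\ell,
\]
where each $Y'_j$ is closed irreducible and each $Y_i$ is irreducible with non-empty boundary. Since $\bdy Y = \bigsqcup_i \bdy Y_i$, any choice of sutures $\Gamma_i$ on $\bdy Y_i$ assembles into sutures on $\bdy Y$ with the per-component Euler characteristic balance preserved. Iterating Lemma~\ref{lem:sut-Kunneth}---part~(1) to absorb each closed factor and each $S^2\times S^1$ factor, and part~(2) to merge the $\ell$ sutured pieces---yields
\[
  \SFH\bigl(Y,\textstyle\bigsqcup_i \Gamma_i\bigr) \cong \bigotimes_j \HFa(Y'_j) \otimes \HFa(S^2\times S^1)^{\otimes(k+\ell-1)} \otimes \bigotimes_i \SFH(Y_i,\Gamma_i).
\]
Since $\HFa$ is non-vanishing on every closed 3-manifold and on $S^2\times S^1$, it suffices to produce suitable $\Gamma_i$ on each $Y_i$, so we may assume $Y$ is irreducible with non-empty boundary.

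In this case, by Juh\'asz's criterion (combining \cite[Proposition 9.18]{Juhasz06:Sutured} with \cite[Theorem 1.4]{Juhasz08:SuturedDecomp}), $\SFH(Y,\Gamma) \neq 0$ if and only if $(Y,\Gamma)$ is taut. If $\bdy Y$ has a sphere component, irreducibility forces $Y=B^3$ and the single equatorial suture gives a product sutured ball with $\SFH=\FF_2$ and $\chi(R_\pm)=1$. Otherwise, on each component $\Sigma$ of genus $g\ge 1$ we build $\Gamma\cap\Sigma$ in two stages: first, include a system of simple closed curves hitting every compressing disk boundary for $\Sigma$ in $Y$, guaranteeing that $R_\pm$ are incompressible; second, augment by disjoint non-separating curves so that $\chi(R_+)=\chi(R_-)=1-g$ on $\Sigma$.

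The main obstacle is the Thurston-norm minimization required for tautness, beyond incompressibility of $R_\pm$. I would handle this by appealing to Gabai's theory: start from a Thurston-norm minimizing surface representing a non-zero class in $H_2(Y,\bdy Y)$ and use it to guide the choice of $\Gamma$, so that the components of $R_\pm$ realize minimal Thurston norm in their relative homology classes. With such $\Gamma$ in place, Juh\'asz's theorem furnishes $\SFH(Y,\Gamma)\ne 0$ and the Euler characteristic balance is arranged on each boundary component by construction, completing the proof.
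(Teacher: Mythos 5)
Your reduction to the irreducible case via the prime decomposition and Lemma~\ref{lem:sut-Kunneth} is fine, and the observation that the per-component Euler characteristic balance is preserved under internal connected sum is correct. The gap is at the heart of the argument: you need to produce, on each irreducible $Y_i$ with boundary of positive genus, sutures $\Gamma_i$ making $(Y_i,\Gamma_i)$ a taut balanced sutured manifold with $\chi(R_+)=\chi(R_-)$ on each boundary component, and your proposal does not actually do this. Your two-stage construction (disk-busting curves, then balancing curves) only ensures that $R_\pm$ are incompressible, which, as you acknowledge, is strictly weaker than tautness (one also needs $R(\gamma)$ to be Thurston-norm minimizing in its relative homology class). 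The sentence ``I would handle this by appealing to Gabai's theory\dots use it to guide the choice of $\Gamma$'' is a placeholder rather than a proof: it is not clear how a norm-minimizing surface in $H_2(Y,\bdy Y)$ determines a choice of sutures for which $R_\pm$ realize the norm, nor how one would simultaneously arrange the per-component constraint $\chi(R_+)=\chi(R_-)$. These two requirements genuinely interact, and it is exactly this interaction that makes the proposition nontrivial.

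For comparison, the paper's proof is deliberately indirect and sidesteps Gabai theory entirely: it views $Y$ as a special bordered-sutured manifold, observes that filling the boundary with handlebodies and invoking the pairing theorem together with the nonvanishing of $\HFa$ forces $\BSA(Y)\not\simeq 0$, deduces that some idempotent truncation $\BSA(Y)\cdot\iota$ is nontrivial, and then uses Zarev's identification of $\BSA(Y)\cdot\iota$ with $\SFC(Y,\Gamma_\iota)$ to read off the desired sutures. The Euler-characteristic balance comes for free from a $\SpinC$-structure argument showing $\iota$ may be taken to occupy exactly half the $\alpha$-arcs on each boundary component. If you want to make your classical approach work, you would need to cite or prove an actual existence theorem for taut balanced sutured structures with per-component balance on irreducible 3-manifolds with boundary; without that, the proof is incomplete.
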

\begin{proof}
  Fix some parameterization of $\bdy Y$ by $F(\PMC)$ for some arc
  diagram $\PMC$ making $Y$ into a special bordered-sutured manifold
  with a single suture on each boundary, say. If we fix some filling $Y'$
  of the boundary components of $Y$ by handlebodies $H_1,\dots,H_n$
  then
  \begin{equation}\label{eq:BSA-nontriv}
    \BSA(Y)\DT\bigl(\BSD(H_1)\otimes\cdots\otimes\BSD(H_n)\bigr)
    \simeq \CFa(Y')\otimes (\FF_2\oplus\FF_2)^{\otimes (n-1)}
  \end{equation}
  (since the right side corresponds to a manifold with $n$ $S^2$ boundary components).
  Since $\HFa$ is always nontrivial (by the computation of its Euler
  characteristic if $b_1=0$ and detection of the Thurston norm if
  $b_1>0$), it follows that $\BSA(Y)\not\simeq 0$. Thus, there is some
  idempotent $\iota$ so that $\BSA(Y)\cdot\iota\not\simeq
  0$. Moreover, in the tensor product~\eqref{eq:BSA-nontriv}, the only
  $\SpinC$-structure on $\bdy H_i$ which extends over $H_i$ is the
  middle $\SpinC$-structure (the one with
  $\langle c_1(\spinc),[\bdy H_i]\rangle=0$). So, we can assume the
  idempotent $\iota$ occupies half of the $\alpha$-arcs corresponding
  to each boundary component of $Y$.
  
  Zarev showed that for each idempotent $\iota$ there is a choice of
  sutures $\Gamma_\iota$ on $\bdy Y$ so that $\BSA(Y)\cdot \iota\simeq
  \SFC(Y,\Gamma_\iota)$~\cite[Section 6.1]{Zarev:JoinGlue}. The region 
  $R_+$ on each boundary component is the union of a disk and a strip
  corresponding to each $\alpha$-arc occupied by $\iota$. Since half
  the $\alpha$-arcs are occupied in $\iota$, the Euler characteristic
  is half of the Euler characteristic of the boundary component.
\end{proof}

Notice that Proposition~\ref{prop:choose-SFH-nontriv} is constructive:
given $\PMC$, there is an explicit, finite list of possibilities for $\Gamma$.

\section{Support of Heegaard Floer homology and detection of handlebodies}\label{sec:detect-handlebodies}
\subsection{Definitions of the support}\label{sec:support-of-mod}
In this section, we recall the classical definition of the support, and then give several equivalent definitions of the support of bordered-sutured Floer homology. Only the classical case is needed for Section~\ref{sec:detect-handlebodies}, so the reader interested only in Theorem~\ref{thm:detect-hb} might read the next paragraph and then skip to Section~\ref{sec:HF-support}.

We recall the classical definition of the support of a module.
Given a module $M$ over a commutative ring $R$, let
$\Ann(M)=\{r\in R\mid rm=0\ \forall m\in M\}$ denote the
\emph{annihilator} of $M$, and let $V(\Ann(M))$ be the set of prime
ideals in $R$ containing $\Ann(M)$ (a subvariety of $\Spec(R)$). The
\emph{support} of $M$, $\Supp(M)$, is the set of prime ideals
$I\subset R$ so that $M_I\neq 0$ (where $M_I$ denotes $M$ localized at
$I$, that is, with all elements of $R$ not lying in $I$ inverted). If
$M$ is finitely generated then
$\Supp(M)=V(\Ann(M))$~\cite[pp. 25--26]{Matsumura89:crt}. In
particular, since $\tHFa(Y)$ is finitely generated over
$R=\FF_2[H_2(Y)]\cong \FF_2[x_1^{\pm1},\dots,x_n^{\pm1}]$,
$\Supp(\tHFa(Y))=V(Ann \tHFa(Y))$, and similarly for twisted sutured
Floer homology.

The rest of this section, about the support of the bordered-sutured
modules, is not needed until Section~\ref{sec:HF-detect}.
Given a
bordered-sutured 3-manifold $(Y,\Gamma,\phi\co F(\PMC)\to \bdy Y)$
with $H_2(Y,F)\cong \ZZ^n$, the totally
twisted bordered-sutured module $\tBSD(Y)$ is a module over
$\Alg(\PMC)\otimes \FF_2[H_2(Y,F)]\cong \Alg(\PMC)\otimes
\FF_2[x_1^{\pm1},\dots,x_n^{\pm1}]$. The \emph{support} of $\tBSD(Y)$ is the set
of prime ideals in
\[
  R=\FF_2[H_2(Y,F)]\cong\FF_2[x_1^{\pm1},\dots,x_n^{\pm1}]
\]
so that $R_I\otimes_R\tBSD(Y)$ is not chain homotopy equivalent to the
trivial module. This condition has several equivalent formulations:
\begin{lemma}\label{lem:support-homology}
  The following conditions on a prime ideal $I\subset R$ are
  equivalent:
  \begin{enumerate}[label=(\arabic*)]
  \item\label{item:supp-TFAE1} The module $R_I\otimes_R\tBSD(Y)$ is
    chain homotopy equivalent to the trivial module.
  \item\label{item:supp-TFAE2} The homology of $R_I\otimes_R\tBSD(Y)$
    vanishes.
  \item\label{item:supp-TFAE3} The tensor product
    $R_I\otimes_R H_*\tBSD(Y)$ vanishes.
  \end{enumerate}
\end{lemma}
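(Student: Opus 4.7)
The plan is to establish the cycle $(1) \Rightarrow (2) \Leftrightarrow (3) \Rightarrow (1)$. The implication $(1) \Rightarrow (2)$ is immediate: a chain homotopy equivalence of type $D$ structures induces an isomorphism on the homology of the associated dg modules, and the trivial module has vanishing homology.

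For the equivalence $(2) \Leftrightarrow (3)$, fix a bordered-sutured Heegaard diagram for $Y$ with finite generator set $\mathcal{X}$. Then the dg $\bigl(\Alg(-\PMC)\otimes R\bigr)$-module underlying $\tBSD(Y)$ has underlying $R$-module $\Alg(-\PMC)\otimes_{\Ground}\mathcal{X}\otimes_{\FF_2}R$, which is free (hence flat) over $R$. Since $R_I$ is also flat over $R$ (localization is exact), the base-change functor $R_I\otimes_R(-)$ commutes with passage to homology, yielding the natural identification $H_*\bigl(R_I\otimes_R\tBSD(Y)\bigr)\cong R_I\otimes_R H_*\tBSD(Y)$. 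One side vanishes if and only if the other does.

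For the main implication $(2) \Rightarrow (1)$, choose a provincially admissible bordered-sutured Heegaard diagram for $Y$. The resulting $\tBSD(Y)$ is bounded, meaning that the iterated coactions $(\delta^1)^k$ terminate for $k$ sufficiently large; this boundedness is inherited by $R_I\otimes_R\tBSD(Y)$ since it depends only on the combinatorics of the underlying Heegaard diagram. A standard homological-algebra argument, applied to the associated dg module over $\Alg(-\PMC)\otimes R_I$, shows that a bounded acyclic type $D$ structure is null-homotopic: one applies the cancellation lemma repeatedly to reduce to a minimal model, and a bounded type $D$ structure whose associated dg module has vanishing homology has zero minimal model. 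Hence $R_I\otimes_R\tBSD(Y)$ is chain homotopy equivalent to the trivial module.

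The main obstacle is this last step. One must verify carefully that the ``bounded + acyclic implies contractible'' principle applies in the setting of type $D$ structures over the localized dg algebra $\Alg(-\PMC)\otimes R_I$, and in particular that the boundedness of $\tBSD(Y)$ survives the base change to $R_I$. Neither issue is deep — both reduce to the corresponding combinatorial features of an admissible Heegaard diagram — but both require explicit verification.
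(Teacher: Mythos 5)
Your proof follows essentially the same route as the paper's, and all three implications are handled the same way in substance. The implication $(1)\Rightarrow(2)$ is immediate in both. For $(2)\Leftrightarrow(3)$ the relevant fact is that $R_I$ is flat over $R$ so localization commutes with homology; you additionally invoke freeness of the underlying $R$-module of $\tBSD(Y)$, which is true but not needed for this step. For the key implication $(2)\Rightarrow(1)$, the paper simply cites~\cite[Corollary 2.4.4]{LOT2} (quasi-isomorphism equals homotopy equivalence for type $D$ structures homotopy equivalent to bounded ones), whereas you sketch a direct argument via cancellation to a minimal model.

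The one place your sketch is genuinely thinner than the paper's citation is the final step, ``a bounded type $D$ structure whose associated dg module has vanishing homology has zero minimal model.'' Over a ground ring such as $\Idem(\PMC)\otimes R_I$ with $R_I$ a local ring that is not a field, cancellation can only be performed along arrows whose coefficient is a \emph{unit} of $R_I$; the resulting minimal model therefore has its idempotent-part matrix $A_0$ with entries in the maximal ideal $\mathfrak{m}_I$, not identically zero. One then has to show that such a nonzero minimal type $D$ structure cannot be acyclic, which requires an additional argument (e.g.\ reducing mod $\mathfrak{m}_I$ and filtering by powers of $\Alg_+$, or arguing that $A_0+A_+$ cannot be invertible when $A_0$ has all entries in $\mathfrak{m}_I$ and $A_+$ is nilpotent). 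You acknowledge this needs verification, and indeed the verification is exactly the content packaged into the cited corollary; the paper's choice to cite it is cleaner and sidesteps the local-ring subtlety that your sketch glosses over.
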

\begin{proof}
  Obviously Condition~\ref{item:supp-TFAE1} implies Condition~\ref{item:supp-TFAE2}. Quasi-isomorphism and homotopy equivalence agree for type $D$ structures which are homotopy equivalent to bounded ones (as $\BSD(Y)$ and $\tBSD(Y)$ are)~\cite[Corollary 2.4.4]{LOT2}. So, Condition~\ref{item:supp-TFAE2} implies Condition~\ref{item:supp-TFAE1}, since Condition~\ref{item:supp-TFAE2} implies the inclusion of the trivial module is a quasi-isomorphism. The equivalence of Conditions~\ref{item:supp-TFAE2} and~\ref{item:supp-TFAE3} follows from the fact that homology commutes with localization.
\end{proof}

We can obtain the same object by quotienting by the augmentation ideal
in $\Alg(\PMC)$. That is, let $\Idem(\PMC)$ be the direct sum over
basic idempotents $\iota$ of $\FF_2$; $\Idem(\PMC)$ is
the usual ground ring for $\Alg(\PMC)$. There is an augmentation
$\Alg(\PMC)\to\Idem(\PMC)$ which sends all Reeb chords to $0$; this
makes $\Idem(\PMC)$ into an $\Ainf$ $\Alg(\PMC)$-module (where almost
all $\Ainf$-operations vanish). Given a bordered-sutured manifold $Y$ as
above, we can then form a chain complex
$\Idem(\PMC)\DT_{\Alg(\PMC)} \tBSD(Y)$ over $R$; this is the
result of quotienting $\tBSD(Y)$ by the augmentation ideal in
$\Alg(\PMC)$.

\begin{proposition}\label{prop:tCFD-supp-reinterp}
  The support of $\tBSD(Y)$ is the same as the support of the
  $R$-module $H_*\bigl(\Idem(\PMC)\DT_{\Alg(\PMC)} \tBSD(Y)\bigr)$.
\end{proposition}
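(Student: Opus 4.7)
The proof is almost entirely a matter of unwinding definitions: the plan is to identify $\Idem(\PMC)\DT_{\Alg(\PMC)}\tBSD(Y)$ with the $R$-chain complex whose homology already appears implicitly in Lemma~\ref{lem:support-homology}, after which the equality of supports follows at once from that lemma.

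First I would describe the box product concretely. The $\Ainf$-module $\Idem(\PMC)$ over $\Alg(\PMC)$ has only $m_2$ nonzero, given by the augmentation $\epsilon\co\Alg(\PMC)\to \Idem(\PMC)$ that kills every Reeb chord. Unwinding the definition of $\DT$ (which is well defined here because $\Idem(\PMC)$ is bounded), one finds that $\Idem(\PMC)\DT_{\Alg(\PMC)}\tBSD(Y)$ has underlying $R$-module $\Idem(\PMC)\otimes_{\Idem(\PMC)}\tBSD(Y)\cong \tBSD(Y)$, with differential
\[
\partial \;=\; \bigl((\epsilon\otimes \mathrm{id}_R)\otimes \mathrm{id}\bigr)\circ \delta^1;
\]
that is, $\partial$ is the projection of $\delta^1$ onto its chord-free component in $\Idem(\PMC)\otimes R\otimes \tBSD(Y)$. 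This is exactly the internal $R$-chain complex structure on $\tBSD(Y)$, i.e., the one whose homology is the $R$-module written as $H_*\tBSD(Y)$ in Lemma~\ref{lem:support-homology}(3). Call this chain complex $C$.

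Next I would invoke exactness. Localization at a prime $I\subset R$ is exact, so it commutes both with forming $C$ from $\tBSD(Y)$ and with passing to homology. Hence
\[
R_I\otimes_R H_*(C)\;\cong\; H_*(R_I\otimes_R C)\;\cong\; H_*\bigl(R_I\otimes_R\tBSD(Y)\bigr).
\]
By Lemma~\ref{lem:support-homology}, $I$ lies in the support of $\tBSD(Y)$ precisely when the right-hand side is nonzero, and by definition of the classical support of an $R$-module, $I$ lies in the support of $H_*(C)$ precisely when the left-hand side is nonzero. These conditions coincide, giving the proposition.

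Because this is essentially a bookkeeping argument, there is no substantive obstacle. The only care required is to verify that the $\Ainf$-module structure on $\Idem(\PMC)$ and the $R$-action on $\tBSD(Y)$ are compatible in the way described, so that the box product genuinely produces the stated $R$-chain complex; this is immediate from the fact that $R$ is treated as a central subalgebra in $\Alg(\PMC)\otimes R$ and the augmentation $\epsilon$ is $R$-linear.
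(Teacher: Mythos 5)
Your proof takes a different route from the paper's, and there is a concern about where the real mathematical content lives. You correctly observe that $\Idem(\PMC)\DT_{\Alg(\PMC)}\tBSD(Y)$ is a chain complex $C$ with underlying $R$-module $\tBSD(Y)$ and differential the idempotent-part $\partial_0$ of $\delta^1$, and you are right that localization commutes with homology. The weak link is the identification of $H_*(C)$ with ``the $R$-module written as $H_*\tBSD(Y)$ in Lemma~\ref{lem:support-homology}(3).'' The object $\tBSD(Y)$ is a type $D$ structure, not a chain complex, and the paper does not define $H_*\tBSD(Y)$ to be the homology of the underlying module with $\partial_0$; it simply invokes a notion of homology for type $D$ structures without pinning it down. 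If that is the intended reading, your proof is a valid shortcut, but it pushes all of the work into Lemma~\ref{lem:support-homology}'s implication (2)$\Rightarrow$(1), which rests on \cite[Corollary 2.4.4]{LOT2}. As written you simply state the identification --- which is precisely what the proposition is asserting --- rather than argue it.

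By contrast, the paper's proof is direct and self-contained; it does not pass through Lemma~\ref{lem:support-homology} at all. The easy inclusion $\Supp H_*(\Idem(\PMC)\DT\tBSD(Y))\subseteq\Supp\tBSD(Y)$ is just that box tensoring preserves homotopy equivalence to the trivial module. For the other inclusion, the paper writes the structure map of $\tBSD(Y)\otimes R_I$ as a matrix $A=A_0+A_+$ with $A_0$ having entries in $\Idem(\PMC)\otimes R_I$ (so $A_0$ is the differential of $C\otimes R_I$) and $A_+$ having entries in the augmentation ideal $\Alg_+\otimes R_I$; the hypothesis forces $A_0$ to be invertible over $R_I$, and the \emph{nilpotence} of $\Alg_+$ makes $A_+$ nilpotent, hence $A$ invertible, hence $\tBSD(Y)\otimes R_I$ contractible. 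The nilpotence of $\Alg_+$ is the mechanism powering the result, and your argument contains no trace of it. To make your proof complete you should either argue the identification directly (which will reproduce the nilpotence argument) or make explicit that you are relying on it being encoded in Lemma~\ref{lem:support-homology} via the cited corollary of Lipshitz--Ozsv\'ath--Thurston.
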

\begin{proof}
  First, suppose that $I\not\in \Supp(\tBSD(Y))$. Then
  $\tBSD(Y)\otimes R_I$ is homotopy equivalent to the trivial
  module, so $(\Idem(\PMC)\otimes R)\DT \tBSD(Y)\otimes R_I$ is
  homotopy equivalent to the trivial complex, so $I$ is not in the support
  of $H_*\bigl(\Idem(\PMC)\DT \tBSD(Y)\bigr)$.

  Conversely, suppose that %
  $I\not\in \Supp H_*\bigl(\Idem(\PMC)\DT \tBSD(Y)\bigr)$.
  Decompose the algebra $\Alg(\PMC)$ as $\Idem\oplus \Alg_+$, where
  $\Alg_+$ is the augmentation ideal.  Using
  the basis given by the generators, view the differential on
  $\tBSD(Y)\otimes R_I$ as a matrix $A$ and decompose $A=A_0+A_+$
  where the entries of $A_0$ are in $\Idem\otimes R_I$ and the entries
  of $A_+$ are in $\Alg_+\otimes R_I$. The hypothesis on $I$ is
  equivalent to $A_0$ being invertible over $\Idem\otimes R_I$. (More
  precisely, there is a matrix $B_0$ so that $A_0B_0$ and $B_0A_0$ are
  diagonal with basic idempotents on the diagonal.) The matrix $A_+$
  is nilpotent (because $\Alg_+$ is), so $A$ is also invertible. Thus,
  $\tBSD(Y)\otimes R_I$ is contractible, so $I\not\in \Supp \tBSD(Y)$,
  as desired.
\end{proof}

Note that, given a bordered-sutured Heegaard diagram $\HD$ for $Y$,
$\Idem(\PMC)\DT_{\Alg(\PMC)}\tBSD(\HD)$ is isomorphic, as a chain
complex over $\Idem(\PMC)\otimes R$, to $\tBSA(\HD)$, where we view
$\tBSA(\HD)$ as a module over $\Idem(\PMC)\otimes R$ by restriction of scalars.

Because nontrivial algebra elements act by zero on
$\Idem(\PMC)\DT_{\Alg(\PMC)}\tBSD(\HD)$, the module is in fact
induced from a module over $\FF_2[H_2(Y)]$ via the inclusion
$\FF_2[H_2(Y)]\into \FF_2[H_2(Y,F)]$. This is slightly easier to
see for $\tBSA(\HD)$. Recall that as an $\Ainf$-module,
$\tBSA(\HD)$ decomposes along $\SpinC$-structures, as
\[
  \tBSA(\HD)\simeq \bigoplus_{\spinc\in\SpinC(Y)}\tBSA(\HD,\spinc).
\]
We can also consider relative $\SpinC$-structures $\SpinC(Y,\bdy Y)$
extending any given $\SpinC$-structure on $\bdy Y$, and as a chain
complex over $\Idem(\PMC)\otimes R$ we have a decomposition
\[
  \tBSA(\HD)\simeq \bigoplus_{\mathfrak{t}\in\SpinC(\bdy Y)}
  \bigoplus_{\substack{\spinc'\in\SpinC(Y,\bdy Y)\\ \spinc'|_{\bdy Y}=\mathfrak{t}}}
  \tBSA(\HD,\spinc').
\]
Any two generators of $\tBSA(\HD,\spinc')$ differ by a provincial
domain, and up to isomorphism, $\tBSA(\HD,\spinc')$ is given by
choosing a base generator and a provincial domain connecting that base
generator to every other generator, and proceeding as in
Section~\ref{sec:twisted-HF}. Then every term in the differential of
a generator has coefficient in $\FF_2[H_2(Y)]\subset
\FF_2[H_2(Y,F)]$. So, $\tBSA(\HD,\spinc')$ is induced from a complex
$\tBSA(\HD,\spinc';\FF_2[H_2(Y)])$ over $\FF_2[H_2(Y)]$. Let
\[
  \tBSA(\HD;\FF_2[H_2(Y)])
  =\bigoplus_{\spinc'}  \tBSA(\HD,\spinc';\FF_2[H_2(Y)]).
\]
Again, this is a chain complex over $\Idem(\PMC)\otimes\FF_2[H_2(Y)]$,
not over $\Idem(\PMC)\otimes\FF_2[H_2(Y,F)]$, and
\[
  \Idem(\PMC)\DT_{\Alg(\PMC)}\tBSD(\HD)\cong
  \tBSA(\HD)\cong
  \tBSA(\HD;\FF_2[H_2(Y)])\otimes_{\FF_2[H_2(Y)]}\FF_2[H_2(Y,F)].
\]

\begin{lemma}
  The support of $\tBSA(Y)$ over $\FF_2[H_2(Y,F)]$ is the
  pullback of the support of $\tBSA(Y;\FF_2[H_2(Y)])$ over
  $\FF_2[H_2(Y)]$ with respect to the projection
  $\pi\co \Spec\FF_2[H_2(Y,F)]\to \Spec\FF_2[H_2(Y)]$ induced by the
  inclusion $i\co \FF_2[H_2(Y)]\into \FF_2[H_2(Y,F)]$.
\end{lemma}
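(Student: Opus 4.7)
The plan is to reduce the claim to the standard commutative-algebra fact that support pulls back under faithfully flat extension of scalars. Set $R=\FF_2[H_2(Y)]$ and $S=\FF_2[H_2(Y,F)]$. As noted in the paragraph just before the lemma, $\tBSA(Y)$ is obtained from $\tBSA(Y;\FF_2[H_2(Y)])$ by the extension of scalars $-\otimes_R S$ along $i\co R\hookrightarrow S$; and by (the $\tBSA$-analogue of) Lemma~\ref{lem:support-homology}, $\Supp(\tBSA(Y))=\Supp H_*(\tBSA(Y))$, and similarly on the $R$-side.

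The first substantive step is to verify that $i$ makes $S$ a free, hence faithfully flat, $R$-module. Since $F$ is a sutured surface it has no closed components, so $H_2(F)=0$; the long exact sequence of the pair $(Y,F)$ then gives that $H_2(Y)\hookrightarrow H_2(Y,F)$ and that the cokernel injects into the free abelian group $H_1(F)$. The cokernel is therefore itself free abelian, so one may choose a splitting $H_2(Y,F)\cong H_2(Y)\oplus K$ with $K$ free abelian. Under this splitting $S\cong R[K]$ is a Laurent polynomial ring over $R$, and is in particular a free (hence faithfully flat) $R$-module.

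Because $S$ is flat over $R$, homology commutes with $-\otimes_R S$, so $H_*(\tBSA(Y))\cong M\otimes_R S$ where $M=H_*(\tBSA(Y;\FF_2[H_2(Y)]))$. The module $M$ is finitely generated over the Noetherian ring $R$, because $\tBSA(Y;\FF_2[H_2(Y)])$ has finitely many generators, one per intersection point in a Heegaard diagram for $Y$. I would then invoke the standard fact that for a finitely generated $R$-module $M$ and a faithfully flat extension $R\to S$, one has $\Ann_S(M\otimes_R S)=\Ann_R(M)\cdot S$, so
\[
\Supp_S(M\otimes_R S)=V(\Ann_R(M)\cdot S)=\pi^{-1}(V(\Ann_R(M)))=\pi^{-1}(\Supp_R M),
\]
where the middle equality uses faithful flatness (a prime $\mathfrak{p}\subset S$ contains $\Ann_R(M)\cdot S$ iff $i^{-1}(\mathfrak{p})$ contains $\Ann_R(M)$). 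Combining this with the identifications from the first paragraph yields the lemma.

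The only real content is the second paragraph, identifying $S$ as a Laurent polynomial ring over $R$; once one has that, the rest is routine flat base change. No obstacles are anticipated.
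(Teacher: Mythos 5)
Your proof is correct and follows essentially the same approach as the paper: reduce to the fact that for a finitely generated module $M$ over a Noetherian ring $R$ and a flat ring map $R\to S$, one has $\Ann_S(M\otimes_R S)=\Ann_R(M)\cdot S$, then translate that into a statement about pullbacks of supports. The paper cites this annihilator fact from the Stacks Project (Tag~07T8) and stops there; you spell out the translation and, along the way, prove the slightly stronger fact that $S=\FF_2[H_2(Y,F)]$ is actually \emph{free} (a Laurent polynomial ring) over $R=\FF_2[H_2(Y)]$, via the observation that $H_2(F)=0$ and the cokernel of $H_2(Y)\hookrightarrow H_2(Y,F)$ injects into the free group $H_1(F)$. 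That freeness is more than you need---flatness alone suffices, as in the paper---but it is a clean way to verify the hypothesis. One small inaccuracy: you attribute the equality $V(\Ann_R(M)\cdot S)=\pi^{-1}(V(\Ann_R(M)))$ to faithful flatness, but this step is elementary and holds for any ring homomorphism $\phi\colon R\to S$ and ideal $I\subset R$ (a prime $\mathfrak{p}\subset S$ contains $IS$ iff $\phi^{-1}(\mathfrak{p})$ contains $I$); faithful flatness is not needed anywhere in the argument. Apart from that misattribution, the argument is sound and complete.
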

\begin{proof}
  By~\cite[\href{https://stacks.math.columbia.edu/tag/07T8}{Lemma 10.40.4}]{stacks-project},
  since $\FF_2[H_2(Y,F)]$ is flat over $\FF_2[H_2(Y)]$ and
  $H_*\tBSA(Y)$ is finitely generated, the annihilator
  $\Ann_{\FF_2[H_2(Y,F)]}H_*\tBSA(Y;\FF_2[H_2(Y,F)])$ is the ideal
  in $\FF_2[H_2(Y,F)]$ generated by the image of
  $\Ann_{\FF_2[H_2(Y)]}H_*\tBSA(Y)$. This implies the result.
\end{proof}

\begin{corollary}\label{cor:supp-tBSA-rankK}
  Let $K=\ker(H_1(F)\to H_1(Y))$. Then
  \[
    \dim\Supp(\tBSA(\HD)) = \dim\Supp(\tBSA(\HD;\FF_2[H_2(Y)]))+\rank(K).
  \]
\end{corollary}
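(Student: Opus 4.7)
The plan is to apply the preceding lemma and then compute the dimension of the fibers of the projection $\pi$ explicitly. By that lemma, $\Supp(\tBSA(\HD))$ is the scheme-theoretic preimage of $\Supp(\tBSA(\HD;\FF_2[H_2(Y)]))$ under $\pi\co \Spec\FF_2[H_2(Y,F)]\to \Spec\FF_2[H_2(Y)]$. Since $\pi$ corresponds to the inclusion $\FF_2[H_2(Y)]\into\FF_2[H_2(Y,F)]$ of group rings, it suffices to analyze the map $H_2(Y)\to H_2(Y,F)$ on lattices.

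For this I would invoke the long exact sequence of the pair $(Y,F)$. Because $F$ is the bordered boundary and in particular has no closed components, $H_2(F)=0$, so the sequence reduces to
\[
0 \to H_2(Y) \to H_2(Y,F) \to H_1(F) \to H_1(Y),
\]
which identifies the cokernel of $H_2(Y)\to H_2(Y,F)$ with $K=\ker(H_1(F)\to H_1(Y))$. Since $F$ is a surface with nonempty boundary, $H_1(F)$ is free abelian, and hence so is its subgroup $K$. Consequently the short exact sequence $0\to H_2(Y)\to H_2(Y,F)\to K\to 0$ splits, yielding a (non-canonical) isomorphism $H_2(Y,F)\cong H_2(Y)\oplus K$.

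Applying the group-ring functor gives $\FF_2[H_2(Y,F)]\cong \FF_2[H_2(Y)]\otimes_{\FF_2}\FF_2[K]$, and under this identification $\pi$ becomes the projection $\Spec\FF_2[H_2(Y)]\times\Spec\FF_2[K]\to \Spec\FF_2[H_2(Y)]$ onto the first factor. Because $\FF_2[K]$ is a Laurent polynomial ring in $\rank(K)$ variables, $\Spec\FF_2[K]$ has Krull dimension $\rank(K)$, so each fiber of $\pi$ is $\rank(K)$-dimensional. Therefore for any subvariety $V\subset \Spec\FF_2[H_2(Y)]$ one has $\dim \pi^{-1}(V)=\dim V+\rank(K)$, and specializing to $V=\Supp(\tBSA(\HD;\FF_2[H_2(Y)]))$ gives the claimed equality. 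There is no serious obstacle; the only points requiring care are the vanishing $H_2(F)=0$ and the freeness of $K$, both of which follow from the hypothesis that $F$ has no closed components.
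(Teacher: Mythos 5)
Your proof is correct and supplies exactly the reasoning the paper leaves implicit: the key step, splitting $H_2(Y,F)\cong H_2(Y)\oplus K$ via the long exact sequence of the pair and the fact that $H_1(F)$ is free (because $F$ has no closed components), is precisely the identification the authors invoke later in the proof of Lemma~\ref{lem:sphere-support} when they cite this corollary. The passage from the scheme-theoretic preimage to the additivity of dimension over the Laurent-torus fibers is also handled carefully, so nothing is missing.
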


Of course, once we have tensored with $\Idem(\PMC)$ or viewed $\tBSA$
as merely a chain complex, we are no longer really in the world of
bordered Floer homology: by Zarev's work~\cite[Section 6.1]{Zarev:JoinGlue},
the result is a sum of sutured Floer homology groups. So:

\begin{corollary}\label{cor:finite-test-mflds}
  Given a bordered-sutured 3-manifold $Y$ with bordered boundary $-F$, there are
  finitely many bordered-sutured 3-manifolds $Y'_1,\dots,Y'_m$ with bordered boundary $F$ so that
  $I$ lies in the support of $\tBSD(Y)$ if and only if there is an $i$ so that
  $\tSFH(Y'_i\cup Y;\FF_2[H_2(Y,F)])\otimes R_I\neq 0$. Further, each $Y'_i$ can be chosen to be of the
  form $[0,1]\times F$ where $\{1\}\times F$ is the bordered boundary
  and the rest is sutured boundary, with some choice of sutures.
\end{corollary}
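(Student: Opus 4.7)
The plan is to use Proposition~\ref{prop:tCFD-supp-reinterp} to reduce the support of $\tBSD(Y)$ to a question about twisted sutured Floer homology, via Zarev's identification of idempotent summands of $\tBSA$.

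First, by Proposition~\ref{prop:tCFD-supp-reinterp} together with Lemma~\ref{lem:support-homology}, $I\in\Supp(\tBSD(Y))$ if and only if $R_I\otimes_R H_*\bigl(\Idem(\PMC)\DT_{\Alg(\PMC)}\tBSD(\HD)\bigr)\neq 0$. The ground ring $\Idem(\PMC)$ decomposes as $\bigoplus_\iota \FF_2$ over the finite set of basic idempotents $\iota$, and (as noted after Proposition~\ref{prop:tCFD-supp-reinterp}) this tensor product is isomorphic to $\tBSA(\HD)$ as a chain complex over $\Idem(\PMC)\otimes R$. Hence it splits as $\bigoplus_\iota \tBSA(\HD)\cdot\iota$, and the question reduces to whether $R_I\otimes_R H_*(\tBSA(\HD)\cdot\iota)\neq 0$ for some $\iota$.

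Next, for each basic idempotent $\iota$, I take $Y'_\iota$ to be the bordered-sutured manifold of the form $[0,1]\times F$, with bordered boundary $\{1\}\times F$ and with sutures on $\{0\}\times F\cup[0,1]\times\bdy F$ chosen as in Zarev~\cite[Section 6.1]{Zarev:JoinGlue} (the same cap used in the proof of Proposition~\ref{prop:choose-SFH-nontriv}). Zarev's construction arranges that $\BSA(Y'_\iota)\DT\BSD(Y)\simeq \BSA(\HD)\cdot\iota$, and by construction $Y'_\iota\cup Y$ is precisely the sutured manifold $(Y,\Gamma_\iota)$ where $\Gamma_\iota$ is Zarev's suture set associated to $\iota$. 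Applying the twisted pairing theorem (Theorem~\ref{thm:twisted-pairing}) with $M_1=\FF_2$ on $Y'_\iota$ and $M_2=\FF_2[H_2(Y,F)]$ on $Y$, and noting that $H_2(Y'_\iota,\{1\}\times F)=0$ since $Y'_\iota$ deformation retracts onto its bordered boundary, I obtain
\[
\tBSA(\HD)\cdot\iota \;\simeq\; \tSFC\bigl(Y'_\iota\cup Y;\,\FF_2[H_2(Y,F)]\bigr)
\]
as complexes over $R$. Here $\FF_2[H_2(Y,F)]$ is viewed as a module over $\FF_2[H_2(Y'_\iota\cup Y)]\cong\FF_2[H_2(Y)]$ via the inclusion $H_2(Y)\hookrightarrow H_2(Y,F)$ (which is injective because $H_2(F)=0$). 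Passing to homology and applying Lemma~\ref{lem:support-homology} gives the corollary, with $Y'_1,\dots,Y'_m$ being the finite collection $\{Y'_\iota\}$.

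The main obstacle is the bookkeeping of two different group rings: one must check that the identification of $\tBSA(\HD)\cdot\iota$ with $\tSFC(Y'_\iota\cup Y;\FF_2[H_2(Y,F)])$ really holds over the whole ring $R=\FF_2[H_2(Y,F)]$, and not merely over $\FF_2[H_2(Y)]$. This is precisely where the vanishing $H_2(Y'_\iota,\{1\}\times F)=0$ is used: the coefficient-gluing map furnished by Theorem~\ref{thm:twisted-pairing} is then just the inclusion $H_2(Y'_\iota\cup Y)\hookrightarrow H_2(Y,F)$, which matches the extension of scalars used to define $\tSFC(Y'_\iota\cup Y;\FF_2[H_2(Y,F)])$ on the right.
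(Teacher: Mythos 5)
Your proposal follows the paper's proof essentially verbatim: reduce to the idempotent summands of $\tBSA$ via Proposition~\ref{prop:tCFD-supp-reinterp}, realize each summand as $\BSA(Y'_\iota)\DT\BSD(Y)$ using Zarev's caps, identify this with a twisted sutured Floer complex by the twisted pairing theorem, and pass to homology using Lemma~\ref{lem:support-homology}. The extra explanation you give about why the coefficient ring works out (that $H_2(Y'_\iota,\{1\}\times F)=0$, so the gluing map in Theorem~\ref{thm:twisted-pairing} degenerates to the natural map $H_2(Y'_\iota\cup Y)\to H_2(Y,F)$, which is the one used to make $\FF_2[H_2(Y,F)]$ a module over $\FF_2[H_2(Y'_\iota\cup Y)]$) is correct and is useful bookkeeping that the paper leaves implicit; just note that the map $H_2(Y)\to H_2(Y,F)$ is the boundary-pair map (injective since $H_2(F)=0$), not literally an inclusion of subgroups in the usual sense, though your injectivity observation makes the terminology harmless.
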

Here, $\FF_2[H_2(Y,F)]$ is a module over $\FF_2[H_2(Y'_i\cup Y)]$ via the map $H_2(Y'_i\cup Y)\to H_2(Y'_i\cup Y,Y'_i)=H_2(Y,F)$.
\begin{proof}
  For each basic idempotent $\iota$ in $\Idem$ there is a
  corresponding bordered-sutured manifold $Y_\iota$ so that
  $\BSA(Y_\iota)\simeq \FF_2\langle\iota\rangle$, as $\Ainf$-modules
  over $\Alg(\PMC)$. By
  Proposition~\ref{prop:tCFD-supp-reinterp}, $I$ is in the support of $\tBSD(Y)$ if and only if $I$ is in the support of $\BSA(Y_\iota)\DT\tBSD(Y)$ for some $\iota$. By the twisted pairing theorem (Theorem~\ref{thm:twisted-pairing}), $\BSA(Y_\iota)\DT\tBSD(Y)\cong \tSFC(Y'_\iota\cup Y;\FF_2[H_2(Y,F)])$. Finally, as in Lemma~\ref{lem:support-homology}, the complex $\tSFC(Y'_\iota\cup Y;\FF_2[H_2(Y,F)])$ and its homology $\tSFH(Y'_\iota\cup Y;\FF_2[H_2(Y,F)])$ have the same support.
\end{proof}

\subsection{The support of Heegaard Floer homology}\label{sec:HF-support}

The goal of this section is to prove Lemma~\ref{lem:sphere-support},
that the support of twisted Heegaard Floer homology determines the
number of disjoint, homologically independent 2-spheres, and then use it
to deduce Theorem~\ref{thm:detect-hb}.

\begin{lemma}\label{lem:sphere-support}
  \begin{enumerate}[label=(\arabic*)]
  \item\label{item:ss-closed} Suppose $Y$ is a closed
    3-manifold. Then the maximum number of
    linearly independent homology classes in $H_2(Y;\QQ)$ that can be
    represented by embedded $2$-spheres is equal to
    \(
      \rank H_2(Y)-\dim \Supp(\tHFa(Y)).
    \)
  \item\label{item:ss-sutured} Let $(Y,\Gamma)$ be a connected sutured
    3-manifold (with non-empty boundary) so that
    $\SFH(Y,\Gamma)\neq 0$ and for each component of $\bdy Y$,
    $\chi(R_+)=\chi(R_-)$.  Then the maximal number of linearly
    independent homology classes in $H_2(Y;\QQ)$ that can be
    represented by embedded $2$-spheres is equal to
    \( \rank H_2(Y)-\dim \Supp(\tSFH(Y)).  \)
  \item\label{item:ss-bs} Suppose $Y$ is a special bordered-sutured
    3-manifold with bordered boundary $F$ and $k$ sutures on its
    boundary (so $R_\pm$ each consists of $k$ disks). Assume $F$ has
    $1\leq n\leq k$ connected components.  Then the maximal number of linearly
    independent homology classes in $H_2(Y;\QQ)$ that can be
    represented by embedded $2$-spheres is equal to
    \begin{equation}\label{eq:tBSD-support-dim}
      \rank H_2(Y,F)-\dim \Supp(\tBSD(Y))+n-k.
    \end{equation}
  \end{enumerate}
\end{lemma}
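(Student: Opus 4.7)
The plan is to prove all three parts with a uniform strategy: use Lemma~\ref{lem:disj-sphere} to decompose $Y$ into irreducible pieces and $S^2\times S^1$ factors, apply the K\"unneth theorem for the relevant flavor of Floer homology, and combine the non-vanishing theorems to compute the dimension of the support of each factor. The two key numerical inputs are that each irreducible piece carrying no homologically essential $2$-spheres has twisted Floer homology supported at the generic point of $\Spec\FF_2[H_2]$ (hence of maximal support dimension), while each $S^2\times S^1$ factor contributes support dimension $0$, because its essential sphere kills the Novikov tensor for every nonzero $\omega$.

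For part~\ref{item:ss-closed}, I would apply Lemma~\ref{lem:disj-sphere} to write $Y\cong Y'\#(S^2\times S^1)^{\#n}$ with $n$ the sphere rank and $Y'$ closed and irreducible. The K\"unneth theorem from Section~\ref{sec:twisted-HF} gives $\tHFa(Y)\simeq \tHFa(Y')\otimes_{\FF_2}\tHFa(S^2\times S^1)^{\otimes n}$, so support dimensions add. The non-vanishing theorem applied to any irrational $\omega\co H_2(Y')\to\RR$ witnesses the generic point of $\Spec\FF_2[H_2(Y')]$ in $\Supp\tHFa(Y')$, giving $\dim\Supp\tHFa(Y')=\rank H_2(Y')$; applied to any nonzero $\omega$ on $H_2(S^2\times S^1)$ it instead forces $\dim\Supp\tHFa(S^2\times S^1)=0$. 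Combining with $\rank H_2(Y) = \rank H_2(Y') + n$ yields the identity.

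Part~\ref{item:ss-sutured} proceeds analogously: Lemma~\ref{lem:disj-sphere} gives $Y\cong Y'\#(S^2\times S^1)^{\#k}\#Y_1\#\cdots\#Y_{n-k+1}$ with $Y'$ closed and each $Y_i$ bounded, all irreducible. Iterating Lemma~\ref{lem:sut-Kunneth} factors $\tSFH(Y,\Gamma)$ as $\tHFa(Y')\otimes \tHFa(S^2\times S^1)^{\otimes n}\otimes \bigotimes_i \tSFH(Y_i,\Gamma_i)$, where the extra $n-k$ copies of $\tHFa(S^2\times S^1)$ appear because each ordinary connected sum of bounded pieces contributes such a factor. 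Since $\SFH(Y,\Gamma)\neq 0$, each factor is nonzero; Juh\'asz's theorem then implies each irreducible $(Y_i,\Gamma_i)$ is taut, so Corollary~\ref{cor:taut-twist-SFH-nontriv} furnishes a generic-stalk summand in $\tSFH(Y_i,\Gamma_i;M)$ and hence $\dim\Supp\tSFH(Y_i,\Gamma_i)=\rank H_2(Y_i)$. The $\chi$-balancing hypothesis descends to the $Y_i$, so this is legitimate. Summing support dimensions and using $\rank H_2(Y) = \rank H_2(Y') + \sum_i \rank H_2(Y_i) + n$ closes this part.

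For part~\ref{item:ss-bs}, the plan is to reduce to part~\ref{item:ss-sutured} by a sutured filling. Corollary~\ref{cor:finite-test-mflds} writes $\Supp\tBSD(Y)$ as a finite union $\bigcup_i\Supp\tSFH(Y'_i\cup Y;\FF_2[H_2(Y,F)])$, with each $Y'_i$ a product filling $[0,1]\times F$ with a choice of sutures. Base change along the injection $H_2(Y'_i\cup Y)\hookrightarrow H_2(Y,F)$ with free cokernel $K=\ker(H_1(F)\to H_1(Y))$ raises the dimension of each such support by $\rank K$, so
\[
\dim\Supp\tBSD(Y)=\max_i\dim\Supp\tSFH(Y'_i\cup Y)+\rank K.
\]
After choosing $Y'_{i^\ast}$ realizing this maximum and using Proposition~\ref{prop:choose-SFH-nontriv} to adjust its sutures so that $\SFH(Y'_{i^\ast}\cup Y)\neq 0$ with the $\chi$-condition on every boundary component, part~\ref{item:ss-sutured} expresses the sphere rank of $Y'_{i^\ast}\cup Y$ as $\rank H_2(Y'_{i^\ast}\cup Y)-\dim\Supp\tSFH(Y'_{i^\ast}\cup Y)$, and the long exact sequence identity $\rank H_2(Y'_{i^\ast}\cup Y)=\rank H_2(Y,F)-\rank K$ puts this in the form $\rank H_2(Y,F)-\dim\Supp\tBSD(Y)$ plus a correction comparing embedded sphere classes in $Y$ to those in the sutured filling. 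Identifying this correction as exactly $n-k$ is the principal obstacle: each of the $n$ components of $F$ produces one closed sutured cap in the boundary of $Y'_{i^\ast}\cup Y$, whereas the $k$ bigon pairs of the original bordered boundary of $Y$ are contributed separately, and a sphere-isotoping argument along the gluing surface modeled on Step~1 of the proof of Lemma~\ref{lem:cb-Haken} is needed to control how spheres can cross the gluing and pin down the sign and coefficient of the correction.
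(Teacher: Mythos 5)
Your arguments for parts~\ref{item:ss-closed} and~\ref{item:ss-sutured} are essentially the paper's: decompose via Lemma~\ref{lem:disj-sphere}, apply the appropriate K\"unneth theorem, compute the support of each factor ($\tHFa(S^2\times S^1)$ is supported at the point $(1,\dots,1)$; aspherical pieces have full support by the Novikov non-vanishing theorem or, in the sutured case, by tautness plus Corollary~\ref{cor:taut-twist-SFH-nontriv}), and add up dimensions. Your parenthetical that the $\chi$-balance hypothesis is invoked in part~\ref{item:ss-sutured} is not quite how it is used (it is stated there primarily so that part~\ref{item:ss-sutured} can be applied via Proposition~\ref{prop:choose-SFH-nontriv} in the proof of part~\ref{item:ss-bs}), but this is cosmetic.

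Part~\ref{item:ss-bs} is where your proposal departs substantively from the paper and is where the gap lies. You route through Corollary~\ref{cor:finite-test-mflds} and compute $\dim\Supp\tBSD(Y)$ as a maximum over sutured fillings $Y'_i\cup Y$. Two problems. First, Corollary~\ref{cor:finite-test-mflds} ranges over \emph{all} basic idempotents, and the filling $Y'_{i^\ast}$ realizing the maximum need not satisfy the hypotheses of part~\ref{item:ss-sutured}: there is no reason the maximizer has $\SFH(Y'_{i^\ast}\cup Y)\neq 0$, and even if it is globally balanced it need not have the per-component condition $\chi(R_+)=\chi(R_-)$. Proposition~\ref{prop:choose-SFH-nontriv} only guarantees \emph{some} filling has these properties, and ``adjusting the sutures'' of $Y'_{i^\ast}$ changes which manifold you are looking at and hence can change the support --- you lose control of the maximum. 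Second, and more seriously: since $Y'_{i^\ast}\cup Y$ is homeomorphic to $Y$ (you are gluing a product $[0,1]\times F$ onto $F$), their sphere ranks are \emph{equal}, so if part~\ref{item:ss-sutured} were applicable to the maximizer your chain of identities would close up to give $\text{sphere rank}=\rank H_2(Y,F)-\dim\Supp\tBSD(Y)$ \emph{with no correction}. This contradicts the formula to be proved whenever $n<k$. There is no ``sphere-isotoping argument'' that will produce the $n-k$: the spheres and their homology classes are literally the same on both sides. The correction $n-k$ does not come from a discrepancy of sphere classes; it comes from the fact that $\rank H_2(Y,F)$ grows by $1$ each time you add a suture (punch another disk out of $F$) while $\dim\Supp\tBSD(Y)$ does not.

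The paper's proof handles this by a reduction you have skipped: it first shows that adding one suture to $Y$ increases both $k$ and $\rank H_2(Y,F)$ by $1$ (gluing on the piece in Figure~\ref{fig:bs-handlebody-diag}, using Proposition~\ref{prop:tCFD-supp-reinterp} to see the support is unchanged), so that Formula~\eqref{eq:tBSD-support-dim} is invariant under this operation and one may assume $n=k$. With $n=k$, the correction term vanishes, and the paper then argues directly at the level of $\tBSD$: it decomposes $Y$ via Lemma~\ref{lem:disj-sphere}, applies the bordered-sutured K\"unneth theorem (Theorem~\ref{thm:bs-Kunneth}) rather than passing to a sutured filling, and for each irreducible summand $Y_i$ picks a sutured filling via Proposition~\ref{prop:choose-SFH-nontriv} and uses part~\ref{item:ss-sutured} together with Corollary~\ref{cor:supp-tBSA-rankK} only to establish the \emph{lower bound} $\dim\Supp\tBSD(Y_i)\geq\rank H_2(Y_i,F_i)$ --- the matching upper bound is automatic. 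That one only needs a one-sided inequality for the irreducible pieces is exactly what makes the ``some filling has $\SFH\neq 0$'' statement of Proposition~\ref{prop:choose-SFH-nontriv} sufficient, whereas your maximizing strategy would require the unverifiable claim that the maximizer itself is a good filling.
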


\begin{proof}
  \ref{item:ss-closed}
  Let $n$ be the dimension of $H_2(Y;\QQ)$ and 
  let $k$ be the maximum number of linearly independent homology
  classes in $H_2(Y,\QQ)$ which can be represented by embedded
  $2$-spheres. By Lemma \ref{lem:disj-sphere}, we may decompose $Y$ as
  \[
    Y=Y'\#(\#^kS^1\times S^2),
  \]
and thus, $\tHFa(Y)=\tHFa(Y')\otimes_{\FF_2}\tHFa(\#^kS^1\times S^2)$. Here, $Y'$ does not contain any homologically essential $2$-spheres. 
Choose an identification $\FF_2[H_2(Y)]\cong
\FF_2[x_1^{\pm1},\dots,x_n^{\pm1}]$ so that $\FF_2[H_2(\#^kS^1\times
S^2)]\cong\FF_2[x_{1}^{\pm1},\dots,x_{k}^{\pm1}]$.
A direct computation shows that $\tHFa(\#^kS^1\times S^2)\cong \FF_2[x_1^{\pm1},\dots,x_k^{\pm1}]/I$ where 
\[I=(x_1-1,\dots,x_k-1).\]
Since $I$ is a maximal ideal, $\Supp\tHFa(\#^kS^1\times S^2)=V(I)=\{I\}$.

Let $\omega$ be a closed, generic $2$-form on $Y'$,
i.e., so that $e_{\omega}\co H_2(Y')\to \RR$ defined as
$e_{\omega}(A)=\int_{A}\omega$ is injective. Let $\Lambda$ denote the universal Novikov field, a completion of $\FF_2[\RR]$. The map $e_\omega$ induces an injective ring
map $\psi_{\omega}\co \FF_2[H_2(Y')]\to\Lambda$ by setting
$\psi_\omega(e^\alpha)=T^{e_{\omega}(\alpha)}$ (making $\Lambda$ into an algebra over $\FF_2[H_2(Y')]$) and thus a field
homomorphism $\imath_{\omega}\co \FF_2(H_2(Y'))\to\Lambda$.  As
a result,
\[
  \tHFa(Y';\FF_2(H_2(Y')))\otimes\Lambda_{\omega}\cong\tHFa(Y';\Lambda)\neq 0,
\]
(where the non-vanishing is by~\cite[Theorem 1.1]{AL19:incompressible}). Hence, $\tHFa(Y')\otimes_{\FF_2[H_2(Y')]}\FF_2(H_2(Y'))\cong\tHFa(Y';\FF_2(H_2(Y')))\neq 0$.
Hence, $\Supp\tHFa(Y')=\mathrm{spec}(\FF_2[H_2(Y')])$.

Finally,
\[
  \begin{split}
\Supp\bigl(\tHFa(Y')\otimes_{\FF_2}\tHFa(\#^kS^1\times S^2)\bigr)&=\Supp\bigl(\tHFa(Y')\bigr)\times \Supp\bigl(\tHFa(\#^kS^1\times S^2)\bigr)\\
&\cong\mathrm{spec}(\FF_2[x_{k+1}^{\pm1},\dots,x_{n}^{\pm1}])\times\{I\}
  \end{split}
\]
has dimension $n-k$, as desired.

\ref{item:ss-sutured} As in the case of closed 3-manifolds, by
Lemma~\ref{lem:disj-sphere} we can decompose $(Y,\Gamma)$ as
\[
  Y'\#\overbrace{(S^2\times S^1)\#\cdots\# (S^2\times S^1)}^k\#(Y_1,\Gamma_1)\#\cdots\#(Y_{n-k+1},\Gamma_{n-k+1})
\]
where $Y'$ is closed and contains no homologically essential $2$-spheres and each $(Y_i,\Gamma_i)$ is an
aspherical balanced sutured manifold.
Here, $n$ is the number of linearly independent homology classes in
$H_2(Y)$ represented by disjoint, embedded 2-spheres.
By Lemma~\ref{lem:sut-Kunneth},
\[
  \tSFH(Y,\Gamma)\cong \tHFa(Y')\otimes\tHFa(S^2\times S^1)^{\otimes
    n}\otimes\tSFH(Y_1;\Gamma_1)\otimes\cdots\otimes\tSFH(Y_{n-k+1},\Gamma_{n-k+1}).
\]
Since $Y'$ is aspherical, the dimension of the support of $\tHFa(Y')$
is $\rank H_2(Y')$. The dimension of the support of
$\tHFa(S^2\times S^1)^{\otimes n}$ is zero.
Since $\SFH(Y,\Gamma)\neq 0$, by the K\"unneth theorem with untwisted
coefficients, $\SFH(Y_i,\Gamma_i)\neq 0$, as well. Hence,
$(Y_i,\Gamma_i)$ is taut~\cite[Proposition 9.17]{Juhasz06:Sutured}.
So, by Corollary~\ref{cor:taut-twist-SFH-nontriv}, the dimension of
the support of $\tSFH(Y_i,\Gamma_i)$ is $\rank H_2(Y_i)$, and so
\[
  \dim\Supp\tSFH(Y,\Gamma)=\rank H_2(Y)+\sum_i\rank
  H_2(Y_i)=\rank H_2(Y)-n,
\]
as claimed.

\ref{item:ss-bs} We start by reducing to the case that each component of
$\bdy Y$ has a single suture. So, suppose we know the result for some
special bordered-sutured manifold $Y$, and let $Y'$ be a
bordered-sutured manifold obtained by adding a suture to a boundary
component of $Y$. Gluing a bordered-sutured Heegaard diagram for $Y$ to
the diagram shown on the right of Figure~\ref{fig:bs-handlebody-diag}
gives a bordered-sutured Heegaard diagram for $Y'$. By
Proposition~\ref{prop:tCFD-supp-reinterp}, gluing on this diagram has
no effect on the support of $\tBSD$, and of course does not change the
number $n$ of boundary components. On the other hand,
$\rank H_2(Y,F)$ and $k$ both increase by $1$, so
Formula~\eqref{eq:tBSD-support-dim} is unchanged.

So, from now on, assume that $n=k$.
Let $s$ be the number of linearly independent homology classes in
$H_2(Y;\QQ)$ that can be represented by disjoint, embedded
$2$-spheres. By Lemma~\ref{lem:disj-sphere}, we can decompose
\[
  Y= Y'\#\overbrace{(S^2\times S^1)\#\cdots\# (S^2\times
    S^1)}^\ell\#Y_1\#\cdots\#Y_{s-\ell+1}.
\]
By the K\"unneth theorem, Theorem~\ref{thm:bs-Kunneth},
\[
  \tBSD(Y)\simeq
  \tCFa(Y')\otimes\tBSD(Y_1)\otimes\cdots\otimes\tBSD(Y_{s-\ell+1})\otimes
  \tCFa(S^2\times S^1)^{\otimes s}.
\]
Thus, the dimension of the support of $\tBSD(Y)$ is
\begin{align*}
  \dim\Supp(&\tCFa(Y'))+\dim\Supp(\tBSD(Y_1))+\cdots+\dim\Supp(\tBSD(Y_{s-\ell+1}))\\
  &\leq\rank H_2(Y')+\rank H_2(Y_1,F_1)+\cdots+\rank H_2(Y_{s-\ell+1}, F_{s-\ell+1})\\
  &=\rank H_2(Y,F)-s.
\end{align*}
Further, equality holds if and only if for each of the irreducible
special bordered-sutured manifolds $Y_i$,
$\dim\Supp\tBSD(Y_i)=\rank H_2(Y_i,F_i)$.

So, to complete the proof, suppose that $Y$ is an irreducible special
bordered-sutured manifold. By
Proposition~\ref{prop:choose-SFH-nontriv}, there is a choice of sutures $\Gamma$
on $\bdy Y$ so that $\SFH(Y,\Gamma)\neq 0$, and so that for each
component of $\bdy Y$, $\chi(R_+)=\chi(R_-)$. Zarev~\cite[Section
6.1]{Zarev:JoinGlue} showed that there is a module $M$ so that
$\SFC(Y,\Gamma)\simeq M\DT\BSD(Y,F)$, corresponding to
$[0,1]\times F$ with bordered boundary on one side and sutured
boundary on the other.

By Part~\ref{item:ss-sutured} of the lemma,
$\dim\Supp(\tSFC(Y,\Gamma))=\rank H_2(Y)$.  By the twisted pairing
theorem, $\tSFC(Y,\Gamma;\FF_2[H_2(Y,F)])\simeq M\DT\tBSD(Y,F)$,
where $\FF_2[H_2(Y,F)]$ is viewed as an $\FF_2[H_2(Y)]$-module via the
inclusion map $H_2(Y)\to H_2(Y,F)$. So, $\dim\Supp\tBSD(Y,F)\geq\dim\Supp_{\FF_2[H_2(Y,F)]}\tSFC(Y,\Gamma;\FF_2[H_2(Y,F)])$. Write
$H_2(Y,F)\cong H_2(Y)\oplus K$, where $K=\ker(H_1(F)\to H_1(Y))$.
As in Corollary~\ref{cor:supp-tBSA-rankK}, 
\begin{equation}\label{eq:support-change-ring}
\dim\Supp_{\FF_2[H_2(Y,F)]}(\tSFC(Y,\Gamma;\FF_2[H_2(Y,F)]))
=\dim\Supp_{\FF_2[H_2(Y)]}(\tSFC(Y,\Gamma))+\rank(K),
\end{equation}
which is equal to $\rank H_2(Y,F)$.
This proves the result.
\end{proof}

Recall that Theorem~\ref{thm:detect-hb} asserts that the bordered
Floer modules detect handlebodies among
irreducible homology handlebodies, via the
dimension of the support of a twisted endomorphism space.

\begin{proof}[Proof of Theorem~\ref{thm:detect-hb}]
  From the duality result for $\CFDa$ (\cite[Theorem 2]{LOTHomPair}, a special case of Theorem~\ref{thm:bs-dual}),
  \begin{multline*}
    \Mor_{\Alg(F)}\bigl(\lsup{\Alg(F)}\CFDa(Y),\lsup{\Alg(F)}\tCFDAa(\Id)_{\Alg(F)}\DT\lsup{\Alg(F)}\CFDa(Y)\bigr)\\
    \simeq\CFAa(-Y)_{\Alg(F)}\DT\lsup{\Alg(F)}\tCFDAa(\Id)_{\Alg(F)}\DT\lsup{\Alg(F)}\CFDa(Y).
  \end{multline*}
  By the twisted pairing theorem (Theorem~\ref{eq:twisted-pairing}
  and, in particular, Formula~\eqref{eq:twist-pair-Id}), this is
  homotopy equivalent to
  \[
    \CFAa(-Y)_{\Alg(F)}\DT\lsup{\Alg(F)}\tCFDa\bigl(Y;\FF_2[H_1(F)]\bigr)\simeq \tCFa\bigl(D(Y);\FF_2[H_1(F)]\bigr)
  \]
  where $D(Y)=-Y\cup_\bdy Y$ is the double of $Y$ across its
  boundary. Choose a splitting $H_1(F)\cong H_2(Y,F)\oplus
  H_1(Y)\cong \ZZ^g\oplus\ZZ^g$. Then
  \[
    \FF_2[H_1(F)]\cong
    \FF_2[H_2(Y,F)]\otimes_{\FF_2}\FF_2[H_1(Y)]\cong \FF_2[H_2(D(Y))]\otimes_{\FF_2}\FF_2[H_1(Y)]
  \]
  as modules over $\FF_2[H_2(Y,F)]\cong\FF_2[H_2(D(Y))]$. So,
  \[
    \tCFa\bigl(D(Y);\FF_2[H_1(F)]\bigr)\cong
    \tCFa(D(Y))\otimes \FF_2[H_1(Y)].
  \]
  In particular,
  \begin{align*}
    \dim\Supp_{\FF_2[H_1(F)]}\tCFa\bigl(D(Y);\FF_2[H_1(F)]\bigr)&=\dim\Supp\tCFa(D(Y))+g\\
    \dim\Supp_{\FF_2[H_2(Y,F)]}\tCFa\bigl(D(Y);\FF_2[H_1(F)]\bigr)&=\dim\Supp\tCFa(D(Y)).
  \end{align*}
  Now, by Lemma~\ref{lem:sphere-support}, $H_2(D(Y))$ is
  generated by 2-spheres if and only if the dimension of the support
  of $\tCFa(D(Y))$ is zero. By
  Lemma~\ref{lem:double-handlebody}, $H_2(D(Y))$ is
  generated by 2-spheres if and only if $Y$ is a handlebody, so this
  implies the result.
\end{proof}

Recall that the bordered modules $\CFDa(Y)$ associated to 3-manifolds
with boundary are algorithmically computable~\cite{LOT4}, as are their
bimodule analogues (see also~\cite{AL19:incompressible}). As we will
discuss in Section~\ref{sec:compute-support}, the dimension of the
support is also computable, so Theorem~\ref{thm:detect-hb} gives an
algorithm to test whether an irreducible manifold is a handlebody. 
(This problem probably also has a well-known solution using
normal surface theory.)

We conclude the section by proving the version of the fact that bordered Floer homology detects handlebodies announced in the abstract:
\begin{proof}[Proof of Corollary~\ref{cor:detect-hb}]
  We first use the gradings to deduce that $Y$ is a homology handlebody.
  (This is the only part of the argument that uses the gradings.) The
  set of orbits in the grading set for $\CFDa(Y)$ is in bijection with
  the $\SpinC$-structures on $Y$, and hence with $H^2(Y)$. Since
  $\CFDa(Y)$ is graded homotopy equivalent to $\CFDa(H)$, $H^2(Y)=0$. So, by Lefschetz duality, $H_1(Y,\bdy Y)=0$. Thus, from the long exact sequence for the pair $(Y,
  \bdy Y)$, $H_1(\bdy Y)$ surjects onto $H_1(Y)$, so $Y$ is a homology handlebody.

  Now, decompose $Y$ as $Y'\# Y''$ where $Y'$ is irreducible and $\bdy
  Y'=\bdy Y$ (so $Y''$ is closed). Since $Y$ is a homology handlebody,
  $Y''$ is an integer homology sphere and $Y'$ is a homology handlebody.
  From the K\"unneth theorem for bordered Floer homology (a special
  case of Theorem~\ref{thm:bs-Kunneth}), $\CFDa(Y)\simeq
  \CFDa(Y')\otimes_{\FF_2}\CFa(Y'')$. Thus,
  \begin{multline*}
    H_*\Mor_{\Alg(F)}\bigl(\lsup{\Alg(F)}\CFDa(Y),\lsup{\Alg(F)}\tCFDAa(\Id)_{\Alg(F)}\DT\lsup{\Alg(F)}\CFDa(Y)\bigr)\\
    \cong H_*\Mor_{\Alg(F)}\bigl(\lsup{\Alg(F)}\CFDa(Y'),\lsup{\Alg(F)}\tCFDAa(\Id)_{\Alg(F)}\DT\lsup{\Alg(F)}\CFDa(Y')\bigr)\otimes_{\FF_2} \HFa(Y'').
  \end{multline*}
  Thus, since the support of the left side is zero-dimensional (by Theorem~\ref{thm:detect-hb}), the support of $$
  H_*\Mor_{\Alg(F)}\bigl(\lsup{\Alg(F)}\CFDa(Y'),\lsup{\Alg(F)}\tCFDAa(\Id)_{\Alg(F)}\DT\lsup{\Alg(F)}\CFDa(Y')\bigr)
  $$
  is also zero-dimensional. So, by Theorem~\ref{thm:detect-hb}, $Y'$ is a handlebody.

  It remains to see that $Y''$ is an $L$-space. The homologies of $\Mor_{\Alg(F)}(\CFDa(Y),\CFDa(Y))$ and $\Mor_{\Alg(F)}(\CFDa(Y),\CFDa(Y))$ both have dimension $2^g$ (since both morphism complexes compute $\HFa(\#g(S^2\times S^1))$, the result of doubling a handlebody). So, by the K\"unneth theorem, $\HFa(Y'')\cong \FF_2$.
\end{proof}

\subsection{Computability of the support}\label{sec:compute-support} 

We discuss briefly how one can compute the support of a module like
$\tHFa(Y)$ or $\tSFH(Y,\Gamma)$. For definiteness, we will focus on the
case of $\tHFa(Y)$.

To keep notation short, let $R=\FF_2[x_1^{\pm 1},\dots,x_n^{\pm 1}]$. Fix any identification of $H_2(Y)$ with $\ZZ^n$, so $\FF_2[H_2(Y)]$ is
identified with $R$.

Suppose $Y\cong \#^k(S^2\times S^1)\# Y'$
where $Y'$ does not contain any homologically essential
2-spheres. This decomposition induces an identification of $\FF_2[H_2(Y)]$ with $\ZZ[y_1^{\pm 1},\cdots,y_n^{\pm 1}]$ so that the support of
$\tHFa(Y)$ is given by $\{y_1=\cdots=y_k=1\}$. Hence, returning to
viewing $\tHFa(Y)$ as a module over $R$, it follows that the support
of $\tHFa(Y)$ is a smooth, codimension-$k$ subvariety of $\Spec R$
containing the point $(1,1,\dots,1)$ (or the ideal
$(y_1-1,\dots,y_k-1)$). Our goal is to compute $k$ without knowing the decomposition inducing the coordinates $y_1,\dots,y_n$.

Write the differential on $\tCFa(Y)$ as an $N\times N$ matrix with
coefficients in $R$. Of course, if $N$ is not even then $Y$ does not
contain any homologically essential $2$-spheres, so we may assume $N$ is
even. Let $A$ be the matrix for the differential on $\tCFa(Y)$, viewed
as a single $N\times N$ matrix.

\begin{lemma}
  An ideal $I\subset R$ is in 
  $\Supp(\tHFa(Y))$ if and only if $I$ contains all the
  $N/2\times N/2$ minors of $A$.
\end{lemma}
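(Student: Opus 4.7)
The plan is to translate the support condition into a rank condition on $A$ over residue fields of $R$, and then apply linear algebra. Since $\tHFa(Y)$ is finitely generated over the Noetherian ring $R$, a prime $I$ lies in $\Supp(\tHFa(Y))$ if and only if $\tHFa(Y)_I \neq 0$. Flatness of localization identifies this with the homology $\ker A/\mathrm{im}\, A$ of the localized complex $(R_I^N, A)$. So the lemma reduces to showing that this homology is nonzero if and only if every $(N/2) \times (N/2)$ minor of $A$ lies in $I$.

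I would reduce further from $R_I$ to its residue field $\kappa(I) = R_I/IR_I$. Acyclicity over $R_I$ implies acyclicity over $\kappa(I)$ via the universal coefficient short exact sequence over $R_I$: when $H_*(R_I^N, A) = 0$, both outer terms of the UCT vanish and hence so does $H_*(\kappa(I)^N, A_\kappa)$. Conversely, suppose some $(N/2) \times (N/2)$ minor of $A$ is a unit in $R_I$. The corresponding $N/2$ columns of $A$ span a free rank-$N/2$ direct summand $L$ of $R_I^N$, with $L \subseteq \mathrm{im}\, A \subseteq \ker A$ (the last inclusion from $A^2 = 0$). Picking a complementary free summand $L'$, the induced map $L' \to L$ coming from $A$ is invertible modulo $IR_I$ (by the field-case analysis below), hence invertible over $R_I$ by Nakayama, and a short matrix computation using $A^2 = 0$ then yields $\ker A = \mathrm{im}\, A$ over $R_I$. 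Over the field $\kappa(I)$, $A_\kappa^2 = 0$ forces $\mathrm{im}\, A_\kappa \subseteq \ker A_\kappa$, so $\rank A_\kappa \leq N/2$, with acyclicity equivalent to $\rank A_\kappa = N/2$, which holds if and only if some $(N/2) \times (N/2)$ minor of $A_\kappa$ is nonzero, i.e.\ some $(N/2) \times (N/2)$ minor of $A$ lies outside $I$.

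The main obstacle is the Nakayama-lift argument in the previous paragraph: one must correctly identify the direct summand $L$ from the unit minor, verify that the induced block $L' \to L$ is invertible mod $IR_I$, and then deduce $\ker A = \mathrm{im}\, A$ over $R_I$ using the constraint $A^2 = 0$. This is pure linear algebra but requires some bookkeeping, since $A$ is an endomorphism of $R_I^N$ and the same matrix plays the role of both differentials in the $2$-periodic complex.
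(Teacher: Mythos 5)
Your proof is correct but differs genuinely from the paper's in the converse direction. Both arguments handle the forward direction the same way: reduce mod $I$ to the field $\kappa(I)$, where $A_\kappa^2=0$ forces $\rank A_\kappa\le N/2$, with equality exactly when the mod-$I$ complex is acyclic. The paper's converse, however, is not pure linear algebra: it invokes the explicit description of $\Supp(\tHFa(Y))$ as the locus $\{y_1=\cdots=y_k=1\}$ established just before the lemma (which in turn rests on the non-vanishing theorem for twisted Floer homology), and derives a contradiction over the fraction field of $R/(y_1-1,\dots,y_k-1)$. Your converse is instead a self-contained Nakayama argument: a unit $N/2\times N/2$ minor produces a free rank-$N/2$ direct summand $L\subseteq\mathrm{im}\,A\subseteq\ker A$, and writing $A$ in block form with respect to $L\oplus L'$ (the column of blocks over $L$ being zero since $L\subseteq\ker A$), the field computation makes the block $B\co L'\to L$ invertible over $\kappa(I)$, hence over $R_I$ by Nakayama; then $A^2=0$ gives $BC=0$, so the block $C\co L'\to L'$ vanishes, whence $\ker A=L=\mathrm{im}\,A$. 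This turns the lemma into a general algebraic fact about $2$-periodic complexes of finite free modules over the regular local ring $R_I$, independent of the Floer-theoretic support computation; the paper's route is shorter only because that computation is already in hand. One informality worth flagging: there is no universal-coefficient \emph{short exact sequence} over $R_I$ for an unbounded complex of free modules when $R_I$ is not a PID. What actually justifies ``acyclic over $R_I$ implies acyclic over $\kappa(I)$'' is that $R_I$ is regular local, so the cycle module of an acyclic $2$-periodic complex of finite free modules is its own syzygy, hence free by finite global dimension, hence the complex is contractible and remains acyclic after base change to $\kappa(I)$. The paper's citation of the universal coefficient spectral sequence for the same unbounded complex has the same informal character, so on this point you are on equal footing with the published argument.
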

\begin{proof}
  If $I$ contains all the $N/2\times N/2$ minors of $A$, then over the
  field $R_I/I$, all $N/2\times N/2$ minors of $A$ vanish so $A$ has
  determinantal rank, and hence rank, less than $N/2$. Hence,
  $\tHFa(Y;R_I/I)\neq 0$. So, by the universal coefficient spectral
  sequence, 
  $$\Tor_{R_I}(\tHFa(Y;R_I), (R_I/I))\neq 0.$$
  Thus,
  $\tHFa(Y;R_I)=\tHFa(Y)_I\neq 0$ and $I\in\Supp(\tHFa(Y))$.

  Conversely, suppose $I\in\Supp(\tHFa(Y))$.  Then in the coordinates
  $x_i$, $I$ contains $y_i-1$ for all $1\le i\le k$. Let $\mathbb{F}$ be the field of
  fractions of $R/(y_1-1,\dots,y_k-1)$. If $I$ does not contain some
  $N/2\times N/2$ minor of $A$ then $(y_1-1,\dots,y_k-1)$ also does
  not contain that minor, so the minor is a non-zero element of
  $\mathbb{F}$. Hence, $A$ has determinantal rank, and hence rank, $N/2$ over
  $\mathbb{F}$, so $\tHFa(Y;\mathbb{F})=0$. On the other hand, we showed previously that
  $\tHFa(Y;\mathbb{F})\neq 0$, a contradiction.
\end{proof}

By the previous lemma, the support of $\tHFa(Y)$ is exactly the variety
associated to the set of $N/2\times N/2$ minors of $A$. In particular,
the dimension of the support is computable by familiar algorithms in
commutative algebra, using Gr\"obner bases. (Alternatively, from the
form of $\tHFa$, this variety is smooth and contains $(1,\dots,1)$, so
one can compute its dimension as the dimension of the tangent space at
$(1,\dots,1)$, if that is faster.)

The discussion above applies without changes to twisted sutured Floer
homology. 
Further, by
Corollary~\ref{cor:finite-test-mflds}, say, if one can compute the
support of twisted sutured Floer homology then one can also compute the
support of the twisted bordered-sutured invariants. So, the supports of
all modules discussed in this paper are computable.

\section{Detecting whether maps extend over a given compression body}\label{sec:HF-detect}

The goal of this section is to prove Theorem~\ref{thm:detect-over-specific}.

Recall that a \emph{bordered handlebody} is a pair $(H,\phi)$ where $H$ is a
handlebody and $\phi\co \Sigma\to \bdy H$ is a diffeomorphism from a
standard, reference surface (usually coming from a pointed matched
circle) to the boundary of $H$. A map $\psi\co \Sigma\to\Sigma$
\emph{extends over $H$} if there is a diffeomorphism $\Psi\co H\to H$
extending $\phi\circ\psi\circ \phi^{-1}$, or equivalently so that
$\phi\circ\psi = \Psi\circ \phi$:
\begin{equation}\label{eq:extends}
  \xymatrix{
    \Sigma\ar[r]^\psi\ar[d]_\phi & \Sigma\ar[d]_\phi\\
    H \ar[r]_{\Psi}  & H.
    }
\end{equation}
That is, $(H,\phi)$ and $(H,\phi\circ\psi)$ are diffeomorphic
(equivalent) bordered manifolds.
Similarly, given a half-bordered compression body $(C,\phi)$, a
diffeomorphism $\psi\co \Sigma\to \Sigma$ (of the outer boundary) \emph{extends
  over $(C,\phi)$} if there is a diffeomorphism $\Psi\co C\to C$
extending $\phi\circ\psi\circ\phi^{-1}$.

\begin{proposition}\label{prop:mc-S2S1}
  Let $(C,\phi)$ be a half-bordered compression body, with outer boundary of genus $g$ and inner boundary with $k$ connected components of genera $g_1,g_2,\dots,g_k$. Then a diffeomorphism $\psi\co\Sigma_g\to\Sigma_g$ extends over $C$ if and only if 
  \begin{equation}\label{eq:ext-comp}
    Y\coloneqq C
    \phipsiphi (-C)\cong Y_1\#Y_2\#\cdots\#Y_{k}\#(S^2\times S^1)^{g-g'}
  \end{equation}
  where $g'=g_1+\cdots+g_k$ and each $Y_i\cong [0,1]\times\Sigma_{g_i}$.
\end{proposition}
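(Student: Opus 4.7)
For the forward direction, suppose $\Psi\co C\to C$ extends $\psi$, meaning $\Psi\circ\phi=\phi\circ\psi$. Then $\Psi\sqcup\Id_{-C}$ on $C\sqcup(-C)$ descends to a diffeomorphism from $Y=C\phipsiphi(-C)$ to the double $D(C):=C\cup_\phi(-C)$ along the outer boundary. It then suffices to compute $D(C)$ directly: every half-bordered compression body admits a boundary connect sum decomposition $C\cong(F_1\times I)\natural\cdots\natural(F_k\times I)\natural H_{g-g'}$, where $H_{g-g'}$ is a genus $g-g'$ handlebody. Doubling along the outer boundary converts $\natural$ into $\#$, while $D(F_i\times I)\cong Y_i$ and $D(H_{g-g'})\cong\#^{g-g'}(S^2\times S^1)$, yielding $D(C)\cong Y_1\#\cdots\# Y_k\#(S^2\times S^1)^{g-g'}$.

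For the backward direction, assume $Y\cong Y_1\#\cdots\# Y_k\#(S^2\times S^1)^{g-g'}$. A direct computation from the prime decomposition gives $\mathrm{rank}\bigl(H_2(Y;\QQ)/i_*H_2(\bdy Y;\QQ)\bigr)=g-g'$, so Lemma~\ref{lem:cb-Haken} applied to the compression body splitting $Y=C\cup_\Sigma(-C)$ produces $r:=k+(g-g')-1$ pairwise disjoint embedded $2$-spheres $S_1,\dots,S_r\subset Y$, each meeting $\Sigma$ in a single circle, with $[S_1],\dots,[S_r]$ linearly independent in $H_2(Y)$ and the first $g-g'$ spanning $H_2(Y;\QQ)/i_*H_2(\bdy Y;\QQ)$. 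Each $S_i$ decomposes as $D_i\cup(-D_i')$ with $D_i\subset C$ and $D_i'\subset-C$ properly embedded disks, and the gluing on $\Sigma$ forces $\bdy D_i'=\psi^{-1}(\bdy D_i)$ as curves on the reference surface.

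The key step is to show that $\{[D_i]\}$ spans $H_2(C,\bdy_{\out}C)$ and symmetrically $\{[D_i']\}$ spans $H_2(-C,\bdy_{\out}(-C))$. A Poincar\'e--Lefschetz computation gives both groups rank exactly $r$; the excision map $\pi_C\co H_2(Y)\to H_2(C,\bdy_{\out}C)$ is surjective with kernel of rank $k$ (the image of $H_2(-C)$) and sends $[S_i]\mapsto[D_i]$, so the spanning statement amounts to $\mathrm{span}_\QQ([S_i])\cap\ker\pi_C=0$. I expect this to follow by refining Step~1 of the proof of Lemma~\ref{lem:cb-Haken}, which already makes each component of $S_i\cap C$ a homologically essential disk, and using the prime decomposition of $Y$ to arrange that these disk classes are not merely nontrivial but linearly independent. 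Lemma~\ref{lem:max-set} then identifies $\{\bdy D_i\}$ and $\{\bdy D_i'\}$ as maximal systems of meridians for $C$.

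Finally, $\psi$ carries the maximal meridian system $\{\bdy D_i'\}=\{\psi^{-1}(\bdy D_i)\}$ to $\{\bdy D_i\}$; extend $\psi$ first across collar neighborhoods of each $\bdy D_i'\mapsto\bdy D_i$, then across the cut-open pieces of $C$, which by Lemma~\ref{lem:max-set} are diffeomorphic to $\bigsqcup_i(F_i\times I)\sqcup B^3$. Extension over $B^3$ exists since any orientation-preserving self-diffeomorphism of $S^2$ is isotopic to the identity; extension over each $F_i\times I$ requires that the induced map on its two copies of $F_i$ agree up to isotopy, which is arranged by adjusting the collar extensions and post-composing with mapping classes of the inner boundary $F_i$. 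Reassembling yields $\Psi\co C\to C$ with $\Psi\circ\phi=\phi\circ\psi$. The main obstacle is the linear-independence step, since Lemma~\ref{lem:cb-Haken} as stated only guarantees independence in $H_2(Y)$ and not directly in $H_2(C,\bdy_{\out}C)$.
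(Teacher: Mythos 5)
Your forward direction is correct and takes a genuinely different route from the paper. The paper builds the decomposition hands-on: it takes a maximal system of meridians $\gamma_i$ with disks $D_i\subset C$, notes that $\phi\circ\psi(\gamma_i)=\bdy\Psi(D_i)$, and observes that the spheres $D_i\cup\Psi(D_i)$ in $Y$ cut it into products. Your observation that $\Psi\sqcup\Id$ descends to a diffeomorphism $Y\to D(C)$, together with the boundary-connect-sum model $C\cong (F_1\times I)\natural\cdots\natural(F_k\times I)\natural H_{g-g'}$ and $D(\cdot)$ converting $\natural$ to $\#$, is cleaner and equally correct.

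The backward direction, however, has the gap you yourself flag, and your proposed fix is not the route the paper takes and is not obviously sufficient. You reduce the problem to showing $\operatorname{span}_\QQ\{[S_i]\}\cap\ker\pi_C=0$, which is exactly what one needs, but then suggest ``refining Step 1 of the proof of Lemma~\ref{lem:cb-Haken}'' to upgrade homological essentiality of the disk components to linear independence. Step~1 only controls individual components of $S_i\cap C$; making them essential in $H_2(C,\bdy_{\out}C)$ does not by itself prevent nontrivial relations among the classes $[D_1],\dots,[D_n]$, and it is not clear how to extract independence from that local argument. The paper instead argues globally: it first uses $H_1(Y)\cong H_1(\Sigma)/(L+\psi_*L)$ to see $L=\psi_*L$, hence $H_1(C)\cong H_1(Y)$, which gives surjectivity of $H_2(Y)\to H_2(Y,-C)$ and identifies its kernel with $i_*H_2(\bdy_{\mathit{in}}(-C))$. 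It then slides $S_{l+1},\dots,S_n$ over $S_1,\dots,S_l$ so they become separating, uses the resulting decomposition $Y=Y_1'\#\cdots\#Y_k'$ to show each $\bdy Y_i'$ is one component of $\bdy_{\mathit{in}}(-C)$ and one of $\bdy_{\mathit{in}}C$, establishes $H_2(Y)\cong\ZZ^n\oplus\bigoplus_i i_*H_2(\bdy_0Y_i)$, and shows $[S_1],\dots,[S_l]$ are a basis of $H_2(Y)/i_*H_2(\bdy Y)$. These facts about the explicit prime decomposition of $Y$, not a refinement of Haken's innermost-disk procedure, are what rule out $\operatorname{span}\{[S_i]\}$ meeting $i_*H_2(\bdy_{\mathit{in}}(-C))$. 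You would need to supply an argument of this kind (or another) to close the gap.

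One small point: with the paper's gluing convention $\phi(x)\sim\phi(\psi(x))$, the relation between the two boundary circles should be checked carefully; it is $\psi$ or $\psi^{-1}$ depending on which side of the gluing surface you parameterize, and you should make sure your final assembly of $\Psi$ is consistent with the requirement $\Psi\circ\phi=\phi\circ\psi$.
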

In particular, $\psi$ extends over a bordered handlebody
$(H,\phi)$ if and only if
\begin{equation}\label{eq:extends-is-sum}
  H\phipsiphi(-H)\cong (S^2\times S^1)^{\#g}.
\end{equation}

\begin{proof}
Suppose $\psi$ extends over $C$. Consider a maximal set of pairwise disjoint meridians $\amalg_{i=1}^n\gamma_i$ in $\Sigma_g$, along with pairwise disjoint, properly embedded disks $\amalg_{i=1}^{n}D_i$ in $C$ such that $\bdy D_i=\phi(\gamma_i)$ and 
\[C\setminus\left(\amalg_{i=1}^{n}\nbd(D_i)\right)\cong\amalg_{i=1}^k \left([0,1]\times \Sigma_{g_i}\right).\]
Here, $n=g-g'+k-1$. Since $\psi$ extends over $(C,\phi)$, each $\psi(\gamma_i)$ is a meridian for $C$ with $\phi\circ\psi(\gamma_i)=\bdy \Psi(D_i)$, where $\Psi$ denotes the extension of $\psi$ over $C$. Further, the disks $\Psi(D_1), \cdots, \Psi(D_n)$ split $C$ into product $3$-manifolds, so we are done. 

Conversely, suppose Equation~\eqref{eq:ext-comp} holds. Then 
\[H_1(Y)\cong \ZZ^{g+g'}\cong \frac{H_1(\Sigma)}{L+\psi_*L}\]
where $L=\ker\left(\phi_*:H_1(\Sigma)\to H_1(C)\right)$. Note that $H_1(\Sigma)/L\cong H_1(C)\cong \ZZ^{g+g'}$. Thus, $L=\psi_*L$, and so the inclusion $C\subset Y$ induces an isomorphism from $H_1(C)$ to $H_1(Y)$. Consequently, in the long exact sequence for the pair $(Y,-C)$ the map 
\[H_2(Y)\to H_2(Y,-C)\cong H_2(C,\bdy_{\mathit{out}}C)\]
is surjective. The kernel of this map is equal to the image of $H_2(-C)$, and so $H_2(\bdy_{\mathit{in}}(-C))$, under the inclusion map.   
Write $\bdy_0Y=\bdy_{\mathit{in}}(-C)$ and $\bdy_1Y=\bdy_{\mathit{in}}(C)$.
Similarly, considering the long exact sequence for the pair $(Y,C)$, the map from $H_2(Y)$ to $H_2(Y,C)$ is surjective, and its kernel is equal to $i_*(H_2(\bdy_1Y))$.

Let $S_1,S_2,\dots, S_{n}$ be pairwise disjoint embedded $2$-spheres in $Y$ satisfying the properties listed in Lemma \ref{lem:cb-Haken}, and $l=g-g'$. By sliding the spheres $S_{l+1},S_{l+2},\dots, S_{n}$ over the spheres $S_1,\dots, S_l$ we can assume $S_{i}$ is a separating sphere for any $l+1\le i\le n$. Thus, these spheres give a decomposition of $Y$ as 
\[Y=Y_1'\#Y_2'\#\cdots\#Y_{k}'\]
where $\bdy Y_i'\neq\emptyset$ for every $i$.  The inclusion maps from $H_1(\bdy_0 Y)$ and $H_1(\bdy_1 Y)$ to $H_1(Y)$ are injective so, for each $i$, $\bdy Y_i'$ contains at least one component of $\bdy_0Y$ and $\bdy_1Y$. Hence, by counting, each $\bdy Y'_i$ consists of exactly one component of $\bdy_0Y$ and one component of $\bdy_1Y$.
A similar consideration of the kernel of $H_1(\bdy Y)\to H_1(Y)$ implies that $\bdy Y_i'=\bdy Y_{j}$ for some $j$.
Further,  
\[
  H_2(Y)\cong \ZZ^{n}\oplus\left(\bigoplus_{i=1}^ki_*(H_2(\bdy_0 Y_i))\right),
\]
The complement $Y\setminus(S_1\cup\cdots\cup S_l)$ is connected, so $S_1,\dots,S_l$ induce a decomposition $Y\cong Y''\#(S^2\times S^1)^l$, and $H_2(Y'')/i_*H_2(\bdy Y'')=0$. So, $[S_1],\dots,[S_l]$ form a basis for $H_2(Y)/i_*H_2(\bdy Y)$. Thus,
\(
\{[S_i] \mid 1\le i\le n\}
\)
is a basis for $H_2(Y)/i_*(H_2(C))$. Let $D_i=S_i\cap C$ and $D'_i=S_i\cap -C$ for $1\le i\le n$. Then the long exact sequence for the pair $(Y,-C)$ implies that $[D_1],\dots,[D_n]$ form a basis for $H_2(C,\bdy_{\mathit{out}}C)$. Similarly, $[D_1'],\dots,[D'_n]$ also form a basis for $H_2(C,\bdy_{\mathit{out}}C)$. 

 Let $C'\subset C$ be the compression body obtained from $\nbd(\bdy_{\mathit{out}}C)\cup \left(\coprod_{i=1}^{n}\nbd(D_i)\right)$ by filling any inner sphere boundary components with balls. By Lemma~\ref{lem:max-set}, $C\setminus C'\cong [0,1]\times \bdy_{\mathit{in}}C$. Similarly, one can define a compression body $C''\subset C$ using $D_1',\dots, D_n'$ and $C\setminus C''\cong [0,1]\times \bdy_{\mathit{in}}C$. We extend $\psi$ to a diffeomorphism of $C$ by first defining $\Psi$ from $C'$ to $C''$ such that $\Psi(D_i)=D'_i$. Since $C\setminus C'$ and $C\setminus C''$ are products, $\Psi$ extends to a diffeomorphism on $C$.
\end{proof}

\begin{remark}
  In a somewhat related vein, Casson-Gordon showed that a mapping class extends over a compression body if and only if it preserves the subgroup of $\pi_1$ generated by a set of meridians for the compression body~\cite[Lemma 5.2]{CG83:fibered-ribbon}.
\end{remark}

The following is essentially a reformulation of Theorem~\ref{thm:detect-over-specific}; we deduce Theorem~\ref{thm:detect-over-specific} immediately after proving it.

\begin{proposition}\label{prop:tHF-detect-extend} With notation as in
  Proposition~\ref{prop:mc-S2S1}, equip $\bdy_{\mathit{in}}C$ with a special
  bordered-sutured structure, with bordered boundary $\phi'\co F'\to
  \bdy_{\mathit{in}}C$. So, $Y:=C\phipsiphi(-C)$ inherits a bordered-sutured
  structure, with bordered boundary $F'\amalg (-F')$. Then $\psi$ extends over
  $(C,\phi)$ if and only if $\psi$ preserves $\ker(\phi_*\co H_1(\Sigma)\to
  H_1(C))$ and the dimension of the support of $\tBSD(Y)$ (over
  $\FF_2[H_2(Y,-F'\amalg F')]$) is $2g'+k$. 
\end{proposition}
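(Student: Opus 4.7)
The plan is to use Proposition~\ref{prop:mc-S2S1} as a topological bridge: it asserts that $\psi$ extends over $C$ if and only if $Y = C\phipsiphi(-C)$ is diffeomorphic to $Y_1\#\cdots\#Y_k\#(S^2\times S^1)^{g-g'}$ with each $Y_i\cong [0,1]\times\Sigma_{g_i}$. Lemma~\ref{lem:sphere-support}(3) will translate between this connect-sum structure and $\dim\Supp\tBSD(Y)$ by counting the maximal number of homologically linearly independent embedded $2$-spheres in $Y$.

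For the forward direction, if $\psi$ extends via $\Psi\co C\to C$ then $\Psi$ carries compressing disks to compressing disks, so $\psi_*L=L$ for $L=\ker(\phi_*)$. By Proposition~\ref{prop:mc-S2S1}, $Y$ has the claimed connect-sum form, so Lemma~\ref{lem:disj-sphere} gives that the maximal number of disjoint homologically independent embedded $2$-spheres in $Y$ is $(g-g')+(k-1)$. A Mayer--Vietoris computation for $Y=C\cup_\Sigma(-C)$ using the hypothesis $\psi_*L=L$ gives $\rank H_2(Y)=2k-1+(g-g')$, and tracking the map $H_1(F'\amalg -F')\to H_1(Y)$ through the long exact sequence of the pair $(Y,F'\amalg -F')$ determines $\rank H_2(Y,F'\amalg -F')$. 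Plugging these values into Lemma~\ref{lem:sphere-support}(3) yields the claimed value of $\dim\Supp\tBSD(Y)$.

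For the converse, assume $\psi_*L=L$ and $\dim\Supp\tBSD(Y)=2g'+k$. The same computations of $\rank H_2(Y)$ and $\rank H_2(Y,F'\amalg -F')$ go through using only the kernel-preservation hypothesis (not the stronger assumption that $Y$ has a specific connect-sum form), so Lemma~\ref{lem:sphere-support}(3), read in reverse, shows that $Y$ admits $(g-g')+(k-1)$ disjoint homologically independent embedded $2$-spheres. By Lemma~\ref{lem:cb-Haken} we may choose these spheres to meet $\Sigma$ each in a single circle, with the first $g-g'$ spanning $H_2(Y;\QQ)/i_*H_2(\bdy Y;\QQ)$; decomposing $Y$ along them produces a connect sum $(S^2\times S^1)^{g-g'}\# Y_1\#\cdots\# Y_k$ with each $Y_i$ irreducible and with non-empty boundary. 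Following the argument in the converse direction of Proposition~\ref{prop:mc-S2S1}, the kernel-preservation hypothesis together with the injectivity of $H_1(\bdy_{\mathit{in}}C)\to H_1(Y)$ pairs up each $\bdy Y_i$ as one component of $\bdy_{\mathit{in}}C$ with one of $\bdy_{\mathit{in}}(-C)$, both of the same genus $g_i$.

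The central remaining step is to show each $Y_i\cong [0,1]\times\Sigma_{g_i}$. Maximality of the sphere count forces each $Y_i$ to be aspherical, and the kernel-preservation hypothesis combined with irreducibility ensures both boundary components of $Y_i$ are incompressible with the inclusion inducing an isomorphism $\pi_1(\bdy Y_i^\pm)\to \pi_1(Y_i)$. Waldhausen's structural theorem for irreducible Haken 3-manifolds (in the spirit of the result cited in the proof of Lemma~\ref{lem:cb-Haken}, see~\cite{Waldhausen68}) then yields $Y_i\cong [0,1]\times\Sigma_{g_i}$, at which point Proposition~\ref{prop:mc-S2S1} gives that $\psi$ extends over $C$. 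The hardest part is this last step: one must carefully assemble the sphere count and kernel preservation to get the precise homotopical hypotheses needed to invoke Waldhausen's product recognition.
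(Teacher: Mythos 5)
The forward direction of your argument is fine and essentially equivalent to the paper's (the paper uses the K\"unneth theorem to compute $\dim\Supp\tBSD(Y)$ directly from the pieces rather than plugging rank computations into Lemma~\ref{lem:sphere-support}\ref{item:ss-bs}, but the bookkeeping is the same). The early steps of your converse---computing the relevant ranks, applying Lemma~\ref{lem:sphere-support}\ref{item:ss-bs} in reverse, invoking Lemma~\ref{lem:cb-Haken}, and using injectivity of $H_1(\bdy_{\mathit{in}}C)\to H_1(Y)$ to pair up boundary components of the $Y_i$---also closely track the paper.

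The gap is in the final step. You assert that kernel preservation plus irreducibility ``ensures both boundary components of $Y_i$ are incompressible with the inclusion inducing an isomorphism $\pi_1(\bdy Y_i^\pm)\to\pi_1(Y_i)$.'' Neither incompressibility nor, more seriously, surjectivity of $\pi_1(\bdy Y_i^\pm)\to\pi_1(Y_i)$ follows from the homological hypotheses you have in hand. Injectivity of $H_1(\bdy_{\mathit{in}}C)\to H_1(Y)$ does not preclude a separating essential curve on a boundary torus/surface from bounding a disk, and even granting incompressibility, there are plenty of aspherical, irreducible $3$-manifolds with two incompressible genus-$g_i$ boundary components, each $H_1$-injective, that are not products: for $g_i=1$ take the exterior of a $(2,4)$-torus link in $S^3$, which is Seifert fibered and not $T^2\times I$ yet has both boundary tori $H_1$-injective. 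So Waldhausen's product-recognition theorem is not applicable without additional input, and the sphere count plus kernel preservation do not supply it directly.

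The paper closes this gap by a different mechanism: it shows that the half-disks $D_i=S_i\cap C$ have homology classes that are linearly independent in $H_2(Y,-C)\cong H_2(C,\bdy_{\mathit{out}}C)$ (this uses Formula~\eqref{eq:i-cap-L}, that $i_*H_2(\bdy_{\mathit{in}}C)$ meets the span of the sphere classes trivially), and hence, by a dimension count, generate $H_2(C,\bdy_{\mathit{out}}C)$. Lemma~\ref{lem:max-set} then promotes this homological statement to the topological one that $\{\bdy D_i\}$ is a basis for $C$, i.e., $C\cong\Sigma[\bdy D_1,\dots,\bdy D_{g-g'+k-1}]$, and likewise for $-C$. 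Cutting $C$ and $-C$ along these disks therefore yields products, so each $Y_i$ is a union of two product pieces along a common boundary, hence itself a product. In other words, the structural fact you need comes from Lemma~\ref{lem:max-set} applied to each half of $Y$, not from a $\pi_1$-level argument about the glued-up pieces $Y_i$. You should replace the Waldhausen step with this argument.
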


\begin{proof}
Suppose $\psi$ extends over $C$. Then commutativity of the Diagram~\eqref{eq:extends} implies that $\psi_*$ preserves the kernel of $\phi_*$. Further, by Proposition \ref{prop:mc-S2S1},
\[Y\cong Y_1\#Y_2\#\cdots\#Y_k\#(S^2\times S^1)^{g-g'},\]
where $Y_i\cong [0,1]\times\Sigma_{g_i}$. By the K\"{u}nneth theorem (Theorem \ref{thm:bs-Kunneth}),
\[
  \tBSD(Y)\simeq\tBSD(Y_1)\otimes \cdots\otimes\tBSD(Y_k)\otimes\tCFa(S^2\times S^1)^{\otimes g-g'+k-1}.
\]
By Lemma~\ref{lem:sphere-support}, since $Y_i$ is aspherical,
$\dim\Supp(\tBSD(Y_i))=\rank H_2(Y_i,F_i)$. Thus, 
\begin{align*}
\dim \Supp(\tBSD(Y))&=\dim\Supp(\tBSD(Y_1))+\cdots+\dim\Supp(\tBSD(Y_k))\\
&=\rank H_2(Y_1,F_1)+\cdots+\rank H_2(Y_k,F_k)+2k-2m\\
&=(2g'+2m-k)+2k-2m=2g'+k.
\end{align*}
Here, $m$ denotes the number of sutured arcs on the inner boundary of $C$ and $F_i$ denotes the bordered part  of $\bdy Y_i$. 

Turning to the converse, since $\psi_*$ preserves $\ker(\phi_*)$, the Mayer-Vietoris sequence for $Y=C\phipsiphi(-C)$ implies that $H_2(Y)/i_*(H_2(\bdy Y))\cong \ZZ^{g-g'}$. Since $\ker(H_1(\Sigma)\to H_1(C,\bdy_{\mathit{in}}(C)))$ is the symplectic orthogonal complement to $\ker(H_1(\Sigma)\to H_1(C))$ inside $H_1(\Sigma)$, $\psi_*$ preserves $\ker(H_1(\Sigma)\to H_1(C,\bdy_{\mathit{in}}(C)))$, as well. So, 
the relative Mayer-Vietoris sequence for $(Y,\bdy Y)=\left(C\phipsiphi(-C), \bdy_{\mathit{in}}(C)\cup\bdy_{\mathit{in}}(-C)\right)$ implies that $H_2(Y,\bdy Y)=\ZZ^{g+g'}$. Consequently, by the long exact sequence for the triple $(Y,\bdy Y, -F'\amalg F')$, we have 
\[
  \rank H_2(Y,-F'\amalg F')=\rank H_2(Y,\bdy Y)+2m-1=g+g'+2m-1.
\]
It follows from Lemma \ref{lem:sphere-support} that the maximal number of linearly independent homology classes that can be represented by embedded $2$-spheres is $g-g'+k-1$. Let $L'\subset H_2(Y)$ denote the linear subspace generated by these homology classes. 

On the other hand, the inclusion map from $H_1(C)$ to $H_1(Y)$ is an isomorphism, and so the long exact sequence for the pair $(Y,C)$ implies that the quotient map from $H_2(Y)$ to $H_2(Y,C)$ is surjective. Since $C$ is built from $\bdy_{\mathit{in}}(C)$ by attaching $1$-handles, $H_2(Y,C)\cong H_2(Y,\bdy_{\mathit{in}}C)$. So, the long exact sequence for the pair $(Y,\bdy_{\mathit{in}}C)$ implies that the inclusion map from $H_1(\bdy_{\mathit{in}}C)$ to $H_1(Y)$ is injective. Thus, 
\begin{equation}\label{eq:i-cap-L}
  \left(i_*H_2(\bdy_{\mathit{in}}C)\right)\cap L'=\{0\}
\end{equation}
and so 
\[
  H_2(Y;\QQ)\cong i_*H_2(\bdy_{\mathit{in}}C;\QQ)\oplus (L'\otimes\QQ).
\]

It follows from Lemma \ref{lem:disj-sphere} that $Y$ has a decomposition as 
\[
  Y=Y'\#(S^2\times S^1)^{g-g'-\ell}\#Y_1\#Y_2\#\cdots\#Y_{k+\ell},  
\]
for some integer $\ell$,
where $Y'$ is closed, while each $Y_i$ has non-empty boundary. 
Since the intersection of $i_*H_2(\bdy_{\mathit{in}}C)$ and $i_*H_2(\bdy_{\mathit{in}}(-C))$ with $L'$ is $\{0\}$, both $\bdy Y_i\cap \bdy_{\mathit{in}}C$ and $\bdy Y_i\cap
\bdy_{\mathit{in}}(-C)$ are nonempty for every $1\le i\le k+\ell$. Therefore, the boundary of each $Y_i$ consists of exactly two components, one in
$\bdy_{\mathit{in}}(C)$ and one in $\bdy_{\mathit{in}}(-C)$. Thus, $\ell\leq 0$; but $H_2(Y)/i_*(H_2(\bdy Y))\cong\ZZ^{g-g'}\supset \ZZ^{g-g'-\ell}$, so $\ell\geq 0$, and hence $\ell=0$. In
addition, injectivity of the inclusions $H_1(\bdy_{\mathit{in}}C)$ and
$H_1(\bdy_{\mathit{in}}(-C))$ into $H_1(Y)$ implies that the
components of $\bdy Y_i$ have equal genus. Therefore, we can assume
$Y_1, Y_2,\dots, Y_k$ are labeled so that $\bdy Y_i$ contains two
components of genus $g_i$.

By Lemma \ref{lem:cb-Haken} there exist pairwise disjoint, embedded $2$-spheres $S_1,S_2,\cdots, S_{g-g'+k-1}$ such that each $S_i$ intersects $\Sigma$ in a single circle. Moreover, they represent linearly independent homology classes in $H_2(Y)$ and $[S_1],\cdots, [S_{g-g'}]$ span $H_2(Y,\QQ)/i_*(H_2(\bdy Y,\QQ))$. Let $D_i=S_i\cap C$ and $D_i'=S_i\cap(-C)$ for all $i$. The long exact sequence for $(Y,-C)$ implies that the kernel of $H_2(Y)\to H_2(Y,-C)$ is equal to $i_*(H_2(-C))=i_*(H_2(\bdy_{\mathit{in}}(-C)))$, and thus by Formula~\eqref{eq:i-cap-L}, $[D_1],[D_2],\cdots, [D_{g-g'+k-1}]$ represent linearly independent homology classes in $H_2(Y,-C)=H_2(C,\bdy_{\out}C)$, and so they generate $H_2(C,\bdy_{\out}C)$. Similarly, $[D_1'], [D_2'],\cdots, [D_{g-g'+k-1}]$ generate $H_2(C,\bdy_{\out}(-C))$. Thus, by Lemma~\ref{lem:max-set}, $C$ is $\Sigma[\bdy D_1,\dots,\bdy D_{g-g'+k-1}]$ and similarly for $C'$. Thus, $Y$ has the form of Proposition~\ref{prop:mc-S2S1}, giving the result.
\end{proof}

\begin{proof}[Proof of Theorem~\ref{thm:detect-over-specific}]
  Write $Y=C\phipsiphi(-C)$. We will relate the morphism complex from
  Equation~\eqref{eq:detect-over-specific-formula} with $\tBSD(Y)$
  and then apply Proposition~\ref{prop:tHF-detect-extend}. (Specifically, we will show that the dimensions of their supports differ by $m$.)

  From the twisted pairing theorem (Theorem~\ref{eq:twisted-pairing}) and the
  duality theorem for bordered-sutured Floer homology
  (Theorem~\ref{thm:bs-dual}), the morphism complex in
  Formula~\eqref{eq:detect-over-specific-formula} is chain homotopy equivalent to $\tBSD(Y;\FF_2[H_2(C,F\cup F')])$, where $\FF_2[H_2(C,F\cup F')]$ is a module over $\FF_2[H_2(Y,F'\cup(-F'))]$ via the map $H_2(Y,F'\cup(-F'))\to H_2(Y,F'\cup(-C))=\FF_2[H_2(C,F\cup F')]$ induced by the evident map of pairs and excision.

  The bordered boundary of $Y$ is $F'\amalg (-F')$.
  The long exact sequence for the triple $(Y,F'\amalg (-C), F'\amalg (-F'))$ gives
  \begin{align*}
  0\to \ZZ^m=H_2(F'\cup(-C),F'\cup(-F'))&\to H_2(Y,F'\cup(-F'))\to H_2(Y,F'\cup(-C))\\&\to H_1(F'\cup(-C),F'\cup(-F'))\to H_1(Y,F'\cup(-F')).
  \end{align*}
  The last map is injective and, by excision, $H_2(Y,F'\cup (-C))=H_2(C,F\cup F')$, so this gives
  \[
    0\to \ZZ^m=H_2(F'\cup(-C),F'\cup(-F'))\to H_2(Y,F'\cup(-F'))\to H_2(C,F\cup F')\to 0.
  \]
  The first $\ZZ^m$ is generated by the sutured components of $\bdy_{\mathit{in}}-C$. So, the image $\ZZ^m$ in $H_2(Y,F'\cup(-F'))$ comes from the kernel of the map $H_1(F'\cup(-F'))\to H_1(Y)$, and hence is disjoint from the subspace generated by spheres (which lies in the image of $H_2(Y)\to H_2(Y,F'\cup(-F'))$). So, the exact sequence implies that
  \[
  \dim\Supp \tBSD(Y)=\dim\Supp(\tBSD(Y;\FF_2[H_2(C,F\cup F')]))+m,
  \]
  as desired.
\end{proof}

\begin{corollary}\label{cor:CFDA-detect-extension}
  Let $\psi$ be a mapping class and $(C,\phi)$ be a bordered
  handlebody or half-bordered compression body. Assume that $\psi$
  preserves $\ker(\phi_*\co H_1(\Sigma)\to H_1(C))$. Then there
  is an algorithm to determine from $\BSDA(\psi)$ whether $\psi$
  extends over $(C,\phi)$.
\end{corollary}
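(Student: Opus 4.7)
The plan is to reduce the extension question directly to a support-dimension computation via Theorem~\ref{thm:detect-over-specific}, and then to invoke the computability discussion in Section~\ref{sec:compute-support}. First, since $(C,\phi)$ is given, we may produce a bordered-sutured Heegaard diagram $\HD$ for $C$ (equipped with whatever special bordered-sutured structure we like on $\bdy_{\mathit{in}} C$, e.g.\ one suture per inner boundary component), using the pieces of Figure~\ref{fig:bs-handlebody-diag}. From $\HD$ the modules $\lsup{\Alg(F',\PMC)}\BSD(C)$ and $\lsup{\Alg(F),\Alg(F')}\tBSD(C)$ are algorithmically computable by the standard bordered-sutured algorithms (as referenced in the paper).

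Second, starting from the given input $\BSDA(\psi)$, we form the box tensor product
\[
N := \lsup{\Alg(F)}\BSDA(\psi)_{\Alg(F)}\DT_{\Alg(F)}\lsup{\Alg(F),\Alg(F')}\tBSD(C)
\]
and then the morphism complex
\[
\Mor_{\Alg(F)}\bigl(\lsup{\Alg(F'),\Alg(F)}\BSD(C),\,N\bigr),
\]
which is a finitely generated chain complex over $\FF_2[H_2(C,F\cup F')]$. All of these operations are finite combinatorial manipulations, since the modules involved are bounded and finitely generated, so the complex can be written explicitly as a matrix with entries in $\Alg(F')\otimes\FF_2[H_2(C,F\cup F')]$.

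Third, the dimension of the support of this complex over $\FF_2[H_2(C,F\cup F')]$ is computable by the method of Section~\ref{sec:compute-support}: pass to the $\Idem$-quotient (equivalently, view the result as an ordinary chain complex over a Laurent polynomial ring), then use the minors of the resulting differential matrix together with Gr\"obner basis methods (or, equivalently, the tangent-space dimension at the augmentation ideal) to extract $\dim\Supp$. Finally, by Theorem~\ref{thm:detect-over-specific}, since $\psi$ already preserves $\ker(\phi_*\co H_1(\Sigma)\to H_1(C))$ by hypothesis, the map $\psi$ extends over $(C,\phi)$ if and only if this computed dimension equals $2g'+k-m$, where $g'$, $k$, and $m$ are determined by our choice of bordered-sutured structure on $C$.

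The only nontrivial content is to verify that each of the above steps really is an \emph{effective} procedure: computability of the bordered-sutured type $D$ modules of $C$ (both twisted and untwisted), the finiteness of the morphism complex after passing to $\Idem$, and the algorithmic nature of the support-dimension computation. All three are already established in the paper (in Sections~\ref{sec:HF-background} and~\ref{sec:compute-support}), so no genuinely new obstacle arises here; the corollary follows by assembling these pieces.
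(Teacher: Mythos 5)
Your proposal is correct and follows the paper's own argument: compute the (twisted and untwisted) bordered-sutured modules of $C$, form the morphism complex of Theorem~\ref{thm:detect-over-specific}, compute its support dimension by the methods of Section~\ref{sec:compute-support}, and compare with $2g'+k-m$. The one place the paper spends noticeably more effort than you do is in justifying that $\tBSD(C)$ is actually algorithmically computable: the paper factors $C$ into the standard elementary pieces (arcslides, basic compression bodies, suture-changing pieces), observes that each has no provincial periodic domains so its twisted invariant is recoverable from the untwisted one via boundary twisting as in Formula~\eqref{eq:twist-pair-Id}, and then glues using the twisted pairing theorem while removing excess twisting as in Formula~\eqref{eq:drop-extra-twisting} -- a point you defer to ``Sections~\ref{sec:HF-background} and~\ref{sec:compute-support}'' but which is in fact established only in the proof of this corollary itself.
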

\begin{proof}
  As we described in our previous paper~\cite{AL19:incompressible}, the
  algorithm for computing $\HFa$ using bordered Floer theory~\cite{LOT4}
  extends easily to give an algorithm for computing the bordered-sutured
  Floer invariants of arbitrary bordered-sutured manifolds. The
  algorithm for computing $\HFa$ depends on computing the type \DD\
  invariants of the identity map, arcslide diffeomorphisms, and
  compression bodies with outer boundary a surface of genus $g$, inner
  boundary a surface of genus $g-1$, and particularly simple
  parameterizations. The extension to bordered-sutured manifolds involves
  computing the invariants of seven other simple pieces, mostly
  corresponding to changing the sutures on the boundary. In each case,
  computing the totally twisted bordered-sutured bimodule is no extra
  work. Indeed, in each of these cases, there are no provincial periodic
  domains, so the twisted invariant is determined by the boundary
  twisting $\tBSDA(Y;\FF_2[H_1(F)])$, which one can compute by taking
  the tensor product of the untwisted invariant with $\tBSDA(\Id)$, as
  in Formula~\ref{eq:twist-pair-Id}.

  Tensoring these pieces together in turn and removing excess twisting
  at each stage as in Formula~\eqref{eq:drop-extra-twisting} computes
  the twisted invariant $\tBSD(C,\phi)$; and the algorithms from earlier
  papers~\cite{LOT4,AL19:incompressible} determine $\BSD(C,\phi)$ and
  $\BSDA(\psi)$. 
  With these invariants in hand computing the morphism complexes and
  tensor products in Formula~\ref{eq:detect-over-specific-formula} is
  clearly combinatorial. As explained in
  Section~\ref{sec:compute-support}, the support of this complex is
  computable. By Theorem~\ref{thm:detect-over-specific}, this determines
  whether $\psi$ extends over $(C,\phi)$.
\end{proof}

\section{Train tracks and bordered Heegaard diagrams}\label{sec:tt-arc-diagram-slides} 

The goal of this section is to show how train track splitting sequences
give bordered-sutured Heegaard diagrams for diffeomorphisms. The first
step, in Section~\ref{sec:ad-from-tt}, is to see how a train track on a
surface gives an arc diagram for that surface (minus some disks). The
second, in Section~\ref{sec:slide-seq-loop}, is to show that a periodic
splitting sequence for a mapping class gives a sequence of arcslides
whose composition is a strongly-based representative for that mapping
class.

\subsection{Arc diagrams from train tracks}\label{sec:ad-from-tt} 
The goal of this section is to observe that, given a filling train track
on a surface (see Section~\ref{sec:traintracks}), there is an associated
arc diagram for the surface (see Section~\ref{sec:bs-background}); and
an additional choice of some switches in the train track induces a
special arc diagram for the surface.

\begin{figure}
  \centering
  \includegraphics{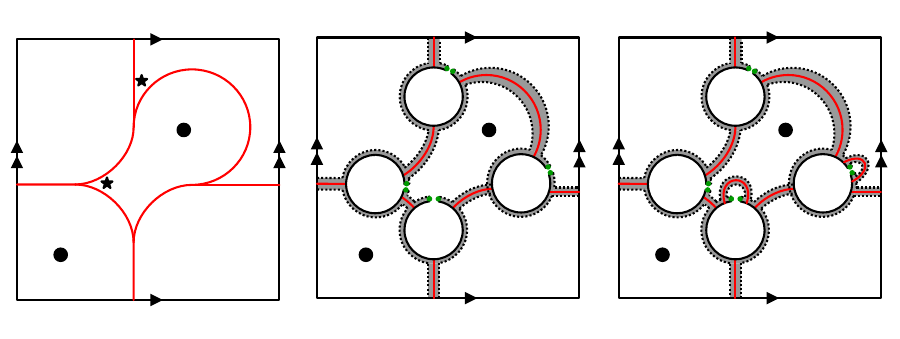}
  \caption{\textbf{Arc diagrams from train tracks.} Left: a \textcolor{red}{train track} on a twice-punctured torus. Center: the corresponding arc diagram $\PMC$ from Construction~\ref{const:tt-to-arc-diag}. The cores of the handles ($\alpha$-arcs in an associated Heegaard diagram) are \textcolor{red}{solid}, the surface $F(\PMC)$ is shaded, the intervals $S_-$ on its boundary are dotted, and the sutures $\bdy S_+=\bdy S_-$ are indicated with \textcolor{darkgreen}{small circles}. Right: the arc diagram $\PMC_\sigma$ from Construction~\ref{const:tt-to-arc-diag-sp}, where $\sigma$ chooses the two cusps marked with stars.}
  \label{fig:ad-to-tt}
\end{figure}

\begin{construction}\label{const:tt-to-arc-diag}
Let $\tau$ be a generic, filling train track on $\Sigma$. Associated to $\tau$ one can construct an arc diagram for $\Sigma$ as follows. Let $Z'_i$ be a small circle
  around the $i\th$ switch in the train track, $J_i$ an open interval
  in $Z'_i$ in the cusp region, and $Z_i=Z'_i\setminus J_i$. Orient
  each circle $Z'_i$ as the boundary of a small disk around the
  switch, and give $Z_i$ the induced orientation. Let $\mathbf{a}$ be
  the intersection of $Z=\amalg_iZ_i$ with the train track (which is the same as
  the intersection of $Z'=\amalg_iZ_i'$ with the train track). The matching $M$ exchanges the pairs of points on the same edge of the train track. Then, for $\PMC=(Z,\mathbf{a},M)$, the surface $F(\PMC)$ has an obvious embedding in $\Sigma$ as a neighborhood of $\tau$.   The intervals $S_+$ are contained in small disks around the switches, while the intervals $S_-$ are large, in the sense that they contain the boundaries of the neighborhoods of the edges of the train track. Moreover, since $\tau$ is filling, every component of $\Sigma\setminus F(\PMC)$ is either a disk or an annulus around a component of $\bdy\Sigma$. See Figure~\ref{fig:ad-to-tt}.
 \end{construction}

\begin{lemma}
  The arc diagram $\PMC$ along with the embedding $F(\PMC)\into\Sigma$ in Construction~\ref{const:tt-to-arc-diag} is, in fact, an arc diagram for $\Sigma$. That is, every component of $\bdy F(\PMC)$ intersects $S_+$.
  \end{lemma}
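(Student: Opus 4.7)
The plan is to show directly, from the embedding $F(\PMC)\into\Sigma$, that every boundary circle of $F(\PMC)$ must contain a piece of $Z_i\times\{1\}\subset S_+$ for some switch $s_i$.

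First I would pin down the local picture at a switch $s_i$. The three points of $\mathbf{a}\cap Z_i$ sit where the three half-branches of $\tau$ cross $Z_i$, and the $1$-handles attached at $\mathbf{a}\times\{0\}$ embed as thickenings of those half-branches running \emph{outward} from $s_i$. So $Z_i\times\{0\}$ lies on the side of $Z_i$ away from $s_i$, while $Z_i\times\{1\}$ lies on the side containing $s_i$ and the cusp region (matching the statement in Construction~\ref{const:tt-to-arc-diag} that $S_+$ is contained in small disks around the switches).

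Next I would identify the components of $\Sigma\setminus F(\PMC)$ with the complementary regions of $\tau$. The small disk $D_i$ bounded by $Z'_i$ around $s_i$ is disjoint from $F(\PMC)$ and is glued, through the arc $J_i$ (which is not part of $\bdy F(\PMC)$), to the cusp wedge of the complementary region whose cusp is at $s_i$. So each complementary region $R$ of $\tau$ corresponds to one component $R'$ of $\Sigma\setminus F(\PMC)$, and the portion $\bdy R'\cap F(\PMC)$ contains a copy of $Z_i\times\{1\}$ for every cusp of $R$. When $R$ is a once-punctured polygon, the remaining boundary component of $R'$ lies in $\bdy\Sigma$ and hence does not contribute to $\bdy F(\PMC)$.

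Finally, I would invoke the Euler-characteristic condition in the definition of a train track, which forces every polygonal or once-punctured polygonal complementary region of $\tau$ to carry at least one cusp (polygons require $k\geq 3$ cusps, once-punctured polygons require $k\geq 1$). Combined with the previous steps this shows every boundary component of $F(\PMC)$ meets some $Z_i\times\{1\}\subset S_+$, giving the claim. The only real obstacle is the orientation bookkeeping establishing that $Z_i\times\{1\}$ really sits on the $s_i$-side of $Z_i$; once this is settled the rest is immediate from the filling hypothesis on $\tau$.
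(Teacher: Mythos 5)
Your proof is correct and follows the same route as the paper's: identify each boundary component of $F(\PMC)$ with the boundary of a complementary region of $\tau$, observe that the $S_+$ arcs $Z_i\times\{1\}$ sit at the cusps of that region, and use the Euler-characteristic condition in the definition of a train track (together with the filling hypothesis) to guarantee every such region has at least one cusp. The one small imprecision is the assertion that the disk $D_i$ bounded by $Z'_i$ is \emph{disjoint} from $F(\PMC)$: with the stated embedding as a neighborhood of $\tau$, the rectangle $Z_i\times[0,1]$ reaches inside $D_i$ (only a smaller disk around $s_i$, communicating with the cusp region, is omitted); but this has no bearing on the component-counting and cusp argument, which is exactly what the paper uses.
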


\begin{proof}
Every component $K$ of $\bdy F(\PMC)$ corresponds to the boundary of a component $C$ of $\Sigma\setminus\tau$, and $K$ will intersect the $S_+$ arcs corresponding to the cusps on the boundary of $C$. Thus, $\tau$ being a train track implies that $\PMC$ is an arc diagram. 
\end{proof}

Note that the arc diagram $\PMC$ constructed in Construction \ref{const:tt-to-arc-diag} is not necessarily special. In fact, the number of positive (and so negative) arcs on every component of $\bdy F(\PMC)$ is equal to the number of cusps on the boundary of the corresponding region of $\Sigma\setminus \tau$.  We modify  $\PMC$  to a special arc diagram for $\Sigma$ as follows. 

\begin{construction}\label{const:tt-to-arc-diag-sp}
With notation as in Construction~\ref{const:tt-to-arc-diag}, consider a subset $\sigma$ of the switches of $\tau$ such that if we add a star in the cusp of each switch in $\sigma$ then every component of $\Sigma\setminus\tau$ contains exactly one star. 
 For any $i$, if the $i\th$ switch of $\tau$ is not in $\sigma$ then add a pair of points on $Z_i$ such that they bound a sub arc of $Z_i$ containing all three points of $Z_i\cap \tau$. Let     
  $\mathbf{a}'$ be the union of these pairs of points, and set $\mathbf{a}_\sigma=\mathbf{a}\cup \mathbf{a}'$. Define the matching $M_\sigma$ such that it is equal to $M$ on $\mathbf{a}$ and exchanges the points of $\mathbf{a}'$ that are on the same component of $Z$. Let $\PMC_\sigma=(Z,\mathbf{a}_\sigma,M_\sigma)$.
  
For any $i$, if the $i\th$ switch is not in $\sigma$, then connect the switch to itself with a loop disjoint from $\tau$ and embedded in a small neighborhood of the switch in the cusp region. Denote the union of such loops by $\ell$. The surface $F(\PMC_\sigma)$ can be embedded into $\Sigma$ as a neighborhood of $\tau\cup\ell$.  Then, $\PMC_\sigma$ along with this embedding is a special arc diagram for $\Sigma$. Again, see Figure~\ref{fig:ad-to-tt}.
\end{construction}

If $\Sigma$ is closed, then for any $\sigma$, the embedded sutured surface $F(\PMC_\sigma)$ in $\Sigma$ is isotopic to $\Sigma\setminus \amalg_iD_i$ where $D_i\subset \Sigma$ is the open disk neighborhood of the $i\th$ switch, with $\bdy D_i=Z_i'$. Under this identification, $S_+$ and $S_-$ are identified with $\amalg_iZ_i$ and $\amalg_iJ_i$, respectively. If $\Sigma$ has some boundary components, some switches in $\sigma$ correspond instead to punctures of $\Sigma$ rather than extra disks which are deleted.

\subsection{Arc slide sequences from periodic splitting sequences}\label{sec:slide-seq-loop}

In this section, we observe that splitting a train track (see
Section~\ref{sec:traintracks}) corresponds to performing a pair of
arcslides (Definition~\ref{def:arcslide}). So, a periodic splitting
sequence for a mapping class $\psi$ induces a factorization of some
strongly based representative of $\psi$ into
arcslides (Lemma~\ref{lem:splits-to-slides-1}). If we restrict to special arc
diagrams, the arc diagrams before and after the sequence of arcslides
may not be the same, but this can be remedied by performing a few more
arcslides supported near the boundary
(Corollary~\ref{cor:factor-into-arcslides}).

\begin{figure}
  \centering
  \includegraphics[scale=.8]{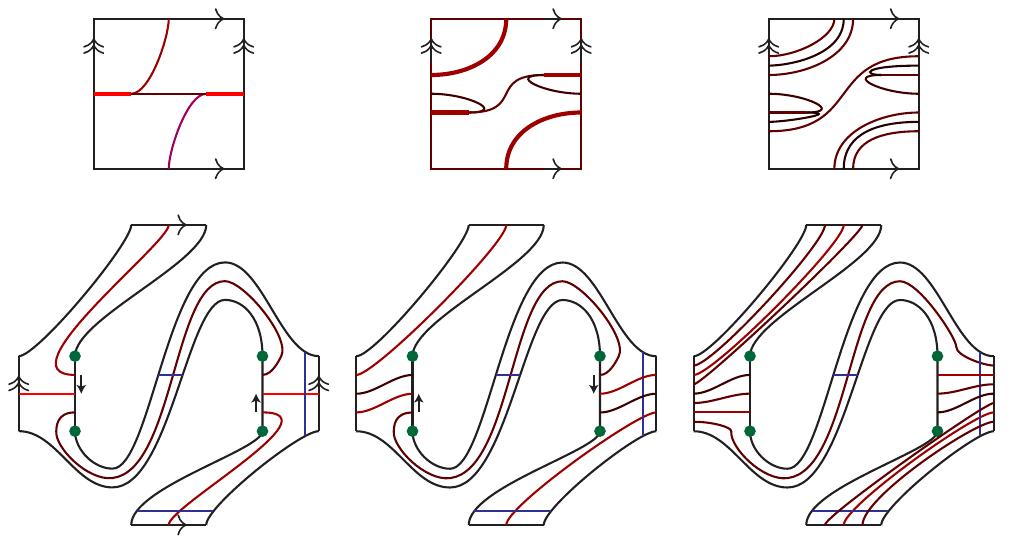}
  \caption{\textbf{Trivalent train tracks and bordered-sutured
      Heegaard diagrams.} Top: the periodic splitting sequence for the
    map $\tau_a\tau_b^{-1}$ on the torus. The arcs being split are
    thick; the splits are drawn in a particular way to correspond to
    the bottom row. Bottom: a corresponding sequence of arcslides
    starting from the half-identity diagram and
    ending at a half diagram for $\tau_a\tau_b^{-1}$. The dots
    indicate sutures and small arrows indicate where the arcslides
    occur.}
  \label{fig:splits}
\end{figure}

\begin{construction}\label{const:diff-arc-slide}
Suppose $\tau_1$ and $\tau_2$ are generic, filling train tracks on $\Sigma$ such that $\tau_2$ is obtained from $\tau_1$ by a split and they have the same number of branches.  Let $\PMC_1$ and $\PMC_2$ be the arc diagrams corresponding to $\tau_1$ and $\tau_2$, respectively, from Construction~\ref{const:tt-to-arc-diag}. Then, as depicted in Figure~\ref{fig:splits}, $\PMC_2$ is obtained from $\PMC_1$ by a pair of arcslides. There is an induced diffeomorphism from $F(\PMC_2)$ to $F(\PMC_1)$, and the half Heegaard diagram associated to that diffeomorphism is the result of performing these arcslides on the $\alpha$-arcs in the $\HalfHD(\Id_{\PMC_1})$.

\begin{figure}
  \centering
  \includegraphics{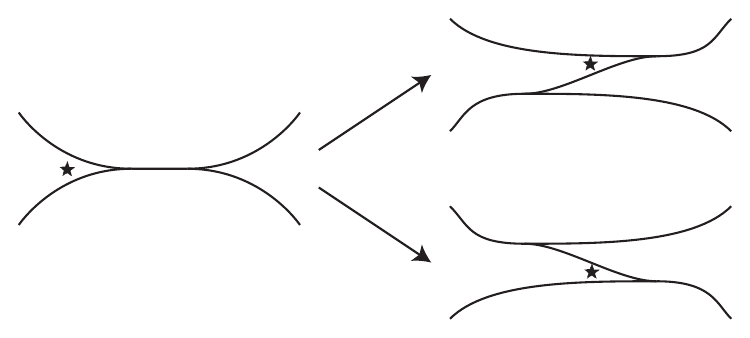}
  \caption[Induced choice of switches after a split]{\textbf{Induced choice of switches after a split}. The switch marked with a star is one of the chosen switches, before and after the split.}
  \label{fig:induced-switch}
\end{figure}

Fix a subset $\sigma_1$ of switches in $\tau_1$ as in Construction~\ref{const:tt-to-arc-diag-sp}. This induces a subset $\sigma_2$ of switches in $\tau_2$ such that the boundary of each component of $\Sigma\setminus\tau_2$ contains exactly one starred cusp (see Figure~\ref{fig:induced-switch}). (Note that the middle case of Figure~\ref{fig:tt-split} does not occur in the periodic splitting sequence.) Let $\PMC_{\sigma_1}$ and $\PMC_{\sigma_2}$ be the special arc diagrams corresponding to $\tau_1$ and $\tau_2$  as in Construction~\ref{const:tt-to-arc-diag-sp}, respectively. Then $\PMC_{\sigma_2}$ is obtained from $\PMC_{\sigma_1}$ by the pair of arcslides corresponding to the aforementioned arcslides from $\PMC_1$ to $\PMC_2$.
 \end{construction}
 
Let $\psi$ be a pseudo-Anosov diffeomorphism of a surface $\Sigma$ and 
$(\tau,\mu)$ be a measured train track suited to the unstable lamination of $\psi$, from the periodic splitting sequence (Theorem~\ref{thm:Agol-cycle}). That is $(\psi(\tau),\lambda^{-1}\psi(\mu))$ is obtained from $(\tau,\mu)$ by a sequence of maximal splits. Let $\PMC$ be the arc diagram for $\Sigma$ associated to $\tau$ by Construction~\ref{const:tt-to-arc-diag}. The abstract arc diagram associated to $\psi(\tau)$ by Construction~\ref{const:tt-to-arc-diag} is the same as $\PMC$. By Construction~\ref{const:diff-arc-slide}, the sequence of maximal splits from $\tau$ to $\psi(\tau)$ gives a sequence of arcslide pairs from $\PMC$ to $\PMC$ and so induces a strongly based diffeomorphism $\psi'$ of $F(\PMC)$. If we let $\phi_R$ denote the original embedding of $F(\PMC)$, as a neighborhood of $\tau$, and $\phi_L$ the embedding after the arcslides, then the diffeomorphism $\psi'$ is characterized by $\psi'\circ \phi_R=\phi_L$. But by construction, the diffeomorphism $\psi$ also has this property (after fixing $\psi$ suitably near the boundary), so $\psi'$ is isotopic to $\psi$ as diffeomorphisms of $\Sigma$. That is, we have proved:

\begin{lemma}\label{lem:splits-to-slides-1}
  The composition of the arcslide diffeomorphisms associated to the
  periodic splitting sequence for $\psi$ is a strongly based
  diffeomorphism which is isotopic, as a (not strongly based)
  diffeomorphism of $\Sigma$ to $\psi$.
\end{lemma}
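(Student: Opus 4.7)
The plan is to assemble the lemma essentially by unpacking the constructions already established. First, I would fix notation: let $(\tau,\mu) = (\tau_0,\mu_0) \rightharpoonup (\tau_1,\mu_1) \rightharpoonup \cdots \rightharpoonup (\tau_N,\mu_N) = (\psi(\tau),\lambda^{-1}\psi(\mu))$ be the periodic splitting sequence, and let $\PMC_i$ denote the arc diagram $\PMC(\tau_i)$ produced by Construction~\ref{const:tt-to-arc-diag}, embedded via $\phi_i\co F(\PMC_i)\into\Sigma$ as a regular neighborhood of $\tau_i$. Because $\psi$ sends $\tau_0$ to $\tau_N = \psi(\tau)$ while preserving the combinatorial data of switches and branches, the abstract arc diagrams $\PMC_0$ and $\PMC_N$ are canonically identified; denote this common abstract arc diagram by $\PMC$. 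In particular, $\phi_R\coloneqq\phi_0$ and $\phi_L\coloneqq\phi_N$ are two embeddings $F(\PMC)\into\Sigma$.

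Next, I would apply Construction~\ref{const:diff-arc-slide} at each step of the periodic splitting sequence to obtain, for each $i$, a pair of arcslides $\PMC_i \rightsquigarrow \PMC_{i+1}$. Composing these yields a sequence of arcslides $\PMC = \PMC_0 \rightsquigarrow \PMC_1 \rightsquigarrow \cdots \rightsquigarrow \PMC_N = \PMC$, and so a strongly based diffeomorphism $\psi'\co F(\PMC)\to F(\PMC)$. By construction, the embedding $\phi_{i+1}$ of $F(\PMC_{i+1})$ as a neighborhood of $\tau_{i+1}$ is obtained from $\phi_i$ by sliding the appropriate foot through the surface in the standard way dictated by the split, so inductively the composition of all the arcslide diffeomorphisms is characterized (up to isotopy, relative to the sutured boundary) by the identity $\phi_L = \phi_R\circ\psi'$ between maps $F(\PMC)\to\Sigma$.

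The remaining step is to compare $\psi'$ to $\psi$. Both $\psi \circ \phi_R$ and $\phi_L$ are embeddings of $F(\PMC)$ into $\Sigma$ as a regular neighborhood of the same train track $\psi(\tau) = \tau_N$, inducing the same parameterization on the level of branches and switches; hence they differ by a diffeomorphism of $\Sigma$ isotopic to the identity (and after adjusting $\psi$ by such an isotopy, we may take $\psi\circ\phi_R = \phi_L$ on the nose, which is the boundary-adjustment mentioned in the statement). Composing with $\phi_R^{-1}$ gives $\psi|_{\phi_R(F(\PMC))} = \phi_R\circ\psi'\circ\phi_R^{-1}$, so $\psi$ and the extension of $\psi'$ agree on the neighborhood of $\tau$. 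Since $\tau$ is filling, the complementary regions are disks or once-punctured disks (or annular neighborhoods of boundary components), and any diffeomorphism of $\Sigma$ which is the identity on a neighborhood of a filling train track is isotopic to the identity relative to that neighborhood. This shows $\psi'$ and $\psi$ are isotopic as diffeomorphisms of $\Sigma$.

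The main (very mild) obstacle is the bookkeeping in the middle step: verifying carefully that the geometric arcslide performed on the embedded $\alpha$-arcs really does match the change of the neighborhood $\phi_i \rightsquigarrow \phi_{i+1}$ induced by a single split, in all three cases of Figure~\ref{fig:tt-split}. This is a local check near the large branch being split, and the cases other than the middle one (which does not occur in a maximal splitting in the periodic sequence) are exactly what Construction~\ref{const:diff-arc-slide} records; no further global argument is needed.
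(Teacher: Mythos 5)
Your proposal is correct and follows essentially the same route as the paper's proof: identify the abstract arc diagrams of $\tau$ and $\psi(\tau)$, apply Construction~\ref{const:diff-arc-slide} to each maximal split to get arcslide pairs, characterize the composition $\psi'$ by $\phi_L=\phi_R\circ\psi'$, and observe that $\psi$ satisfies the same characterization after adjustment. (Your version of the characterizing equation, $\phi_L=\phi_R\circ\psi'$, is the one that type-checks correctly; the spelled-out final step using that $\tau$ is filling is just a slightly more explicit phrasing of the paper's ``characterized by the same property'' argument.)
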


Similarly, one can fix a subset $\sigma$ of the switches and construct a special arc diagram $\PMC_{\sigma}$ for $\Sigma$ as in Construction \ref{const:tt-to-arc-diag-sp}. As we discuss in Lemma~\ref{lem:arc-slide-diff} below, the composition of the arcslides associated to the periodic splitting sequence may not give a strongly based diffeomorphism of $F(\PMC_\sigma)$. The following lemma will allow us to adjust the diffeomorphism near the boundary to make it strongly based, by composing with a simple sequence of arcslides.

\begin{figure}
  \centering
  \includegraphics{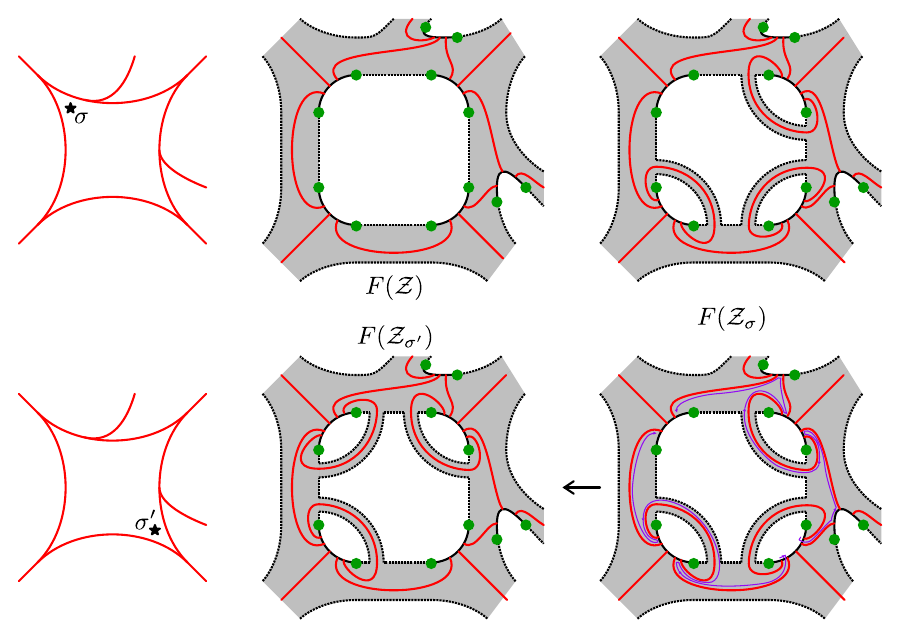}
  \caption{\textbf{Diffeomorphism connecting different choices of preferred punctures.} The figure shows two different choices of distinguished corner for a quadrilateral in a train track and the associated special arc diagrams. The thin arrows in the figure on the lower-right indicate a sequence of arc slides taking the arc diagram $F(\PMC_\sigma)$ to $F(\PMC_{\sigma'})$ (up to isotopy).}
  \label{fig:change-Sigma}
\end{figure}

\begin{lemma}\label{lem:bdy-tw-diff}
Let $\sigma_1$ and $\sigma_2$ be two subsets of the cusps of $\tau$ and $\PMC_{\sigma_1}$ and $\PMC_{\sigma_2}$ be the corresponding special arc diagrams as in Construction \ref{const:tt-to-arc-diag-sp}. There is a sequence of arcslides from $\PMC_{\sigma_1}$ to $\PMC_{\sigma_2}$ such that the points corresponding to the sub arc diagram $\PMC$ associated to $\tau$ stay fixed. 
The induced diffeomorphism from $F(\PMC_{\sigma_2})$ to $F(\PMC_{\sigma_1})$ extends to a diffeomorphism of $\Sigma$ which is isotopic to the identity. 
\end{lemma}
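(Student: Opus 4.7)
The plan is to reduce to a local model on a single complementary region of $\Sigma\setminus\tau$ and then exhibit the arcslides explicitly.

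First, I would reduce to the case where $\sigma_1$ and $\sigma_2$ differ in exactly one region $A$ of $\Sigma\setminus\tau$, with $\sigma_1\cap\bdy A=\{c_1\}$ and $\sigma_2\cap\bdy A=\{c_2\}$ for two distinct cusps $c_1,c_2$. Since arcslide sequences concatenate, and both conclusions of the lemma (fixing the points of $\PMC$, and inducing a diffeomorphism of $\Sigma$ isotopic to the identity) are preserved under concatenation, this reduction is harmless. By a further such reduction I may also assume that $c_1$ and $c_2$ are adjacent cusps on $\bdy A$, sharing a single edge of $\tau\cap\bdy A$.

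Second, in this local situation I would write down an explicit sequence of arcslides carrying $\PMC_{\sigma_1}$ to $\PMC_{\sigma_2}$, as indicated in Figure~\ref{fig:change-Sigma}. In $\PMC_{\sigma_1}$ the extra matched pair added for the non-distinguished cusp $c_2$ consists of two points on $Z_{c_2}$ lying on either side of the three points of $\mathbf{a}\cap Z_{c_2}$; in $\PMC_{\sigma_2}$ the extra pair instead sits on $Z_{c_1}$, while $Z_{c_2}$ carries no extra points. One can pass between these two configurations by sliding the feet of the extra handle, one at a time, over the handles of $\PMC$ corresponding to the branches of $\tau$ incident to $c_1$ and $c_2$ and to the shared edge of $\tau\cap\bdy A$ joining them. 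Recalling from Definition~\ref{def:arcslide} that an arcslide moves only the sliding foot $a'\mapsto a''$ of a single handle, the points of $\mathbf{a}$ (those corresponding to the sub arc diagram $\PMC$) are never relocated, as required.

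Finally, to check that the induced strongly based diffeomorphism $F(\PMC_{\sigma_2})\to F(\PMC_{\sigma_1})$ extends to a diffeomorphism of $\Sigma$ isotopic to the identity, I would observe that both surfaces are embedded in $\Sigma$ as regular neighborhoods of $\tau\cup\ell_{\sigma_i}$, where $\ell_{\sigma_i}$ is a union of small loops located at the switches not in $\sigma_i$. These embeddings agree outside $A$, and inside $A$ differ only by the position of a single small loop that has moved from a neighborhood of $c_2$ to a neighborhood of $c_1$. Since $A$ is a disk (or once-punctured disk, if $A$ contains a puncture of $\Sigma$), this loop motion is realized by an ambient isotopy of $A$, giving the desired identity-isotopic extension. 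The main technical point is in step two: verifying by a direct local computation that the chosen arcslide sequence really implements the described repositioning of the extra handle while leaving the feet in $\mathbf{a}$ fixed.
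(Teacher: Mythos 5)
Your proof is correct and takes essentially the same approach as the paper's: reduce to changing the distinguished cusp on one complementary region at a time, realize the change by sliding the feet of the extra $1$-handle over the $\PMC$-handles along the boundary (as in Figure~\ref{fig:change-Sigma}), and observe that the induced diffeomorphism is the identity away from the affected region, hence extends to an identity-isotopic diffeomorphism of $\Sigma$. Your extra reduction to adjacent cusps and the explicit observation that arcslides move only the sliding foot of the extra handle (so points of $\mathbf{a}$ are untouched) are small refinements of the paper's more telegraphic argument, not a different route.
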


\begin{proof}
Each positive arc (component of $S_+$) in $\bdy F(\PMC)$ corresponds to a switch in $\tau$. So, a choice of subset $\sigma$ specifies a subset of positive arcs that includes exactly one positive arc on each component of $\bdy F(\PMC)$. The sutured surface $F(\PMC_\sigma)$ is obtained from $F(\PMC)$ by attaching a $1$-handle to $S_-$ for each component of $S_+$ that is not associated to a switch in $\sigma$. (See Figure~\ref{fig:change-Sigma}.) Consider a component of $\bdy F(\PMC)$ on which $\sigma_1$ and $\sigma_2$ specify distinct positive arcs. In the construction of $F(\PMC_{\sigma_1})$, a $1$-handle has been attached to $\bdy F(\PMC)$ at the switch distinguished by $\sigma_2$ (since by assumption $\sigma_1$ does not choose this arc). Sliding the feet of this $1$-handle as in Figure~\ref{fig:change-Sigma} gives a $1$-handle attached to $\bdy F(\PMC)$ corresponding to the arc distinguished by $\sigma_1$. This specifies a sequence of arcslides from $F(\PMC_{\sigma_1})$ to $F(\PMC_{\sigma_1'})$ where $\sigma_1'$ coincides with $\sigma_2$ on the boundary component of $F(\PMC)$ under consideration and coincides with $\sigma_1$ on other components. Repeating this process gives a sequence of arcslides from $F(\PMC_{\sigma_1})$ to $F(\PMC_{\sigma_2})$. Since the induced diffeomorphism from $F(\PMC_{\sigma_2})$ to $F(\PMC_{\sigma_1})$ is equal to identity on the subsurface $F(\PMC)\setminus \nbd(\bdy F(\PMC))$, the extension of this diffeomorphism on $\Sigma$ is isotopic to identity.
\end{proof}
 
As discussed in the proof of Lemma~\ref{lem:carry-diag-ext}, the diffeomorphism $\psi$ induces a correspondence between the cusps on the boundaries of the complementary regions of $\tau$. So, any subset $\sigma$ will get mapped to another subset, denoted by $\psi(\sigma)$.

\begin{lemma}\label{lem:arc-slide-diff}
Let $\sigma$ be a subset of switches as in Construction \ref{const:tt-to-arc-diag-sp}. The sequence of maximal splits from $\tau$ to $\psi(\tau)$ induces a sequence of arcslides from $\PMC_{\sigma}$ to $\PMC_{\psi^{-1}(\sigma)}$. The strongly based diffeomorphism represented by the composition of these arcslides extends to a diffeomorphism of $\Sigma$ representing the mapping class of $\psi$. 
\end{lemma}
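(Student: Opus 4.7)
The plan is to build on Construction~\ref{const:diff-arc-slide} and Lemma~\ref{lem:splits-to-slides-1} by iterating across the entire periodic splitting sequence for $\psi$ and carefully tracking how the subset $\sigma$ of chosen switches transforms.

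First, I would observe that Construction~\ref{const:diff-arc-slide} tells us two things for a single split $\tau_i \rightharpoonup \tau_{i+1}$ from the periodic splitting sequence: (i) the special arc diagram $\PMC_{\sigma_{i+1}}$ is obtained from $\PMC_{\sigma_i}$ by a specific pair of arcslides, and (ii) the chosen subset $\sigma_{i+1}$ at the $(i+1)\th$ stage is the canonical image of $\sigma_i$ determined by the fact that splits send switches to switches (Figure~\ref{fig:induced-switch}). Concatenating the pairs of arcslides across the whole periodic splitting sequence $(\tau, \mu) \rightharpoonup \cdots \rightharpoonup (\psi(\tau), \lambda^{-1}\psi(\mu))$ yields the required sequence of arcslides from $\PMC_\sigma$ to a special arc diagram $\PMC_{\sigma'}$ supported on a neighborhood of the train track $\psi(\tau)$.

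Next I would identify $\PMC_{\sigma'}$ with $\PMC_{\psi^{-1}(\sigma)}$. Since each split in the sequence preserves the set of switches (they are topological points of $\Sigma$ that do not move under splits), the subset $\sigma \subset \mathrm{switches}(\tau)$ is identical, as a set of points of $\Sigma$, to $\sigma' \subset \mathrm{switches}(\psi(\tau))$. To view $\PMC_{\sigma'}$ abstractly as an arc diagram associated to $\tau$ we use the diffeomorphism $\psi \co \tau \to \psi(\tau)$; under this identification a switch $s$ of $\tau$ corresponds to the switch $\psi(s)$ of $\psi(\tau)$. Hence the subset $\sigma$ at the end of the splitting sequence, viewed abstractly on $\tau$ via $\psi^{-1}$, is precisely $\psi^{-1}(\sigma)$, giving the target arc diagram $\PMC_{\psi^{-1}(\sigma)}$.

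Finally, I would verify that the composition of arcslides represents $\psi$ as a mapping class of $\Sigma$. The underlying sequence of arcslides on the non-special arc diagrams $\PMC_i$ obtained by ignoring the stars is already handled by Lemma~\ref{lem:splits-to-slides-1}: its composition extends to a representative of $\psi$ on $\Sigma$. The special diagram $F(\PMC_{\sigma_i})$ differs from $F(\PMC_i)$ only by extra $1$-handles attached in small neighborhoods of the non-starred switches, away from the branches being split; thus each arcslide pair in our enlarged sequence is supported in the same neighborhood as in the non-special case and its extension by the identity on the extra $1$-handles does not affect the underlying mapping class. So the extension to $\Sigma$ of the strongly based composition still represents $\psi$.

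The main obstacle will be the second step: carefully comparing the two bijections $\mathrm{switches}(\tau) \to \mathrm{switches}(\psi(\tau))$ — one coming from the periodic splitting sequence (via ``switches do not move'') and one coming from the map $\psi$ itself — and verifying that reading the transported $\sigma$ through $\psi^{-1}$ gives $\psi^{-1}(\sigma)$. The extension-to-$\Sigma$ step, while requiring care near the extra handles, reduces to Lemma~\ref{lem:splits-to-slides-1}.
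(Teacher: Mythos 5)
Your proposal is broadly sound but takes a different route from the paper, and in doing so leaves one genuine issue implicit. The paper's proof argues directly about the composite arcslide diffeomorphism of $F(\PMC)$: it asserts that its restriction to $S_+$, under the identification of positive arcs with cusps, coincides with the cusp correspondence induced by $\psi$ (the correspondence from the proof of Lemma~\ref{lem:carry-diag-ext}). You instead track the subset $\sigma$ through the splitting sequence by positions in $\Sigma$. This is a reasonable alternative, but the notation $\psi^{-1}(\sigma)$ in the lemma statement is \emph{not} defined as the set-theoretic preimage under the diffeomorphism $\psi$: it is defined (see the sentence immediately preceding the lemma) via the cusp correspondence of Lemma~\ref{lem:carry-diag-ext}, which composes the action of $\psi$ with the carrying map $\psi(\tau)<\tau$. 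Your argument silently uses the fact that this carrying map (induced by the splitting sequence) is the positional identity on switches, so that the cusp correspondence and the literal map $\psi$ agree on the set of switch positions; this is true but is precisely the content you need to make explicit. Without noting this, the step ``viewed abstractly on $\tau$ via $\psi^{-1}$, is precisely $\psi^{-1}(\sigma)$'' does not obviously match the paper's definition. Similarly, the claim that Construction~\ref{const:diff-arc-slide}'s rule for $\sigma_i\mapsto\sigma_{i+1}$ (Figure~\ref{fig:induced-switch}) agrees with positional tracking is correct but deserves a sentence, since that rule is stated in terms of cusps of complementary regions, not switch positions. Your reduction of the ``extends to a representative of $\psi$'' claim to Lemma~\ref{lem:splits-to-slides-1}, via the observation that the extra $1$-handles of $F(\PMC_{\sigma_i})$ are attached in the cusp regions away from where the arcslides are supported, is fine and matches the paper's (also terse) treatment of that point.
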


\begin{proof}
 Let $\PMC$ be the arc diagram associated to $\tau$. As in the proof of Lemma~\ref{lem:bdy-tw-diff}, each positive arc in $\bdy F(\PMC)$ corresponds to a switch and so a cusp on the boundary of a complementary region of $\tau$. The sequence of arcslide pairs associated to the maximal split induces a strongly based diffeomorphism on $F(\PMC)$ such that its restriction to $S_+$, under the identification with cusps, is equal to the induced map by $\psi$. Therefore, the result of the arcslide moves on $\PMC_\sigma$ is $\PMC_{\psi^{-1}(\sigma)}$ and the induced diffeomorphism from $F(\PMC_{\psi^{-1}(\sigma)})$ to $F(\PMC_\sigma)$ extends to a diffeomorphism isotopic to $\psi$ on $\Sigma$.
\end{proof}

\begin{corollary}\label{cor:factor-into-arcslides}
For any subset $\sigma$ of switches as in Construction \ref{const:tt-to-arc-diag-sp}, the composition of 
the arcslide diffeomorphism from $F(\PMC_{\sigma})$ to $F(\PMC_{\psi^{-1}(\sigma)})$ as in Lemma~\ref{lem:bdy-tw-diff} and
the arcslide diffeomorphism 
associated to maximal splits from $F(\PMC_{\psi^{-1}(\sigma)})$ to $F(\PMC_{\sigma})$ as in Lemma~\ref{lem:arc-slide-diff} is a strongly based diffeomorphism from $F(\PMC_\sigma)$ to $F(\PMC_\sigma)$ that extends to a diffeomorphism isotopic to $\psi$ on $\Sigma$.
\end{corollary}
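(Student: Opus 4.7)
The plan is to deduce the corollary directly by composing the strongly based diffeomorphisms produced by Lemmas~\ref{lem:bdy-tw-diff} and~\ref{lem:arc-slide-diff}. First I would apply Lemma~\ref{lem:bdy-tw-diff} with $\sigma_2=\sigma$ and $\sigma_1=\psi^{-1}(\sigma)$ to obtain a strongly based diffeomorphism $\Phi_1\co F(\PMC_\sigma)\to F(\PMC_{\psi^{-1}(\sigma)})$, realized by an explicit sequence of arcslides that fix the sub arc diagram $\PMC$ associated to $\tau$, together with an extension $\widetilde\Phi_1$ to $\Sigma$ that is isotopic to the identity. Next I would invoke Lemma~\ref{lem:arc-slide-diff} to obtain a strongly based diffeomorphism $\Phi_2\co F(\PMC_{\psi^{-1}(\sigma)})\to F(\PMC_\sigma)$, realized by the arcslide pairs associated to the maximal splits in the periodic splitting sequence from $\tau$ to $\psi(\tau)$, together with an extension $\widetilde\Phi_2$ to $\Sigma$ isotopic to $\psi$.

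With these in hand, I would form the composition $\Phi_2\circ\Phi_1\co F(\PMC_\sigma)\to F(\PMC_\sigma)$. Since the property of sending $S_+$ to $S_+$ and $S_-$ to $S_-$ is preserved under composition, this composite is strongly based; and concatenating the two arcslide sequences realizes it as the strongly based diffeomorphism of a single arcslide sequence from $\PMC_\sigma$ to itself. The diffeomorphism $\widetilde\Phi_2\circ\widetilde\Phi_1$ is then a diffeomorphism of $\Sigma$ which visibly extends $\Phi_2\circ\Phi_1$, and it is isotopic to $\psi\circ\mathrm{Id}_\Sigma=\psi$ since composition of isotopies is an isotopy. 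This exhibits the required extension, completing the argument.

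I do not expect any real obstacle here: the corollary is essentially a packaging of the two preceding lemmas into the form needed for constructing bordered-sutured Heegaard diagrams for mapping classes in Section~\ref{sec:compress-diffeo}. The only place that requires any attention is checking that the diffeomorphism on the boundary components tracks correctly through Construction~\ref{const:diff-arc-slide} so that the output indeed lies in $\PMC_{\psi^{-1}(\sigma)}$ (rather than some other subset of cusps), but this was already verified in the proofs of Lemmas~\ref{lem:bdy-tw-diff} and~\ref{lem:arc-slide-diff}, so no new argument is needed at the level of the corollary.
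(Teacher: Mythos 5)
Your argument is correct and matches the (implicit) argument in the paper: the corollary is stated without proof precisely because it is the direct composition of the diffeomorphisms furnished by Lemmas~\ref{lem:bdy-tw-diff} and~\ref{lem:arc-slide-diff}, which is exactly what you unpack. The source–target bookkeeping ($\sigma_1=\psi^{-1}(\sigma)$, $\sigma_2=\sigma$ in Lemma~\ref{lem:bdy-tw-diff}) and the observation that strongly based diffeomorphisms and isotopy classes are both closed under composition are handled correctly.
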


\section{Compressing surface diffeomorphisms}\label{sec:compress-diffeo}
The goal of this section is to prove Theorem~\ref{thm:phi-extend-bound} (restated as Theorem~\ref{thm:phi-extend-bound-v2}, below), that bordered-sutured Floer homology detects when a mapping class $\phi$ extends over some compression body and, in particular, that there is an explicit bound on the number of generators of the bordered-sutured module involved. The general strategy of the proof is similar to Casson-Long's proof that there is an algorithm to determine whether a diffeomorphism extends over some compression body (and, in fact, we use some of their results). The main difference is that their bounds are in terms of lengths of geodesics with respect to some hyperbolic metric, while we need bounds adapted to Heegaard diagrams. So, Sections~\ref{sec:length-via-triang} and~\ref{sec:bounded-complexity} give analogues of some of their results in terms of the number of intersections of curves with the triangulation dual to a train track. Section~\ref{sec:length-via-triang} focuses on the lengths of curves, while Section~\ref{sec:bounded-complexity} constructs compression bodies represented by bases of bounded length. Section~\ref{sec:tt-to-HD} then uses the bounded-length compression bodies to build a particular Heegaard diagram in which we can understand the number of intersection points, proving Theorem~\ref{thm:phi-extend-bound}. Unlike in the previous section, we require $\psi$ to be a diffeomorphism of a closed surface here; this is needed for the proof of Lemma~\ref{lem:scc-length-bound}.

\begin{notation}\label{not:Ts-etc}
  Given a pseudo-Anosov diffeomorphism $\psi$ of a closed surface $\Sigma$, let:
  \begin{itemize}
  \item $\tau$ be a train track, suited to the unstable foliation of $\psi$,
    from the periodic splitting sequence (so repeatedly applying maximal splits to
    $\tau$ eventually gives $\psi(\tau)$),
  \item $\mathcal{T}$ be the triangulation of $\Sigma$ dual to $\tau$,
  \item $v(\mathcal{T})$ denote the set of vertices of $\mathcal{T}$,
  \item $s$ denote the number of switches of $\tau$, 
  \item $l$ denote the number of edges of $\tau$, and
  \item $\kappa=|v(\mathcal{T})|$ denote the number of connected components of
    $\Sigma\setminus\tau$.
  \end{itemize}
\end{notation}

\subsection{Discrete length via triangulation}\label{sec:length-via-triang}
In this section, we use the triangulation $\mathcal{T}$ to define a notion of length for curves in $\Sigma$, and then study how the length grows when one applies $\psi$, as well as the number of intersections between a curve and its image under $\psi$.

For any simple closed curve $\gamma\subset\Sigma$, let
$\ell(\gamma)=\imath(\gamma,\mathcal{T})$ be the geometric intersection
number of $\gamma$ with the edges of $\mathcal{T}$, that is, the minimum
number of times that a simple closed curve
$\gamma'\subset\Sigma\setminus v(\mathcal{T})$ isotopic to $\gamma$ in $\Sigma$ intersects $\mathcal{T}$.

Recall from Section~\ref{sec:compression-bodies} that a basis 
for a compression body $C$ is a collection of curves in $\Sigma$ 
specifying $C$. Let $B(C)$ denote the set of all bases for $C$, and define 
\begin{equation}\label{eq:ls-Y}
  \ell(C)=\min_{\left\{\gamma_1,\dots,\gamma_n\right\}\in B(C)}\sum_{i=1}^n \ell(\gamma_i).
\end{equation}
  
We pause to show that for $\gamma=\cup_{i=1}^n\gamma_i$, the minimal length of an embedded representative of $\gamma$ is equal to $\ell(\gamma)=\sum_{i=1}^n\ell(\gamma_i)$. Certainly $\sum\ell(\gamma_i)$ is a lower bound for the length of an embedded representative for $\gamma$. Conversely, suppose $\gamma'_i$ is a minimal-length representative of $\gamma_i$ and $\gamma'=\cup_{i=1}^n\gamma'_i$. The curve $\gamma'$ may be immersed rather than embedded, but is regularly homotopic to $\gamma$. Hence, $\gamma'$ can be made embedded by a sequence of isotopies supported in bigons on $\Sigma$ with boundary on $\gamma'$ and interior disjoint from $\gamma'$. Since $\#(\gamma'\cap\mathcal{T})=\ell(\gamma)$, both sides of each bigon intersect $\mathcal{T}$ in the same number of points, so such isotopies do not change $\#(\gamma'\cap\mathcal{T})$.
  
Since $\tau$ is from the periodic splitting sequence, $\psi(\tau)$ is obtained from
$\tau$ by a sequence of maximal splits. So, $\psi(\tau)$ is
carried by $\tau$ with carrying map induced by the splitting sequence. Denote the associated incidence matrix by $M=(m_{ij})$.  

Denote the $l$ branches of $\tau$ by $b_1,
b_2,\dots, b_l$. Let
\begin{equation}
  \label{eq:s-of-psi}
  r(\psi)=\max_j\left\{\sum_{i=1}^lm_{ij}\right\}.
\end{equation}
That is, $r(\psi)$ is the largest entry of $(1,\dots,1)M$.

\begin{lemma}\label{lem:lbound-1}
For any simple closed curve $\gamma$ on $\Sigma$ we have
\[\ell(\psi(\gamma))\le r(\psi) \ell(\gamma).\]
\end{lemma}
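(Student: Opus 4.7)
The plan is to combine the fibered-neighborhood picture of the carrying $\psi(\tau) < \tau$ with a careful choice of representative of $\psi(\gamma)$, turning the per-branch bound $\#(\psi(b_j)\cap\tau)\le\sum_i m_{ij}\le r(\psi)$ into the desired global bound.

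First, I would put $\gamma$ in minimal position with respect to $\mathcal{T}$, and observe that, because $\mathcal{T}$ and $\tau$ are dual (each edge $e_i$ of $\mathcal{T}$ is transverse to its dual branch $b_i$ at a single point, and is disjoint from every other branch), this representative $\gamma_0$ can also be arranged to be transverse to $\tau$ with $\#(\gamma_0\cap\tau)=\#(\gamma_0\cap\mathcal{T})=\ell(\gamma)$. Let $k_j=\#(\gamma_0\cap b_j)$, so $\sum_j k_j=\ell(\gamma)$, and record in what complementary region of $\tau$ each arc of $\gamma_0\setminus\tau$ lives.

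Next, I would invoke the hypothesis (established just before the statement) that $\psi$ has been isotoped within its mapping class so that $\psi(\tau)$ lies in a fibered neighborhood $N(\tau)$ of $\tau$, each branch $\psi(b_j)$ is a smooth train path in $\tau$ (via the carrying map from the splitting sequence) traversing $b_i$ exactly $m_{ij}$ times, and $\#(\psi(b_j)\cap b_i)\le m_{ij}$. In particular, each $\psi(b_j)$ crosses $\tau$ in at most $\sum_i m_{ij}\le r(\psi)$ points, all of which lie in $N(\tau)$.

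The heart of the proof is to produce a representative $\tilde\gamma$ of the isotopy class of $\psi(\gamma)$ with $\#(\tilde\gamma\cap\tau)\le r(\psi)\ell(\gamma)$. Starting from $\psi(\gamma_0)$, which meets $\psi(\tau)$ transversely in the $\ell(\gamma)$ points $\psi(p)$ for $p\in\gamma_0\cap\tau$ (each such $p$ contributing one crossing with a unique branch $\psi(b_{j(p)})$), I would isotope $\psi(\gamma_0)$ inside $N(\tau)$ so that, near each crossing $\psi(p)\in\psi(\gamma_0)\cap\psi(b_{j(p)})$, the curve $\psi(\gamma_0)$ is made to follow a small arc along $\psi(b_{j(p)})$ before leaving $N(\tau)$. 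This organizes all intersections of $\tilde\gamma$ with $\tau$ into disjoint ``local packets'' near the $\psi(p)$'s, with the packet near $\psi(p)$ contributing at most $\#(\psi(b_{j(p)})\cap\tau)\le\sum_i m_{i,j(p)}\le r(\psi)$ crossings. Summing over the $\ell(\gamma)$ crossings $p\in\gamma_0\cap\tau$ gives the bound.

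The main obstacle is showing that, outside these packets, $\tilde\gamma$ can be arranged to be disjoint from $\tau$. Each arc of $\gamma_0$ between consecutive points of $\gamma_0\cap\tau$ lies in a complementary region $A$ of $\tau$; its image $\psi(\gamma_0|A)$ lies in the complementary region $\psi(A)$ of $\psi(\tau)$. The difficulty is that $\psi(A)$ need not be disjoint from $\tau$, since $\tau$ can pass through $\psi(A)$. I would handle this by using the fact that $\psi(\tau)\subset N(\tau)$ to push $\psi(\gamma_0|A)$ out of $N(\tau)$ and into the part of $\psi(A)$ that avoids $\tau$ entirely, relying on the fibered structure of $N(\tau)$ and on innermost-bigon reductions to eliminate any superfluous crossings introduced along the way. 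Granted that all crossings of $\tilde\gamma$ with $\tau$ can be pushed into the local packets, the lemma follows.
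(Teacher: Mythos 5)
There is a genuine gap: you conflate transverse intersections of a curve with the train track $\tau$ and transverse intersections with the dual triangulation $\mathcal{T}$. Recall $\ell(\gamma)\coloneqq\imath(\gamma,\mathcal{T})$; the statement to be proved concerns $\imath(\psi(\gamma),\mathcal{T})$. Your ``heart of the proof'' instead produces a representative $\tilde\gamma$ of $\psi(\gamma)$ with $\#(\tilde\gamma\cap\tau)\le r(\psi)\ell(\gamma)$, and you never bridge back from $\#(\tilde\gamma\cap\tau)$ to $\imath(\tilde\gamma,\mathcal{T})$. No such bridge exists in general: a curve carried by $\tau$ can be isotoped into the interior of a fibered neighborhood of $\tau$, so it meets $\tau$ in zero points, yet if it is essential it must cross $\mathcal{T}$ (every complementary region of $\mathcal{T}$ is a disk). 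The same example shows your first step already fails: there is no reason a minimal-$\mathcal{T}$ representative $\gamma_0$ can be made to satisfy $\#(\gamma_0\cap\tau)=\imath(\gamma_0,\mathcal{T})$, and for carried curves this equality is impossible. So the quantity you bound, and the quantity you implicitly set up as the length, are both the wrong one.

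The paper's proof avoids $\tau$-crossings entirely. It isotopes a minimal-$\mathcal{T}$ representative of $\gamma$ into a small fibered neighborhood $\nbd(\tau)$ (not necessarily carried by $\tau$), and records the vector $V_\gamma=(\#(T_1\cap\gamma),\dots,\#(T_l\cap\gamma))^T$ of intersection numbers with the edges $T_i$ of $\mathcal{T}$ dual to the branches $b_i$ --- equivalently, how many arcs of $\gamma$ run over each branch. Since the carrying $\psi(\tau)<\tau$ sends the branch $\psi(b_j)$ to a train path traversing $b_i$ exactly $m_{ij}$ times, pushing $\psi(\gamma)$ into $\nbd(\tau)$ along the carrying map yields a representative whose intersections with $\mathcal{T}$ are given by $MV_\gamma$. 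Hence $\ell(\psi(\gamma))\le(1,\dots,1)MV_\gamma\le r(\psi)\,\ell(\gamma)$. The local-packet picture you sketch is close in spirit, but to make it prove the lemma you would have to replace every count of $\tau$-crossings by a count of $\mathcal{T}$-crossings (i.e.\ branch traversals), at which point you are essentially rediscovering the vector $V_\gamma$ and the role of the incidence matrix.
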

\begin{proof} 
Consider a representative of $\gamma$ in a small tubular neighborhood $\nbd(\tau)$ of $\tau$ such that $\gamma$ intersects $\mathcal{T}$ in $\imath(\gamma,\mathcal{T})$ points. Any representative of $\gamma$ with minimal length can be isotoped into $\nbd(\tau)$ (not necessarily carried by $\tau$), so such a representative exists. Let
\[
  V_\gamma=\left(\#(T_1\cap\gamma),\dots,\#(T_l\cap\gamma)\right)^T,
\]
where $T_i$ denotes the edge of $\mathcal{T}$ dual to the branch $b_i$ of $\tau$.
Then $\psi(\gamma)$ has a representative in $\nbd(\tau)$ so that its
intersections with the edges of $\mathcal{T}$ are given by the entries
of the vector $MV_{\gamma}$. This representative does not necessarily
have minimal geometric intersection number with $\mathcal{T}$, but
this still gives
\[
  \ell(\psi(\gamma))\le (1,1,\dots,1)MV_{\gamma}\le r(\psi)\ell(\gamma),
\]
as desired.
\end{proof}

\begin{lemma}\label{lem:intbound}
For any simple closed curve $\gamma$ on $\Sigma$, we have
\[\imath(\gamma, \psi(\gamma))\le r(\psi)\left(\ell(\gamma)\right)^2\]
\end{lemma}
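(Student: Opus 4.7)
The plan is to produce judicious transverse representatives of $\gamma$ and $\psi(\gamma)$ and count their intersections triangle-by-triangle in the dual triangulation $\mathcal{T}$, using the elementary bound $\imath(\gamma,\psi(\gamma))\leq\#(\gamma'\cap\delta')$ for any transverse representatives $\gamma'$, $\delta'$.

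First I would choose representatives $\gamma'$ of $\gamma$ and $\delta'$ of $\psi(\gamma)$ in minimal position with respect to $\mathcal{T}$, so that $\#(\gamma'\cap\mathcal{T})=\ell(\gamma)$ and, by Lemma~\ref{lem:lbound-1}, $\#(\delta'\cap\mathcal{T})\leq r(\psi)\ell(\gamma)$. Since $\tau$ is generic, every 2-cell $T$ of $\mathcal{T}$ is a triangle, and minimal position forces every arc of $\gamma'\cap T$ (and of $\delta'\cap T$) to have its two endpoints on \emph{distinct} edges of $T$: otherwise such an arc would, together with a sub-arc of the edge containing both of its endpoints, bound a bigon whose removal strictly decreases $\#(\gamma'\cap\mathcal{T})$.

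Next I would perform additional isotopies, inside each triangle separately and rel edge-endpoints, so that any arc of $\gamma'\cap T$ and any arc of $\delta'\cap T$ intersect at most once. This is possible because a normal arc in a triangle is determined up to isotopy (rel endpoints) by its pair of endpoints, and two such arcs in a disk realize their minimal mutual intersection number, namely $0$ or $1$ depending on whether the two endpoint pairs are linked on $\bdy T$. Because these isotopies are supported in the interior of $T$ and fix the intersection points with the edges of $T$, they leave $\#(\gamma'\cap\mathcal{T})$ and $\#(\delta'\cap\mathcal{T})$ unchanged. Writing $a_T=|\gamma'\cap T|$ and $b_T=|\delta'\cap T|$ and counting endpoints on edges gives $\sum_T a_T=\ell(\gamma)$ and $\sum_T b_T=\ell(\psi(\gamma))$, so
\[
  \imath(\gamma,\psi(\gamma))\leq \#(\gamma'\cap\delta')\leq\sum_T a_T b_T\leq\Bigl(\sum_T a_T\Bigr)\Bigl(\sum_T b_T\Bigr)\leq \ell(\gamma)\cdot r(\psi)\ell(\gamma)=r(\psi)\ell(\gamma)^2.
\]

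The only step requiring a bit of care is the simultaneous normalization: one must be sure that the per-triangle isotopies used to arrange at-most-one crossing per arc pair do not undo the minimal position of either curve against $\mathcal{T}$. That is automatic here, since those isotopies are supported away from $\bdy T$, so neither $\#(\gamma'\cap\mathcal{T})$ nor $\#(\delta'\cap\mathcal{T})$ changes during them.
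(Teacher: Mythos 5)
Your argument is correct, but it takes a genuinely different route from the paper's. The paper puts $\gamma$ in a fibered neighborhood of $\tau$ (so that it is carried by $\tau$, running $v_i$ times over each branch $b_i$) and $\psi(\gamma)$ in a fibered neighborhood of $\psi(\tau)$, and then counts intersections branch-by-branch: each of the at most $\sum_j m_{ij}v_j$ transverse crossings of $\psi(\gamma)$ with $b_i$ produces $v_i$ points of $\gamma\cap\psi(\gamma)$, yielding $\imath(\gamma,\psi(\gamma))\le V_\gamma^{T}MV_\gamma$ and then $\le r(\psi)\ell(\gamma)^2$. You instead put both curves in minimal position with respect to $\mathcal{T}$ and count triangle-by-triangle, which gives the cleaner, more general intermediate statement $\imath(\gamma,\delta)\le\ell(\gamma)\ell(\delta)$ for any two simple closed curves, after which Lemma~\ref{lem:lbound-1} finishes the job. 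Your approach is more modular: the incidence matrix only enters through the already-proved length estimate, and you do not need the carrying picture for $\psi(\gamma)$; the paper's intermediate bound $V_\gamma^{T}MV_\gamma$ is in principle tighter than $\ell(\gamma)\ell(\psi(\gamma))$, but since both are relaxed by the same crude inequality to $r(\psi)\ell(\gamma)^2$, nothing is lost. One small suggestion for the normalization step: rather than arguing about simultaneous isotopies, it is cleanest to realize each family of normal arcs as straight chords in a round-disk model of each $2$-cell of $\mathcal{T}$; a non-crossing set of boundary pairs gives non-crossing chords, two distinct chords meet at most once, and the boundary of the cell is fixed, so both minimal positions with $\mathcal{T}$ and the one-crossing-per-pair condition hold at once (this also handles the possibility that the $2$-cells of the dual triangulation are triangles only abstractly, with boundary identifications).
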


\begin{proof}
As in the proof of Lemma \ref{lem:lbound-1}, consider a representative
of $\gamma$ in a small neighborhood of $\tau$, built from parallel copies of the edges of $\tau$, joined up near the switches. So,
$\gamma$ intersects $\mathcal{T}$ in $\imath(\mathcal{T},\gamma)$ points.
Let $V_{\gamma}=(v_1,v_2,\dots, v_l)^T$ be the corresponding vector,
so $v_i=\#(\gamma\cap T_i)$. Then $\psi(\gamma)$ has a
representative also built from parallel copies of the edges of $\tau$,
such that $\#(\psi(\gamma)\cap T_i)=\sum_{j=1}^lm_{ij}v_j$.
By a small isotopy (removing some bigons), we can arrange that
$\#(\psi(\gamma)\cap b_i)\leq\#(\psi(\gamma)\cap T_i)$ (and these intersections are transverse).
So, for a representative of $\gamma$ in a sufficiently small neighborhood of $\tau$ so that $\gamma$ goes over the branch $b_i$ $v_i$-many times, each intersection point of $\psi(\gamma)$ with $b_i$ will result in $v_i$ intersection points in $\gamma\cap\psi(\gamma)$, and thus we have 
\[
  \#\left(\gamma\cap\psi(\gamma)\right)\le V_{\gamma}^TMV_{\gamma}\le |V_{\gamma}||MV_{\gamma}|\le   \ell(\gamma)\left((1,1,\dots,1) MV_{\gamma}\right)\le r(\psi)\left(\ell(\gamma)\right)^2,
\]
as desired.
\end{proof}

\subsection{Existence of a compression with bounded complexity}\label{sec:bounded-complexity}

In this section, we prove that if $\psi$ extends over some compression body, then $\psi$ extends over a compression body $C$ for which $\ell(C)$ is bounded above by a specific constant defined in terms of $\psi$ and $\tau$ (Lemma~\ref{lem:bound}).

Let $K>0$ be the smallest integer such that for any $k>K$, every non-infinitesimal branch or non-infinitesimal diagonal of $\tau$ goes over every branch of $\tau$ under the carrying map $\psi^k(\tau)<\tau$ and its extension to diagonals as in Lemma \ref{lem:carry-diag-ext}. Denote the number of infinitesimal branches and infinitesimal diagonals of $\tau$ by $l_{I}(\tau)$ and $d_{I}(\tau)$, respectively.

For any maximal diagonal extension $\wt{\tau}$ of $\tau$, fix a maximal diagonal
extension $\wt{\tau}'$ of $\tau$ that carries $\psi^K(\wt{\tau})$, so that the carrying map for $\psi^K(\wt{\tau})<\wt{\tau}'$ extends the carrying map for $\psi^K(\tau)<\tau$. Let $N_{\widetilde{\tau}}=(\widetilde{n}_{ij})$ be the corresponding incidence matrix. Define
\[c(N_{\widetilde{\tau}})=2\max_i\left\{\sum_{j=1}^{\widetilde{l}}\widetilde{n}_{ij}\right\},\quad\quad\quad c(\psi)=\max_{\widetilde{\tau}}\left\{c(N_{\widetilde{\tau}})\right\}+l_I(\tau)+d_{I}(\tau).\]
Here, $\widetilde{l}$ denotes the number of branches in
$\widetilde{\tau}$, and in the definition of $c(\psi)$ we take the
maximum over all maximal diagonal extensions $\wt{\tau}$ of $\tau$ (where for each $\wt{\tau}$ we choose some $\wt{\tau}'$).

\begin{lemma}\label{lem:bound-1} Let $\gamma\subset \Sigma$ be a simple closed curve. For any sufficiently large $n$, there exists a maximal diagonal extension $\widetilde{\tau}$ of $\tau$ such that
  \begin{enumerate}
  \item $\psi^n(\gamma)$ is carried by $\widetilde{\tau}$, and
  \item for any subarc $\alpha$ of $\psi^n(\gamma)$, if the image of
    $\alpha$ under the carrying map goes over at least $c(\psi)$
    branches of $\widetilde{\tau}$, then $\alpha$ goes over every
    branch of $\tau\subset\widetilde{\tau}$.
  \end{enumerate}
\end{lemma}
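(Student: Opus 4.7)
The plan is to combine the convergence result of Lemma~\ref{lem:unstableconv} with the compatibility statement of Lemma~\ref{lem:carry-diag-ext} and the positivity of $M^K$. First, by Lemma~\ref{lem:unstableconv}, for any sufficiently large $n$, there exists a maximal diagonal extension $\widetilde{\tau}_0$ of $\tau$ carrying $\psi^{n-K}(\gamma)$. Iterating Lemma~\ref{lem:carry-diag-ext} $K$ times, we obtain a maximal diagonal extension $\widetilde{\tau}$ of $\tau$ (which may be taken to be the $\widetilde{\tau}_0'$ fixed in the definition of $c(\psi)$) such that $\psi^K(\widetilde{\tau}_0)<\widetilde{\tau}$, and such that the carrying map restricts, on the subtrack $\psi^K(\tau)\subset\psi^K(\widetilde{\tau}_0)$, to the carrying $\psi^K(\tau)<\tau$. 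Composing this with the carrying $\psi^{n-K}(\gamma)<\widetilde{\tau}_0$ produces a carrying of $\psi^n(\gamma)$ by $\widetilde{\tau}$, which gives part~(1).

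For part~(2), any subarc $\alpha$ of $\psi^n(\gamma)$ is of the form $\psi^K(\beta)$ for a subarc $\beta\subset\psi^{n-K}(\gamma)$, and the image of $\alpha$ in $\widetilde{\tau}$ is the carrying of $\psi^K(p)$ into $\widetilde{\tau}$, where $p$ is the image of $\beta$ as a train path in $\widetilde{\tau}_0$. Decomposing $p$ into a sequence of branches of $\widetilde{\tau}_0$ (partial at the two ends, complete in between), each complete interior branch $b'_j$ contributes a sub-path whose branches in $\widetilde{\tau}$ are recorded by the $j$-th column of $N_{\widetilde{\tau}_0}$, while the two endpoint partial branches contribute at most one such column's worth each. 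Thus, if the total length of the image of $\alpha$ in $\widetilde{\tau}$ is at least $c(\psi)\geq c(N_{\widetilde{\tau}_0})$, a counting argument using the bounds encoded in $N_{\widetilde{\tau}_0}$ forces $p$ to contain a complete branch $b_j\in\tau\subset\widetilde{\tau}_0$ rather than merely partial branches or added diagonals. Here the description of $\psi^K$ on added diagonals from Lemma~\ref{lem:carry-diag-ext} (each such image has the form $\gamma_i^{-1}d'\gamma_j$, and therefore bounded length in $\widetilde{\tau}$) is used to ensure that no combination of partial and diagonal images can accumulate to length $c(\psi)$ without a full $\tau$-branch appearing in $\beta$.

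Once $\beta$ is known to contain some complete $\tau$-branch $b_j$, the image of $\alpha$ in $\widetilde{\tau}$ contains the $\widetilde{\tau}$-carried image of $\psi^K(b_j)$; by the compatibility clause of Lemma~\ref{lem:carry-diag-ext}, this agrees with the image of $\psi^K(b_j)$ in the subtrack $\tau\subset\widetilde{\tau}$ under the carrying $\psi^K(\tau)<\tau$. Since $\tau$ lies in the periodic splitting sequence for $\psi$, the Perron-Frobenius matrix $M^K$ has all entries strictly positive, so the image of $\psi^K(b_j)$ traverses every branch of $\tau$ at least once. Hence the image of $\alpha$ in $\widetilde{\tau}$ traverses every branch of $\tau\subset\widetilde{\tau}$, as claimed.

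The main obstacle is the combinatorial accounting in the middle step: one must verify precisely that the constant $c(\psi)=\max_{\widetilde{\tau}}\bigl(2\max_i\sum_j\widetilde{n}_{ij}+1\bigr)$ is exactly the threshold at which the structure of the carrying forces a complete $\tau$-branch to appear in $\beta$. The careful bookkeeping must handle how the two endpoint partial branches and any added-diagonal branches contribute to the $\widetilde{\tau}$-length of the image, using the explicit form of the diagonal images from Lemma~\ref{lem:carry-diag-ext} to bound the diagonal contributions and the positivity of $M^K$ to bound the $\tau$-branch contributions from below.
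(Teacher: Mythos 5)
Your proof takes essentially the same route as the paper's: Lemma~\ref{lem:unstableconv} supplies a maximal diagonal extension $\widetilde{\tau}_0$ carrying $\psi^{n-K}(\gamma)$, Lemma~\ref{lem:carry-diag-ext} (iterated) produces the fixed extension $\widetilde{\tau}=\widetilde{\tau}_0'$ carrying $\psi^n(\gamma)$ compatibly with $\psi^K(\tau)<\tau$, and positivity of $M^K$ finishes once a full $\tau$-branch of the lifted path $p$ is exhibited. Your caution about the middle combinatorial step mirrors the paper's own terseness there (``the definition of $c(\psi)$ implies\dots''), and is in fact well-placed: the per-branch length bound your argument uses is the column sum $\sum_i\widetilde{n}_{ij}$ of $N_{\widetilde{\tau}_0}$, whereas $c(N_{\widetilde{\tau}})$ is written in the paper as a row sum $\sum_{j}\widetilde{n}_{ij}$, and a configuration for $p$ of the form (partial $\tau$-branch, full diagonal, partial $\tau$-branch) suggests the coefficient $2$ in $c(N_{\widetilde{\tau}})=2\max_i\{\sum_j\widetilde{n}_{ij}\}+1$ should arguably be a $3$ over column sums; these are small accounting issues in the paper's formula rather than a gap in the strategy, which you have reproduced faithfully.
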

\begin{proof}
The sequence
$\{\psi^n(\gamma)\}$ converges to the unstable foliation of $\psi$ in
$\mathcal{PML}(\Sigma)$. By Lemma \ref{lem:unstableconv}, there exists an $N>0$ such that for any $n\ge N$ the curve $\psi^{n}(\gamma)$ is carried by some maximal diagonal extension $\widetilde{\tau}_n$ of
$\tau$. (The $\widetilde{\tau}_n$ might depend on $n$.) Let $\widetilde{\tau}_n'$ be the fixed maximal diagonal extension of $\tau$ that carries $\psi^K(\widetilde{\tau}_n)$ with a carrying map that extends the carrying $\psi^K(\tau)<\tau$ as above. The carrying map for $\psi^K(\widetilde{\tau}_n)<\widetilde{\tau}_n'$ gives a carrying of $\psi^{n+K}(\gamma)$ by $\widetilde{\tau}_n'$. Suppose $\alpha$ is a subarc of $\psi^{n+K}(\gamma)$ so that the image of
$\alpha$ under the carrying map goes over $c(\psi)$ branches of $\widetilde{\tau}_n'$. Lemma~\ref{lem:long-paths} and the definition of $c(\psi)$ imply that the image of $\alpha$ under the carrying map contains the image of $\psi^K(b)$ under the carrying map for some non-infinitesimal branch or diagonal $b\subset\widetilde{\tau}_n$. On the other hand, $K$ has been chosen such that the image of every branch of $\widetilde{\tau}_n$ which is not infinitesimal goes over every branch of $\tau$ under the carrying map $\psi^K(\widetilde{\tau}_n)<\widetilde{\tau}_n'$.
Therefore, the claim holds for any $n\ge N+K$ with $\widetilde{\tau}=\widetilde{\tau}_{n-K}'$ along with the described carrying map.
\end{proof}

For any diagonal $d$ of $\tau$, let $\ell(d)$ be the minimum,
over arcs $d'$ isotopic to $d$ rel endpoints, of the number of times
that $d'$ intersects the dual triangulation $\mathcal{T}$. 
Let
\[
c'(\psi)=\max\{\ell(d)\mid d\text{ is a diagonal of }\tau\}.
\]

\begin{lemma}\label{lem:scc-length-bound}
Suppose $\psi$ extends over some compression body $C$ with $\bdy C=\Sigma$. Then there exists an essential simple closed curve $\gamma\subset \Sigma$ such that $\ell(\gamma)\le c(\psi)+c'(\psi)$ and $\gamma$ bounds a disk in $C$.
\end{lemma}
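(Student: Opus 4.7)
Following a strategy of Casson-Long, the plan is to start from an arbitrary meridian of $C$, iterate $\psi$ to bring it under a maximal diagonal extension of $\tau$, and then progressively shorten it by surgery until its length falls below $c(\psi)+c'(\psi)$.

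Pick any meridian $\gamma_0\subset\Sigma$ of $C$. Since $\psi$ extends over $C$, every iterate $\psi^n(\gamma_0)$ is again a meridian. By Lemma~\ref{lem:bound-1}, for all sufficiently large $n$ there is a maximal diagonal extension $\widetilde{\tau}$ of $\tau$ carrying $\psi^n(\gamma_0)$ with the property that any subarc whose carrying image traverses at least $c(\psi)$ branches of $\widetilde{\tau}$ must traverse every branch of $\tau$. Among all meridians of $C$ satisfying this property for some maximal diagonal extension of $\tau$, I would choose $\gamma$ minimizing $\ell(\gamma)$, and then suppose for contradiction that $\ell(\gamma)>c(\psi)+c'(\psi)$.

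Decomposing $\gamma$ at its traversals of diagonals of $\widetilde{\tau}$, each diagonal contributes at most $c'(\psi)$ to $\ell(\gamma)$. Combined with the length hypothesis, this forces $\gamma$ to contain a subarc whose carrying image uses at least $c(\psi)$ branches of $\widetilde{\tau}$. By Lemma~\ref{lem:bound-1}, this subarc traverses every branch of $\tau$, so either some branch of $\tau$ or some diagonal of $\widetilde{\tau}$ is traversed at least twice by $\gamma$ in the same direction. Cutting $\gamma$ at the midpoints of these two traversals and closing up (trivially when the two traversals are on a $\tau$-branch, or along the intervening subarc of the diagonal in the diagonal case) produces two simple closed curves $\gamma_1,\gamma_2$ with combined length at most $\ell(\gamma)+2c'(\psi)$, at least one of which is strictly shorter than $\gamma$.

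The hard part will be showing that one of $\gamma_1,\gamma_2$ is still a meridian of $C$: since $\gamma=\gamma_1\gamma_2$ up to basepoint conjugation in $\pi_1(\Sigma)$, the surgery only guarantees $[\gamma_1]\cdot[\gamma_2]=[\gamma]=1$ in $\pi_1(C)$, which shows that $[\gamma_1]$ is inverse-conjugate to $[\gamma_2]$ but not that either class is trivial. To resolve this, I would apply a Haken-style innermost-disk argument to an embedded compressing disk $D\subset C$ bounded by $\gamma$: pushing the shortcut arc slightly into $C$ gives a properly embedded arc $d'\subset C$ with $\bdy d'\subset\gamma$, and simplifying $D\cap d'$ by innermost-disk moves either isotops $D$ off of $d'$ entirely (realizing both $\gamma_i$ as meridians via sub-disks of $D$ split along $d'$) or, in degenerate cases, produces a null-homotopic component on $\Sigma$ which can be discarded to yield an essential shorter meridian. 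Either outcome contradicts minimality, completing the proof.
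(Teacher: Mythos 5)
Your outline captures the Casson--Long flavor of the argument, but it is missing the device that actually makes the proof close, and the step you yourself flag as ``the hard part'' contains a genuine gap. The paper's proof does not attempt to show that a shortcut along a $\tau$-branch or diagonal produces a meridian. Instead it works \emph{simultaneously} with $\psi^n(\gamma)$ and $\psi^{-n}(\gamma)$: the former is carried by a maximal diagonal extension $\wt\tau$ of $\tau$ (via Lemma~\ref{lem:bound-1}), the latter by a maximal diagonal extension $\wt\tau^*$ of the \emph{dual bigon track} $\tau^*$ (via Lemma~\ref{lem:stableconv}). Both bound compressing disks $D$, $D'$ in $C$, and the short curve is assembled as $\beta\cup\beta'$ with $\beta\subset\psi^n(\gamma)$ and $\beta'\subset\psi^{-n}(\gamma)$, where $\beta'$ is cut off by an innermost arc of $D\cap D'$. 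Because $\beta'$ comes from an innermost arc, $\beta\cup\beta'$ automatically bounds a disk in $C$; and because $\wt\tau$ and $\wt\tau^*$ intersect efficiently, it is automatically essential. This is what your approach lacks.

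Concretely, your attempted fix --- push the shortcut arc $\delta$ into $C$ to a properly embedded arc $d'$ and run an innermost-disk argument on $D\cap d'$ --- does not work dimensionally: a disk and an arc in a $3$-manifold intersect generically in finitely many points, not in circles or arcs, so there are no innermost disks to remove, and the argument gives you nothing. As you correctly observe, knowing only $[\gamma_1][\gamma_2]=1$ in $\pi_1(C)$ is far from knowing either class is trivial; the paper sidesteps this entirely by never producing $\gamma_1$ and $\gamma_2$ in the first place. There is also a secondary problem with the descent: the two surgered curves have total length $\le\ell(\gamma)+2c'(\psi)$, which does not force either one to be strictly shorter than $\gamma$ (consider $c'(\psi)$ large), and even if one were shorter there is no reason it is again carried by a maximal diagonal extension with the ``every branch'' property, so the minimality of $\ell(\gamma)$ in your chosen class is never contradicted. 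The fix is to follow the paper and use the pair $D$, $D'$; the dual bigon track $\tau^*$ and Lemma~\ref{lem:stableconv} are not optional.
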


\begin{proof}
The proof is similar to the proof of  \cite[Theorem 1.2]{CassonLong85:compression}. Let $\gamma\subset\Sigma$ be an essential simple closed curve that bounds a disk in $C$. As $n$ goes to infinity, $\psi^{-n}(\gamma)$ converges to the stable foliation of $\psi$ in $\mathcal{PML}(\Sigma)$. So, by Lemma~\ref{lem:stableconv}, for $n\gg0$, $\psi^{-n}(\gamma)$ is carried by some maximal diagonal extension $\widetilde{\tau}^*$ of the dual bigon track $\tau^*$. Thus, choose $n$ large enough so that:
\begin{enumerate}
\item By Lemma~\ref{lem:bound-1}, $\psi^n(\gamma)$ is isotopic to a simple closed curve $\gamma_n$ so that $\gamma_n$ is contained in a small fibered neighborhood $\nbd(\widetilde{\tau})$ of some maximal diagonal extension $\widetilde{\tau}$ of $\tau$, is carried by $\widetilde{\tau}$, and any subarc of $\gamma_n$ that goes over $c(\psi)$ branches of $\widetilde{\tau}$ goes over every branch of $\tau$.
\item $\psi^{-n}(\gamma)$ is isotopic to an essential simple closed curve $\gamma_{-n}$ in a small fibered neighborhood $\nbd(\widetilde{\tau}^*)$ of some maximal diagonal extension $\widetilde{\tau}^*$ of the dual bigon track $\tau^*$ and $\psi^{-n}(\gamma)$ is carried by $\wt{\tau}^*$.
\end{enumerate}
Choose the neighborhoods $\nbd(\widetilde{\tau})$ and $\nbd(\widetilde{\tau}^*)$ small enough that the intersection points of $\gamma_n$ and $\gamma_{-n}$ correspond to the intersection points of $\widetilde{\tau}$ and $\widetilde{\tau}^*$.
Since $\psi$ extends over $C$, both $\gamma_n$ and $\gamma_{-n}$ bound properly embedded disks in $C$, which we denote $D$ and $D'$, respectively. We assume that $D$ and $D'$ intersect transversely, and we remove circle components of $D\cap D'$ by isotopy. So, $D\cap D'$ is a disjoint union of properly embedded arcs. Define a pairing $\sim$ on the intersection points  $\gamma_n\cap \gamma_{-n}$ by setting $x\sim y$ if there exists an arc $\eta$ in $D\cap D'$ so that $\bdy \eta=\{x,y\}$. 

Choose an arc $\alpha\subset \gamma_n$ satisfying the following:
\begin{enumerate}[label=(\arabic*)]
\item\label{item:scc-lb-1} for any $x,y\in\gamma_n\cap\gamma_{-n}$, if $x\sim y$ then $x\in\alpha$ if and only if $y\in \alpha$,
\item\label{item:scc-lb-2} $\alpha$ goes over at least $c(\psi)$ branches of $\widetilde{\tau}$, and
\item $\alpha$ is minimal with respect to conditions~\ref{item:scc-lb-1} and~\ref{item:scc-lb-2}, i.e., there is no subarc of $\alpha$ satisfying~\ref{item:scc-lb-1} and~\ref{item:scc-lb-2}.
\end{enumerate}
(See Figure~\ref{fig:length-bound}.)

\begin{figure}
  \centering
  \includegraphics{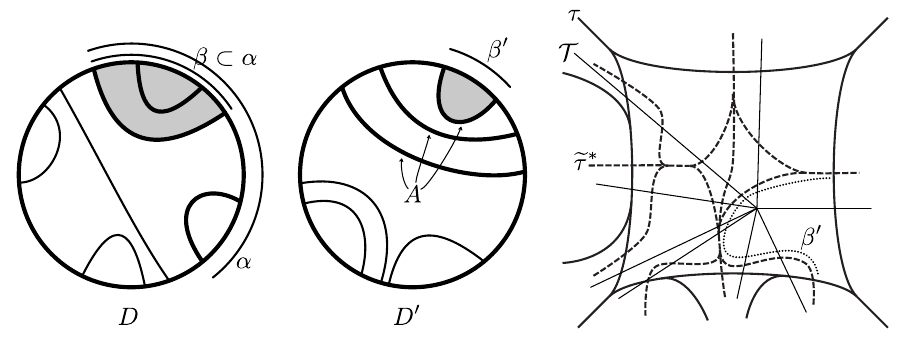}
  \caption[Bounding the length of a curve that bounds a disk]{\textbf{Bounding the length of a curve that bounds a disk.} Left and center: the disks $D$ and $D'$. The new disk $C\subset D\cup D'$ is shaded. The arcs in $A$ are thick. Right: part of the train tracks $\tau$ (solid) and $\wt{\tau}^*$ (dashed), the dual triangulation (thin) $\mathcal{T}$, and the curve $\beta'$ (dotted). The contribution of this region to $c'(\psi)$ is $4$ (via the diagonal from the top-left to the bottom-right). Since the curve $\beta'$ shown intersects $\mathcal{T}$ in this region five points, it is straightened to an arc intersecting $\mathcal{T}$ in one point, instead.}
  \label{fig:length-bound}
\end{figure}

 Consider the subset $A$ of $D\cap D'$ consisting of the arcs whose boundary is in $\alpha$. An innermost arc of $A\subset D'$ decomposes $\bdy D'=\gamma_{-n}$ into two arcs so that the interior of one of them, denoted $\beta'$, is disjoint from $\alpha$. Since the interior of $\beta'$ is disjoint from $\alpha$, which goes over every branch of $\tau$, $\beta'$ is in one region of $\Sigma\setminus\tau$. Moreover, $\beta'$ is carried by $\widetilde{\tau}^*$, so $\beta'$ can be modified by an isotopy that fixes its end points and is supported in exactly one region of $\Sigma\setminus \tau$ so that $\#(\beta'\cap\mathcal{T})\le c'(\psi)$.

 Let $\beta$ be the subarc of $\alpha$ with $\bdy \beta=\bdy
 \beta'$. This arc satisfies~\ref{item:scc-lb-1} and so minimality of $\alpha$
 implies that  the number of intersection points in
 $\beta\cap\mathcal{T}$ is less than or equal to 
 $c(\psi)$. (Equality holds when $\beta=\alpha$.) Let $\gamma=\beta\cup\beta'$. It is easy to see that
 $\gamma$ bounds a disk in $C$.
 
 The dual bigon track for the train track $\widetilde{\tau}$ is obtained by collapsing $\tau^*$ along admissible arcs, as in Figure~\ref{fig:trivial-collapse}. Splitting this dual bigon track at the large branches associated to these collapses results in a (combed) maximal diagonal extensions of $\tau^*$. From the proof of Lemma~\ref{lem:stableconv}, we may assume that $\widetilde{\tau}^*$ is one of the maximal diagonal extensions obtained this way. Since $\widetilde{\tau}$ intersects its dual efficiently, it intersects $\widetilde{\tau}^*$ efficiently as well. Thus, $\gamma$ is an essential curve in $\Sigma$. Moreover, $\ell(\beta\cup\beta')\le c(\psi)+c'(\psi)$, so we are done.
\end{proof}

\begin{definition} Given an embedded $1$-dimensional submanifold $\gamma$ in $\Sigma$ consisting of essential simple closed curves, and a compression body $C$ with outer boundary $\Sigma$, we say $\gamma$ \emph{compresses} in $C$ if every connected component of $\gamma$ bounds a disk in $C$.
\end{definition}

Given two 1-dimensional
submanifolds $\gamma$ and $\gamma'$ of $\Sigma$, Casson and
Long~\cite[Lemma 2.2]{CassonLong85:compression} construct a finite
(possibly empty) collection of compression bodies
$B(\gamma,\gamma')=\{C_1,\dots,C_n\}$ satisfying the following:
\begin{enumerate}[label=(\arabic*)]
 \item\label{item:B1} For every $i$, each curve in $\gamma$ or $\gamma'$ bounds a disk in $C_i$.
 \item\label{item:B2} If every closed curve in $\gamma$ or $\gamma'$ bounds a disk in some compression body $C$, then some $C_i\subseteq C$.
 \item\label{item:B3} The set $B(\gamma,\gamma')$ is minimal with respect to the properties~\ref{item:B1} and~\ref{item:B2}, i.e., no proper subset of $B(\gamma,\gamma')$ satisfies both properties. 
\end{enumerate}

We recall the construction of $B(\gamma,\gamma')$. Suppose $\gamma=\coprod_{i=1}^k\gamma_i$ and $\gamma'=\coprod_{j=1}^{l}\gamma'_j$ where each $\gamma_i$ and $\gamma'_j$ is an essential simple closed curve. Suppose there exists some compression body $C$ such that every $\gamma_i$ and $\gamma_j'$ bounds a disk in $C$ otherwise, $B(\gamma,\gamma')=\emptyset$.  Let $D=\coprod_{i=1}^kD_i$ and $D'=\coprod_{j=1}^{l}D'_j$ be disjoint unions of properly embedded disks in $C$ so that $\bdy D_i=\gamma_i$, $\bdy D'_j=\gamma'_j$ and $D_i$ intersects $D'_j$ in a set of pairwise disjoint arcs for any $i$ and $j$. Then $D\cap D'$ induces a pairing on $\gamma\cap \gamma'$ by setting $x\sim y$ if and only if $\{x,y\}$ is the boundary of an arc in $D\cap D'$. Clearly, this pairing has the following properties: 
\begin{itemize}
\item if $x\sim y$ then $x$ and $y$ lie on the same connected components of $\gamma$ and $\gamma'$
\item for any subarc $\alpha$ in $\gamma$ or $\gamma'$ where $\bdy\alpha$ consists of paired points, and any pair $x\sim y$, either $\{x,y\}\cap \mathrm{int}(\alpha)=\emptyset$ or $\{x,y\}\cap \mathrm{int}(\alpha)=\{x,y\}$.
\end{itemize}
On the other hand, given any pairing on the intersection points $\gamma\cap\gamma'$ satisfying
the above properties, one can construct a compression body that both $\gamma$ and $\gamma'$ compress in, as follows. Consider paired points $x\sim y$ on $\gamma$ such that there exists an arc
$\alpha\subset \gamma$ with $\bdy\alpha=\{x,y\}$ and
$\mathrm{int}(\alpha)\cap\gamma'=\emptyset$. Let $\gamma'_j$ be the
component of $\gamma'$ containing $x$ and $y$.  
Modify $\gamma'$ and remove the pair of intersection points $\{x,y\}\subset \gamma\cap\gamma'$ 
by doing surgery on $\gamma'_j$ at $x$ and $y$ along $\alpha$. One can remove all of the intersections points between $\gamma$ and $\gamma'$ by repeating this process. The resulting set of pairwise disjoint simple closed curves defines a compression body that every component of $\gamma$ or $\gamma'$ bounds a disk in. Moreover, if both $\gamma$ and $\gamma'$ compress in some compression body $C$, then $C$ contains the compression body associated to the pairing induced by $C$ on $\gamma\cap \gamma'$. Then $B(\gamma,\gamma')$ will be a subset of the finite set of compression bodies corresponding to these pairings. To make the set $B(\gamma,\gamma')$ minimal, if the compression body associated to one pairing is contained in the compression body associated to another then one drops the larger of the two. (Checking if one compression body is contained in another can be done using the fundamental group, say, or using twisted Floer homology, to check if the curves in a basis for the smaller compression body bound disks in the bigger compression body.)

\begin{lemma}\label{lem:basisbound}
Assume $\gamma$ and $\gamma'$ are embedded $1$-manifolds consisting of essential circles on
$\Sigma$ and $B(\gamma,\gamma')\neq\emptyset$. Then for any
$C_i\in B(\gamma,\gamma')$ we have 
\[\ell(C_i)\le \ell(\gamma)+\ell(\gamma')+\imath(\gamma,\gamma')\min\{\ell(\gamma),\ell(\gamma')\}.\]
\end{lemma}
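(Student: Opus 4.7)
The plan is to build a basis for $C_i$ explicitly by running the Casson--Long surgery construction, choosing every surgery arc from whichever of $\gamma,\gamma'$ is shorter, and then bound the total intersection number of the resulting basis with $\mathcal{T}$. Without loss of generality, suppose $\ell(\gamma)\le\ell(\gamma')$. Let $\sim$ be the (non-crossing) pairing on $\gamma\cap\gamma'$ associated to $C_i$. Since every intersection point is paired and $\sim$ is non-crossing, so long as intersections remain there is an \emph{innermost} pair $x\sim y$ on $\gamma$: one of the two subarcs $\alpha\subset\gamma$ bounded by $\{x,y\}$ has interior disjoint from $\gamma\cap\gamma'$, and hence from $\gamma'$. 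The Casson--Long surgery on $\gamma'$ along $\alpha$ at $\{x,y\}$ is therefore legal, replacing the component $\gamma'_j\subset\gamma'$ containing $x,y$ by two simple closed curves obtained by joining the two arcs of $\gamma'_j\setminus\{x,y\}$ to two parallel pushoffs of $\alpha$. Iterating at innermost pairs until all intersections cancel yields a pairwise disjoint system $\gamma\cup\tilde\gamma'$ which, after discarding any inessential components, is a basis for $C_i$.

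For the length bound, pick a representative of $\alpha$ realizing $\ell(\alpha)\le\ell(\gamma)$ and take the two pushoffs used in the surgery close enough to $\alpha$ that they meet $\mathcal{T}$ in the same crossings as $\alpha$; keep the cut subarcs of $\gamma'_j$ in minimal position as well. Each surgery then raises $\#(\gamma'\cap\mathcal{T})$ by at most $2\ell(\alpha)\le 2\ell(\gamma)$, and the standard innermost-bigon argument noted after Equation~\eqref{eq:ls-Y} shows that putting the result in minimum position does not increase this count. The total number of surgeries equals the number of pairs in $\sim$, namely $\tfrac{1}{2}\imath(\gamma,\gamma')$, so
\[
\ell(\tilde\gamma')\le\ell(\gamma')+\imath(\gamma,\gamma')\,\ell(\gamma).
\]
Adding $\ell(\gamma)$ for the untouched $\gamma$-part of the basis and using $\ell(\gamma)=\min\{\ell(\gamma),\ell(\gamma')\}$ then gives
\[
\ell(C_i)\le\ell(\gamma)+\ell(\gamma')+\imath(\gamma,\gamma')\min\{\ell(\gamma),\ell(\gamma')\},
\]
as required.

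The only delicate points are the existence of innermost pairs and the compatibility of length-minimizing representatives under parallel pushoff; both are routine, the former by the non-crossing property baked into the definition of $B(\gamma,\gamma')$ and the latter because parallel pushoffs of $\alpha$ in a sufficiently thin neighborhood meet $\mathcal{T}$ at the same points $\alpha$ does. No ingredients beyond the surgery picture underlying the construction of $B(\gamma,\gamma')$ are required.
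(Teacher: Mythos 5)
Your argument tracks the paper's closely: normalize to $\mathcal{T}$-minimal representatives, run the Casson--Long innermost-arc surgery on the longer of $\gamma,\gamma'$, and bound each surgery step by $2\min\{\ell(\gamma),\ell(\gamma')\}$. The one substantive step you leave unaddressed is precisely the one the paper spells out: you silently assume that representatives can be chosen that are \emph{simultaneously} $\mathcal{T}$-minimal (realizing $\ell(\gamma)$ and $\ell(\gamma')$) and in minimal position with each other (realizing $\imath(\gamma,\gamma')$), so that the number of surgeries is $\tfrac{1}{2}\imath(\gamma,\gamma')$. These are two independent minimization problems, and it is not automatic that both are solved at once; if $\#(\gamma\cap\gamma')>\imath(\gamma,\gamma')$ then the surgery count, and hence your final bound, would be wrong. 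The paper's opening paragraph supplies the missing argument: start with $\mathcal{T}$-minimal representatives, and observe that the two sides of any innermost $(\gamma,\gamma')$-bigon must cross $\mathcal{T}$ the same number of times (otherwise pushing one curve across the bigon would contradict $\mathcal{T}$-minimality of $\gamma$ or of $\gamma'$), so the bigon-removal isotopies preserve $\mathcal{T}$-minimality while reducing $\#(\gamma\cap\gamma')$ to $\imath(\gamma,\gamma')$. With that paragraph prepended your proof is complete and is essentially the paper's; the innermost-pair induction you make explicit is a reasonable unpacking of the same per-step cost estimate, not a genuinely different route.
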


\begin{proof}
Suppose $\gamma$ and $\gamma'$ intersect $\mathcal{T}$ minimally, i.e., in $\ell(\gamma)$ and $\ell(\gamma')$ points, respectively. If $\#(\gamma\cap\gamma')>\imath(\gamma,\gamma')$ there exists an embedded bigon on $\Sigma$ whose interior is disjoint from $\gamma\cup\gamma'$ and whose boundary consists of an arc on $\gamma$ and an arc on $\gamma'$. Since $\gamma$ and $\gamma'$ intersect $\mathcal{T}$ minimally, both arcs on the boundary of this bigon intersect $\mathcal{T}$ in the same number of points, and so modifying $\gamma'$ with an isotopy supported in a small neighborhood of this bigon to remove the corresponding pair of intersection points does not change $\#(\gamma'\cap\mathcal{T})$. Repeating this process, we get representatives of $\gamma$ and $\gamma'$ that intersect $\mathcal{T}$ minimally and satisfy $\#(\gamma\cap\gamma')=\imath(\gamma,\gamma')$.

Suppose $\ell(\gamma)\le \ell(\gamma')$. Then given a pairing on $\gamma\cap\gamma'$, each step of removing a pair of intersection points by modifying $\gamma'$ will add at most $2\ell(\gamma)$ to the total number of intersection points between the curves and the edges of the ideal triangulation $\mathcal{T}$.  If $\ell(\gamma)\ge \ell(\gamma')$ we can remove the intersection points by modifying $\gamma$ instead of $\gamma'$. So, we are done.
\end{proof}

\begin{lemma}\label{lem:bound}
If $\psi$ extends over some compression body, then it extends over a compression body $C$ with 
\[
  \ell(C)\le (\overbrace{F_{\psi}\circ\cdots\circ F_{\psi}}^{2g})\left(c(\psi)+c'(\psi)\right)
\]
where $F_{\psi}(x)=(1+r(\psi))x+r(\psi)x^3$.
\end{lemma}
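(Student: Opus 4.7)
The plan is to iteratively build a sequence of 1-submanifolds $\gamma_0, \gamma_1, \ldots, \gamma_N \subset \Sigma$, each a basis for a compression body $C_i$ contained in a given compression body $C$ over which $\psi$ extends, so that $\ell(\gamma_i) \le F_\psi^i(c(\psi)+c'(\psi))$ and $\psi$ extends over $C_N$ for some $N \le 2g$. The base case is Lemma~\ref{lem:scc-length-bound}, which furnishes an essential simple closed curve $\gamma_0$ bounding a disk in $C$ with $\ell(\gamma_0) \le c(\psi)+c'(\psi)$; take $C_0 = \Sigma[\gamma_0]$.

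For the inductive step, suppose $\gamma_i$ is a basis for $C_i \subseteq C$. Since $\psi$ extends over $C$, both $\gamma_i$ and $\psi(\gamma_i)$ compress in $C$, so by property~(2) of the Casson-Long construction some element $C_{i+1} \in B(\gamma_i, \psi(\gamma_i))$ satisfies $C_{i+1} \subseteq C$; property~(1) then forces $C_i \subseteq C_{i+1}$. Let $\gamma_{i+1}$ be the basis for $C_{i+1}$ produced by the explicit surgery construction on $\gamma_i \cup \psi(\gamma_i)$. Combining Lemmas~\ref{lem:lbound-1}, \ref{lem:intbound}, and~\ref{lem:basisbound},
\[
\ell(\gamma_{i+1}) \le \ell(\gamma_i) + \ell(\psi(\gamma_i)) + \imath(\gamma_i, \psi(\gamma_i))\,\ell(\gamma_i) \le (1+r(\psi))\ell(\gamma_i) + r(\psi)\ell(\gamma_i)^3 = F_\psi(\ell(\gamma_i)),
\]
where we used $\ell(\psi(\gamma_i)) \le r(\psi)\ell(\gamma_i)$ and $\min\{\ell(\gamma_i),\ell(\psi(\gamma_i))\} \le \ell(\gamma_i)$.

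For termination, consider $K_i = \ker(H_1(\Sigma;\QQ) \to H_1(C_i;\QQ))$. Since $C_i \subseteq C_{i+1}$ gives $K_i \subseteq K_{i+1}$ and $K_i \subseteq H_1(\Sigma;\QQ) \cong \QQ^{2g}$, the chain strictly increases at most $2g$ times. Hence there exists $N \le 2g$ with $K_N = K_{N+1}$, at which point the homology classes of components of $\psi(\gamma_N)$ lie in $K_N$, i.e.\ $\psi_*$ preserves the kernel of $H_1(\Sigma) \to H_1(C_N)$. Suitably refining the iteration so that the stabilized basis respects the subgroup of $\pi_1(\Sigma)$ generated by meridians then implies $\psi$ extends over $C_N$. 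Monotonicity of $F_\psi$ on $[0,\infty)$ yields $\ell(C_N) \le \ell(\gamma_N) \le F_\psi^N(\ell(\gamma_0)) \le F_\psi^{2g}(c(\psi)+c'(\psi))$.

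The hardest step is promoting rank-stabilization of $K_i$ in rational homology to the genuinely $\pi_1$-level condition ensuring $\psi$ extends over $C_N$, which is strictly stronger than mere homology-level $\psi$-invariance (cf.\ the Casson-Gordon criterion in Remark~\ref{rem:cg-meridians} or the analogue at the start of Section~\ref{sec:HF-detect}). This likely requires either enlarging the iteration (e.g.\ using $B(\gamma_i, \psi(\gamma_i), \psi^{-1}(\gamma_i))$, or interleaving $\psi$ and $\psi^{-1}$) or running the process long enough that the produced integral subgroup, not just its $\QQ$-span, is $\psi$-invariant; the factor of $2g$ rather than $g$ in the statement presumably absorbs exactly this refinement. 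Modulo that termination argument, everything else is a clean induction whose only content is the single-step growth estimate $F_\psi$ derived from the preceding three lemmas.
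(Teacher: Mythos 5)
Your single-step growth estimate and the overall iterative structure match the paper precisely: you use Lemma~\ref{lem:scc-length-bound} for the base case, the Casson--Long construction $B(\gamma,\psi(\gamma))$ for the step, and Lemmas~\ref{lem:lbound-1}, \ref{lem:intbound}, and~\ref{lem:basisbound} to derive $\ell(\gamma_{i+1})\le F_\psi(\ell(\gamma_i))$ in exactly the same way. The only substantive divergence is the termination argument, and that is where there is a real gap, which you yourself flag. Your proposed invariant $K_i=\ker(H_1(\Sigma;\QQ)\to H_1(C_i;\QQ))$ does satisfy $K_i\subseteq K_{i+1}$ when $C_i\subseteq C_{i+1}$, but this monotonicity is not enough for two independent reasons. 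First, proper containment $C_i\subsetneq C_{i+1}$ does not force $K_i\subsetneq K_{i+1}$: a compression body can acquire new meridians that are homologically dependent on existing ones (e.g.\ separating curves), so the iteration could continue indefinitely without $\rank K_i$ ever increasing, and you then have no bound at all on $N$. Second, even at an index where $K_N=K_{N+1}$, nothing forces $C_N=C_{N+1}$ or $\psi$ to extend over $C_N$; $\QQ$-homology stabilization of the kernel is strictly weaker than the $\pi_1$-level invariance that controls extendability (cf.\ the Casson--Gordon criterion mentioned after Proposition~\ref{prop:mc-S2S1}). Attempting to patch this, as you suggest, by interleaving $\psi^{-1}$ or enlarging the construction is not obviously compatible with the $F_\psi$ bound and is not what the paper does.

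The paper avoids all of this by building a \emph{tree} rather than a single chain (branching over the finite set $B(\gamma',\psi(\gamma'))$ and declaring leaves when $|B(\gamma',\psi(\gamma'))|\le 1$, the one-element case being exactly when $\psi$ extends over $\Sigma[\gamma']$) and then simply invoking Casson--Long's Lemma~2.3 from~\cite{CassonLong85:compression}, which bounds the height of this tree by $2g$. The paper therefore never proves a termination bound from scratch, whereas you try to and do not succeed. If you want a self-contained argument you would need to reproduce the content of Casson--Long's lemma, which uses an invariant finer than rational first homology; as written your termination step does not close.
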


\begin{proof}
Let $C'$ be a compression body that $\psi$ extends over. By Lemma
\ref{lem:scc-length-bound}, there exists an essential simple closed
curve $\gamma$ that compresses in $C'$ with $\ell(\gamma)\le
c(\psi)+c'(\psi)$. We construct a rooted tree, such that each vertex
corresponds to a set of pairwise disjoint essential curves on
$\Sigma$, $\gamma$ is the root, and the children of each vertex
$\gamma'$ are in one-to-one correspondence with compression bodies in
$B(\gamma',\psi(\gamma'))$, unless $B(\gamma',\psi(\gamma'))$ has one element. (Cf.~\cite[Proof of Theorem
2.1]{CassonLong85:compression}.) Specifically, if $B(\gamma',\psi(\gamma'))$ has more than one element, each child of
$\gamma'$ is a basis for an element of $B(\gamma',\psi(\gamma'))$
achieving the minimum in Formula~\eqref{eq:ls-Y} (i.e., a minimal
length basis). If $B(\gamma',\psi(\gamma'))$ has one element (i.e., $\psi$ extends over $\Sigma[\gamma']$) or $B(\gamma',\psi(\gamma'))=\emptyset$ ($\gamma'$ and $\psi(\gamma')$ cannot be compressed in the same compression body) then $\gamma'$ does not have a child.

If $\gamma''$ is a child of
$\gamma'$, by Lemmas \ref{lem:basisbound},~\ref{lem:lbound-1}, and~\ref{lem:intbound}, we have
\[\begin{split}
\ell(\gamma'')&\le \ell(\gamma')+\ell(\psi(\gamma'))+\imath(\gamma',\psi(\gamma'))\ell(\gamma')\\
&\le \ell(\gamma')+r(\psi)\ell(\gamma')+r(\psi)\ell(\gamma')^3=F_\psi(\ell(\gamma')).
\end{split}\]

Since $\psi$ extends over $C'$, for some vertex $\gamma'$ of this tree, $\psi$ extends over the compression body defined by $\gamma'$. Let $C=\Sigma[\gamma']$. By \cite[Lemma 2.3]{CassonLong85:compression} this tree has height at most $2g$. Thus, 
\[
  \ell(\gamma')\le (F_{\psi}\circ \cdots\circ F_{\psi})(\ell(\gamma))\le (F_{\psi}\circ \cdots\circ F_{\psi})(c(\psi)+c'(\psi)),
\]
where $F_{\psi}$ is composed with itself $2g$-times. 
\end{proof}

Let 
\begin{equation}\label{eq:def-M-psi}
  M(\psi):=(\overbrace{F_{\psi}\circ\cdots\circ F_{\psi}}^{2g})\left( c(\psi)+c'(\psi) \right).
\end{equation}

\subsection{Special bordered-sutured diagrams and a bound on the rank of \texorpdfstring{$\BSD$}{BSD}}\label{sec:tt-to-HD}
In this section, we show how to turn a train track for a diffeomorphism $\psi$ and a compression body which $\psi$ extends over into a bordered-sutured Heegaard diagram, while maintaining an explicit bound on the number of generators of the bordered module associated to the diagram. This immediately gives Theorem~\ref{thm:phi-extend-bound}. The process is outlined in Figure~\ref{fig:HD-from-tt}.

Before turning to the construction of the Heegaard diagram, we start with two lemmas about bases for compression bodies that $\psi$ extends over.
\begin{lemma}\label{lem:basisconditions}
Let $C$ be a compression body with outer boundary $\Sigma$. There exists an embedded, closed $1$-manifold $\gamma\subset\Sigma$ which is a basis for $C$ and satisfies the following conditions:
\begin{enumerate}[label=(\arabic*)]
\item\label{item:bc-1} $\#(\gamma\cap\mathcal{T})=\ell(\gamma)=\ell(C)$.
\item\label{item:bc-2} There exist at most two intersection point of $\gamma\cap\tau$ between any two consecutive intersection points of $\gamma\cap \mathcal{T}$.
\item\label{item:bc-3} Every connected component of $\Sigma\setminus (\mathcal{T}\cup\gamma)$ which is a triangle with one side on $\gamma$ and two sides on $\mathcal{T}$ or a rectangle with two sides on $\gamma$ and two sides on $\mathcal{T}$ contains no switch of $\tau$.
\end{enumerate}
\end{lemma}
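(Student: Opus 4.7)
The plan is to start with any length-minimising basis for $C$ and then adjust it by isotopies supported in each face of $\mathcal{T}$ so that, in each face, the contained switch ends up on the ``correct'' side of every arc.

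First, pick a basis $\gamma = \gamma_1 \cup \cdots \cup \gamma_n$ for $C$ achieving $\ell(C) = \sum_i \ell(\gamma_i)$, with each $\gamma_i$ realised as an embedded curve meeting $\mathcal{T}$ transversely in the minimum $\ell(\gamma_i)$ points. This gives $\#(\gamma \cap \mathcal{T}) = \ell(\gamma) = \ell(C)$, establishing condition~(1), and forbids any bigon between a sub-arc of $\gamma$ and a segment of $\mathcal{T}$ (switches of $\tau$ are ordinary interior points of $\Sigma$, so any such bigon could be removed by a free isotopy); hence in every face $T$ of $\mathcal{T}$ each arc of $\gamma \cap T$ joins two distinct edges of $T$.

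Fix a face $T$ with its switch $p_T$. The three half-branches of $\tau$ at $p_T$ form a tripod cutting $T \setminus \{p_T\}$ into three wedges, one adjacent to each vertex of $T$. For an arc of $\gamma \cap T$ joining edges $e$ and $e'$ with third edge $e''$, there are, rel endpoints, exactly two embedded isotopy classes in $T \setminus \{p_T\}$, distinguished by whether $p_T$ lies on the side containing the shared vertex of $e$ and $e'$ or on the side containing $e''$. I isotope each arc to its representative with $p_T$ on the $e''$-side, and nest parallel arcs of the same ``normal'' type. A direct case analysis on which wedges contain the two endpoints shows that this representative crosses the tripod at most twice, giving~(2). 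After these isotopies the complement of $\gamma$ in $T$ consists of: small triangles cut off near each vertex of $T$ by the innermost arc of each normal type, each living inside a single wedge and hence not containing $p_T$; rectangular strips between successive parallel arcs of the same type, again confined to one wedge and so missing $p_T$; and a single central region containing $p_T$, whose boundary is a $3$-, $4$-, $5$-, or $6$-gon depending on how many of the three normal types occur. Case by case, the central region's $\gamma/\mathcal{T}$-side pattern never matches either forbidden form in~(3).

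The main obstacle is the extremal case of the $2$-crossing bound: when the two endpoints of an arc $\alpha$ lie in the two wedges \emph{not} containing the shared vertex of $e, e'$, the ``wrong'' isotopy class (putting $p_T$ on the shared-vertex side) has only one tripod crossing, whereas the ``right'' class chosen above has two, sitting exactly at the allowed bound; in every other endpoint configuration the right class has strictly fewer tripod crossings than the wrong one, so~(2) is comfortable. A secondary issue is that the local choices combine into a globally embedded configuration; this follows because, after the choice, arcs of different normal types lie in disjoint portions of $T$ (up to slides that do not affect the combinatorial analysis above), and parallel arcs of the same type can be nested disjointly. Performing the construction simultaneously in every face of $\mathcal{T}$ produces the desired basis $\gamma$ satisfying~(1), (2), and~(3).
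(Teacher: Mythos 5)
Your proof is correct and follows essentially the same strategy as the paper's: start from a minimal-length embedded basis to get condition~(1), observe that each face $T$ of $\mathcal{T}$ is cut by $\gamma$ into a single region meeting all three edges of $T$ plus some triangles and rectangles near the corners, and isotope $\gamma$ within each face so the switch lands in the all-three-edges region (which is exactly your ``$p_T$ on the $e''$-side of every arc''), giving~(3). The only cosmetic difference is how~(2) is finished: the paper isotopes to~(3) first and then removes bigons between $\gamma$ and $\tau$ inside each face, while you choose the correct rel-endpoints class directly and verify the $\le 2$-crossing bound by a case analysis on wedges; these are equivalent. Two very minor imprecisions worth noting but not fatal: in the extremal case an innermost triangle cut off by an arc can span more than one wedge (it still misses $p_T$, but by construction rather than by ``living inside a single wedge''), and the simultaneous disjoint realizability of the chosen representatives — which you flag as a ``secondary issue'' — is really just the statement that one isotopes the fixed embedded $1$-manifold $\gamma$ rel $\partial T$ until $p_T$ sits in the unique complementary region meeting all three sides, which sidesteps the need to reconcile per-arc choices.
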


\begin{proof} 
  Let $\gamma=\coprod_{i=1}^n\gamma_i$ be a basis for $C$ such that $\gamma\pitchfork\mathcal{T}$ and $\#(\gamma\cap\mathcal{T})=\ell(\gamma)=\ell(C)$, so $\gamma$ satisfies Condition~\ref{item:bc-1}. Every triangle $T$ of the triangulation $\mathcal{T}$ contains exactly one switch of $\tau$. Since $\gamma$ intersects $\mathcal{T}$ minimally, $T\setminus\gamma$ consists of one special region that has nonempty intersection will all three sides of $T$, and some triangles and rectangles that intersect two sides of $T$. The curve $\gamma$ can be isotoped in the interior of $T$ such that after the isotopy the switch of $\tau$ in $T$ lies in the special region. (See Figure~\ref{fig:gamma-in-triangle}.) After doing this for every triangle of $\mathcal{T}$ we get an isotopic translate of $\gamma$ that satisfies Condition~\ref{item:bc-3}. Turning to Condition~\ref{item:bc-2}, since $\gamma$ satisfies Condition~\ref{item:bc-3}, if there are more than two intersection points of $\gamma\cap\tau$ between two consecutive intersection points of $\gamma\cap\mathcal{T}$ there is a bigon in the triangle $T$ containing the intersection points with boundary on $\gamma$ and $\tau$. We change $\gamma$ with a small isotopy to remove the bigon and reduce the number of intersection points. Continue this process until no such bigon is left, and the resulting $\gamma$ will satisfy Condition~\ref{item:bc-2} as well. (Again, see Figure~\ref{fig:gamma-in-triangle}.)
\end{proof}

\begin{figure}
  \centering
  \includegraphics{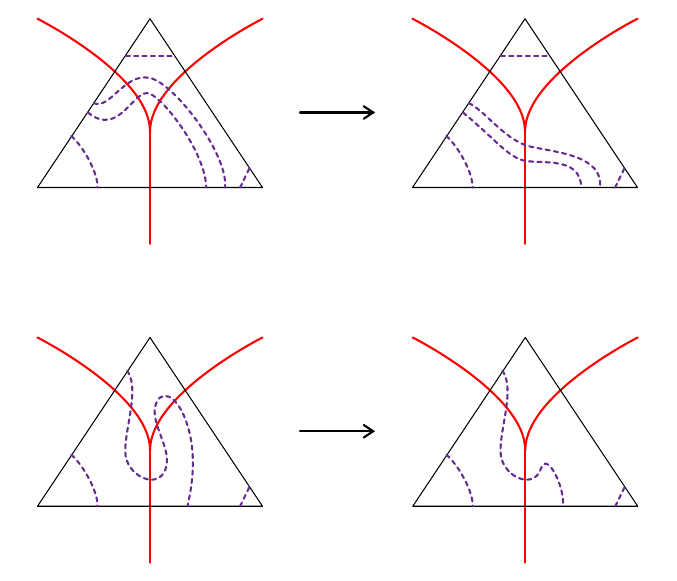}
  \caption[Local form for $\gamma$ in a triangle]{\textbf{Local form for $\gamma$ inside a triangle.} The triangle in $\mathcal{T}$ is thin, the train track $\tau$ is \textcolor{red}{solid}, and the 1-manifold $\gamma$ is \textcolor{darkpurple}{dashed}. Top: arranging that the switch lies in the unique region which is not a rectangle or triangle. Bottom: arranging that each arc of $\gamma$ intersects the train track in at most two points.}
  \label{fig:gamma-in-triangle}
\end{figure}

\begin{lemma}\label{lem:graph}
For any $\gamma$ satisfying the conditions of Lemma~\ref{lem:basisconditions}, we have $\#(\gamma\cap\tau)\le 2 \ell(\gamma)$. Moreover,  every connected component of $\Sigma\setminus \gamma$ contains at least one switch of $\tau$.
\end{lemma}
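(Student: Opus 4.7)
The first inequality $\#(\gamma\cap\tau)\le 2\ell(\gamma)$ follows immediately from conditions (1) and (2) of Lemma~\ref{lem:basisconditions}. Every component $\gamma_i$ of the basis $\gamma$ is essential, and since the faces of the triangulation $\mathcal{T}$ are disks, $\gamma_i$ must meet $\mathcal{T}$. By condition (1), the $\ell(\gamma_i)$ intersection points with $\mathcal{T}$ subdivide $\gamma_i$ into $\ell(\gamma_i)$ arcs, each meeting $\tau$ in at most two points by condition (2). Summing over components gives the bound.

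For the second statement I would argue by contradiction, using the classification of components of $\Sigma\setminus(\mathcal{T}\cup\gamma)$ from the proof of Lemma~\ref{lem:basisconditions}: inside every triangle $T$ of $\mathcal{T}$, the curve $\gamma$ cuts $T$ into (a) small triangles at a corner of $T$ with one $\gamma$-side and two $\mathcal{T}$-sides, (b) small rectangles between two parallel $\gamma$-arcs in $T$ with two $\gamma$-sides and two $\mathcal{T}$-sides, and (c) a unique special region touching all three sides of $T$ and containing the switch of $T$. Suppose some component $R$ of $\Sigma\setminus\gamma$ contains no switch; then $R$ contains no region of type (c).

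The key step is to also rule out type-(a) pieces in $R$. If $\Delta\subset R$ were a small triangle at a vertex $v$ of $\mathcal{T}$, then $v\in\bar R\setminus\gamma$ forces $v$ into the interior of $R$, so a disk neighborhood of $v$ lies in $R$. Thus the corner piece at $v$ in every triangle of $\mathcal{T}$ incident to $v$ lies in $R$; since $R$ has no type-(c) region, each such corner must itself be a small triangle. Their innermost $\gamma$-arcs at $v$ glue together, sharing the $\gamma\cap\mathcal{T}$-point closest to $v$ on each incident $\mathcal{T}$-edge, into a simple closed curve lying in a single component of $\gamma$ and bounding a disk around $v$, contradicting essentiality. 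Hence $R$ consists only of small rectangles. Since the interior of each $\mathcal{T}$-side of such a rectangle is disjoint from $\gamma$, it lies in $R$, and so the region across that $\mathcal{T}$-edge is another small rectangle of $R$. Gluing cyclically along $\mathcal{T}$-sides (with M\"obius bands excluded by orientability of $\Sigma$) shows that $R$ is an annulus whose two boundary circles are distinct, parallel components $\gamma_{j_1}\ne\gamma_{j_2}$ of $\gamma$.

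To reach the final contradiction I would argue that two parallel components violate condition (1). In the surface obtained by surgery along $\gamma\setminus\{\gamma_{j_2}\}$, the curve $\gamma_{j_2}$ bounds a disk; thus the extra $2$-handle at $\gamma_{j_2}$ together with the subsequent $S^2$-filling by a $3$-ball in the compression body construction may be absorbed as a collar of a boundary disk, leaving the topological type of $\Sigma[\gamma]$ unchanged upon removal of $\gamma_{j_2}$. Hence $\gamma\setminus\{\gamma_{j_2}\}$ is still a basis for $C$ but with strictly smaller intersection with $\mathcal{T}$, contradicting $\#(\gamma\cap\mathcal{T})=\ell(C)$. The main technical step is this last reduction about parallel curves; the remaining work is a direct case analysis of the pieces of $\Sigma\setminus(\mathcal{T}\cup\gamma)$.
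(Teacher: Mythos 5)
Your proof is correct and shares the paper's central combinatorial step: if a component $R$ of $\Sigma\setminus\gamma$ contained no switch, then each piece of the cell structure cut out by $\mathcal{T}\cup\gamma$ inside $R$ would be a small triangle or a small rectangle, and gluing along $\mathcal{T}$-edges forces $R$ to consist entirely of one type. From there the arguments diverge. The paper observes that the dual graph of pieces is a union of circles, concludes $R$ has genus zero, handles the handlebody case separately (asserting $\Sigma\setminus\gamma$ is connected), and for the non-handlebody case invokes the claim, left implicit, that a minimal-length basis has no genus-zero complementary region. You instead extract the contradiction directly: in the triangle case, going around a vertex $v\in v(\mathcal{T})$ lying in $R$ produces a component of $\gamma$ bounding a disk about $v$, violating essentiality; in the rectangle case, $R$ is an annulus cobounded by two components of $\gamma$, and dropping one of them still gives a basis for $C$ of strictly smaller $\ell$, violating minimality. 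Your route treats the handlebody and non-handlebody cases uniformly and exhibits the curve to discard, effectively proving the ingredient the paper only asserts. One small point you should fill in: you state the annulus $R$ has two \emph{distinct} boundary components $\gamma_{j_1}\ne\gamma_{j_2}$. If they coincided, $\bar{R}$ would be a closed torus inside the connected surface $\Sigma$, forcing $\Sigma\cong T^2$ and $\gamma=\gamma_{j_1}$, so $R=\Sigma\setminus\gamma$ would contain every switch---a contradiction (and $\Sigma$ has genus at least two in this section anyway, since $\psi$ is pseudo-Anosov).
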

\begin{proof}
 The inequality follows from condition~\ref{item:bc-2}. To prove the second part, if $C$ is a handlebody, then $\Sigma\setminus\gamma$ is connected and the result holds trivially. Otherwise, we define a dual graph for each connected component $A$ of $\Sigma\setminus\gamma$ as follows. Put a vertex in the interior of each connected component of $\Sigma\setminus(\mathcal{T}\cup\gamma)$ that is in $A$. If two such components share an edge that lies in $\mathcal{T}$, connect the corresponding vertices by an arc dual to this edge. If $A$ does not contain any switch of $\tau$, every component of $\Sigma\setminus(\mathcal{T}\cup\gamma)$ that is included in $A$ is either a triangle or a rectangle, and in fact $A$ is either entirely composed of triangles or entirely of rectangles. Therefore, the dual graph is a disjoint union of circles on $\Sigma$. By removing the vertices of $\mathcal{T}$ from $A$ (if $A$ is composed of triangles), we get a subset of $\Sigma$ that is homotopy equivalent to the dual graph, and so $A$ has genus $0$. However, since $\gamma$ is a basis for some compression body $C$ such that $\ell(\gamma)=\ell(C)$, every connected component of $\Sigma\setminus \gamma$ has genus strictly greater than $0$, a contradiction.
 \end{proof}

Corresponding to the train track $\tau$ and any embedded $1$-manifold $\gamma\subset \Sigma$ satisfying the conditions of Lemma \ref{lem:basisconditions}, we define an $(\alpha,\beta)$-bordered--sutured Heegaard diagram 
\[
  \mathcal{H}=\left(\widetilde{\Sigma},\alphas^a,\betas^c\cup\betas^a,\Gamma\right).
\]
(An example is shown in Figures~\ref{fig:HD-from-tt} and~\ref{fig:HD-from-tt-2}.)

First, we modify the dual graph $G=\coprod_AG_A$ defined in the proof of Lemma \ref{lem:graph} to construct an embedded graph $\widetilde{G}\subset \Sigma$ whose vertices coincide with the switches of $\tau$ and so that every connected component of $\Sigma[\gamma]\setminus\widetilde{G}$ is a disk with at least three vertices (not necessarily distinct) on its boundary. (The latter condition will be needed to show the sutures can be distributed appropriately to give a bordered-sutured diagram.)
For every connected component of $\Sigma\setminus(\mathcal{T}\cup\gamma)$ containing a switch of $\tau$, isotope the corresponding vertex of $G$ to coincide with the switch. The rest of the vertices of $G$ have degree two. Remove them and concatenate their adjacent edges into one edge. Every vertex in the resulting graph, which we still denote by $G$, has degree three and coincides with a switch of $\tau$. Since the inclusion of $G$ into $\Sigma\setminus(\gamma\cup v(\mathcal{T}))$ is a homotopy equivalence, each connected component of  $\Sigma\setminus (\gamma\cup G)$ has one of the following types:
\begin{enumerate}[label=(\arabic*)]
\item\label{item:disk} a disk containing one vertex of $\mathcal{T}$ or
\item\label{item:annulus} an annulus with one boundary on $\gamma$. (The part of the boundary of the annulus along $G$ may not be embedded.)
\end{enumerate}
So, $G$ divides $\Sigma[\gamma]$ into disk regions.  Every type~\ref{item:disk} region corresponds to a component of $\Sigma\setminus (\tau\cup \gamma)$ that contains a vertex of $\mathcal{T}$, as in Figure~\ref{fig:new-pic-1}. For any such component, the switches on the boundary correspond to the vertices of $G$, and there is at least one switch between any two consecutive boundary arcs that lie on $\gamma$. Since every connected component of $\Sigma\setminus \tau$ contains more than one switch on the boundary, minimality of the number of intersection points between $\gamma$ and $\mathcal{T}$ implies that type~\ref{item:disk} disk regions cannot be monogons; see Figure~\ref{fig:no-type-1-monogon}. If a type~\ref{item:disk} disk is a bigon, then again minimality of the number of intersection points between $\gamma$ and $\mathcal{T}$ implies that its corresponding component of $\Sigma\setminus (\tau\cup \gamma)$ contains two $\gamma$ arcs on the boundary; see Figure~\ref{fig:one-arc-on-gamma}. Isotoping one of these boundary arcs (and possible parallel copies) over the vertex of $\mathcal{T}$ in this component will remove this bigon, without creating a new type \ref{item:disk} bigon; see Figure~\ref{fig:type-1-bigon}.

\begin{figure}
  \centering
  \includegraphics{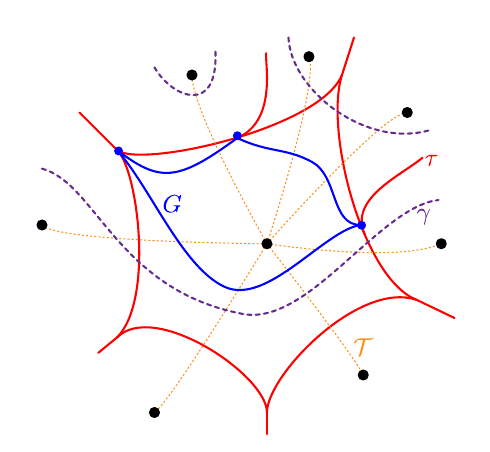}  
  \caption{\textbf{An example of a type~\ref{item:disk} region.} The train track $\tau$ is red, the dual triangulation $\mathcal{T}$ is thin, dashed, orange, the graph $G$ is blue, and the curves $\gamma$ are dark purple and dashed.}
  \label{fig:new-pic-1}
\end{figure}

\begin{figure}
  \centering
  \includegraphics[scale=.95]{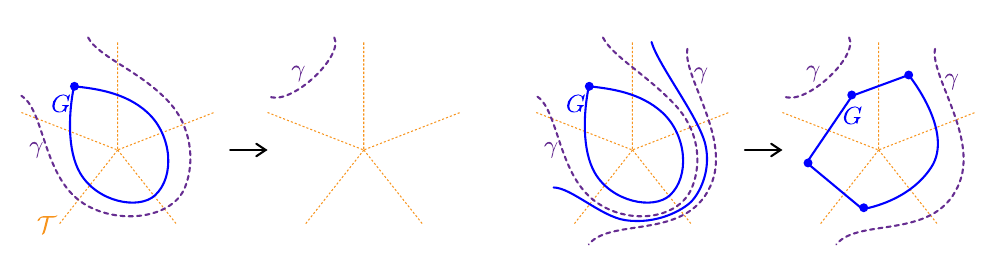}
  \caption{\textbf{Nonexistence of type 1 monogons.} In both figures, the existence of a type~\ref{item:disk} monogon implies that the number of intersections of $\gamma$ and $\mathcal{T}$ can be reduced, by the indicated move. The figure on the right shows how this affects the graph $G$ in a slightly larger region; in particular, it does not introduce a new monogon. Conventions are the same as in Figure~\ref{fig:new-pic-1}.}
  \label{fig:no-type-1-monogon}
\end{figure}

\begin{figure}
  \centering
  \includegraphics{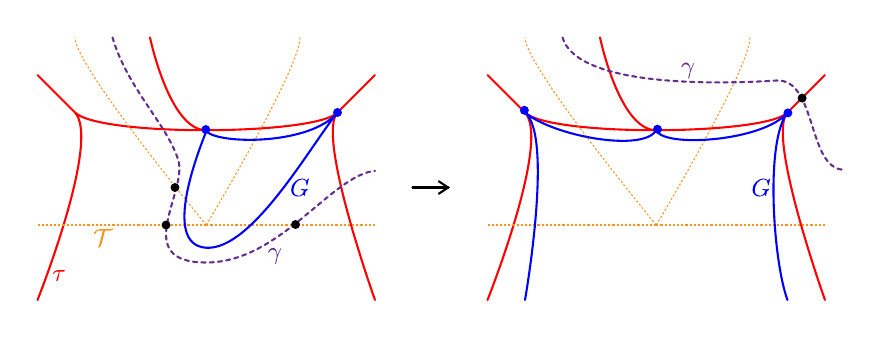}
  \caption{\textbf{Nonexistence of bigons with one arc on $\gamma$.} If there is a bigon with one arc on $\gamma$ then the number of intersections of $\gamma$ and $\mathcal{T}$ is not minimal. Conventions are as in Figure~\ref{fig:new-pic-1}; intersections of $\gamma$ with $\mathcal{T}$ are marked with dots (as are vertices of $G$).}
  \label{fig:one-arc-on-gamma}
\end{figure}

\begin{figure}
  \centering
  \includegraphics{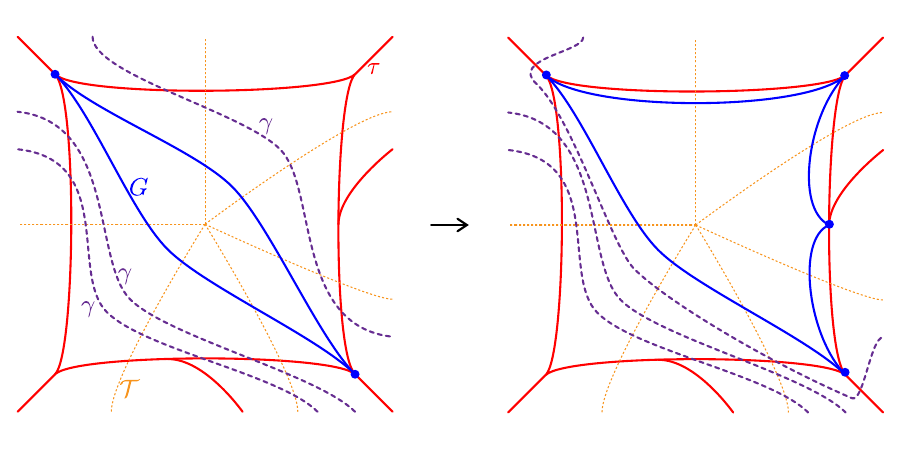}
  \caption{\textbf{Removing type-1 bigons.} Moving one of the arcs of $\gamma$ parallel to the bigon across the vertex eliminates the bigon. Conventions are as in Figure~\ref{fig:new-pic-1}.}
  \label{fig:type-1-bigon}
\end{figure}

Next, we will remove type~\ref{item:annulus} monogons and bigons by removing edges from $G$.   Let $A$ be a type~\ref{item:annulus} component. Write $\bdy A=L\coprod \gamma_A$ where $\gamma_A$ is a component of $\gamma$, and $L$ is a loop in $G$. If $A$ is a monogon, so $L$ contains exactly one vertex of $G$, there exists a distinct connected component of $\Sigma\setminus(\gamma\cup G)$ that is adjacent to $A$ and contains $L$ on its boundary; denote it by $A'$. Removing the single edge in $L$ from $G$, as in Figure~\ref{fig:type-2-monogon}, will result in concatenating $A$ and $A'$, giving a disk region in $\Sigma[\gamma]$ with at least three vertices (a vertex is repeated).  We repeat this until no monogons are left. 

\begin{figure}
  \centering
  \includegraphics{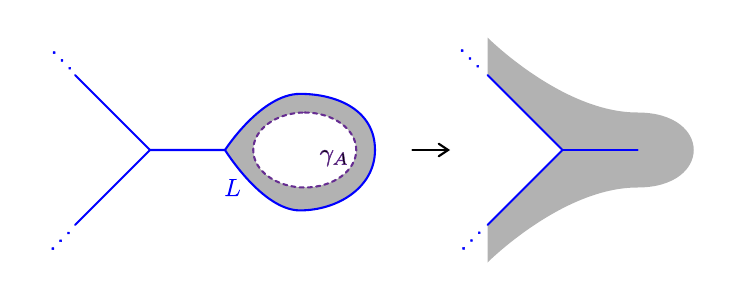}
  \caption{\textbf{Removing a type~\ref{item:annulus} monogon.} On the left, the annulus $A$ is shaded; $A'$ is the unshaded region outside $A$. The picture on the right is in $\Sigma[\gamma]$, not in $\Sigma$. Removing the round edge $L$ concatenates $A$ and $A'$; the result is still a disk in $\Sigma[\gamma]$.}
  \label{fig:type-2-monogon}
\end{figure}

Next, if $A$ is a bigon, its adjacent region(s) (regions that share an edge with $A$) are not bigons. Indeed, since vertices of $G$ either have degree three or one, if $A$ is adjacent to just one region then that region is not a bigon. Moreover, if $A$ is adjacent to a bigon then the other adjacent region must be a bigon as well.  Then $\ell(\gamma)$ is not equal to $\ell(C)$, a contradiction. Thus, no two bigons are adjacent. We remove exactly one side of each bigon to make sure that none of the regions are bigons. Since no two bigons are adjacent, this process will not create a region that is not a disk. The final graph is $\widetilde{G}$.

Second, given a graph $\widetilde{G}$ as above, we use Hall's marriage theorem to show that there exists an injective map $\sigma'$ from $\pi_0(\Sigma[\gamma]\setminus \widetilde{G})$ to $v(\widetilde{G})$, where $v(\widetilde{G})$ denotes the set of vertices of $\widetilde{G}$, taking a component to a vertex on its boundary. Suppose $P_1, P_2, \dots, P_m$ are $m$ distinct components of $\Sigma[\gamma]\setminus \widetilde{G}$. Denote the number of sides in $P_i$ by $k_i$. Further, assume $\cup_{i=1}^m\overline{P_i}$ contains $v$ vertices and $e$ edges of $\widetilde{G}$. Then, since none of the $P_i$ are monogons or bigons,
\[3m\le k_1+k_2+\dots+k_m\le 2e\le 3v\]
and so $v\ge m$, and therefore the claim follows from Hall's marriage theorem. 

\begin{figure}
  \centering
  \includegraphics{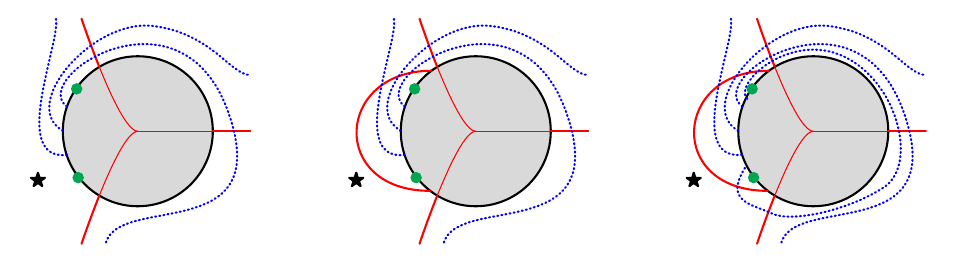}
  \caption[The form of the Heegaard diagram near the boundary]{\textbf{The form of the Heegaard diagram near the boundary.} The disk $D_i$ is shaded; it is not part of the Heegaard diagram. The $\alpha$-arcs are \textcolor{red}{solid} and the $\beta$-arcs are \textcolor{blue}{dotted}. The \textcolor{red}{thin} arcs indicate the intersection of the train track with $D_i$ and are not part of the $\alpha$-arcs. In the left figure, this switch is the switch chosen by $\sigma$ and $\sigma'$ for the region marked $\star$. In the center, it is the switch chosen by $\sigma'$ but not $\sigma$, so there is an extra $\alpha$-arc. On the right, this switch is not the chosen by $\sigma$ or $\sigma'$, so there is an extra $\alpha$- and an extra $\beta$-arc.}
  \label{fig:psi-HD-near-bdy}
\end{figure}

Now, we are ready to define $\HD$ as follows:
\begin{itemize}
  \item Consider pairwise disjoint small disk neighborhoods $D=\coprod_{i}D_i$ of the switches (and so the vertices of $\widetilde{G}$) in $\Sigma\setminus \gamma$ and let $\widetilde{\Sigma}=\Sigma\setminus D$.
  \item On each boundary component $\bdy D_i\subset \bdy \widetilde{\Sigma}$, consider two disjoint arc sutures, both in the boundary of the cusp region of $D_i\cap \tau$, and let $\Gamma$ be the union of all these sutures. Let $\bdy_L\widetilde{\Sigma}$ (respectively $\bdy_R\widetilde{\Sigma}$) be the union of the arcs in $\bdy\widetilde{\Sigma}\setminus \Gamma$ that are not disjoint (respectively are disjoint) from $\tau$. See Figure~\ref{fig:psi-HD-near-bdy}. Note that each component of $\bdy_L\widetilde{\Sigma}$ intersects $\tau$ at three points.
  \item Let $\betas^c=\gamma$.
  \item Let $\alphas^a=\alphas^{a,1}\coprod\alphas^{a,2}$ where $\alphas^{a,1}=\widetilde{\Sigma}\cap \tau$, and $\alphas^{a,2}$ extends $\alphas^{a,1}$ to a parameterization of the sutured surface $(\widetilde{\Sigma},\Gamma)$. Every component of $\wt{\Sigma}\setminus \alphas^{a,1}$ has at least one pair of sutures on its boundary, since every region in the complement of the train track has at least one cusp. For each connected component of $\widetilde{\Sigma}\setminus \alphas^{a,1}$ that contains more than one pair of sutures on its boundary, $\alphas^{a,2}$ consists of arcs parallel to $\bdy\widetilde{\Sigma}$ for all except one pair of these sutures, as in Figure~\ref{fig:psi-HD-near-bdy}. (This corresponds to making a choice $\sigma$ of one cusp for each region in the complement of the train track, as in Section~\ref{sec:ad-from-tt}.)

  \item Let $\betas^a=\betas^{a,1}\coprod\betas^{a,2}$, where  $\betas^{a,1}$ is obtained as follows. The map $\sigma'$ assigns a disk $D_i$, and so a boundary component of $\widetilde{\Sigma}$, to each connected component of $\widetilde{\Sigma}\setminus (\betas^c\cup \widetilde{G})$. The arcs $\betas^{a,1}$ are obtained by isotoping the arcs $\widetilde{G}\cap \widetilde{\Sigma}$ in a small neighborhood of $\bdy\widetilde{\Sigma}$ so that:
  \begin{enumerate}
    \item $\bdy (\widetilde{G}\cap\widetilde{\Sigma})\subset \bdy_R\widetilde{\Sigma}$.
    \item The sutures on the boundary components of $\widetilde{\Sigma}$ distinguished by $\sigma'$ are on the boundary of this connected component of $\widetilde{\Sigma}\setminus (\betas^c\cup\betas^{a,1})$, as well.
  \end{enumerate}
  Then every connected component of $\widetilde{\Sigma}\setminus (\betas^c\cup \betas^{a,1})$ has genus zero, and contains at least one pair of sutures on its boundary. 
  \item For any connected component of $\bdy \widetilde{\Sigma}$ whose corresponding vertex is not in the image of $\sigma'$, add a $\beta$ arc as in Figure~\ref{fig:psi-HD-near-bdy} and let $\betas^{a,2}$ be the union of these arcs.
\end{itemize}

\begin{example}
  Figure~\ref{fig:HD-from-tt} shows the construction of the Heegaard diagram associated to the mapping class $\tau_a\tau_b\tau_c\tau_d^{-1}$ of a genus $2$ surface with one puncture, where $a,b,c,d$ are the curves shown, and a compression body where a single curve (passing through the edge labeled $D$ once) is being compressed. (Even though we have required surfaces to be closed in this section, except for Lemma~\ref{lem:scc-length-bound}, the construction works equally well for punctured surfaces. The train track in this example was produced by Mark Bell's Flipper~\cite{flipper}, which gives train tracks on punctured surfaces.)
\end{example}

\begin{figure}
  \centering
  \begin{tabular}{cc}
  \includegraphics[scale=.94]{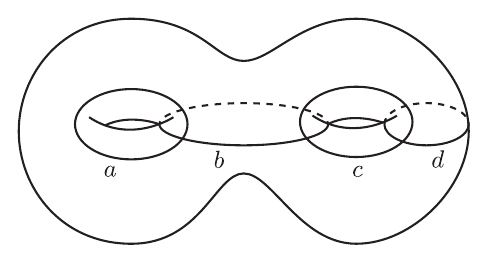} &
  \includegraphics[scale=.81]{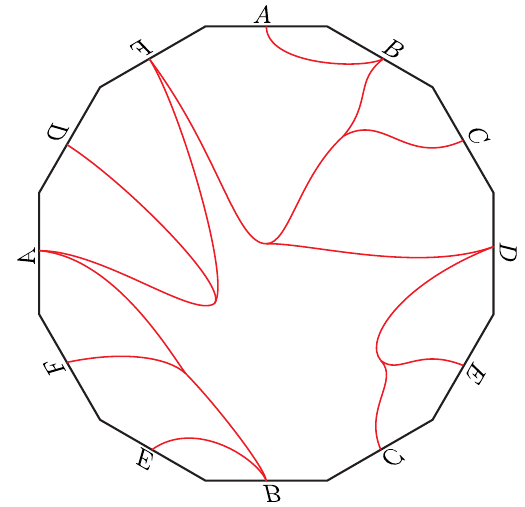}\\
  (a) & (b)\\[2em]
  \includegraphics[scale=.81]{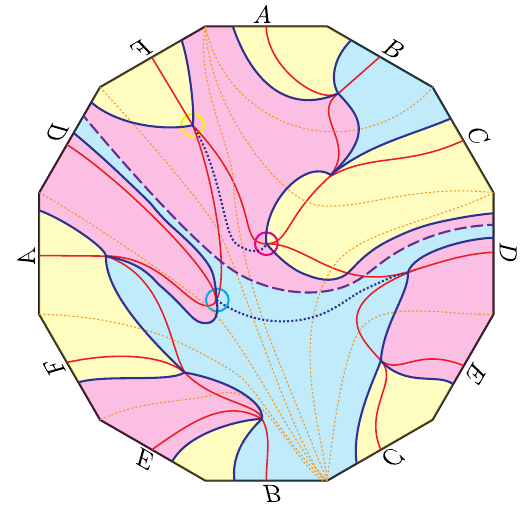} &
  \includegraphics[scale=.81]{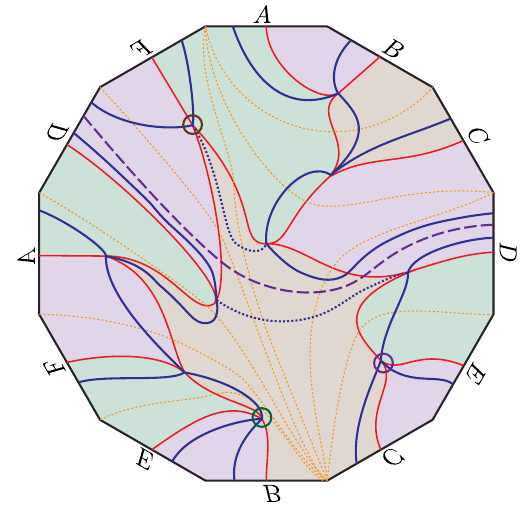}\\
  (c) & (d)
  \end{tabular}
  \caption[Constructing the Heegaard diagram associated to a train track]{\textbf{The
      Heegaard diagram associated to a train track and compression body.} (a) Curves on a genus $2$ surface; the mapping class under consideration is $\tau_a\tau_b\tau_c\tau_d^{-1}$. 
      (b) The output
    of Flipper applied to this mapping class. The letters indicate the edge identifications. (c) The train track, a small perturbation of Flipper's
    output so that the switches are not on the polygon, is in solid red. The dual triangulation is in thin, dashed, orange. The curve $\gamma$ being compressed is purple and dashed. The graph $G'$ is in solid blue; the graph $G$ is the union of the solid and dotted blue curves. The three components of $G\cup\gamma$ are shaded different colors. The small circles indicate a choice of vertices corresponding to these three components. (d) A coloring of the components of $\Sigma\setminus\tau$, and circles indicating a choice of components corresponding to the switches. The green region is a bigon, and contains the puncture. This figure continues as Figure~\ref{fig:HD-from-tt-2}.}
  \label{fig:HD-from-tt}
\end{figure}

\begin{figure}
  \centering
  \begin{tabular}{cc}
  \includegraphics[scale=.88]{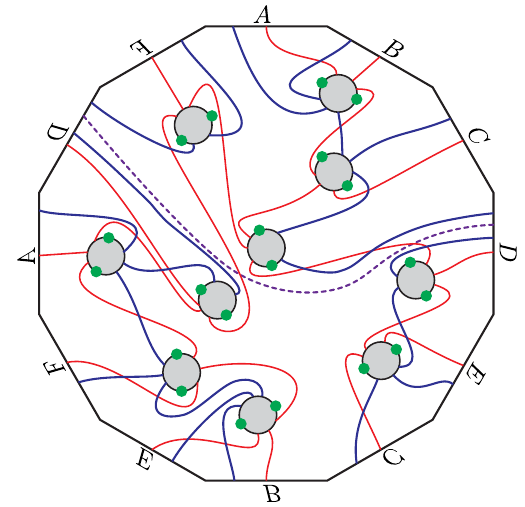} & \includegraphics[scale=.88]{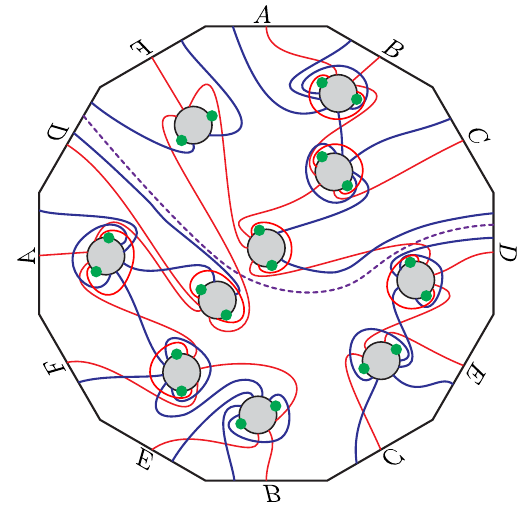}\\
  (e) & (f)\\
  \end{tabular}
    
  \caption[The Heegaard diagram associated to a train track]{\textbf{The Heegaard diagram associated to a train track}. This is a continuation of Figure~\ref{fig:HD-from-tt}. (a) The Heegaard surface (complement of the shaded disks), sutures (small green circles), and the curves $\alphas^{a,1}$ (red) $\betas^c$ (purple, dashed), and $\betas^{a,1}$ (blue). (b) The whole Heegaard diagram, including the curves $\alphas^{a,2}$ and $\betas^{a,2}$. There is an arc in $\betas^{a,2}$ for each boundary component of the diagram except the three circled in Figure~\ref{fig:HD-from-tt}(c) and an arc in $\alphas^{a,2}$ for each boundary component except the three circled in Figure~\ref{fig:HD-from-tt}(d).}
  \label{fig:HD-from-tt-2}
\end{figure}

Corresponding to $\HD$, let $\PMC_L$ and $\PMC_R$ be the arc diagrams:
\[
\begin{split}
&\PMC_L^{\alpha}=\left(\bdy_L\widetilde{\Sigma}, \alphas^a\cap\bdy_L\widetilde{\Sigma},M_L\right)\\
&\PMC_R^{\beta}=\left(\bdy_R\widetilde{\Sigma},\betas^a\cap\bdy_R\widetilde{\Sigma},M_R\right)
\end{split}
\]
where $M_L$ and $M_R$ are the matchings that pair the end points of each arc in $\alphas^a$ and $\betas^{a}$, respectively. 
Consider the bordered sutured Heegaard diagram
$\HD\RcupL\HalfHD(\Id_{\PMC_R^\beta})$
corresponding to gluing a half-identity diagram from~\cite[Construction 8.18]{LOTHomPair} to the right boundary of $\HD$. The bordered-sutured $3$-manifold $Y_{\HD}$ associated to  
$\HD\RcupL\HalfHD(\Id_{\PMC_R^\beta})$ 
is obtained from the compression body $C=C_{\gamma}$ and the train track $\tau$ as follows. By definition, $C$ is constructed by attaching $2$-handles to $[0,1]\times\Sigma$ along $\{1\}\times\gamma_i$ for all $i$. By Lemma \ref{lem:graph}, every connected component of $\Sigma\setminus\gamma$ contains  at least one switch of $\tau$. Let $D\subset \Sigma$ be a union of small pairwise disjoint disk neighborhoods of the switches of $\tau$ in $\Sigma$. Then, $Y_{\HD}=C\setminus \left([0,1]\times D\right)$. 
The sutured part of the boundary is $[0,1]\times\bdy D$ with two parallel, horizontal sutures connecting $\{0\}\times \bdy D$ to $\{1\}\times \bdy D$ on each connected component of $[0,1]\times\bdy D$. The bordered boundary components $\{0\}\times\left(\Sigma\setminus D\right)$ and $\{1\}\times\left(\Sigma[\gamma]\setminus D\right)$ are identified with $F(\PMC_L^{\alpha})$ and $F(\PMC_R^{\alpha})$. As discussed in Section~\ref{sec:bs-background} the bordered-sutured manifold $Y_{\HD}$ can be turned into a special bordered-sutured manifold by attaching $2$-handles to $\{1/2\}\times \bdy D$
to get the compression body and modifying each component of $R_+$ and $R_-$ to become a bigon. This corresponds to attaching tube-cutting pieces (Figure~\ref{fig:arced-HD}) to 
$\HD\RcupL\HalfHD(\Id_{\PMC_R^\beta})$, 
one corresponding to each inner boundary component of $C$.

\begin{theorem}\label{thm:phi-extend-bound-v2} 
  Continuing to use Notation~\ref{not:Ts-etc}, suppose $\psi$ extends
  over some compression body. Then there is a half-bordered compression
  body $Y$ so that $\psi$ extends over $Y$, and a way of extending the
  parameterization to the internal boundary of $Y$ so that the rank of
  $\BSD(Y)$ is less than or equal to
  \begin{equation}\label{eq:DDbound}
    (20(g+s)-18)^s\left((2M(\psi))^{2g}+(2M(\psi)+8)^{2(g+s-1)}\right).
  \end{equation}
\end{theorem}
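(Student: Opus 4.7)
The plan is to combine Lemma~\ref{lem:bound} with the Heegaard diagram construction above and a careful count of generators. First, by Lemma~\ref{lem:bound}, since $\psi$ extends over some compression body, it extends over a compression body $C$ with $\ell(C) \leq M(\psi)$. I would choose a length-minimizing basis $\gamma = \gamma_1 \cup \cdots \cup \gamma_n$ for $C$, so $\sum_i \ell(\gamma_i) \leq M(\psi)$, and apply Lemma~\ref{lem:basisconditions} to arrange that $\gamma$ intersects $\mathcal{T}$ exactly $\ell(\gamma)$ times, has at most two intersections with $\tau$ between consecutive intersections with $\mathcal{T}$, and avoids switches in monogon and bigon triangle regions. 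Lemma~\ref{lem:graph} then ensures $\#(\gamma \cap \tau) \leq 2M(\psi)$ and that each component of $\Sigma \setminus \gamma$ contains a switch.

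Next, apply the construction preceding the theorem to $(\tau, \gamma)$ to build $\mathcal{H}$ and glue $\HalfHD(\Id_{\PMC_R^\beta})$ to its right boundary. The associated bordered-sutured manifold is the compression body $C$ drilled along $[0,1] \times D$, where $D$ is a union of small disk neighborhoods of the switches; attaching tube-cutting pieces along each inner boundary component (as in Figure~\ref{fig:arced-HD}) produces the special half-bordered compression body $Y$ over which $\psi$ extends, since any extension of $\psi$ over $C$ can be isotoped to permute the switch-neighborhoods and the tube-cutting modifications are canonical.

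The core task is to bound the rank of $\BSD(Y)$, which counts tuples of points in $\alphas \cap \betas$ using each $\beta$-circle once, each $\beta$-arc at most once, and with complementary $\alpha$-idempotents. This count splits into three kinds of contributions. For each $\beta$-circle $\gamma_i$, the possible choices are intersection points with $\alphas^a$; since $\alphas^{a,2}$ lies in neighborhoods of $\bdy\widetilde{\Sigma}$ disjoint from $\gamma$, this reduces to $\#(\gamma_i \cap \tau)$, with $\sum_i \#(\gamma_i \cap \tau) \leq 2M(\psi)$. For each arc in $\betas^{a,1}$, intersections with $\alphas^a$ are bounded by $2M(\psi) + 8$ per arc, the $+8$ absorbing extra intersections introduced by the boundary isotopy of Figure~\ref{fig:psi-HD-near-bdy}. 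Near each of the $s$ switch-disks, the local options for how $\alphas^{a,2}$, the endpoints of $\betas^{a,1}$, and $\betas^{a,2}$ are arranged, together with the idempotent matching, contribute a uniformly bounded factor of at most $20(g+s)-18$, obtained by a direct local enumeration.

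Multiplying these contributions and splitting the product via an elementary bound according to whether a generator uses more $\beta$-circles or more $\beta$-arcs yields the two-term form of Formula~\eqref{eq:DDbound}: the term $(2M(\psi))^{2g}$ dominates when generators concentrate on the $\beta$-circles (handlebody case, $n \leq g$), while $(2M(\psi)+8)^{2(g+s-1)}$ dominates when they concentrate on the $2(g+s-1)$ $\beta$-arcs of $\mathcal{H}$. The main obstacle is the precise local combinatorial enumeration yielding the constant $20(g+s)-18$: this requires systematically listing the arrangements of $\alphas^{a,2}$, $\betas^{a,2}$, and entering $\betas^{a,1}$ arcs in the neighborhood of each switch-disk, as sketched in Figure~\ref{fig:psi-HD-near-bdy}, and tracking how these interact with sutures and with idempotent complementarity. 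Once the local count is in hand, the global bound assembles by taking products over switches.
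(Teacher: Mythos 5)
Your overall strategy matches the paper's: apply Lemma~\ref{lem:bound} to get a compression body $C$ with $\ell(C)\le M(\psi)$, take a basis $\gamma$ satisfying Lemma~\ref{lem:basisconditions}, build the $(\alpha,\beta)$-bordered-sutured diagram $\HD$ from $(\tau,\gamma)$, glue on $\HalfHD(\Id_{\PMC_R^\beta})$ and tube-cutting pieces, and count generators. The $\beta$-circle bound ($\le 2M(\psi)$ total, via Lemma~\ref{lem:graph}) and the per-$\beta$-arc bound $2M(\psi)+8$ (where $+8$ absorbs intersections with $\alphas^{a,2}$ near $\bdy\widetilde\Sigma$) are also in line with the paper.

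The gap is where the factor $(20(g+s)-18)^s$ comes from. You attribute it to ``the local options for how $\alphas^{a,2}$, the endpoints of $\betas^{a,1}$, and $\betas^{a,2}$ are arranged'' near each switch-disk of $\HD$, referencing the picture of $\HD$ near $\bdy\widetilde\Sigma$. But those contributions are exactly what the $+8$ in $2M(\psi)+8$ already accounts for; $\HD$ itself (and $\HD\RcupL\HalfHD(\Id_{\PMC_R^\beta})$, which has the same generators) contributes only the summand $(2M(\psi))^m+(2M(\psi)+8)^{2(g+s-m-1)}$. The factor $(20(g+s-m)-18)^s$ arises \emph{after} that, from the $s$ tube-cutting pieces that are glued on to pass from the arced picture to a special bordered-sutured one. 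Each tube-cutting piece is a separate Heegaard diagram containing one new $\alpha$-circle $\alpha_1^c$, a distinguished $\beta$-circle $\beta_1$ (made to meet $\alpha_1^c$ twice for admissibility), and further $\beta$-circles; the count $20(g+s-m)-18$ bounds the number of ways a generator of $\HD$ extends across one such piece, by cases on whether the extension uses a point of $\alpha_1^c\cap\beta_1$. Your account omits these new circles entirely, so as written it would double-count the near-boundary structure of $\HD$ and never account for the pieces that actually produce the exponential-in-$s$ factor. Relatedly, your ``splitting the product'' heuristic for the two-term sum isn't a valid operation and isn't what the paper does; the paper just bounds the generator count of $\HD$ by $(2M(\psi))^m+(2M(\psi)+8)^{2(g+s-m-1)}$ and then substitutes $m\le 2g$.
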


\begin{proof}
By Lemma~\ref{lem:bound}, there exists a compression body $C$ that $\psi$ extends over and $\ell(C)\le M(\psi)$. Let $\gamma$ be a basis for $C$ satisfying the conditions of Lemma~\ref{lem:basisconditions} and 
let $Y$ be the special bordered-sutured manifold corresponding to $C$, $\gamma$ and $\tau$ as described above. We will show that the number of generators in the bordered-sutured Heegaard diagram for $Y$ constructed above is bounded above by Formula~\eqref{eq:DDbound}.

Consider the $(\alpha,\beta)$-bordered--sutured Heegaard diagram $\HD$ associated to $\gamma$ and $\tau$ as described above. Then the bordered-sutured Heegaard diagram for $Y$ is obtained from 
$\HD\RcupL\HalfHD(\Id_{\PMC_R^\beta})$
by attaching tube-cutting pieces, one for each inner boundary component of $C$.

In the Heegaard diagram $\HD$, the number of components in $\betas^c$, $\betas^a$ and $\alphas^a$ are equal to
\[
  |\alphas^a|=2(g+s-1),\quad\quad|\betas^c|=m,\quad\quad |\betas^a|=2(g+s-m-1).
\]
The total number of intersection points between $\betas^c$ and $\alphas^a$ is less than or equal to $2\ell(C)$, and thus $2M(\psi)$. Moreover, we may arrange for the intersection points of $\alphas^a$ and $\betas^a$ that are not in a neighborhood of $\bdy \widetilde{\Sigma}$ to be on the $\beta$ arcs in $\betas^{a,1}$ that correspond to the edges of $\widetilde{G}$ on the boundary of type $(2)$ annulus components of $\Sigma\setminus (\gamma\cup G)$. After fixing an orientation on the $\gamma$ curves one can define a bijection between these intersection points and the points in $\tau\cap \gamma$. So the number of such intersection points is bounded above by $2M(\psi)$. Therefore, each $\beta$ arc will have at most $2M(\psi)+8$ intersection points with $\alpha$ arcs; see Figure~\ref{fig:psi-HD-near-bdy}. 
Thus, the number of generators in $\HD$ is bounded above by
\[
  (2M(\psi))^m+(2M(\psi)+8)^{2(g+s-m-1)}.
\]
The number of generators in $\HD$ and
$\HD\RcupL\HalfHD(\Id_{\PMC_R^\beta})$ 
are the same, so this quantity bounds the number of generators in 
$\HD\RcupL\HalfHD(\Id_{\PMC_R^\beta})$, 
as well.

\begin{figure}
  \centering
  \includegraphics[scale=.66667]{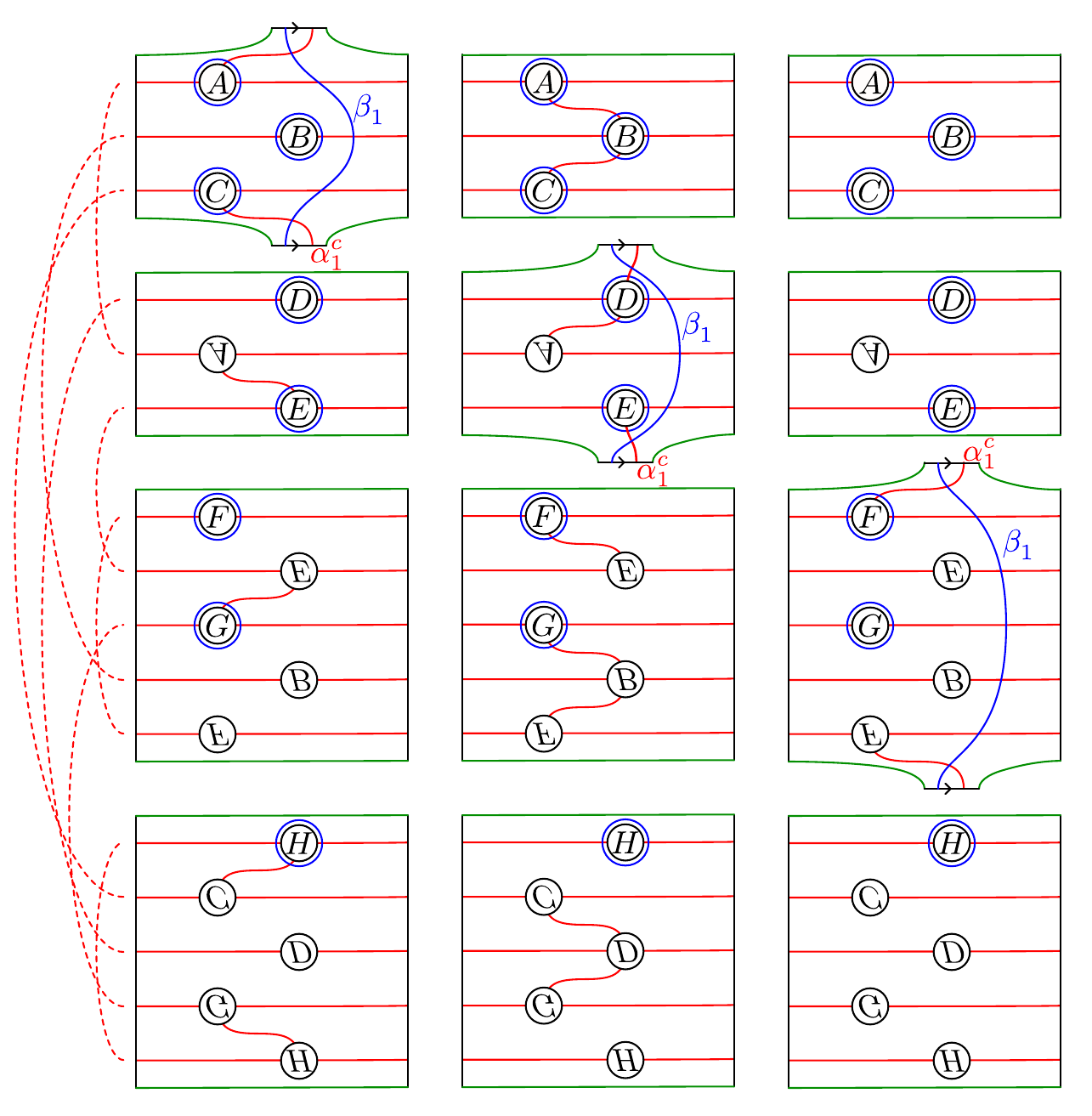}
  \caption{\textbf{The tube-cutting piece.} An arc diagram and three of the four associated tube-cutting pieces. There is one tube-cutting piece associated to each switch; the one not drawn is similar to the third one drawn. The two switches corresponding to the two bigons in the track have more complicated $\alpha$-circles than the two not corresponding to bigons. The arc diagram is the one from the right of Figure~\ref{fig:ad-to-tt}.}
  \label{fig:TC-again}
\end{figure}

In each tube-cutting piece $\TC$, there are two kinds of generators. Each piece
has a single $\alpha$-circle $\alpha_1^c$. There are 
$2(g+s-m-1)$
$\beta$-circles as in the identity
bordered Heegaard diagram, plus one more, distinguished $\beta$-circle
$\beta_1$. In order to make the diagram admissible, we do a finger move so that
$\alpha_1^c$ and $\beta_1$ intersect in two points; see
Figure~\ref{fig:TC-again}. Consequently, given a generator $\x$ for a Heegaard
diagram $\HD$ before gluing on $\TC$, there are two kinds of extensions of $\x$
to a generator of $\HD\cup\TC$: extensions using a point in $\alpha_1^c\cap
\beta_1$ and extensions not using those points. For the former kind of
extension, which $\alpha$-arcs $\x$ occupies completely determines which points
on $\alphas^a\cap \betas$ are chosen: each $\beta$-circle is occupied on the
left if the $\alpha$-arc is not occupied by $\x$ and on the right if the
$\alpha$-arc is occupied by $\x$. So, $\x$ extends to two generators of this kind (corresponding to the
two points in $\alpha_1^c\cap\beta_1$). For the second kind of extension, the
way we have drawn the diagrams in Figure~\ref{fig:TC-again}, which
$\beta$-circles are occupied on the left is still determined, but now one of the
other $\beta$-circles is occupied by a point in $\alpha_1^c$, and one of the
$\alpha$-arcs on the right is occupied by a point in $\beta_1$. So, $\x$ extends
to at most 
$20(g+s-m-1)$
generators of this kind. 
(We have estimated this as $5$ for the maximal number of intersection points between $\beta_1$ and an $\alpha$-arc times twice the number of $\beta$-arcs, for the maximum possible number of intersections of $\alpha_1^c$ with a $\beta$-arc; this is typically a gross
overestimate.)
Thus, the number of generators is multiplied by at most
$20(g+s-m)-18$
for each tube-cutting piece. The number of tube-cutting pieces is
$s$, so in all the number of generators is multiplied by 
\[
  (20(g+s-m)-18)^s.
\]

Therefore, the rank of $\BSD(Y)$ is less than or equal to 
\[
(20(g+s-m)-18)^{s}\left((2M(\psi))^{m}+(2M(\psi)+8)^{2(g+s-m-1)}\right).
\]
So, the claim follows for the fact that $m\le 2g$. 
\end{proof}

Recall that Corollary~\ref{cor:alg-extend} states, somewhat imprecisely, that bordered-sutured Floer homology gives an algorithm to determine if $\psi$ extends over some compression body. We give the algorithm we have in mind as its proof:
\begin{proof}[Proof of Corollary~\ref{cor:alg-extend}]
  First, compute the bordered-sutured bimodule associated to $\psi$ by
  Corollary~\ref{cor:factor-into-arcslides}; its description as a
  composition of arcslides is exactly the input to the algorithm
  in~\cite{LOT4} (or its easy extension to arc diagrams
  in~\cite{AL19:incompressible}).
  Next, construct all bordered Heegaard diagrams for compression bodies
  with at most the number of generators in Formula~\eqref{eq:DDbound}.
  Compute their bordered-sutured modules, as in the proof of
  Corollary~\ref{cor:CFDA-detect-extension}, and then apply
  Theorem~\ref{thm:detect-over-specific} to test if $\psi$ extends over
  each of them.
\end{proof}

Corollary~\ref{cor:alg-extend} is not quite as impractical as it might
seem at first. Assume that we already know the train track $\tau$, e.g.,
as discussed in~\cite{Bell:pA-NP}. Then Lemma~\ref{lem:bound} implies
that $\beta$-circles in the associated Heegaard diagram intersect the
triangles in the dual triangulation $\mathcal{T}$ in at most $M(\psi)$
arcs. A train track for a closed, genus $g$ surface has at most $18g-18$ edges.
The number of ways to distribute the $M(\psi)$ endpoints of these
arcs on the $18g-18$ edges of the triangulation is
$O(M(\psi)^{18g-19})$ (as a function of $M(\psi)$; $g$ is
fixed).
Given the number of endpoints on each edge of a triangle, there is at most one way the edges can lie in the
triangle (since the region containing the switch must be adjacent to all
three sides of the triangle). 
So, there are at most $O(M(\psi)^{18g-19})$ choices of $\beta$-circles. Once the $\beta$-circles are fixed, the $\beta$-arcs are determined by the algorithm above. Thus, there are at most $O(M(\psi)^{18g-19})$ bordered-sutured modules to compute. In particular, the number of modules to compute is polynomial in the length $M(\psi)$.

(Of course, our bound $M(\psi)$ itself grows quickly in terms of, say, the word length of $\psi$, because $r(\psi)$ does. The time to compute the bordered-sutured modules via the algorithm in~\cite{LOT4,AL19:incompressible} also seems to be exponential in the complexity of the manifold involved. One then also needs the computation of the bordered-sutured bimodule from Section~\ref{sec:slide-seq-loop}, which takes exponential time in the length of the splitting sequence via the algorithm in~\cite{LOT4}. So, the algorithm is slow mostly because of the growth rate of $r(\psi)$ and the cost of computing bordered-sutured invariants.)

Theorem~\ref{thm:phi-extend-bound-v2} gives a more algebraic obstruction to $\psi$ extending over a compression body, in that if $\psi$ extends then there must be some bordered-sutured module with at most the number of generators in Formula~\eqref{eq:DDbound} and satisfying the conditions of Theorem~\ref{thm:detect-over-specific}. (Moreover, the module must look like the invariant of a handlebody in various senses, like that if one computes Formula~\eqref{eq:detect-over-specific-formula} with $\Id$ instead of $\psi$, the support should be $0$-dimensional.) Arguably, this is a little like saying that the bimodule $\BSDA(\psi)$ has an eigenvector of bounded length.

\bibliographystyle{hamsalpha}

\bibliography{heegaardfloer}
\end{document}